\documentclass[a4paper,11pt]{article}

\usepackage{hyperref}
\usepackage{appendix}
\usepackage{amsmath}
\usepackage{amsthm}
\usepackage{amssymb}
\usepackage{amsfonts}
\usepackage{bbm}
\usepackage[svgnames]{xcolor}
\usepackage{tikz}
\usepackage{braket}
\usepackage{verbatim}
\usepackage{authblk}
\usepackage{fullpage}
\usepackage{enumitem}
\usepackage{diagrams}
\usepackage{subcaption}
\usepackage{makecell}
\usepackage{subcaption}
\usepackage{mathdots}
\usepackage{fancyhdr}
\pagestyle{fancy}
\fancyhf{}

\fancyhead[R]{\thepage}

\usepackage{float}
\floatstyle{boxed}
\restylefloat{figure}

\newarrow{Implies}{=}{=}{=}{=}{=>}
\usetikzlibrary{decorations.markings}
\usetikzlibrary{shapes.geometric}
\pgfdeclarelayer{edgelayer}
\pgfdeclarelayer{nodelayer}
\pgfsetlayers{edgelayer,nodelayer,main}
\tikzstyle{none}=[inner sep=0pt]
\tikzstyle{rn}=[circle,fill=Red,draw=Black,line width=0.8 pt]
\tikzstyle{gn}=[circle,fill=Lime,draw=Black,line width=0.8 pt]
\tikzstyle{yn}=[circle,fill=Yellow,draw=Black,line width=0.8 pt]
\tikzstyle{simple}=[-,draw=Black,line width=2.000]
\tikzstyle{arrow}=[-,draw=Black,postaction={decorate},decoration={markings,mark=at position .5 with {\arrow{>}}},line width=2.000]
\tikzstyle{tick}=[-,draw=Black,postaction={decorate},decoration={markings,mark=at position .5 with {\draw (0,-0.1) -- (0,0.1);}},line width=2.000]

\newcounter{jamiecomment}
\setcounter{jamiecomment}{1}

\newcounter{dominiccomment}
\setcounter{dominiccomment}{1}

\theoremstyle{plain}
\newtheorem{theorem}{Theorem}[section]
\newtheorem{proposition}[theorem]{Proposition}
\newtheorem{corollary}[theorem]{Corollary}
\newtheorem{lemma}[theorem]{Lemma}

\theoremstyle{definition}

\newtheorem{definition}[theorem]{Definition}
\newtheorem{remark}[theorem]{Remark}
\newtheorem{example}[theorem]{Example}

\def\Id{\ensuremath{\mathrm{Id}}}
\def\id{\ensuremath{\mathrm{id}}}
\def\Hom{\ensuremath{\mathrm{Hom}}}
\bibliographystyle{plainurl}

\begin{document}

\title{Coherence for braided and symmetric pseudomonoids}
\author{Dominic Verdon}
\affil{School of Mathematics, University of Bristol}
\maketitle

\begin{abstract}
Computads for unbraided, braided, and symmetric pseudomonoids in semistrict monoidal bicategories are defined. Biequivalences characterising the monoidal bicategories generated by these computads are proven. It is shown that these biequivalences categorify results in the theory of monoids and commutative monoids, and generalise the standard coherence theorems for braided and symmetric monoidal categories to braided and symmetric pseudomonoids in any weak monoidal bicategory.
\end{abstract}
\section{Introduction}

\subsection{Overview}

\paragraph{Braided and symmetric pseudomonoids.} Naked, braided and symmetric pseudomonoids are categorifications of noncommutative and commutative monoids, obtained by replacing equality with coherent isomorphism. In the symmetric monoidal bicategory {\bf Cat} of categories, functors and natural transformations, such structures are precisely naked, braided and symmetric monoidal categories. Naked, braided and symmetric pseudomonoids are more general, however, as they can be defined in any monoidal bicategory with the requisite braided structure~\cite{McCrudden2000}. 

Pseudomonoids in braided and symmetric monoidal bicategories arise in a variety of mathematical settings. By categorification of the representation theory of Hopf algebras, bicategories encoding the data of a four-dimensional topological field theory can be obtained as representation categories of certain `Hopf' pseudomonoids~\cite{Neuchl1997,Crane2011}. Pseudomonoids have also appeared recently in the the theory of surface foams, where certain pseudomonoids in a braided monoidal category represent knotted foams in four-dimensional space~\cite{Carter2011}. Many properties of monoidal categories can be formulated externally as structures on pseudomonoids; for instance, Street showed that Frobenius pseudomonoids correspond to star-autonomous categories~\cite{Street2004}, giving rise to a diagrammatic calculus for linear logic~\cite{Dunn2016}. Furthermore, it has been shown that the three-dimensional cobordism category is a symmetric monoidal bicategory generated from the data of a certain Frobenius pseudomonoid~\cite{Bartlett2014a}.

Given these structures' recent appearances in algebra and topology, it is natural to ask whether the well-known coherence theorems for naked, braided and symmetric monoidal categories~\cite{MacLane1978} can be extended in general to naked, braided and symmetric pseudomonoids. For naked pseudomonoids in an naked monoidal bicategory, this question was answered in the affirmative by Lack \cite{Lack2000}. Lack's result, however, does not apply in braided and symmetric monoidal bicategories, or to braided and symmetric pseudomonoids. In this work we solve this problem by proving coherence theorems for naked, braided and symmetric pseudomonoids in fully weak braided and symmetric monoidal bicategories. \begin{comment}This is to the author's knowledge the first  coherence theorem for a higher algebraic theory in the setting of a weak braided monoidal bicategory.  , as well as the development of a higher representation theory which categorifies the theory of quantum groups, a program initiated by Neuchl~\cite{Neuchl1997} but sadly little developed in generality rather than in specificity (although in specific cases the program of higher representation theory has been much developed CITE).\end{comment}

\paragraph{Our approach to coherence.}
In non-higher algebra, algebraic theories are commonly treated using PROs, PROBs and PROPs (collectively, PROs). These are naked, braided and symmetric monoidal categories whose objects are natural numbers, and whose morphisms are specified by generators and relations (a \emph{computad}). For instance, the monoid PRO has two generating 1-cells, $m: 2 \to 1$ and $u: 0 \to 1$, and one of its generating equalities is associativity, $m \circ (m \otimes \id) = m \circ (\id \otimes m)$. Models of the theory in a category of interest are precisely functors from the PRO. One approach to understanding an  algebraic theory is to find an isomorphism between the PRO, defined by a computad, and some simpler combinatorial category. For naked and commutative monoids, these isomorphisms with combinatorial categories have been found by other authors~\cite{Day1997,Davydov2010,Pirashvili2002} and are summarised in Table~\ref{tbl:monoidsresultsinintro}. 

For higher algebraic theories, we take the same approach. Our higher PROs are naked, braided and symmetric monoidal bicategories\footnote{We could not treat the sylleptic case here, due to the lack of a coherence theorem for sylleptic monoidal bicategories.} generated from the computad for a naked, braided or symmetric pseudomonoid. Models in a bicategory of interest correspond to strict naked, braided or symmetric monoidal bifunctors from the higher PRO. Our coherence results are biequivalences between these higher PROs and certain simpler combinatorial 2-categories.

\paragraph{Our results.}
Pseudomonoids are weakenings of monoids, with identical 0- and 1-cell data, and equalities of 1-cells replaced with coherent 2-isomorphisms. Our combinatorial 2-categories are identical at the level of 0- and 1-cells to the combinatorial categories appearing in the theory of monoids.  In all but one of the cases we consider, we show that the categorification adds no additional data: the combinatorial 2-category is \emph{locally discrete}, that is, it has only identity 2-cells. Here we say that `all diagrams commute'. 

The case where not all diagrams commute is that of a braided pseudomonoid in a symmetric monoidal bicategory, where the biequivalent combinatorial category is a categorification ${\bf FS}^{\text{br}}$ of the category of finite sets and functions, whose objects are natural numbers, whose 1-cells $f:m \to n$ are functions $\{1,\dots,m\} \to \{1,\dots,n\}$, and all of whose 2-cells are endomorphisms $\alpha:f \to f$, corresponding to elements of a product of pure braid groups based on $f$.
\begin{comment}
We list our results now, referring back to Table~\ref{tbl:monoidsresultsinintro} for definitions of combinatorial categories.
{\begin{figure}
\centering
\includegraphics[scale=0.15]{}
\quad
\raisebox{2cm}{$\Rightarrow$}
\quad
\includegraphics[scale=0.15]{}
\quad
\raisebox{2cm}{$\Rightarrow$}
\quad
\raisebox{.5cm}{\includegraphics[scale=0.15]{}}
\caption{The distributive law defining composition in the naked monoid PROB ${\bf B\Delta}$ corresponds to pulling the monoid structure through the braids and then eliminating any attached units.}
\label{fig:distlawintro}
\end{figure}}
\end{comment}
\begin{table}
\centering
\begin{comment}\begin{tabular}{|l|c|c|}
\hline
{\bf Ambient category}  &  \multicolumn{2}{c|}{{\bf Monoid signature}}  \\
	&	\multicolumn{1}{c}{Naked} &  \multicolumn{1}{c|}{Commutative}  \\
	\cline{2-3} 
Naked (PRO) & ${\bf \Delta}$ & \emph{N/A}  \\
\cline{2-3}
Braided (PROB) & ${\bf B\Delta}$~\cite{Day1997} & ${\bf B\Delta/{\raise.17ex\hbox{$\scriptstyle{\sim}$}}}$~\cite{Davydov2010}  \\
\cline{2-3}
Symmetric (PROP) & ${\bf P\Delta}$~\cite{Day1997} & ${\bf P\Delta/{\raise.17ex\hbox{$\scriptstyle{\sim}$}}}$ = ${\bf FS}$\cite{Davydov2010,Pirashvili2002} \\
\hline
\end{tabular}
\end{comment}

\begin{tabular}{|l|c|c|}
\hline
\parbox{2cm}{{\bf Monoidal category}}  &  \multicolumn{2}{c|}{{\bf Monoid signature}}  \\
	&	\multicolumn{1}{c}{Naked} &  \multicolumn{1}{c|}{Commutative}  \\
\cline{2-3}
\parbox{2cm}{Naked \\(PRO)} & \parbox{5cm}{\center{${\bf \Delta}$} \\ Morphisms are monotone functions $\underline{m} \to \underline{n}$ \vspace{2pt}} & N/A  \\
\cline{2-3}
\parbox{2cm}{Braided \\(PROB)} & \parbox{5cm}{\center{${\bf B\Delta}$} \\ Morphisms are pairs of a monotone function $\underline{m} \to \underline{n}$ and an element of the braid group $B_m$~\cite{Day1997} \vspace{2pt}} & \parbox{5cm}{\center{${\bf B\Delta/{\raise.17ex\hbox{$\scriptstyle{\sim}$}}}$} \\ Morphisms are pairs of a monotone function $\underline{m} \to \underline{n}$ and an element of a quotient of the braid group $B_m$~\cite{Davydov2010} \vspace{2pt}}  \\
\cline{2-3}
\parbox{2cm}{Symmetric \\(PROP)} & \parbox{5cm}{\center{${\bf S\Delta}$} \\ Morphisms are pairs of a monotone function $\underline{m} \to \underline{n}$ and an element of the symmetric group $S_m$~\cite{Day1997} \vspace{2pt}}& \parbox{5cm}{\vspace{-.9cm}\center{${\bf FS}$} \\ Morphisms are functions $\underline{m} \to \underline{n}$~\cite{Davydov2010,Pirashvili2002} \vspace{2pt}} \\
\hline
\end{tabular}
\caption{The table describes the combinatorial category isomorphic to a given PRO. All these categories have natural numbers as objects, so what are described in the table are the morphisms $m \to n$. Here $\underline{n}$ is the set $\{1, \dots, n\}$. Also displayed in the table is our notation for the combinatorial category.}
\label{tbl:monoidsresultsinintro}
\end{table}

These results are summarised in Table~\ref{tbl:pseudomonoidsintro}. We show that, in the special case of naked, braided and symmetric pseudomonoids in the symmetric monoidal bicategory {\bf Cat}, these biequivalences imply the classical coherence results of MacLane.
\begin{table}
\centering
\begin{tabular}{|l|c|c|c|}
\hline
\parbox{2cm}{\vspace{2pt}{\bf Monoidal\\ bicategory}}  &  \multicolumn{3}{c|}{{\bf Pseudomonoid signature}} \\
	&	\multicolumn{1}{c}{Naked} &  \multicolumn{1}{c}{Braided} & \multicolumn{1}{c|}{Symmetric} \\
	\cline{2-4} 
Naked & ${\bf \Delta}$ & N/A & N/A \\
\cline{2-4}
Braided &${\bf B\Delta}$ & ${\bf B\Delta/{\raise.17ex\hbox{$\scriptstyle{\sim}$}}}$ & N/A \\
\cline{2-4}
Symmetric & ${\bf S\Delta}$ & \parbox{5cm}{\center{${\bf FS}^{\text{br}}$} \\[2pt] Locally disconnected categorification of ${\bf FS}$ whose 2-cells are elements of a product of pure braid groups. \vspace{2pt}} & ${\bf FS}$ \\
\hline
\end{tabular}
\caption{This table presents our results regarding the combinatorial bicategory biequivalent to a given higher PRO. Where a combinatorial category from Table~\ref{tbl:monoidsresultsinintro} is given, it is considered as a locally discrete 2-category. We were unable to treat the case of sylleptic monoidal bicategories due to the lack of a known coherence theorem. The result for naked pseudomonoids in a naked monoidal bicategory was already proved by Lack~\cite{Lack2000}. }
\label{tbl:pseudomonoidsintro}
\end{table}

\paragraph{Our techniques.}
We use semistrictness results, allowing us to work with Gray monoids rather than fully weak monoidal bicategories. In Gray monoids, some of the coherent 2-isomorphisms in the definition of a weak monoidal bicategory are taken to be identity 2-cells. This allows a flexible and intuitive `movie calculus'. We develop techniques for working with this calculus which should be applicable to other problems in higher algebra, including the problem of finding similar coherence theorems for Frobenius pseudomonoids~\cite{Street2004} and pseudobialgebras (also known as Hopf categories)~\cite{Neuchl1997}.

\subsection{Related work}

\paragraph{Semistrictness for braided and symmetric monoidal bicategories.}
Gurski proved~\cite{Gurski2011} that every weak braided monoidal bicategory~\cite{Kapranov1994,Baez1996,Crans1998} is biequivalent to a Crans semistrict braided monoidal bicategory~\cite{Crans1998}. The braided monoidal bicategories we consider, derived from the Bar-Vicary definition of semistrict 4-category~\cite{Bar2017}, are slightly stricter than those of Crans, as the hexagonators are trivial; however, in Appendix~\ref{app:semistrictnesshexagonators} we sketch a proof that trivial hexagonators do not affect semistrictness. Our definition also includes the PT-B equality (see Definition~\ref{def:braidedgraymonoid}), which has not appeared in previous definitions of braided monoidal bicategory and apparently cannot be derived from the other axioms; we argue in Section~\ref{sec:newsymmgrayaxioms} that this omission was erroneous, and that PT-B should be included in any definition of a braided monoidal bicategory.

Every weak symmetric  monoidal bicategory is biequivalent to a \emph{quasistrict} symmetric monoidal bicategory\cite{Schommer-Pries2009, Gurski2013}. Our definition of symmetric monoidal bicategories is weaker than the quasistrict definition, as this simplifies our proofs. An alternative formulation in terms of permutative Gray monoids was introduced in recent work~\cite{Gurski2017,Gurski2017a}, but was not required here.

\paragraph{Rewriting theory.}
While there has been much work on higher dimensional rewriting using polygraphs, yielding a powerful theory \cite{Lafont1997,Mimram2014,Guiraud2016} which has been used to rederive coherence results for braided and symmetric monoidal categories \cite{Guiraud2012, Acclavio2016}, this theory is applicable only to strict higher categories. Because our approach is semistrict, it applies to fully weak braided and symmetric monoidal bicategories. This motivates a theory of higher dimensional rewriting in Gray categories. Since the first appearance of the results in this paper some progress was made in this direction~\cite{Forest2018}. 

\subsection{Outline of the paper}

We begin by introducing some background results. In Section~\ref{sec:computadssemistrictbackground} we review basic notions of computads and semistrictness, explaining how semistrictness can be used to apply our results to fully weak braided and symmetric monoidal bicategories. In Section~\ref{sec:presentedgraymonoidsintro} we define semistrict monoidal bicategories (Gray monoids) and their computads. In Section~\ref{sec:newsymmgrayaxioms} we define braided and symmetric Gray monoids and their computads and discuss the PT-B equality. In Section~\ref{sec:symmgraymonoidstechniques} we recall and derive some coherence results that we use in our main proof. In Section~\ref{sec:presentations} we define computads for naked, braided and symmetric pseudomonoids. In Section~\ref{sec:theoryofmonoids} we review results from the theory of monoids which were summarised in Table~\ref{tbl:monoidsresultsinintro}. 

We then move onto our results. In Section~\ref{sec:coherence} we define the combinatorial bicategories appearing in Table~\ref{tbl:pseudomonoidsintro}. In Section~\ref{sec:biequivalences} we define maps between these and the bicategories generated from the pseudomonoid computads. In Section~\ref{sec:theoremstatement} we show how these maps' being biequivalences implies MacLane's coherence theorems for braided and symmetric monoidal categories, and prove that they are essentially surjective on objects and 1-cells, and faithful on 2-cells. In Section~\ref{sec:fullnessproof} we show that the maps are full on 2-cells. In Section~\ref{sec:functorialityproof} we show that the maps are functorial, completing the proof that the maps are biequivalences.

There are two appendices. In Appendix~\ref{app:semistrictnesshexagonators} we sketch a proof that our definition of a Gray monoid, which has trivial hexagonators, is still semistrict. In Appendix~\ref{sec:proofsforgraymoncohappendix} we provide the proof of the main coherence result from Section~\ref{sec:symmgraymonoidstechniques}.

\subsection{Globular workspace}

In this work we have used  \emph{Globular} \cite{Bar2018}, a graphical proof assistant for semistrict higher category theory which allows one to easily view and manipulate higher compositions. Globular has a definition of semistrict 4-category~\cite{Bar2017}; in this definition, a semistrict 4-category with only one 0- and one 1-cell is a braided Gray monoid in the sense of Definition~\ref{def:braidedgraymonoid}, where $n$-cells in the 4-category are considered as $(n-2)$-cells in the braided Gray monoid.  Equalities are encoded by invertible cells in higher dimension.

We have have encoded certain graphical proofs from this paper into a \emph{Globular} workspace, which can be found at \url{http://globular.science/1705.001v2}. The propositions are equalities of 2-cells in the braided Gray monoid --- that is, invertible 5-cells in the workspace. The proofs of the propositions, which take these invertible 5-cells and and expand them as a series of generating equalities, are invertible 6-cells in the workspace. Note that the higher categorical structure in Globular is only being used at the 4-categorical level; the use of 5- and 6-cells to encode propositions and proofs is simply formal.

\subsection{Acknowledgements}

The author would like to thank Krzysztof Bar for support with \emph{Globular}, three anonymous referees from FSCD2018 for helpful comments on an early version of this work, Jamie Vicary for advice throughout the writing process, and Manuel B{\"a}renz, Vaia Patta and David Reutter for useful discussions and comments. This work was supported by the UK Engineering and Physical Sciences Research Council.

\section{Background}

\subsection{Computads and semistrictness}
\label{sec:computadssemistrictbackground}

\emph{Computads}, sometimes known as \emph{presentations} or \emph{polygraphs}, are generating data for a category~\cite{Batanin1998, Schommer-Pries2009}. The bicategories we study in this work are \emph{computadic}; that is, generated from computads. As already discussed, we seek biequivalences between these bicategories and some simpler combinatorial 2-categories. To this end, we make use of \emph{semistrictness} results, which show that any fully weak bicategory is biequivalent, in the appropriate sense, to a more tractable \emph{semistrict} bicategory. We then need only consider the semistrict bicategories.

For the semistrict bicategories we consider here, there is also a notion of semistrict computad, with a `quotient' functor from weak computads to semistrict computads compatible with the biequivalence in the following sense. Let $F, F_{SS}$ be functors which take a computad to the bicategory it generates, and let $\tilde{S}, S$ be the functors which take a computad or a bicategory to its corresponding semistrict computad or biequivalent semistrict bicategory. Then the following diagram commutes:
\begin{diagram}
\textbf{Comp} & \rTo^{F} & \textbf{Cats} \\
\dTo^{\tilde{S}} &		& \dTo_{S} \\
\textbf{SSComp}	& \rTo_{F_{SS}}	& \textbf{SSCats}
\end{diagram}
For detail, see~\cite{Schommer-Pries2009}.

In what follows we will only define semistrict bicategories and semistrict pseudomonoid computads; by the above discussion, our coherence results apply in the weak case also. Fully weak definitions can be found in the work of other authors.\footnote{The definition of a computad for a weak symmetric monoidal bicategory is given, along with a description of the weak symmetric monoidal bicategory it generates, in~\cite[Section 2.10]{Schommer-Pries2009}; the definition of a computad for a weak braided monoidal bicategory is identical except for the omission of the symbols $\sigma$, and the definition of a computad for a weak naked monoidal bicategory is identical except for the omission of  the symbols $\beta$, $R$, $S$ and $\sigma$. Naked, braided and symmetric pseudomonoid computads are weak naked, braided and symmetric monoidal bicategory computads with the generating cells given in~\cite[pp.79-81,86-87,90]{McCrudden2000}.}

\subsection{Semistrict monoidal bicategories and their computads}\label{sec:presentedgraymonoidsintro}

The semistrict monoidal bicategories we consider here are computadic \emph{Gray monoids}~\cite{Day1997,Gurski2006}. We will not be overly concerned with technical details, which have been treated elsewhere~\cite{Day1997, Hummon2012, Schommer-Pries2009}, but will rather provide an informal overview using the diagrammatic approach of Bar and Vicary~\cite{Bar2017}. 

Gray monoid computads are defined inductively: for each $0 \leq k \leq 2$ there is a set $C_k$ of generating $k$-cells. For $k \neq 0$, each generating $k$-cell has a $(k-1)$ cell as source and another as target. There is also a set $E$ of equalities of 2-cells, each of which has a 2-cell as source and another as target (although the choice of source and target here is arbitrary). In order to define the $k$-th level of the computad, one must know how $(k-1)$-cells are generated from the lower levels.

We will define the sets of $k$-cells of a computadic Gray monoid first, and then describe its compositional structure.

%%%%%%%%%%%%%%%%%%%%%%IGNORE, EXPL DEFN OF COMPTD%%%%%%%%%%%%%%

\begin{comment}
In order to define these generating cells a definition of $(k-1)$-cells generated by the lower-dimensional data of the computad is required. Here we provide the definition of the computad first, then define the cells.
\begin{definition}[Gray monoid computad]
A computad $\mathcal{C}$ for a Gray monoid consists of the following data.
\begin{itemize}[leftmargin=*]
\item A set of generating 0-cells $C_0$.
\item A set of generating 1-cells $c \in C_1$, each with defined source and target $0$-cells $s(c)$ and $t(c)$ generated from $(C_0)$. The definition of a $0$-cell generated from $(C_0)$ will be given as Definition~\ref{def:0cellsgraymoncomp}.
\item A set of generating 2-cells $c \in C_2$, each with defined source and target 1-cells $s(c)$ and $t(c)$ generated from $(C_0,C_1)$, such that $ss(c)=st(c)$ and $ts(c) = tt(c)$. The definition of a $1$-cell generated from $(C_0,C_1)$ will be given as  Definition~\ref{def:1cellsgraymoncomp}.
\item A set of equalities $e \in E$ between 2-cells $s(e)$ and $t(e)$ generated from $(C_0,C_1,C_2)$, such that $ss(e)=st(e)$ and $ts(e) = tt(e)$. The definition of a $2$-cell generated from $(C_0,C_1,C_2)$ will be given as Definition~\ref{def:1cellsgraymoncomp}.
\end{itemize}
\end{definition}
\end{comment}

%%%%%%%%%%%%%%%%%%%%%%%%%%%%%%%%%%%%%%%%%%%%%%%%%

\begin{definition}[0-cells of a Gray monoid]\label{def:0cellsgraymoncomp}
The 0-cells generated from $(C_0)$ are ordered lists of elements of $C_0$.
\end{definition} 
Every generating 1-cell in $C_1$ has an ordered list of elements of $C_0$ as source and target; $(C_0, C_1)$ can therefore be considered as the computad for a monoidal category. We assume the reader is familiar with the string diagram calculus for monoidal categories, which is well-established~\cite{Selinger2010,Joyal1991a,Joyal1991b}. The 1-cells of a computadic Gray monoid are defined as string diagrams generated from $(C_0,C_1)$; however, the notion of topological equivalence used to identify two diagrams as referring to the same 1-cell is more rigid.

%%%%%%%%%%%%%%%%%%IGNORE, STRING CALCULUS DESCN%%%%%%%%%%%%%%

\begin{comment}
 We read diagrams from bottom to top. In summary, a computad for a monoidal category has a set $C_0$ of generating 0-cells, and a set $C_1$ of generating 1-cells whose sources and targets are ordered lists, possibly empty, of generating 0-cells. In the diagrammatic calculus, generating 0-cells are represented as string labels, and generating 1-cells $f\in C_1$ are represented as nodes with $|s(f)|$ input and $|t(f)|$ output strings; the input strings, labelled from left to right according to $s(f)$ (i.e. the leftmost string is labelled with the first element of $s(f)$, the next string with the second, etc.) enter into the bottom of the node, and the output strings, labelled from left to right according to $t(f)$, leave the top of the node. 1-cells generated from $(C_0,C_1)$ correspond to rectangular string diagrams formed from these strings and nodes, where input strings enter at the bottom edge and output strings leave at the top edge, and throughout the diagram the tangents to strings are never horizontal. The source of the 1-cell is the list of labels of the input strings, and the target is the list of labels of the output strings.  We identify these string diagrams up to planar isotopy, so that two planar isotopic diagrams represent the same underlying 1-cell in the generated monoidal category.
\end{comment}

%%%%%%%%%%%%%%%%%%%%%%%%%%%%%%%%%%%%%%%%%%%%%

\begin{definition}
Given generating 0- and 1-cells $(C_0,C_1)$, an \emph{ordered string diagram} generated from this data is a string diagram, no pair of whose generating 1-cells occur at the same vertical height in the diagram.
\end{definition}

\begin{definition}
An \emph{ordered planar isotopy} between ordered string diagrams is a planar isotopy between them where, at each point of the isotopy, the string diagrams are ordered.
\end{definition}
\begin{definition}[1-cells of a Gray monoid]\label{def:1cellsgraymoncomp}
The 1-cells generated from $(C_0,C_1)$ are ordered string diagrams generated from $(C_0,C_1)$, identified up to ordered planar isotopy.
\end{definition}
\noindent
These notions are illustrated in Figure~\ref{fig:orderedstringdiags}.
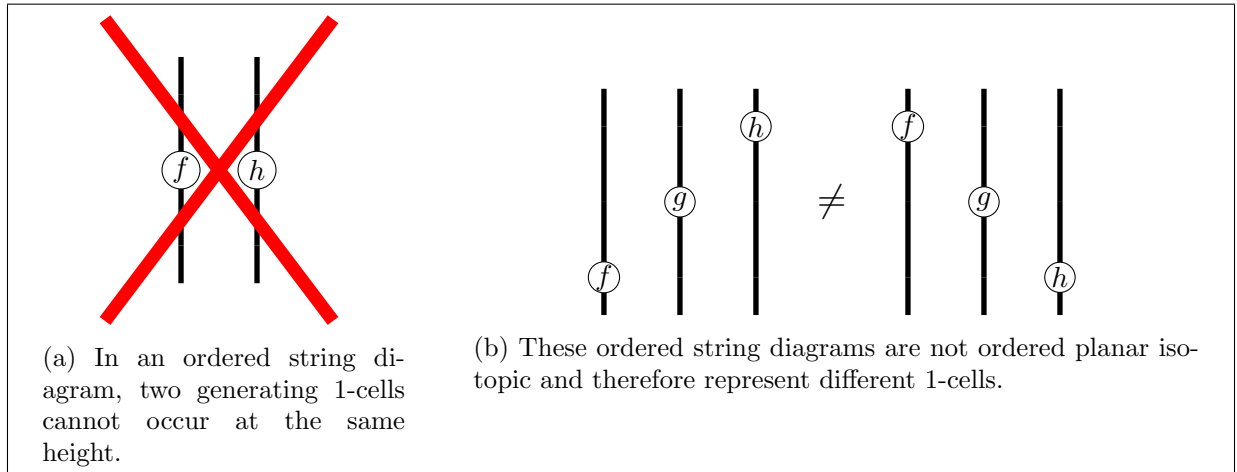
\begin{figure}
\centering
\begin{subfigure}{0.3\textwidth}\centering
\begin{tikzpicture}
	\begin{pgfonlayer}{nodelayer}
		\node [fill=white, minimum size=0.5cm, draw, circle, style=none] (0) at (1, 2) {$f$};
		\node [style=none] (1) at (2, 3) {};
		\node [fill=white, minimum size=0.5cm, draw, circle, style=none] (2) at (2, 2) {$h$};
		\node [style=none] (3) at (2, 1) {};
		\node [style=none] (4) at (1, 1) {};
		\node [style=none] (5) at (1, 3) {};
		\node [style=none] (6) at (1, 3.5) {};
		\node [style=none] (7) at (1, 0.5) {};
		\node [style=none] (8) at (2, 3.5) {};
		\node [style=none] (9) at (2, 0.5) {};
		\node [style=none] (10) at (3, 4) {};
		\node [style=none] (11) at (0, -0) {};
		\node [style=none] (12) at (0, 4) {};
		\node [style=none] (13) at (3, -0) {};
	\end{pgfonlayer}
	\begin{pgfonlayer}{edgelayer}
		\draw [style=simple] (5.center) to (0.center);
		\draw [style=simple] (0.center) to (4.center);
		\draw [style=simple] (1.center) to (2.center);
		\draw [style=simple] (2.center) to (3.center);
		\draw [style=simple] (6.center) to (5.center);
		\draw [style=simple] (7.center) to (4.center);
		\draw [style=simple] (9.center) to (3.center);
		\draw [style=simple] (8.center) to (1.center);
		\draw [line width=5pt, red] (13.center) to (12.center);
		\draw [line width=5pt, red] (11.center) to (10.center);
	\end{pgfonlayer}
\end{tikzpicture}
\caption{In an ordered string diagram, two generating 1-cells cannot occur at the same height.}
\end{subfigure}
\qquad
\begin{subfigure}{0.6\textwidth}\centering
\begin{tikzpicture}
	\begin{pgfonlayer}{nodelayer}
		\node [fill=white, minimum size=0.4cm, draw, circle, style=none] (0) at (0, 0.5) {$f$};
		\node [style=none] (1) at (0, 1.5) {};
		\node [style=none] (2) at (0, 2.5) {};
		\node [style=none] (3) at (1, 0.5) {};
		\node [style=none] (4) at (1, 2.5) {};
		\node [fill=white, minimum size=0.4cm, draw, circle, style=none] (5) at (1, 1.5) {$g$};
		\node [style=none] (6) at (2, 1.5) {};
		\node [fill=white, minimum size=0.4cm, draw, circle, style=none] (7) at (2, 2.5) {$h$};
		\node [style=none] (8) at (2, 0.5) {};
		\node [style=none] (9) at (5, 2.5) {};
		\node [style=none] (10) at (4, 0.5) {};
		\node [fill=white, minimum size=0.4cm, draw, circle, style=none] (11) at (6, 0.5) {$h$};
		\node [style=none] (12) at (4, 1.5) {};
		\node [fill=white, minimum size=0.4cm, draw, circle, style=none] (13) at (5, 1.5) {$g$};
		\node [style=none] (14) at (6, 1.5) {};
		\node [style=none] (15) at (6, 2.5) {};
		\node [fill=white, minimum size=0.4cm, draw, circle, style=none] (16) at (4, 2.5) {$f$};
		\node [style=none] (17) at (5, 0.5) {};
		\node [style=none] (18) at (0, 3) {};
		\node [style=none] (19) at (0, -0) {};
		\node [style=none] (20) at (1, 3) {};
		\node [style=none] (21) at (1, -0) {};
		\node [style=none] (22) at (2, 3) {};
		\node [style=none] (23) at (2, -0) {};
		\node [style=none] (24) at (4, 3) {};
		\node [style=none] (25) at (4, -0) {};
		\node [style=none] (26) at (5, 3) {};
		\node [style=none] (27) at (5, -0) {};
		\node [style=none] (28) at (6, 3) {};
		\node [style=none] (29) at (6, -0) {};
		\node [style=none] (30) at (3, 1.5) {{\Large $\neq$}};
	\end{pgfonlayer}
	\begin{pgfonlayer}{edgelayer}
		\draw [style=simple] (0.center) to (1.center);
		\draw [style=simple] (1.center) to (2.center);
		\draw [style=simple] (4.center) to (5.center);
		\draw [style=simple] (5.center) to (3.center);
		\draw [style=simple] (8.center) to (6.center);
		\draw [style=simple] (7.center) to (6.center);
		\draw [style=simple] (10.center) to (12.center);
		\draw [style=simple] (12.center) to (16.center);
		\draw [style=simple] (17.center) to (13.center);
		\draw [style=simple] (9.center) to (13.center);
		\draw [style=simple] (11.center) to (14.center);
		\draw [style=simple] (15.center) to (14.center);
		\draw [style=simple] (2.center) to (18.center);
		\draw [style=simple] (19.center) to (0.center);
		\draw [style=simple] (21.center) to (3.center);
		\draw [style=simple] (23.center) to (8.center);
		\draw [style=simple] (22.center) to (7.center);
		\draw [style=simple] (20.center) to (4.center);
		\draw [style=simple] (24.center) to (16.center);
		\draw [style=simple] (25.center) to (10.center);
		\draw [style=simple] (27.center) to (17.center);
		\draw [style=simple] (29.center) to (11.center);
		\draw [style=simple] (28.center) to (15.center);
		\draw [style=simple] (26.center) to (9.center);
	\end{pgfonlayer}
\end{tikzpicture}
\caption{These ordered string diagrams are not ordered planar isotopic and therefore represent different 1-cells.}
\end{subfigure}
\caption{Ordered string diagrams and ordered planar isotopy.}
\label{fig:orderedstringdiags}
\end{figure}

\begin{remark}\label{rem:precisedefn1cells}
The greater rigidity of ordered planar isotopy equivalence allows one to divide ordered string diagrams into vertical levels, where precisely one generating 1-cell occurs at each vertical level. The 1-cells may therefore be written as compositions of tensor products of generating 1-cells with the identity (`whiskerings'). This links the diagrammatic approach to more conventional presentations of Gray monoids~\cite{Schommer-Pries2009,Gurski2006}.
\end{remark}

\begin{example}
Some examples of ordered string diagrams constructed from generating 0- and 1-cells are the sources and targets of 2-cells in the pseudomonoid computad $\mathcal{P}$ (Definition~\ref{pseudomonoidpresentation}). %Alternatively, one can create a 3-category with a single 0-cell in the higher-categorical proof assistant \emph{Globular}~\cite{Bar2018} and input generating 0(1)- and 1(2)-cells; \emph{Globular} then allows for the creation of ordered string diagrams. 
\end{example}
We now consider 2-cells. In a Gray monoid, there is an additional family of generating 2-cells, not specified within the computad, but rather obtained from the generating 1-cells of the computad. These implement non-ordered planar isotopy, which was an equality for monoidal categories but in Gray monoids is controlled by nontrivial 2-cells.
\begin{definition}
We say that two generating 1-cells in an ordered string diagram are \emph{connected} if one may be reached from the other by a path through strings which always travels upwards in the diagram.
\end{definition}
\begin{definition}[Interchangers]
In any 1-cell diagram where two unconnected 1-cells $f,g$ are vertically adjacent, there is a generating \emph{interchanger} 2-cell $\iota$ whose source is the original 1-cell and whose target is the 1-cell with the heights of $f$ and $g$ interchanged. 
\begin{center}\begin{tikzpicture}
	\begin{pgfonlayer}{nodelayer}
		\node [style=none] (0) at (1.5, 0.75) {$\Rightarrow$};
		\node [style=none] (1) at (3, 1) {};
		\node [style=none] (2) at (2.25, -0) {};
		\node [style=none] (3) at (2.25, 0.5) {};
		\node [fill=white, minimum size=0.4cm, draw, circle, style=none] (4) at (3, 0.5) {$g$};
		\node [fill=white, minimum size=0.4cm, draw, circle, style=none] (5) at (2.25, 1) {$f$};
		\node [style=none] (6) at (3, -0) {};
		\node [style=none] (7) at (2.25, 1.5) {};
		\node [style=none] (8) at (3, 1.5) {};
		\node [style=none] (9) at (0, -0) {};
		\node [style=none, circle, draw, minimum size=0.4cm, fill=white] (10) at (0.75, 1) {$g$};
		\node [style=none] (11) at (0.75, -0) {};
		\node [style=none] (12) at (0, 1.5) {};
		\node [style=none] (13) at (0.75, 1.5) {};
		\node [style=none] (14) at (0.75, 1) {};
		\node [style=none, circle, draw, minimum size=0.4cm, fill=white] (15) at (0, 0.5) {$f$};
		\node [style=none] (16) at (1.5, 1) {$\iota$};
		\node [style=none] (17) at (0, 0.5) {};
	\end{pgfonlayer}
	\begin{pgfonlayer}{edgelayer}
		\draw [style=simple] (2.center) to (3.center);
		\draw [style=simple] (3.center) to (5.center);
		\draw [style=simple] (6.center) to (4.center);
		\draw [style=simple] (1.center) to (4.center);
		\draw [style=simple] (7.center) to (5.center);
		\draw [style=simple] (8.center) to (1.center);
		\draw [style=simple] (11.center) to (10.center);
		\draw [style=simple] (14.center) to (10.center);
		\draw [style=simple] (12.center) to (15.center);
		\draw [style=simple] (13.center) to (14.center);
		\draw [style=simple] (9.center) to (17.center);
		\draw [style=simple] (17.center) to (15.center);
	\end{pgfonlayer}
\end{tikzpicture}
\end{center}
An interchanger 2-cell exists regardless of horizontal separation of the two vertically adjacent generating 1-cells by other strings.
\end{definition}
Generic 2-cells will be sequences of applications of generating 2-cells to subregions of a 1-cell diagram. For their definition we therefore need a good notion of subregion.
\begin{definition}
We define a \emph{rectangular subregion} of an ordered string diagram as the interior $I(R)$ of an embedded rectangle $R$ satisfying the following properties:
\begin{itemize}
\item $I(R)$ contains all generating 1-cells occuring at a vertical level between the bottom and the top of the rectangle.
\item The boundary of $R$ does not intersect generating 1-cells.
\item Intersections of the boundary of $R$ with strings are all on the bottom and top edge of $R$.
\end{itemize}
These conditions ensure that the rectangular subregion itself contains an ordered string diagram, corresponding to a 1-cell. See Figure~\ref{fig:rectangularsubregion} for examples.
\begin{figure}\centering
\subcaptionbox{The source of an interchanger.}
{\includegraphics[scale=.3]{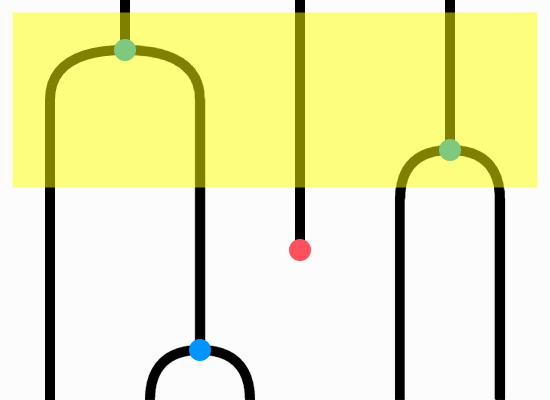}}
\qquad \qquad 
\subcaptionbox{The source of an inverse associator (Definition~\ref{pseudomonoidpresentation}).}
{\includegraphics[scale=.3]{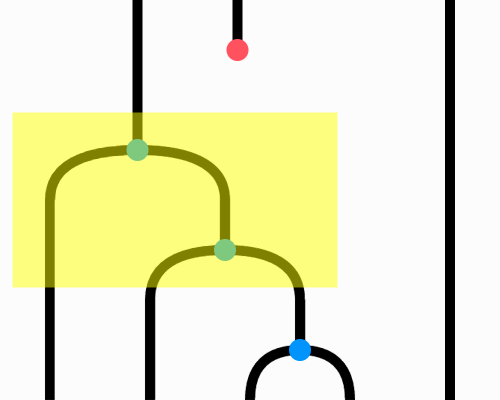}}
\caption{Examples of rectangular subregions.}\label{fig:rectangularsubregion}
\end{figure}
\end{definition}

%%%%%%%%%%%%%IGNORE: FORMAL DESCN OF RECT SUBREGIONS%%%%%%%%%%%%

\begin{comment}
\begin{remark}
For exactness, one may define rectangular subregions using the inductive definition of 1-cells in Remark~\ref{rem:precisedefn1cells}. In this definition, 1-cells are defined by gluing generating 1-cells onto the targets of existing 1-cells, to create a list of compositions of generating 1-cells $\dots \circ g_2 \circ g_1 \circ g_0 \circ \id_L$, where each composition comes with information about the linear subregion onto which the generating 1-cell was glued. A rectangular subregion then corresponds to a choice $\{i_1,\dots, i_2\}$ of indices between which the generating 1-cells will be incorporated in the subregion, and also a choice of partition $t(g_{i_1-1} \circ \dots \circ \id_L) = L_0 \frown L_1 \frown L_2$ such that none of the $g_i$ for $i \in \{i_1,\dots, i_2\}$ are glued onto linear subregions of $L_0$ or $L_2$. The sub-1-cell specified by the rectangular subregion is then precisely $g_{i_2} \circ \cdots \circ g_{i_1} \circ id_{L_1}$, where the gluing information is identical to that in the full 1-cell.
\end{remark}
\end{comment}

%%%%%%%%%%%%%%%%%%%%%%%%%%%%%%%%%%%%%%%%%%%%%%%%%%%%%%%%%%%%%%%%

In order to define 1-cells, we introduced ordered string diagrams
and then stipulated that ordered planar isotopic ordered string diagrams represented the same 1-cell. Likewise, in order to define 2-cells we introduce \emph{movies} and identify these up to a certain equivalence relation.
\begin{definition}[Movie]
Let $D_1, \dots, D_n$ be 1-cells in the free Gray monoid on the computad $(C_0,C_1,C_2,C_3)$. A \emph{movie} $D_1 \to D_n$ is a sequence $$D_1 \overset{\gamma_{1,2}}{\Rightarrow} D_2 \overset{\gamma_{2,3}}{\Rightarrow} \dots \overset{\gamma_{n-1,n}}{\Rightarrow} D_n,$$ where $D_i$ are 1-cells and $\gamma_{i,i+1}$ are generating 2-cells such that $D_i$ and $D_{i+1}$ differ only by the application of $\gamma_{i,i+1}$ to a rectangular subregion.
\end{definition}
As the 1-cells are represented by ordered string diagrams, if we draw these we get a series of transitions of planar diagrams. This is the reason for the name `movie'; each ordered string diagram is a \emph{frame} of the movie. 
We call a contiguous subsequence of frames in a movie a \emph{clip}.

%%%%%%%%%%%%%%%%IGNORE: PROJECTION DFN%%%%%%%%%%%%%%%%%%%%%%%%%%
\begin{comment}
It can be rather difficult to get an idea of the global structure of a long movie. It is possible to introduce a notion of projection which allows one to view the whole movie in a single planar diagram. This idea is well-known from low-dimensional topology~\cite{Carter1998} and has been implemented in \emph{Globular}~\cite{Bar2018}.
\begin{definition}
Let $M$ be any movie. We define the projection of $M$ to be an ordered string diagram $D_M$ whose string labels are generating 1-cells appearing in frames of $M$ and whose nodes are generating 2-cells transitioning between frames of $M$. The diagram is as follows. Begin with the first frame of $M$. Each generating 1-cell in this frame will be an input string of $D_M$; the lowest generating 1-cell is the leftmost input string, the next-lowest is the second-to left input string, etc. The generating 2-cells in $M$ are represented as nodes which take input strings labelled by 1-cells --- the one-cells in the source, where lower cells correspond to leftmost input strings --- and 
\end{definition}
\end{comment}
%%%%%%%%%%%%%%%%%%%%%%%%%%%%%%%%%%%%%%%%%%%%%%%%%%%%%%%%%%%%%%%%

We now introduce the following \emph{structural equalities} by which movies will be identified.
\begin{definition}[Structural equalities of a Gray monoid]\label{structuralequalitiesgraymondef}
\begin{enumerate} \item \emph{Type I rewrites}. If two generating 2-cells occur consecutively in the movie, and the sets of generating 1-cells involved in each have zero intersection, their order may be interchanged. 

For example, take the movie generated from the pseudomonoid computad $\mathcal{P}$~(Definition~\ref{pseudomonoidpresentation}), in which two associators are applied to four left-bracketed multiplication 1-cells, the first on the bottom pair and the second on the top pair. There is a Type I rewrite that switches this movie for an equal one where the associator is applied to the top pair of nodes first, then to the bottom pair.
\begin{center}
\raisebox{0.45cm}{\huge [}\includegraphics[width=1cm,height=1.5cm]{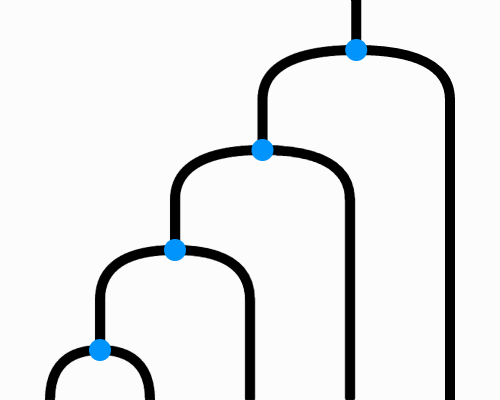}
\raisebox{0.6cm}{$\Rightarrow$}\includegraphics[width=1cm,height=1.5cm]{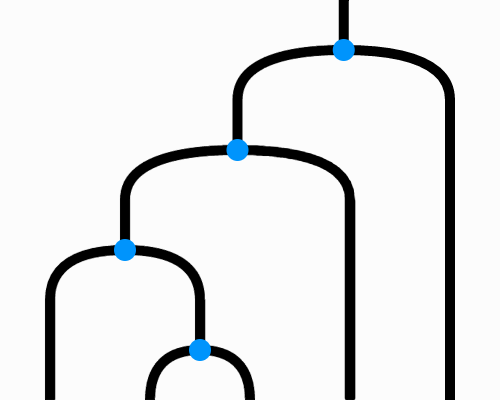}
\raisebox{0.6cm}{$\Rightarrow$}
\includegraphics[width=1cm,height=1.5cm]{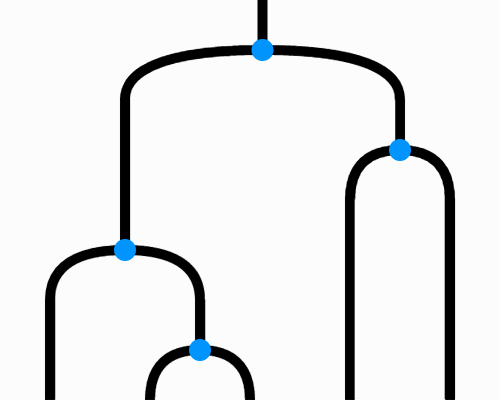} \raisebox{0.45cm}{\huge ]} \raisebox{0.45cm}{$=$}
\raisebox{0.45cm}{\huge [}\includegraphics[width=1cm,height=1.5cm]{typeIrewriteexample1}
\raisebox{0.6cm}{$\Rightarrow$}\includegraphics[width=1cm,height=1.5cm]{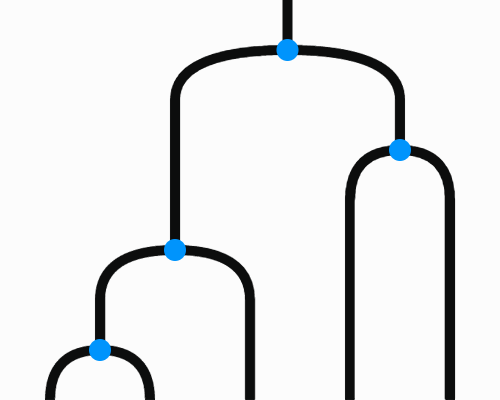}
\raisebox{0.6cm}{$\Rightarrow$}
\includegraphics[width=1cm,height=1.5cm]{typeIrewriteexample3}\raisebox{0.45cm}{\huge ]}
\end{center} 

\item \emph{Type II rewrites}. These rewrites state that the downwards interchanger is the inverse of the upwards interchanger. If a 1-cell is unconnected to the 1-cell directly above it, we may insert an interchanger and its inverse into the movie; likewise, we may remove an interchanger and its inverse when they occur together. 

Here is an example generated from $\mathcal{P}$, where an interchanger between a unit and a multiplication node may be inserted:
\begin{center}
\raisebox{0.45cm}{\huge [}\includegraphics[width=1cm,height=1.5cm]{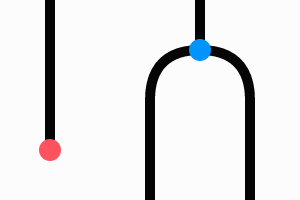}
\raisebox{0.45cm}{\huge ]} 
\raisebox{0.6cm}{$=$}
\raisebox{0.45cm}{\huge [}\includegraphics[width=1cm,height=1.5cm]{typeIIinterchangerrewrite1}
\raisebox{0.6cm}{$\Rightarrow$}\includegraphics[width=1cm,height=1.5cm]{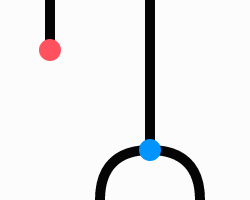}
\raisebox{0.6cm}{$\Rightarrow$}
\includegraphics[width=1cm,height=1.5cm]{typeIIinterchangerrewrite1}\raisebox{0.45cm}{\huge ]} 
\end{center}
\item \emph{Type III rewrites}. When one 1-cell interchanges with another 1-cell, and either 1-cell is immediately acted upon by the  following 2-cell, the movie may be rewritten so that the 2-cell occurs before the interchanger.  

The following example is generated by $\mathcal{P}$. Here a unit interchanges with a 1-cell after application of an associator; this may be rewritten to a movie where the associator occurs before the interchanger:
\begin{center}
\raisebox{0.45cm}{\huge [}\includegraphics[width=1cm,height=1.5cm]{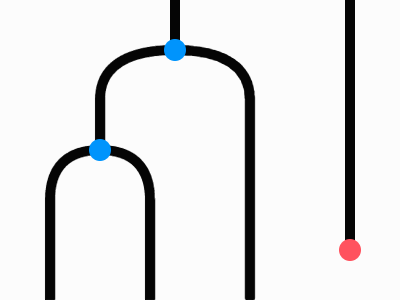}
\raisebox{0.6cm}{$\Rightarrow$}\includegraphics[width=1cm,height=1.5cm]
{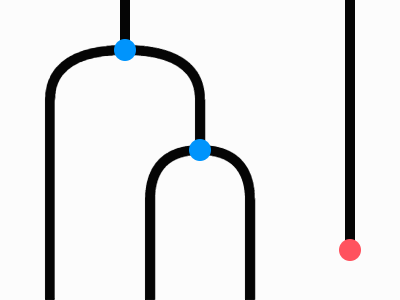}
\raisebox{0.6cm}{$\Rightarrow$}\includegraphics[width=1cm,height=1.5cm]
{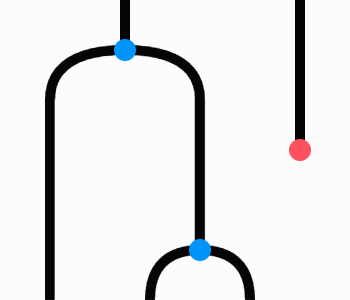}
\raisebox{0.6cm}{$\Rightarrow$}\includegraphics[width=1cm,height=1.5cm]
{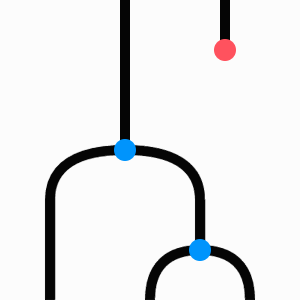}
\raisebox{0.45cm}{\huge ]} 
\raisebox{0.45cm}{$=$}
\raisebox{0.45cm}{\huge [}\includegraphics[width=1cm,height=1.5cm]{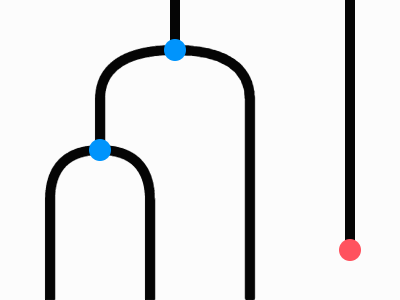}
\raisebox{0.6cm}{$\Rightarrow$}\includegraphics[width=1cm,height=1.5cm]
{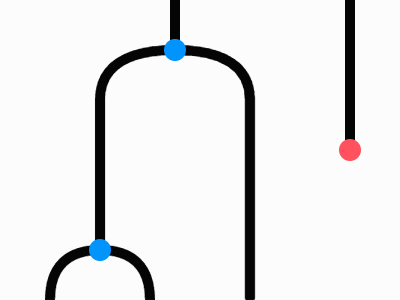}
\raisebox{0.6cm}{$\Rightarrow$}\includegraphics[width=1cm,height=1.5cm]
{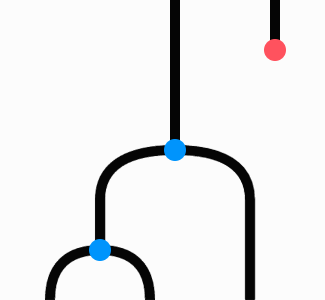}
\raisebox{0.6cm}{$\Rightarrow$}\includegraphics[width=1cm,height=1.5cm]
{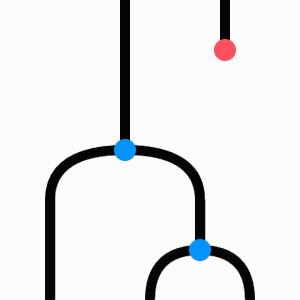}
\raisebox{0.45cm}{\huge ]} 
\end{center} 
\end{enumerate}
\end{definition}
\begin{remark}
The above equalities correspond to those in~\cite[Definition 1]{Day1997}. In particular, the Type I rewrites correspond to naturality in the strict 2-category; the Type II rewrites correspond to the fact that the interchanger is an isomorphism; and the Type III rewrites correspond to equality (iii) in Day and Street's definition. 
\end{remark}

\begin{definition}[2-cells of a Gray monoid]\label{def:graymon2cells}
A 2-cell generated from $(C_0,C_1,C_2)$ is a movie constructed from $(C_0,C_1,C_2)$, where movies are identified up to the structural equalities of Definition~\ref{structuralequalitiesgraymondef}.
\end{definition}

Finally, the computad contains a set $E$ of specified equalities of 2-cells; for each element of this set there is a pair of $2$-cells (Definition~\ref{def:graymon2cells}) which are defined to be equal. In order to apply 2-cells to subregions of 1-cells, we introduced the notion of a rectangular subregion. Now, in order to apply rewrites to subregions of 2-cells, we introduce a higher-dimensional notion. 

\begin{definition}
Let $M = D_1 \Rightarrow \dots \Rightarrow D_n$ be a 2-cell. A \emph{cuboidal subregion} of $M$ is a choice of clip $c = D_{i_1} \Rightarrow \dots \Rightarrow D_{i_2}$, $1 \leq i_1 \leq i_2 \leq n$, together with a fixed rectangular subregion $R$ of $D_{i_1}$, such that every transition in the clip acts on a rectangular subregion within $R$. A cuboidal subregion specifies a `sub-2-cell' with source 1-cell $D_{i_1}|_R$ and target 1-cell $D_{i_2}|_R$ in the obvious way.
\end{definition}

\begin{definition}[Movie rewrites on cuboidal subregions]
Let $M$ be a movie. Whenever a cuboidal subregion of $M$ specifies a sub-2-cell related to another by an equality in $E$, $M$ is equal to the movie $M'$ which is identical except for the replacement of the clip in the cuboidal subregion by the equal clip. 
\end{definition}

\begin{definition}[2-cells following quotient by $E$]\label{def:2cellsgraymon}
The 2-cells generated from $(C_0,C_1,C_2,E)$ are precisely movies constructed from $(C_0,C_1,C_2)$, identified up to the structural equalities of Definition~\ref{structuralequalitiesgraymondef} and the specified equalities in the computad.
\end{definition}

Now we have completely defined the 0-, 1- and 2-cells generated by the computad $\mathcal{C}$; all that remains is to define composition, and we have a full definition of our semistrict monoidal bicategory $G(\mathcal{C})$. 

\begin{definition}[Compositional structure of Gray monoid]
The 0-, 1- and 2-cell of a Gray monoid compose as shown in Table~\ref{compositioningeneratedgraymonoid}.
\end{definition} 

Note that the only thing that prevents this from being a \emph{strict} monoidal bicategory is the failure of the interchange law. This weakness is sufficient for semistrictness.

%%%%%%%%%%%%%%IGNORE: SEMISTRICTNESS COMMENT%%%%%%%%%%%%%%%%%%%
\begin{comment}
\begin{remark}
Gray monoids are semistrict according to Definition~\ref{def:semistrictness}. The semistrictness result is implied by Gurski's semistrictness result for Gray categories~\cite{Gurski2006}, since a Gray monoid is a one-object Gray category. The quotient map $\eta$ strictifies everything except the interchange law.
\end{remark}
\end{comment}
%%%%%%%%%%%%%%%%%%%%%%%%%%%%%%%%%%%%%%%%%%%%%%%%%%%%%%%%%%%%%%%

\begin{table}[p]
\resizebox{!}{12cm}{
% [inline block 0: 1 envs, 62466 chars -> data_tex | \begin{tabular}{|l | c|} \hline...]
}
\caption{The compositional structure of a Gray monoid.}
\label{compositioningeneratedgraymonoid}
\end{table}
\noindent
We finish this section by defining some vocabulary.
\begin{definition}[Isomorphism]\label{def:isomorphism}
We make signatures of computads more concise by saying that a particular 2-cell $\mu$ is an \emph{isomorphism}. This means that there is another 2-cell $\mu^{-1}$ in the signature with $s(\mu^{-1}) = t(\mu)$ and $t(\mu^{-1})=s(\mu)$, satisfying $\mu \circ_H \mu^{-1} =  id_{t(\mu)}$ and $\mu^{-1}\circ_H \mu =  id_{s(\mu)}$.
\end{definition}
\begin{example}
In the pseudomonoid signature $\mathcal{P}$ of Definition~\ref{pseudomonoidpresentation}, by specifying $\alpha$ as an isomorphism we avoid having to specify the 2-cell $\alpha^{-1}$ and two equalities.
\end{example} 	

\begin{definition}
\label{cellsmovies2projremark}
We call a movie whose source and target are equal a \emph{loop}. We call a sequence of rewrites which take this movie to the trivial movie a \emph{contraction} of the loop.
\end{definition}

\begin{definition}
When all the 2-cells featuring in an equality are isomorphisms, an equality still holds if the direction of all 2-cells on both sides of the equality is reversed. We call this the \emph{flip} of the equality.
\end{definition}

\subsection{Braided and symmetric Gray monoids}\label{sec:newsymmgrayaxioms}

The 0-, 1- and 2-cells of semistrict braided and symmetric monoidal bicategories (\emph{braided and symmetric Gray monoids}) are generated and composed in exactly the same way as for naked Gray monoids. The difference is that the computads for braided and symmetric Gray monoids include additional \emph{structural} generating cells which are not specified explicitly in the computad, but are rather constructed from the other specified data.

\subsubsection{Braided Gray monoids} 

\begin{definition}\label{def:braidedgraymonoid}
A braided Gray monoid has the following additional structural generating cells and equalities.\footnote{For the equalities we also give the hieroglyphic notation of Kapranov and Voevodsky where defined~\cite{Kapranov1994}. We highlight the rectangular subregion containing the source of an applied 2-cell where this might be unclear.}

\begin{itemize}[leftmargin=*]
\item \underline{Additional generating 1-cells}:
\begin{itemize}[leftmargin=*,label={-}]
\item For every pair of 0-cells $A,B$, an `overbraiding' 1-cell $R^{+}_{A,B}: A \otimes B \to B \otimes A$ and an `underbraiding' 1-cell $R^{-}_{A,B}: A \otimes B \to B \otimes A$ . We depict these 1-cells as braidings in the 1-morphism diagram:
\begin{center}
\begin{tabular}{l l}
\raisebox{0.6cm}{$R^{+}_{A,B}=$}\includegraphics[width=1cm,height=1.5cm]{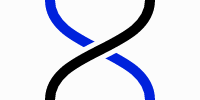}& 
\raisebox{0.6cm}{$\sim \quad R^{-}_{A,B}=$}\includegraphics[width=1cm,height=1.5cm]{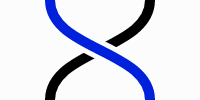}
\end{tabular}
\end{center}
The hexagonators are trivial; that is, for all 0-cells $A,B,C$ in $G(\Sigma)$, we have:
\begin{align*}
R^{\pm}_{A \otimes B,C} &= (id_B \otimes R^{\pm}_{A,C}) \circ (R^{\pm}_{A,B} \otimes id_C) \\ R^{\pm}_{A,B \otimes C} &= (R^{\pm}_{A,C} \otimes id_B) \circ (id_A \otimes R^{\pm}_{B,C})
\end{align*}
\end{itemize}
\item \underline{Additional generating 2-cells}:
\begin{itemize}[label={-},leftmargin=*]
\item `Braiding inverse-insert' 2-isomorphisms $i^{+}_{AB},i^{-}_{AB}$ for each pair, $A, B$ of $0$-cells:
\begin{center}
\begin{tabular}{l l}
\includegraphics[width=1cm,height=1.5cm]{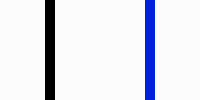}
\raisebox{0.6cm}{$\overset{i^+_{AB}}{\Rightarrow}$}\includegraphics[width=1cm,height=1.5cm]{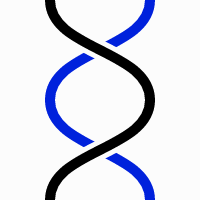} &\qquad
\includegraphics[width=1cm,height=1.5cm]{identitypair}
\raisebox{0.6cm}{$\overset{i^-_{AB}}{\Rightarrow}$}\includegraphics[width=1cm,height=1.5cm]{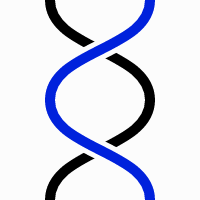} 
\end{tabular}
\end{center}
These isomorphisms are strictly monoidal; that is, on products they are equal to the composites defined in the obvious way. For example:
\begin{center}
\label{eq:trivialhexagonatorbraidings}
\raisebox{0.6cm}{{\huge [}} 
\includegraphics[width=1cm,height=1.5cm]{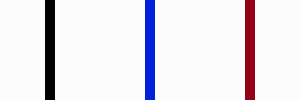}
\raisebox{0.6cm}{$\overset{i_{A,B \otimes C}^{+}}{\Rightarrow}$}
\includegraphics[width=1cm,height=1.5cm]{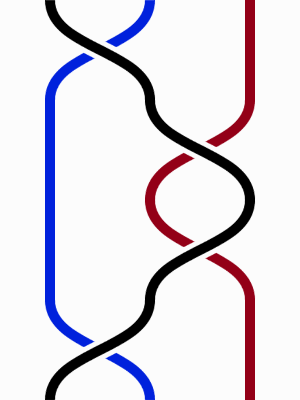}
\raisebox{0.6cm}{{\huge ]}}
\raisebox{0.6cm}{=}
\raisebox{0.6cm}{{\huge [}} 
\includegraphics[width=1cm,height=1.5cm]{braidinginverseinsertsontensor1}
\raisebox{0.6cm}{$\overset{i_{A,B}^{+}}{\Rightarrow}$}
\includegraphics[width=1cm,height=1.5cm]{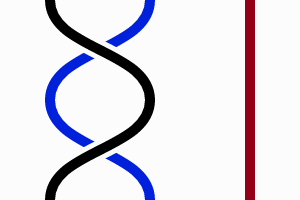}
\raisebox{0.6cm}{$\overset{i_{B,C}^{+}}{\Rightarrow}$}
\includegraphics[width=1cm,height=1.5cm]{braidinginverseinsertsontensor3}
\raisebox{0.6cm}{{\huge ]}} 
\end{center}

\item `Pull-over' and `pull-under' 2-isomorphisms $PO_{f,B}$, $PO_{A,g}$, $PU_{f,B}$ and $PU_{A,g}$ for all $1$-cells $f:A \to C$, $g: B \to D$ in $\mathcal{C}$.
\begin{center}
\begin{tabular}{l l}
\includegraphics[width=1cm,height=1.5cm]{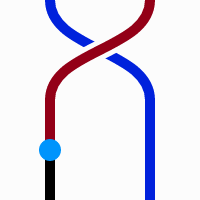}
\raisebox{0.6cm}{$\overset{PO_{f,B}}{\Rightarrow}$}\includegraphics[width=1cm,height=1.5cm]{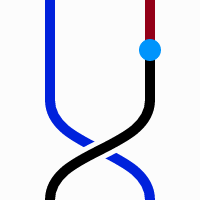} &\qquad
\includegraphics[width=1cm,height=1.5cm]{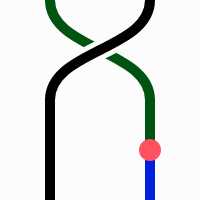}
\raisebox{0.6cm}{$\overset{PU_{A,g}}{\Rightarrow}$}\includegraphics[width=1cm,height=1.5cm]{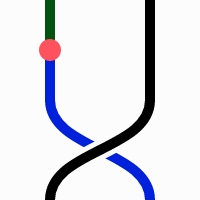}
\end{tabular}
\end{center}
\end{itemize}

\item \underline{Additional equalities}:
\begin{itemize}[leftmargin=*,label={-}]
\item $(\rightarrow \otimes \rightarrow)$. For $f:A \to C$, $g:B \to D$, we have the following equality:
\begin{center}
\begin{tabular}{l l}
\raisebox{0.45cm}{\Huge [} \includegraphics[width=1cm,height=1.5cm]{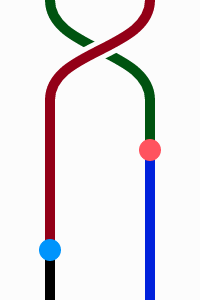}
\raisebox{0.6cm}{$\overset{PU_{C,g}}{\Rightarrow}$}\includegraphics[width=1cm,height=1.5cm]{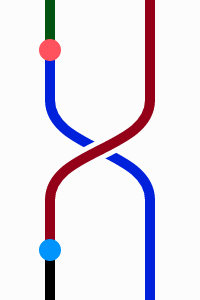}
\raisebox{0.6cm}{$\overset{PO_{f,B}}{\Rightarrow}$}\includegraphics[width=1cm,height=1.5cm]{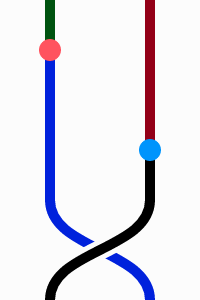}
\raisebox{0.6cm}{$\overset{\iota}{\Rightarrow}$}\includegraphics[width=1cm,height=1.5cm]{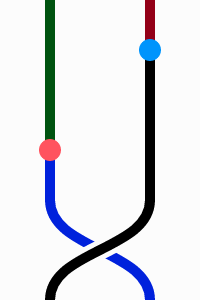}
\raisebox{0.45cm}{\Huge ]}
\raisebox{0.6cm}{$=$}\raisebox{0.45cm}{\Huge [} \includegraphics[width=1cm,height=1.5cm]{doublepullthroughcoherencestart}
\raisebox{0.6cm}{$\overset{\iota}{\Rightarrow}$}\includegraphics[width=1cm,height=1.5cm]{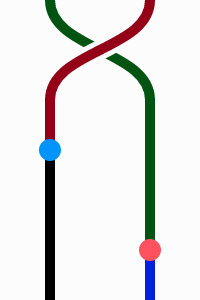}
\raisebox{0.6cm}{$\overset{PO_{f,C}}{\Rightarrow}$}\includegraphics[width=1cm,height=1.5cm]{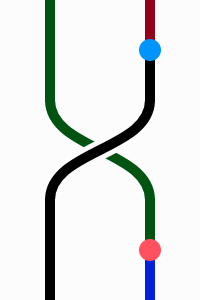}
\raisebox{0.6cm}{$\overset{PU_{A,g}}{\Rightarrow}$}\includegraphics[width=1cm,height=1.5cm]{doublepullthroughcoherenceend}
\raisebox{0.45cm}{\Huge ]}
\end{tabular}
\end{center}
\item $(\cdot\; \otimes \Downarrow)$. For morphisms $f,g: B \to D$ and a 2-morphism $\alpha: f \to g$, we have the following equality. (Here and elsewhere, we use highlighting to make clear where a 2-morphism is about to be applied.)
\begin{center}
\begin{tabular}{l l}
\raisebox{0.45cm}{\Huge [} \includegraphics[width=1cm,height=1.5cm]{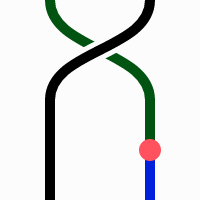}
\raisebox{0.6cm}{$=$}\includegraphics[width=1cm,height=1.5cm]{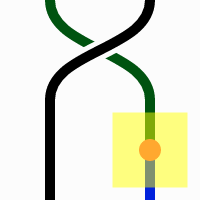}
\raisebox{0.6cm}{$\overset{\alpha}{\Rightarrow}$}\includegraphics[width=1cm,height=1.5cm]{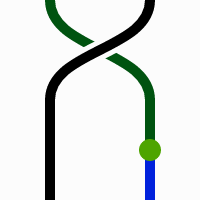}
\raisebox{0.6cm}{$\overset{PU_{A,g}}{\Rightarrow}$}\includegraphics[width=1cm,height=1.5cm]{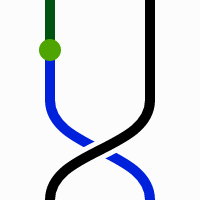}
\raisebox{0.45cm}{\Huge ]}
\raisebox{0.6cm}{$=$}
\raisebox{0.45cm}{\Huge [} 
\includegraphics[width=1cm,height=1.5cm]{pullthroughwith2morphstart}
\raisebox{0.6cm}{$\overset{PU_{A,f}}{\Rightarrow}$}\includegraphics[width=1cm,height=1.5cm]{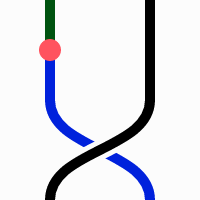}
\raisebox{0.6cm}{$=$}\includegraphics[width=1cm,height=1.5cm]{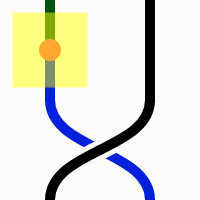}
\raisebox{0.6cm}{$\overset{\alpha}{\Rightarrow}$}\includegraphics[width=1cm,height=1.5cm]{pullthroughw2morphend}
\raisebox{0.45cm}{\Huge ]}
\end{tabular}
\end{center}
\item $(\Downarrow \otimes \;\cdot)$. As above, but for $f,g: A \to C$ and with a pull-over. 
\item $(\rightarrow \rightarrow \otimes \;\cdot)$. We have the following equality for morphisms $f:A \to C, g: C \to E$, where the composition $c$ is a strict equality since we are in a Gray monoid.
\begin{center}
\begin{tabular}{l l}
\raisebox{0.45cm}{\Huge [} \includegraphics[width=1cm,height=1.5cm]{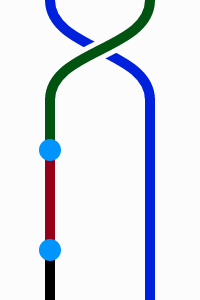}
\raisebox{0.6cm}{$\overset{PO_{g,A}}{\Rightarrow}$}\includegraphics[width=1cm,height=1.5cm]{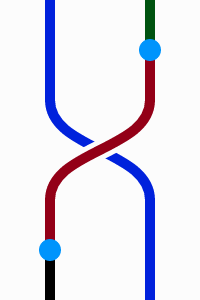}
\raisebox{0.6cm}{$\overset{PO_{f,A}}{\Rightarrow}$}\includegraphics[width=1cm,height=1.5cm]{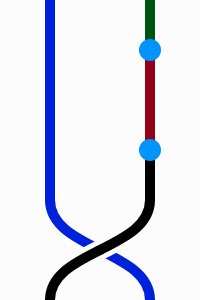}
\raisebox{0.6cm}{$=$}\includegraphics[width=1cm,height=1.5cm]{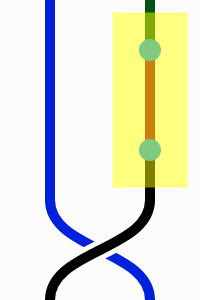}
\raisebox{0.6cm}{$\overset{c}{=}$}\includegraphics[width=1cm,height=1.5cm]{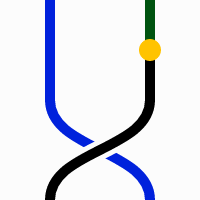}
\raisebox{0.45cm}{\Huge ]}\\
\raisebox{0.6cm}{$=$}
\raisebox{0.45cm}{\Huge [} \includegraphics[width=1cm,height=1.5cm]{compopulloverstart}
\raisebox{0.6cm}{$=$}\includegraphics[width=1cm,height=1.5cm]{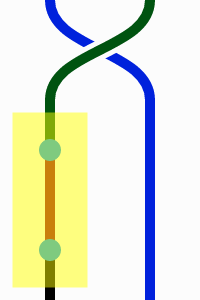}
\raisebox{0.6cm}{$\overset{c}{=}$}\includegraphics[width=1cm,height=1.5cm]{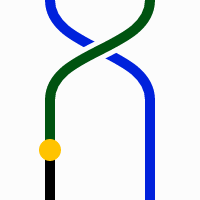}
\raisebox{0.6cm}{$\overset{PO_{g \circ f,A}}{\Rightarrow}$}
\includegraphics[width=1cm,height=1.5cm]{compopulloverend}
\raisebox{0.45cm}{\Huge ]}
\end{tabular}
\end{center}
\item $( \cdot\; \otimes \rightarrow \rightarrow)$. As above, but with $f:B \to D, g: D \to F$ and pull-unders.
\item \emph{PT-B}. For any 1-morphism $g: B \to D$, the following 2-morphisms are equal:
\begin{center}
\begin{tabular}{l l}
\raisebox{0.45cm}{\Huge [}
\includegraphics[width=1cm,height=1.5cm]{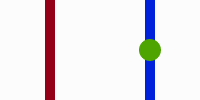}
\raisebox{0.6cm}{$\overset{i^+_{AD}}{\Rightarrow}$}\includegraphics[width=1cm,height=1.5cm]{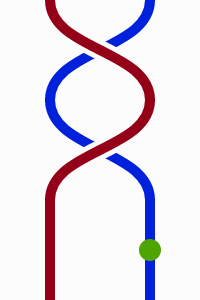}
\raisebox{0.6cm}{$\overset{PU_{A,g}}{\Rightarrow}$}\includegraphics[width=1cm,height=1.5cm]{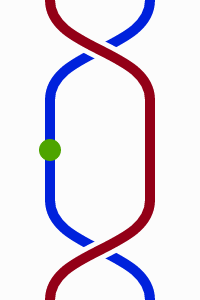}
\raisebox{0.45cm}{\Huge ]}
\raisebox{0.6cm}{$=$}
\raisebox{0.45cm}{\Huge [}
\includegraphics[width=1cm,height=1.5cm]{CYCstart}
\raisebox{0.6cm}{$\overset{i^+_{AB}}{\Rightarrow}$}\includegraphics[width=1cm,height=1.5cm]{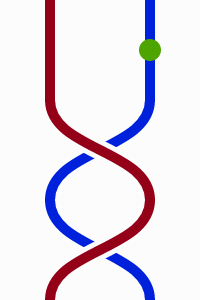}
\raisebox{0.6cm}{$\overset{PU^{-1}_{g,A}}{\Rightarrow}$}\includegraphics[width=1cm,height=1.5cm]{CYCend}
\raisebox{0.45cm}{\Huge ]}
\end{tabular}
\end{center}
Similar equations hold where $i^{+}_{A,B}$ is changed for $i^{-}_{A,B}$, and/or the 1-cell pulled through is $f: A \to C$ rather than $g: B \to D$.
\item \emph{ADJ}. $i^{+}_{A,B}$ and $i^{-}_{A,B}$ are the units of adjoint equivalences.
\item $S^+ = S^-$. The two possible instantiations of the braid move $\sigma_i \sigma_{i+1} \sigma_i \to \sigma_{i+1} \sigma_i \sigma_{i+1}$ are equal.
\end{itemize}
\end{itemize}
\end{definition}
\noindent
The axioms we have specified are  those of a twice-degenerate semistrict 4-category in the definition of Bar and Vicary~\cite{Bar2017} and are slightly stricter than in any previous definition of a braided Gray monoid~ \cite{Kapranov1994,Baez1996,Crans1998,Gurski2011}. The two points of difference with the strictest previous definition~\cite{Crans1998} are the following.

\begin{itemize}[leftmargin=*]
\item The `hexagonators' in the Bar-Vicary definition are trivial; this corresponds to strictness of composition in the semistrict 4-category. The strict monoidality of the braiding inverse-insert 2-cells follows from this.
\item There is an extra axiom in the Bar-Vicary definition, PT-B, which relates a braiding inverse-insert and pullthrough above to a braiding inverse-insert and pullthrough below. This axiom follows from Homotopy Generator VI in the definition of Bar and Vicary.
\end{itemize}
The Crans definition with trivial hexagonators remains semistrict.
\begin{theorem}[Semistrictness with trivial hexagonators]
For any computadic Crans braided monoidal bicategory, the quotient homomorphism $\phi$ which identifies all braiding 1-cells with the corresponding `expanded' composite of braidings of generating 1-cells~\eqref{eq:trivialhexagonatorbraidings}, and sends all hexagonators to the identity, is a braided monoidal biequivalence. 
\end{theorem}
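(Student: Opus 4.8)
The plan is to verify directly that $\phi$ satisfies the three conditions for a braided monoidal biequivalence: essential surjectivity on objects, local equivalence of hom-categories, and compatibility with the braided monoidal structure. The last condition is built into the definition of $\phi$, which sends the braiding 1-cells and hexagonators of the Crans bicategory to exactly the structural data (expanded composites and identity 2-cells) that constitute the braided structure of the target; so once $\phi$ is shown to be a local biequivalence, braided compatibility is automatic and need only be checked to be strictly respected on the nose.

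First I would dispatch the easy conditions. On objects $\phi$ is the identity, since the quotient touches only braiding 1-cells and hexagonator 2-cells, so $\phi$ is (strictly) surjective, a fortiori essentially surjective, on $0$-cells. Because the quotient merely \emph{identifies} some 1-cells and \emph{trivialises} some 2-cells, introducing no new cells, $\phi$ is surjective on 1-cells and on 2-cells; this immediately yields essential surjectivity of each local hom-functor on its objects and fullness on its morphisms.

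The crux is local faithfulness: if $\alpha,\beta\colon f\Rightarrow g$ are parallel 2-cells of the Crans bicategory with $\phi(\alpha)=\phi(\beta)$, I must show $\alpha=\beta$. The equality $\phi(\alpha)=\phi(\beta)$ means that $\alpha$ and $\beta$ are connected by a finite zigzag of hexagonator insertions and deletions, together with the 1-cell identifications. The essential input is a coherence theorem for the hexagonators, asserting that any two parallel composites of hexagonators and their inverses are equal---equivalently, that all hexagonator diagrams commute. Granting this, the zigzag relating $\alpha$ to $\beta$ collapses to a single structural 2-cell on each side, and these, being parallel composites of hexagonators, coincide, forcing $\alpha=\beta$. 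An equivalent and perhaps more transparent route is to exhibit a pseudo-inverse $\psi$ that is the identity on objects and on generating 1-cells and sends each trivialised composite braiding back to the corresponding expanded composite of the Crans bicategory; then $\phi\psi=\id$ strictly, while the hexagonators assemble into an invertible braided pseudonatural transformation $\psi\phi\Rightarrow\id$, whose transformation axioms are verified again using hexagonator coherence.

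The main obstacle is establishing this hexagonator coherence in the precise semistrict setting at hand. Here associators and unitors are already trivial, since we work in a Gray monoid, so the only structural 2-cells are hexagonators and interchangers, and the task reduces to showing that the braiding axioms---the two pull-through equalities, $S^+=S^-$, and, in the Bar--Vicary setting, PT-B---generate every commutativity among hexagonator composites. I would prove this by reducing to the braided coherence for the underlying generating $0$-cells, where the statement is the classical one for braided monoidal structures, and transporting it along the strict evaluation homomorphism into the given computadic Crans bicategory. Care is needed to track the interchangers, which do not vanish, but these are controlled by the Type I--III structural equalities of Definition~\ref{structuralequalitiesgraymondef} and play no role in the identifications effected by $\phi$.
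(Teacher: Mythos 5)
Your proposal is correct in outline, and its working core coincides with the paper's own proof: the paper likewise constructs a section $H$ with $\phi \circ H = \id$ and an invertible transformation $\nu : \Id \to H \circ \phi$ whose 2-cell components are the canonical composites of hexagonators, with everything resting on the single coherence fact that all parallel composites of hexagonators are equal. The one genuine divergence is how that fact is sourced: the paper derives it directly from the polyhedral axioms (2.4--2.7) of \cite{Crans1998}, whereas you propose deducing it from coherence in the free braided monoidal bicategory (essentially Theorem~\ref{gurskicoherencethmbraids}) and transporting it forward along the strict evaluation homomorphism. That direction of transport is sound (equalities push forward along homomorphisms), but the instances arising in your zigzag are \emph{whiskered} hexagonators sitting inside diagrams that contain generating 1-cells, so the reduction to the free substructure generated by the 0-cells needs the interchanger bookkeeping spelled out rather than merely remarked to be ``controlled'' by the Type I--III equalities.

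Two steps in your write-up are glossed in a way that matters. First, your opening claim that braided compatibility is ``automatic'' once $\phi$ is an equivalence of underlying bicategories is precisely Whitehead's theorem for braided monoidal bicategories (Theorem~\ref{thm:whiteheadbraidedmon}), which is not free: a braided monoidal biequivalence a priori requires a braided monoidal pseudo-inverse together with all its coherence data, and the paper obtains the reduction by adapting the skeletalization argument of \cite{Schommer-Pries2009}. You should invoke or prove this theorem explicitly rather than assert automaticity. Second, surjectivity of $\phi$ on 2-cells does not ``immediately'' give fullness of the local hom-functors: a preimage of $\gamma : \phi(f) \Rightarrow \phi(g)$ is some 2-cell $f' \Rightarrow g'$ where $f'$ and $g'$ only satisfy $\phi(f') = \phi(f)$ and $\phi(g') = \phi(g)$, and need not equal $f$ and $g$ in the source; one must conjugate by the canonical hexagonator isomorphisms (the 2-cell components of $\nu$) to land in $\Hom(f,g)$, which is exactly how the paper defines $H$ on 2-cells, with well-definedness again supplied by hexagonator coherence. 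Both repairs use only the coherence statement you already identified as the crux, so your argument goes through once these steps are made explicit.
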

\begin{proof}
See Appendix~\ref{app:semistrictnesshexagonators}.
\end{proof}
Having resolved the issue of the trivial hexagonators, we turn to the PT-B equality. It seems that this equality, which is topologically well-motivated, is not implied by the other equalities. The arguments for the correctness of Bar and Vicary's definition support it; indeed, the proof that every equivalence in a semistrict 4-category can be promoted to an adjoint equivalence satisfying the butterfly equations depends on Homotopy Generator VI  \cite{Bar2017}. It is also an essential ingredient in our algorithm for putting 1-cells in TSNF (Theorem~\ref{tsnfprocedure}). 

PT-B may have been omitted previously because previous authors worked only with braided monoidal bicategories with no generating 1-cells. In that case, PT-B follows straightforwardly from ADJ, rendering a separate axiom unnecessary. In the symmetric setting, PT-B is implied by stronger coherence results. 

\subsubsection{Symmetric Gray monoids}

The axioms we have chosen for symmetric Gray monoids are somewhat weaker than those of the quasistrict definition of Schommer-Pries \cite{Schommer-Pries2009}. The primary advantage of using a weaker definition is that our proofs of fullness and functoriality of the biequivalences we define apply equally to non-braided, braided and symmetric Gray monoids. 

\begin{definition}
A symmetric Gray monoid is a braided Gray monoid with the following additional structural generating cells and equalities.

\begin{itemize}[label={-},leftmargin=*]
\item \underline{Additional generating 2-cells}:
\begin{itemize}[leftmargin=*,label={-}]
\item A `syllepsis' 2-isomorphism $\sigma_{AB}$ for all 0-cells $A,B$, which controls the symmetry of the braiding:
\begin{center}
\begin{tabular}{l l}
\includegraphics[width=1cm,height=1.5cm]{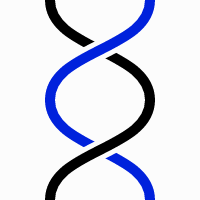}
\raisebox{0.6cm}{$\overset{\sigma_{AB}}{\Rightarrow}$}\includegraphics[width=1cm,height=1.5cm]{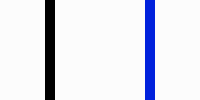}
\end{tabular}
\end{center}
As with the braiding inverse-insert, the syllepsis on monoidal products is the composite of the syllepses on the factors.
\end{itemize}

\item \underline{Additional equalities of 2-cells}:
\begin{itemize}[leftmargin=*,label={-}]
\item \emph{PT-SYL}. For every 0-cell $A$ and 1-cell $g: B \to D$, the following equality:
\begin{center}
\begin{tabular}{l l}
\raisebox{0.45cm}{\Huge [} \includegraphics[width=1cm,height=1.5cm]{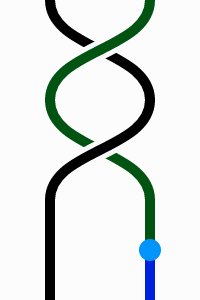}
\raisebox{0.6cm}{$\overset{\sigma_{AD}}{\Rightarrow}$}\includegraphics[width=1cm,height=1.5cm]{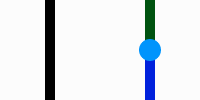}\raisebox{0.45cm}{\Huge ]}
\raisebox{0.6cm}{$=$}
\raisebox{0.45cm}{\Huge [} \includegraphics[width=1cm,height=1.5cm]{PTSYLAgstart}
\raisebox{0.6cm}{$\overset{PU_{A,g}}{\Rightarrow}$}\includegraphics[width=1cm,height=1.5cm]{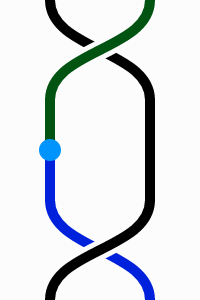}
\raisebox{0.6cm}{$\overset{PO_{g,B}}{\Rightarrow}$}\includegraphics[width=1cm,height=1.5cm]{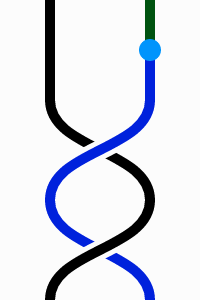}\raisebox{0.6cm}{$\overset{\sigma_{AB}}{\Rightarrow}$}\includegraphics[width=1cm,height=1.5cm]{PTSYLAgend}\raisebox{0.45cm}{\Huge ]}
\end{tabular}
\end{center}
The equality PT-SYL$_{f,B}$ is defined similarly for all 0-cells $B$ and 1-cells $f: A \to C$.
\item \emph{SYM}. The following 2-morphisms are equal:
\begin{center}
\begin{tabular}{l l}
\raisebox{0.45cm}{\Huge [} \includegraphics[width=1cm,height=1.5cm]{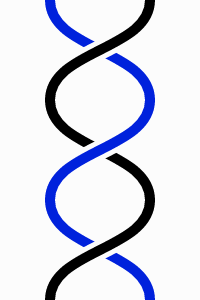}
\raisebox{0.6cm}{$=$}\includegraphics[width=1cm,height=1.5cm]{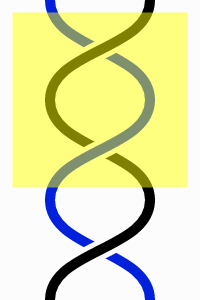}
\raisebox{0.6cm}{$\overset{\sigma_{B,A}}{\Rightarrow}$}\includegraphics[width=1cm,height=1.5cm]{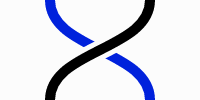}\raisebox{0.45cm}{\Huge ]}
\raisebox{0.6cm}{$=$}
\raisebox{0.45cm}{\Huge [} \includegraphics[width=1cm,height=1.5cm]{syllepsisissymmstart}
\raisebox{0.6cm}{$=$}\includegraphics[width=1cm,height=1.5cm]{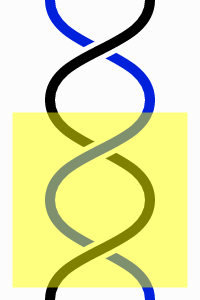}
\raisebox{0.6cm}{$\overset{\sigma_{A,B}}{\Rightarrow}$}\includegraphics[width=1cm,height=1.5cm]{syllepsisissymmend}\raisebox{0.45cm}{\Huge ]}
\end{tabular}
\end{center}
\end{itemize}
\end{itemize}
\end{definition}
Here, rather than take the braiding inverse-inserts and syllepses to be identities, as in the quasistrict definition of Schommer-Pries, we only require PT-B, which follows from triviality of the braiding inverse-inserts in the quasistrict definition; and PT-SYL, another axiom similar to PT-B but involving the syllepsis, which follows from triviality of the syllepsis in the quasistrict definition. The semistrictness of this definition of symmetric Gray monoids is therefore implied by the semistrictness of Schommer-Pries' quasistrict definition~\cite[Theorem 2.96]{Schommer-Pries2009}.

\begin{comment}
We remark that, since a sylleptic monoidal category is a thrice-degenerate 5-category, and the syllepsis arises from degeneracy in the same way as the braiding, it seems likely that the Bar-Vicary definition of semistrict 5-category  will imply SYL-PT.
\end{comment}

\begin{definition}
Every computad for a naked Gray monoid can also be taken as a computad for a braided or a symmetric Gray monoid; likewise, every computad for a braided Gray monoid can also be taken as a computad for a symmetric Gray monoid.
\end{definition}

\subsection{Coherence for braided and symmetric Gray monoids}\label{sec:symmgraymonoidstechniques}

In this section we treat coherence results for braided and symmetric Gray monoids which will be used in the proof of our main theorem.  We first recall two important results about braided and symmetric Gray monoids; here we only state the computadic versions, although they hold in greater generality.

\begin{theorem}[{\cite[Theorem 25]{Gurski2011}}]\label{gurskicoherencethmbraids}
Let $C$ be a braided Gray monoid computad with no non-structural generating 2-cells, whose non-structural generating 1-cells all have exactly one 0-cell as source and one 0-cell as target. In the braided Gray monoid generated from $C$, all parallel 2-cells are equal, and two structural 1-cells are isomorphic iff they have the same underlying element of the braid group.
\end{theorem}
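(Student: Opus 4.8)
The plan is to realise the theorem as the statement that the generated braided Gray monoid $G(C)$ is braided monoidally biequivalent to the locally discrete $2$-category $\mathcal{B}$, where $\mathcal{B}$ is the free braided strict monoidal category on the underlying naked computad $(C_0, C_1^{\mathrm{ns}})$ of non-structural thin $1$-cells, regarded as a $2$-category with only identity $2$-cells. Both assertions fall out of this biequivalence: ``all parallel $2$-cells are equal'' is local faithfulness of the comparison functor into a locally discrete target, and the isomorphism classification of structural $1$-cells is the statement that $\mathcal{B}$, restricted to braiding-only $1$-cells, is the braid category whose hom-sets are the braid groups $B_n$.

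First I would construct the comparison homomorphism $F : G(C) \to \mathcal{B}$. On $0$-cells $F$ is the identity on lists; on $1$-cells it sends an ordered string diagram to the evident morphism of $\mathcal{B}$ (each bead to its generator, each $R^{\pm}_{A,B}$ to the braiding or its inverse); on $2$-cells it is constant at the identity. The only thing to check is well-definedness: ordered-planar-isotopic diagrams have equal image because $\mathcal{B}$ is a genuine monoidal category, and each generating $2$-cell has equal $F$-source and $F$-target because the interchanger is absorbed by the interchange law in $\mathcal{B}$, the braiding inverse-inserts $i^{\pm}$ by invertibility of the braiding, and the pull-throughs $PO, PU$ by naturality of the braiding. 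Since the target is locally discrete, every structural equality and every braided axiom (PT-B, ADJ, $S^{+}=S^{-}$, and the pull-through coherences) is automatically respected, so $F$ is a strict braided monoidal homomorphism.

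Three of the four required properties are then immediate. $F$ is essentially surjective on $0$-cells and surjective on $1$-cells by construction. Because $F$ sends isomorphic $1$-cells to equal morphisms of $\mathcal{B}$, and equal morphisms of the braid category have equal underlying braids, isomorphic structural $1$-cells have the same element of $B_n$; this is the ``only if'' direction of the classification. For the ``if'' direction (fullness on $2$-cells) I would show directly that any two structural $1$-cells with the same underlying braid are connected by a structural $2$-isomorphism: the defining relations of $B_n$ are realised in $G(C)$ by concrete invertible $2$-cells --- Reidemeister~II by the braiding inverse-inserts (invertible by ADJ), Reidemeister~III by the $2$-cell witnessing $S^{+}=S^{-}$, and far-commutativity by interchangers --- so the coherence theorem for the braid category lifts these moves to an isomorphism.

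The substance of the theorem is local faithfulness, i.e.\ the claim that all parallel $2$-cells are equal. Since every generating $2$-cell is invertible, each hom-category is a groupoid, so it suffices to show that every \emph{loop} (Definition~\ref{cellsmovies2projremark}) contracts to the trivial movie: then for parallel $\gamma, \gamma'$ the loop $\gamma^{-1}\circ_V\gamma'$ contracts and $\gamma = \gamma'$. Conjugating by a $2$-isomorphism to a normal form (Theorem~\ref{tsnfprocedure}) reduces this to loops on normal-form $1$-cells, where the beads have been pulled to the bottom of each strand and the braiding part is in braid normal form; a general loop then splits into a ``bead'' part, controlled by PT-B and the pull-through coherences, and a ``braid'' part, which is a syzygy among braid relations killed by $S^{+}=S^{-}$ and the interchanger relations. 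The main obstacle --- and the genuinely hard coherence content, which is what makes this a theorem rather than bookkeeping --- is to prove that these imposed relations generate \emph{all} syzygies, with no exotic $2$-cell identities surviving. I would attack this either by setting up a terminating, confluent rewriting system on movies (with the Type~I--III rewrites and the braided axioms as rules) and verifying that all critical pairs resolve, or, more conceptually, by identifying the hom-groupoids of $G(C)$ with fundamental groupoids of decorated configuration spaces and invoking their $1$-truncatedness; either route concentrates the difficulty precisely in showing that no second-order relations are missing.
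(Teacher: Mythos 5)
You should first note what the paper actually does here: it does not prove this statement at all, but imports it wholesale as \cite[Theorem 25]{Gurski2011}. So the relevant comparison is with Gurski's proof, and against that standard your proposal has a genuine gap. Your framework is sound as far as it goes: the comparison homomorphism $F: G(C) \to \mathcal{B}$ into the locally discrete free braided strict monoidal category is well defined for the reasons you give, essential surjectivity is immediate, the ``only if'' direction of the braid classification follows from $F$ collapsing 2-cells, and the ``if'' direction by lifting the braid relations (Reidemeister~II via $i^{\pm}$ and ADJ, Reidemeister~III via $S^{+}=S^{-}$, far-commutativity via interchangers) is a correct and standard manoeuvre. But everything in the theorem then hinges on local faithfulness --- that every loop contracts --- and at exactly that point you stop proving and start proposing: ``I would attack this either by\dots or\dots''. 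That claim is not bookkeeping relative to the rest; it \emph{is} the theorem, and neither of your two routes is executed. A proof that defers its entire content to two candidate programmes is a plan, not a proof.

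Worse, your first route is likely not salvageable in the form stated. A rewriting system ``on movies'' with the Type I--III moves and the braided axioms as rules is rewriting \emph{modulo} the Gray-monoid structural equalities, not plain term rewriting; critical-pair analysis modulo such a congruence is a substantial research problem in its own right, and the paper itself points out (Section 1.2) that polygraphic rewriting applies only to strict higher categories, with rewriting in Gray categories only developed later~\cite{Forest2018}. Your second route, by contrast, is essentially what Gurski actually does: his proof identifies the free braided monoidal bicategory with a model of 2-fold loop spaces, so that hom-groupoids become fundamental groupoids of configuration spaces of points in the plane, and asphericity of those spaces (braid groups are $K(\pi,1)$'s) delivers ``all parallel 2-cells are equal''. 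But ``invoking 1-truncatedness'' presupposes the identification of the hom-groupoids of $G(C)$ with those fundamental groupoids, and constructing that identification (via operadic recognition machinery) is the hard technical body of Gurski's paper --- including the extension from the one-object case to computads with generating 1-cells $a \to b$, which your decorated-configuration-space gesture does not address. One smaller caution: your appeal to Theorem~\ref{tsnfprocedure} is not circular, since its proof in Appendix~\ref{sec:proofsforgraymoncohappendix} uses only structural equalities and PT-B, but note that in the paper TSNF is a tool built \emph{on top of} the present theorem (via Theorem~\ref{gurskiosornocoherencethmextended}), not a step towards it --- a sign that your reduction to normal-form loops, while harmless, does not by itself bring you closer to the missing faithfulness argument.
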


\begin{theorem}[{\cite[Theorem 1.23]{Gurski2013}}]\label{gurskiosornocoherencethm}
Let $C$ be a symmetric Gray monoid computad with no non-structural generating 2-cells, whose non-structural generating 1-cells all have exactly one 0-cell as source and one 0-cell as target. In the symmetric Gray monoid generated from $C$, all parallel 2-cells are equal, and two structural 1-cells are isomorphic iff they have the same underlying permutation.
\end{theorem}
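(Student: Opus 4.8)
The plan is to deduce this symmetric coherence statement from the braided coherence result, Theorem~\ref{gurskicoherencethmbraids}, by analysing exactly what is added when one passes from a braided to a symmetric Gray monoid, namely the syllepses $\sigma_{A,B}$ together with the axioms PT-SYL and SYM. First I would reduce the claim ``all parallel 2-cells are equal'' to the claim ``every loop contracts to the trivial movie''. Since every structural generating 2-cell is an isomorphism (Definition~\ref{def:isomorphism}), any two parallel 2-cells $\phi,\psi: f \Rightarrow g$ give a loop $\psi^{-1} \circ_V \phi$ on $f$, and showing that this loop equals the identity 2-cell yields $\phi = \psi$. Thus both the braided and the symmetric statements are really assertions that the only endo-2-cells are trivial.

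For the classification of 1-cells, I would use that the hypothesis on generating 1-cells---each has a single 0-cell as source and a single 0-cell as target---forces every structural 1-cell to be a braiding of labelled wires, so by Theorem~\ref{gurskicoherencethmbraids} the structural 1-cells of the underlying braided Gray monoid are classified up to isomorphism by elements of the braid group $B_n$, where $n$ is the number of wires. The passage to the symmetric setting adds the syllepsis $\sigma_{A,B}$, which supplies a 2-isomorphism relating the overbraiding $R^+_{A,B}$ and the underbraiding $R^-_{A,B}$; consequently the generators $\sigma_i$ and $\sigma_i^{-1}$ of $B_n$ become isomorphic 1-cells, and the canonical surjection $B_n \twoheadrightarrow S_n$ is realised by structural isomorphisms. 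The ``if'' direction of the classification then follows from these syllepsis-induced isomorphisms together with braided coherence. For the ``only if'' direction I would exhibit an invariant: the underlying permutation of the wire endpoints is preserved by every structural 2-cell (interchangers, pull-throughs, braiding inverse-inserts, and syllepses all fix the source and target endpoints), so it is well defined on isomorphism classes and separates those that differ.

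The crux is 2-cell uniqueness in the presence of the syllepsis: I must show that adjoining the syllepses and the axioms PT-SYL and SYM introduces no nontrivial loops. The cleanest route is to construct a locally discrete strict model $\mathbf{P}$---the 2-category of coloured finite sets and bijections, with only identity 2-cells---and a strict symmetric monoidal homomorphism $G(C) \to \mathbf{P}$ sending a structural 1-cell to its underlying permutation; since $\mathbf{P}$ is locally discrete, a biequivalence with $\mathbf{P}$ forces all parallel 2-cells of $G(C)$ to coincide. To establish the biequivalence I would contract an arbitrary loop in stages: first use Theorem~\ref{gurskicoherencethmbraids} to normalise the braid-only portion, then use PT-SYL to move each syllepsis past the pull-through 2-cells into a standard position, and finally use SYM to cancel syllepses against their symmetric partners, leaving the trivial movie.

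The main obstacle I expect is precisely this last contraction argument: checking that PT-SYL and SYM, combined with braided coherence, genuinely suffice to trivialise every syllepsis-containing loop, with no residual ``monodromy'' obstructing the collapse $B_n \to S_n$. Equivalently, one must verify that $\mathbf{P}$ carries a well-defined symmetric Gray monoid structure for which the comparison homomorphism is locally faithful. This is the step where the full strength of Mac Lane--style symmetric coherence is needed, and where I would fall back on the strictification underlying the cited result of Gurski if the direct movie-contraction proved unwieldy.
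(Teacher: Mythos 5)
The paper does not prove this statement at all: it is imported verbatim from the literature as \cite[Theorem 1.23]{Gurski2013}, so the only question is whether your sketch stands on its own as an independent derivation. The peripheral parts of it do. Reducing ``all parallel 2-cells are equal'' to contractibility of loops is sound, since every generating 2-cell in such a computad is invertible; the ``only if'' half of the 1-cell classification is sound, because each generating 2-cell (interchanger, pull-through, braiding inverse-insert, syllepsis) preserves the underlying permutation of a frame; and the ``if'' half is sound, because the syllepsis makes $\sigma_i \cong \sigma_i^{-1}$, so Theorem~\ref{gurskicoherencethmbraids} together with these isomorphisms realises the collapse $B_n \twoheadrightarrow S_n$ on isomorphism classes.

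The crux, however, is asserted rather than proven, and it is exactly the content of the cited theorem. ``Use PT-SYL to move each syllepsis into standard position, then use SYM to cancel syllepses against their symmetric partners'' is not an argument: SYM only relates $\sigma_{A,B}$ to $\sigma_{B,A}$ conjugated by an exchange of the two braidings; it does not cancel a syllepsis against anything, and the dangerous loops are precisely those in which syllepses implement a nontrivial pure braid --- for instance a loop in which $\sigma_i^{2} \cong \mathrm{id}$ is realised by two different composites of syllepses and pull-throughs. Whether all such loops contract is the monodromy question you name, and it genuinely fails in the merely sylleptic setting (the paper explicitly declines to treat sylleptic monoidal bicategories for lack of a coherence theorem), so any proof must exploit SYM in an essential combinatorial way that your sketch never supplies. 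Your fallback via the locally discrete model $\mathbf{P}$ is circular as stated: \emph{every} homomorphism into a locally discrete 2-category identifies all parallel 2-cells, so the model yields nothing until local faithfulness is established, and local faithfulness \emph{is} the loop-contraction claim. (A smaller point: since the theorem asserts equality of parallel 2-cells between arbitrary 1-cells, $\mathbf{P}$ would in any case have to be the free symmetric monoidal category on the underlying 1-computad, viewed as locally discrete, not merely coloured bijections.) Your closing move --- falling back on ``the strictification underlying the cited result of Gurski'' --- concedes the gap: at that point your argument reduces to the paper's own treatment, which is simply the citation.
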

\noindent We now introduce a useful normal form for 1-cells in braided and symmetric Gray monoids.

\begin{definition}[Output string]
Consider a 1-cell diagram in a braided Gray monoid. For any generating 1-cell $N$ in the diagram with a single generating 0-cell in its output, we define its \emph{output string} to be the string extending from $N$ to the next non-structural 1-cell to which the string is input; or to the roof of the diagram if it is not input to another non-structural 1-cell.
\end{definition}
\noindent
Let $N$ be a non-structural generating 1-cell in some frame of a movie. Provided that no non-structural 2-cells occur on rectangular subregions containing $N$, it is always possible to identify $N$ in previous and subsequent frames, where it may have been moved by interchangers and pullthroughs. We may therefore speak about $N$ as being in a clip, rather than just in a frame.

\begin{definition}[TSNF]\label{tsnfdefinition}
Let $M$ be a clip in a computadic braided Gray monoid, containing some non-structural generating 1-cell $N$ whose output is a single generating 0-cell. We say that $N$ is in \emph{top string normal form} (TSNF) in $M$ if no generating 2-cells act on rectangular subregions containing the output string of $N$ during $M$. 
\end{definition}

\begin{example}
In the clip below the output string of the lowest generating 1-cell is highlighted. The lowest generating 1-cell is not in TSNF because a braiding cancellation, a braiding inverse-insert, and then a pullthrough occur on the output string during the clip.
\begin{center}
\raisebox{0.45cm}{\Huge [} \includegraphics[width=1cm,height=1.5cm]{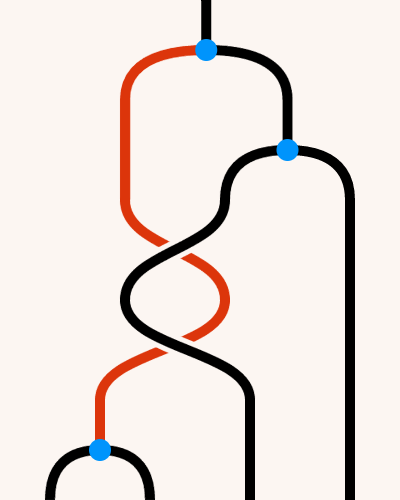}
\raisebox{0.6cm}{$\Rightarrow$}\includegraphics[width=1cm,height=1.5cm]{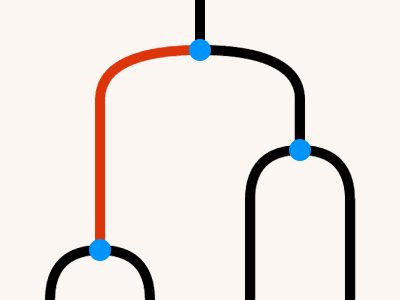}
\raisebox{0.6cm}{$\Rightarrow$}\includegraphics[width=1cm,height=1.5cm]{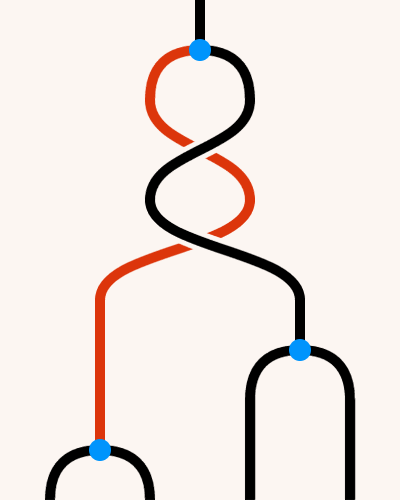}
\raisebox{0.6cm}{$\Rightarrow$}\includegraphics[width=1cm,height=1.5cm]{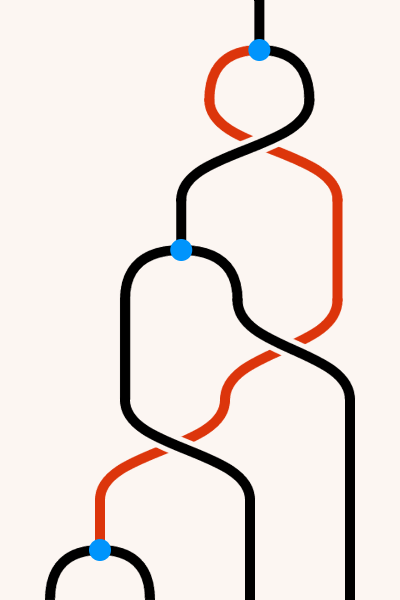}\raisebox{0.45cm}{\Huge ]}
\end{center}
\end{example}
\noindent
As we will now show, we can always put a generating 1-cell in TSNF, provided that no non-structural 2-cells occur on a rectangular subregion containing some part of the output string. First we define a useful movie rewriting technique.

\begin{definition}[Insert IPI]
Let $N$ be a 1-cell in a clip in a braided or symmetric Gray monoid. We can rewrite the clip by using Type II rewrites and invertibility of the pullthroughs to move $N$ up or down the frame. We say that we `insert IPI'. For example, here we insert IPI to move the lowest generating 1-cell to the top of the frame and back to the bottom again:
\begin{center}
\raisebox{0.45cm}{\huge [}\includegraphics[width=.9cm,height=1.1cm]{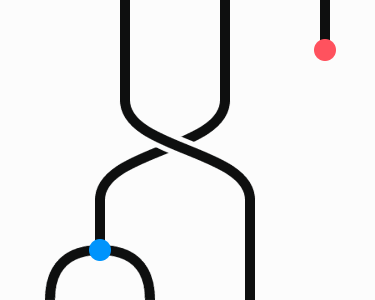}
\raisebox{0.45cm}{\huge ]}
\raisebox{0.45cm}{$=$}
\raisebox{0.45cm}{\huge [}
\includegraphics[width=.9cm,height=1.1cm]
{insertipi1}
\raisebox{0.6cm}{$\Rightarrow$}\includegraphics[width=.9cm,height=1.1cm]
{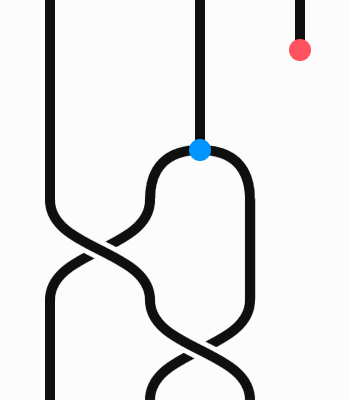}
\raisebox{0.6cm}{$\Rightarrow$}\includegraphics[width=.9cm,height=1.1cm]
{insertipi1}
\raisebox{0.45cm}{\huge ]} 
\raisebox{0.45cm}{$=$}
\raisebox{0.45cm}{\huge [}\includegraphics[width=.9cm,height=1.1cm]{insertipi1}
\raisebox{0.6cm}{$\Rightarrow$}\includegraphics[width=.9cm,height=1.1cm]
{insertipi2}
\raisebox{0.6cm}{$\Rightarrow$}\includegraphics[width=.9cm,height=1.1cm]
{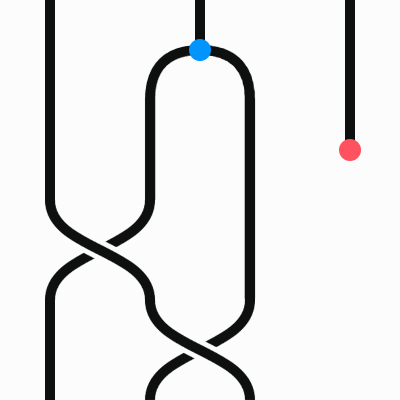}
\raisebox{0.6cm}{$\Rightarrow$}\includegraphics[width=.9cm,height=1.1cm]
{insertipi2}
\raisebox{0.6cm}{$\Rightarrow$}\includegraphics[width=.9cm,height=1.1cm]
{insertipi1}
\raisebox{0.45cm}{\huge ]} 
\end{center} 
\end{definition}
\noindent
Now we state the main results of this section. We postpone their proofs to Appendix~\ref{sec:proofsforgraymoncohappendix}.
\begin{theorem}[Putting a 1-cell in TSNF]\label{tsnfprocedure}
Let $M$ be a clip in a computadic braided Gray monoid. Let $N$  be a non-structural generating 1-cell whose output is a single generating 0-cell. If no non-structural 2-cells occur on a rectangular subregion containing the output string of $N$ during $M$, then there exists a series of rewrites to put $N$ in TSNF.
\end{theorem}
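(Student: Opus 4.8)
The plan is to induct on the number $k$ of generating 2-cells of $M$ that act on a rectangular subregion meeting the output string of $N$. By hypothesis every such 2-cell is structural, so each is an interchanger $\iota$, a braiding inverse-insert $i^{\pm}$, or a pullthrough $PO/PU$; the induction removes them one transition at a time. If $k=0$ then $N$ is already in TSNF, so assume $k\geq 1$ and consider the lowest transition $\gamma$ in $M$ that touches the output string.

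The guiding picture is that the output string is a single strand issuing from $N$ and running up to the next non-structural cell $N'$ (or to the roof), possibly crossing other strands at braidings; I want to clear this strand by trading every interaction with it for an interaction occurring \emph{below} $N$, where it no longer meets the output string. The elementary relocation device is the insert-IPI technique of the preceding definition, which lets me raise or lower $N$ freely by inserting cancelling interchanger and pullthrough pairs, all of which sit below $N$ and hence away from the output string.

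I would then eliminate $\gamma$ by cases. If $\gamma$ is an interchanger moving a cell past the output string, I commute it with the neighbouring structural data using Type I and Type III rewrites until it can be cancelled against its partner by a Type II rewrite, or absorbed into a relocation of $N$. If $\gamma$ is a braiding inverse-insert or a pullthrough acting on the output string, I use PT-B to rewrite it: PT-B equates a braiding inverse-insert together with a pullthrough performed \emph{above} a point on the strand with the corresponding inverse-insert and pullthrough performed \emph{below} it, so it converts an interaction on the output string into one below $N$. The invertibility supplied by ADJ and the Type II rewrites then lets me cancel the redundant pair created, and the composition coherences $(\rightarrow\rightarrow\otimes\;\cdot)$ and $(\cdot\;\otimes\rightarrow\rightarrow)$ together with $(\cdot\;\otimes\Downarrow)$ and $(\Downarrow\otimes\;\cdot)$ let me slide these manipulations past any non-structural cell attached lower down. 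Each step should strictly decrease $k$ without introducing new 2-cells on the output string, since the insertions and the PT-B trades all land below $N$, so the induction closes.

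The hard part will be the pullthrough case and the proof that the procedure terminates. Relocating $N$ threatens to move structural interactions from one region of the diagram into another, so I must fix a well-founded measure — for instance the count $k$ refined by the total number of braiding strands separating $N$ from $N'$ across all frames — and check that each PT-B trade and each insert-IPI strictly decreases it. Verifying that PT-B is exactly strong enough to push every interaction below $N$, and that no case secretly forces a non-structural 2-cell onto the output string (which the hypothesis forbids), is the delicate bookkeeping I would carry out in full in the appendix; the topological intuition that a single strand with a rigid node at its base can be isotoped straight is what makes the result true, and PT-B is the algebraic shadow of that isotopy.
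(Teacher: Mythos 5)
Your overall skeleton matches the paper's proof: iterate over the offending structural 2-cells one at a time, and for each one use insert-IPI to bring $N$ up to the site of the action, then reorder with Type I/III rewrites and apply a braided coherence axiom so the interaction is traded for one that no longer meets the output string. But there is a concrete misstep in your case analysis: you propose to handle pullthroughs with PT-B, and PT-B cannot do that job. PT-B relates a braiding inverse-insert followed by a pullthrough \emph{above} to a braiding inverse-insert with a pullthrough \emph{below}; its source and target both contain an inverse-insert, i.e.\ braidings created from the identity. When the offending transition is a pullthrough of some other generating 1-cell $g$ across an \emph{already existing} braiding on the output string of $N$, no inverse-insert is present and PT-B simply does not apply. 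The axiom the paper uses here is $(\rightarrow\otimes\rightarrow)$ (and its flip for downwards pullthroughs), which exchanges the order in which $N$ and $g$ pull through the same braiding — and this is precisely the axiom absent from your list: you cite $(\rightarrow\rightarrow\otimes\;\cdot)$, $(\cdot\;\otimes\rightarrow\rightarrow)$, $(\cdot\;\otimes\Downarrow)$ and $(\Downarrow\otimes\;\cdot)$, but those play no role in this step. Notably, the pullthrough case is exactly the one you flagged as "the hard part" to be verified later, and it is exactly where your chosen tool fails.

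Two smaller points. First, your case list is incomplete in a way that matters for the rewrites: braiding cancellations (the inverses of the inverse-inserts) require the \emph{flip} of PT-B with a different ordering of the inserted IPI (insert before the cancellation rather than after), and upwards versus downwards interchangers and pullthroughs likewise need distinct orderings; these are not symmetric bookkeeping variants one can wave at. Second, your worry about termination, and the refined measure counting braiding strands between $N$ and $N'$, is unnecessary: each elimination step in the paper removes the first 2-cell acting on the output string, and every 2-cell it inserts (the IPI pairs and the relocated action) either involves $N$ itself or occurs strictly below $N$, so no new actions on the output string are ever created and the count of offending transitions strictly decreases. Once the pullthrough case is repaired with $(\rightarrow\otimes\rightarrow)$, the plain induction on that count closes without any auxiliary measure.
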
 
\begin{theorem}[Extended coherence for computadic braided and symmetric Gray monoids]\label{gurskiosornocoherencethmextended}
Let $C$ be a computad for a braided or symmetric Gray monoid with no nonstructural generating 2-cells, whose generating 1-cells all have a single generating 0-cell as output. Then  all parallel 2-cells in the braided or symmetric Gray monoid generated from $C$ are equal.
\end{theorem}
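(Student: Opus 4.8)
The plan is to reduce the general statement to the fully structural case, where Theorems~\ref{gurskicoherencethmbraids} and~\ref{gurskiosornocoherencethm} apply, by peeling off generating 1-cells from the top of the diagram one at a time using the TSNF procedure of Theorem~\ref{tsnfprocedure}. First I would observe that, since all structural generating 2-cells are isomorphisms, every 2-cell in $G(C)$ is invertible; hence two parallel 2-cells $\phi,\psi\colon f\to g$ are equal iff the loop $\bar\psi\circ_V\phi\colon f\to f$ is trivial (contractible in the sense of Definition~\ref{cellsmovies2projremark}). So it suffices to show that every loop is trivial. I would then induct on the number $n$ of non-structural generating 1-cells occurring in the source 1-cell $f$. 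This is a well-defined induction because, there being no non-structural generating 2-cells, every structural 2-cell (interchanger, braiding inverse-insert, pullthrough, syllepsis) preserves the multiset of non-structural generating 1-cells; thus every frame of a loop on $f$ contains exactly the same non-structural 1-cells as $f$.

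For the base case $n=0$, the 1-cell $f$ and every frame of the loop are purely structural, so the loop is a loop in the braided (resp.\ symmetric) Gray monoid generated by the 0-cells of $C$ alone, whose structural equalities are a fortiori available in $G(C)$. That computad has no non-structural generating 1-cells, so it satisfies the hypotheses of Theorem~\ref{gurskicoherencethmbraids} (resp.\ Theorem~\ref{gurskiosornocoherencethm}) vacuously, and all parallel 2-cells there are equal; in particular the loop is trivial. For the inductive step I would choose a generating 1-cell $N$ whose output string runs all the way to the roof of the diagram; such an $N$ exists because each generating 1-cell has a single output string which either reaches the roof or feeds into another generating 1-cell, so following outputs upward through the finite acyclic diagram must terminate at the roof. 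Since $C$ has no non-structural generating 2-cells, the hypothesis of Theorem~\ref{tsnfprocedure} is automatically met, so I can rewrite the loop to an equal loop $\tilde M$ in which $N$ is in TSNF (Definition~\ref{tsnfdefinition}): no 2-cell ever acts on the output string of $N$.

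With $N$ in TSNF its output string is inert, and the remaining activity of the loop takes place on the sub-diagram $f'$ obtained by deleting $N$ and rerouting its input strings to the roof. This $f'$ has $n-1$ non-structural generating 1-cells, and $\tilde M$ restricts to a loop $M'$ on $f'$; by the inductive hypothesis $M'$ is trivial, and re-inserting $N$ with its clean output string lifts a contraction of $M'$ to a contraction of $\tilde M$, and hence of $M$.

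The main obstacle is making this last reduction precise: I must verify that deletion of $N$ gives a faithful correspondence between the rewrites available with $N$ present and those available after deletion. The delicate cases are the structural 2-cells that act on $N$ itself rather than on its output string, namely interchangers that move $N$ past cells beside or below it, and pullthroughs of $N$ through braidings on its input strings, for which there is no immediate analogue once $N$ is gone. I would handle these by first using ``insert IPI'' together with the Type~I--III rewrites of Definition~\ref{structuralequalitiesgraymondef} to slide $N$ up to sit immediately below the roof in every frame --- possible precisely because, in TSNF, $N$ is unconnected to everything above it --- and then checking that each structural equality of Definition~\ref{structuralequalitiesgraymondef}, together with PT-B, restricts to $f'$ and lifts back unchanged once $N$ is pinned at the top. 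Establishing this compatibility, so that triviality of $M'$ transfers to $\tilde M$, is the crux of the argument.
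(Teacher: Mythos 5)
Your overall skeleton --- reduce equality of parallel 2-cells to contractibility of loops, peel off a topmost generating 1-cell using Theorem~\ref{tsnfprocedure}, and close the induction with the structural coherence theorems --- is exactly the shape of the paper's argument, but there is a genuine gap at the step you yourself flag as the crux, and it is not a technicality: it is where essentially all of the paper's proof lives. TSNF (Definition~\ref{tsnfdefinition}) only guarantees that no 2-cell acts on a region containing the \emph{output string} of $N$; it does not make $N$ inert. After putting $N$ in TSNF, the loop can still contain interchangers moving $N$ up and down and pullthroughs of $N$ itself through braidings (the paper's proof explicitly tracks ``the number $b$ of braidings on the output string'', which changes precisely under pullthroughs \emph{of} $N$). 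Consequently your claim that ``the remaining activity of the loop takes place on the sub-diagram $f'$'' is false as stated, and your proposed repair --- sliding $N$ to sit ``immediately below the roof in every frame'' --- is not a consequence of TSNF plus unconnectedness. Making $N$ stationary at the top throughout the loop requires (i) commuting every 2-cell not involving $N$ below $N$, which needs a case analysis because braiding inverse-inserts and inverse syllepses create cells at levels that $N$ must cross (the paper handles these with inserted interchanger pairs followed by Type III rewrites of Definition~\ref{structuralequalitiesgraymondef}); and (ii) eliminating $N$'s own pullthroughs and interchangers, which the paper achieves by a nontrivial cancellation scheme: go to the first downwards pullthrough at which $b$ is maximal, push it later with Type I rewrites, and argue that the only possible obstruction is an upwards pullthrough with which it cancels, then repeat the analogous argument for interchangers. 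Nothing in your outline supplies these arguments; PT-B and ADJ alone do not yield them.

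The lifting step is also unsound as stated, even granting that deletion of $N$ is well defined on movies. Deletion sends every interchanger and pullthrough of $N$ to an identity (once $N$ is removed, the two frames of such a 2-cell become ordered-planar-isotopic), so $M'$ can be trivial while $\tilde M$ still carries an uncancelled residue concentrated entirely in the erased 2-cells --- for instance a loop in which $N$ pulls through a braid and back. That residue is exactly what steps (i)--(ii) above exist to kill, so ``triviality of $M'$ transfers to $\tilde M$'' can only be proved \emph{after} $N$ has been made inert, at which point the deletion detour is unnecessary: once $N$ is pinned at the top of every frame, the activity below it is literally a loop on a smaller diagram, and the recursion closes as in the paper, with the final purely structural loop contracted by Theorem~\ref{gurskiosornocoherencethm} and the ascent of the $N_i$ cancelled against their descent. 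In short, your strategy is the paper's strategy, but the pinning step you defer is the theorem's actual content and must be carried out along the lines of the paper's explicit rewriting procedure.
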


\subsection{Computads for braided and symmetric pseudomonoids}\label{sec:presentations}

The following computads follow the definitions of Day and Street \cite{Day1997}.

\begin{definition}\label{pseudomonoidpresentation}
The \emph{pseudomonoid computad} $\mathcal{P}$ is the naked Gray monoid computad defined as follows.
\begin{itemize}[leftmargin=*,label={}]
\item \underline{0-cells}: $\{C\}$.
\item \underline{1-cells}: $m: C \otimes C \to C$ and $u: I \to C$.
\begin{center}
\begin{tabular}{l l}
\raisebox{0.4cm}{$m=$}
\includegraphics[width=1.75cm,height=1cm]{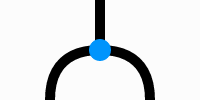}&\qquad
\raisebox{0.4cm}{$u=$}\includegraphics[width=1cm,height=1cm]{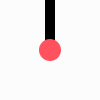} 
\end{tabular}
\end{center}
\item \underline{2-cells}: $\alpha$ (associator), $\lambda$ (left unitor) and $\rho$ (right unitor), all isomorphisms (Definition~\ref{def:isomorphism}).
\begin{center}
\begin{tabular}{l l l l}
\includegraphics[width=2cm,height=1.5cm]{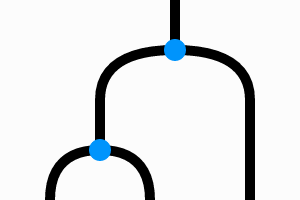}
\raisebox{0.6cm}{$\overset{\alpha}{\Rightarrow}$}\includegraphics[width=2cm,height=1.5cm]{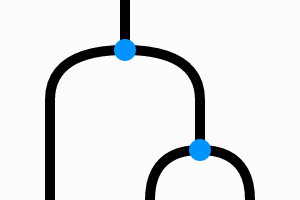} &\qquad
\includegraphics[width=1cm,height=1.5cm]{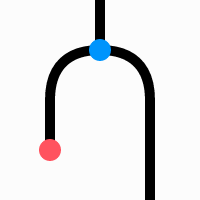}
\raisebox{0.6cm}{$\overset{\lambda}{\Rightarrow}$}\includegraphics[width=1cm,height=1.5cm]{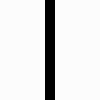}&\qquad
\includegraphics[width=1cm,height=1.5cm]{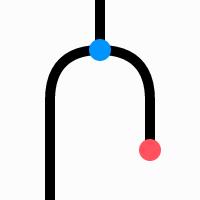}
\raisebox{0.6cm}{$\overset{\rho}{\Rightarrow}$}\includegraphics[width=1cm,height=1.5cm]{identity}
\end{tabular}
\end{center}
We will occasionally call the unitors $\lambda, \rho$ \emph{unit destruction operators} and the inverse unitors $\lambda^{-1}, \rho^{-1}$ \emph{unit creation operators}.
\item \underline{Equalities}:
\begin{itemize}[leftmargin=*,label={-}]
\item Pentagon: \begin{center}
\begin{tabular}{l l}
\raisebox{0.45cm}{\Huge [} \includegraphics[width=1cm,height=1.5cm]{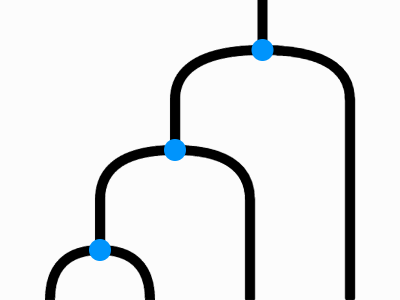}
\raisebox{0.6cm}{$\overset{\alpha}{\Rightarrow}$}\includegraphics[width=1cm,height=1.5cm]{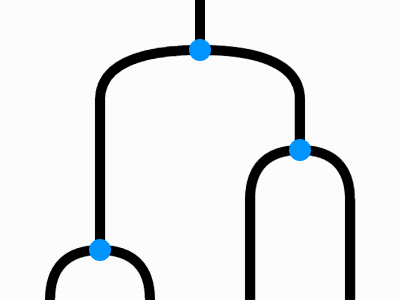}
\raisebox{0.6cm}{$\overset{\iota}{\Rightarrow}$}\includegraphics[width=1cm,height=1.5cm]{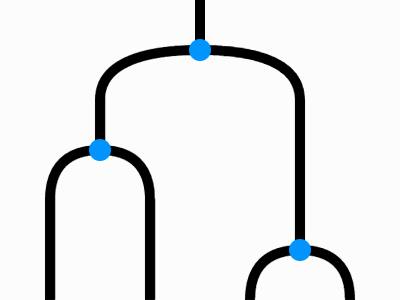}
\raisebox{0.6cm}{$\overset{\alpha}{\Rightarrow}$}\includegraphics[width=1cm,height=1.5cm]{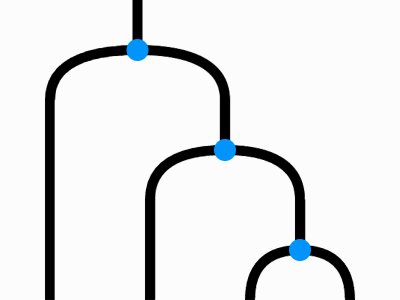}
\raisebox{0.45cm}{\Huge ]}
\raisebox{0.6cm}{$=$}
\raisebox{0.45cm}{\Huge [} \includegraphics[width=1cm,height=1.5cm]{pentagonstart}
\raisebox{0.6cm}{$\overset{\alpha}{\Rightarrow}$}\includegraphics[width=1cm,height=1.5cm]{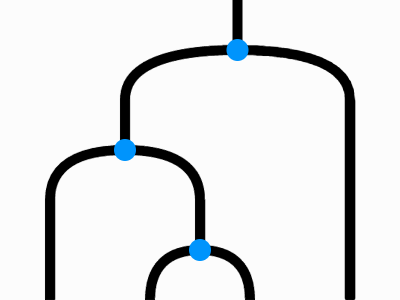}
\raisebox{0.6cm}{$\overset{\alpha}{\Rightarrow}$}\includegraphics[width=1cm,height=1.5cm]{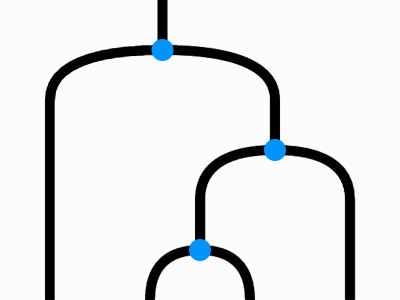}
\raisebox{0.6cm}{$\overset{\alpha}{\Rightarrow}$}
\includegraphics[width=1cm,height=1.5cm]{pentagonend}
\raisebox{0.45cm}{\Huge ]}
\end{tabular}
\end{center}
\item Triangle:\begin{center}
\begin{tabular}{l l}
\raisebox{0.45cm}{\Huge [} \includegraphics[width=1cm,height=1.5cm]{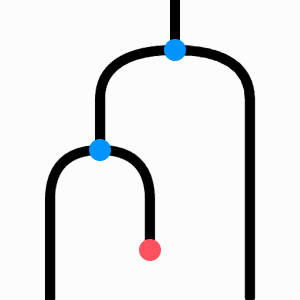}
\raisebox{0.6cm}{$\overset{\alpha}{\Rightarrow}$}\includegraphics[width=1cm,height=1.5cm]{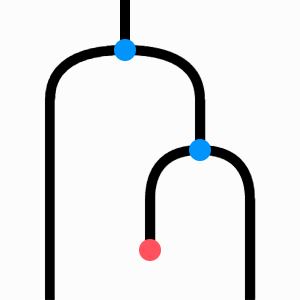}
\raisebox{0.6cm}{$\overset{\lambda}{\Rightarrow}$}\includegraphics[width=1cm,height=1.5cm]{mult}
\raisebox{0.45cm}{\Huge ]}
\raisebox{0.6cm}{$=$}
\raisebox{0.45cm}{\Huge [} \includegraphics[width=1cm,height=1.5cm]{trianglestart}
\raisebox{0.6cm}{$\overset{\rho}{\Rightarrow}$}\includegraphics[width=1cm,height=1.5cm]{mult}
\raisebox{0.45cm}{\Huge ]}
\end{tabular}
\end{center}
\end{itemize}
\end{itemize}
\end{definition}

\begin{definition}\label{braidedpseudomonoidpres}
The \emph{braided pseudomonoid computad} $\mathcal{P}^{\text{br}}$ is the braided Gray monoid computad with all the generating cells of $\mathcal{P}$, and the following additional data.

\begin{itemize}[leftmargin=*,label={}]
\item \underline{2-cells}: An isomorphism $c$ (the commutator):
\begin{center}
\includegraphics[width=.75cm,height=.75cm]{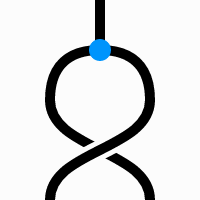}
\raisebox{0.3cm}{$\overset{c}{\Rightarrow}$}\includegraphics[width=.75cm,height=.75cm]{mult}
\end{center}
\item \underline{Equalities}:
\begin{itemize}[leftmargin=*,label={-}]
\item Hexagon 1:\begin{center}
\raisebox{0.45cm}{\Huge [} \includegraphics[width=.75cm,height=1.25cm]{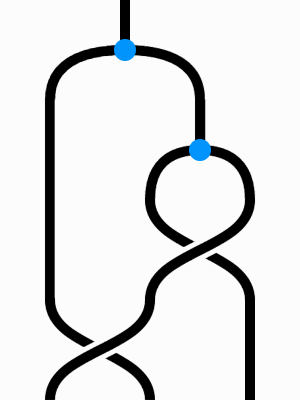}
\raisebox{0.6cm}{$\overset{c}{\Rightarrow}$}\includegraphics[width=.75cm,height=1.25cm]{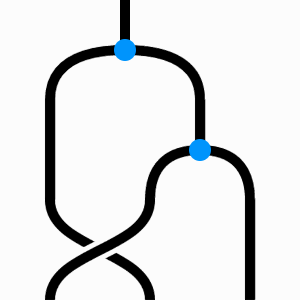}
\raisebox{0.6cm}{$\overset{\alpha^{-1}}{\Rightarrow}$}\includegraphics[width=.75cm,height=1.25cm]{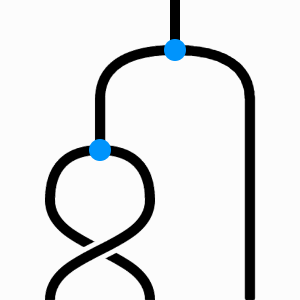}
\raisebox{0.6cm}{$\overset{c}{\Rightarrow}$}\includegraphics[width=.75cm,height=1.25cm]{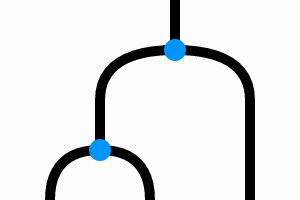}
\raisebox{0.45cm}{\Huge ]}
\raisebox{0.6cm}{$=$}
\raisebox{0.45cm}{\Huge [} \includegraphics[width=.75cm,height=1.25cm]{hexagononeend}
\raisebox{0.6cm}{$\overset{\alpha^{-1}}{\Rightarrow}$}\includegraphics[width=.75cm,height=1.25cm]{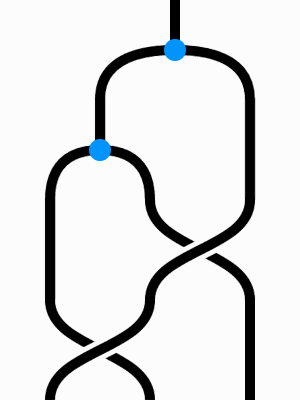}
\raisebox{0.6cm}{$\overset{PU^{-1}_{C,m}}{\Rightarrow}$}\includegraphics[width=.75cm,height=1.25cm]{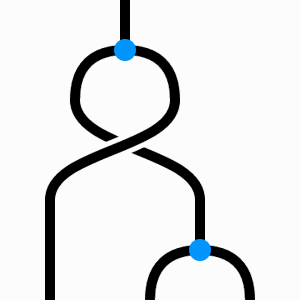}
\raisebox{0.6cm}{$\overset{c}{\Rightarrow}$}\includegraphics[width=.75cm,height=1.25cm]{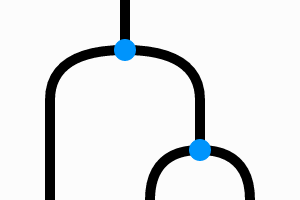}
\raisebox{0.6cm}{$\overset{\alpha^{-1}}{\Rightarrow}$}\includegraphics[width=.75cm,height=1.25cm]{hexagononestart}
\raisebox{0.45cm}{\Huge ]}
\end{center}
\item Hexagon 2:
\begin{center}
\raisebox{0.45cm}{\Huge [} \includegraphics[width=.75cm,height=1.25cm]{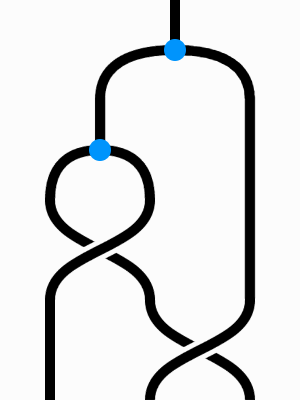}
\raisebox{0.6cm}{$\overset{\alpha}{\Rightarrow}$}\includegraphics[width=.75cm,height=1.25cm]{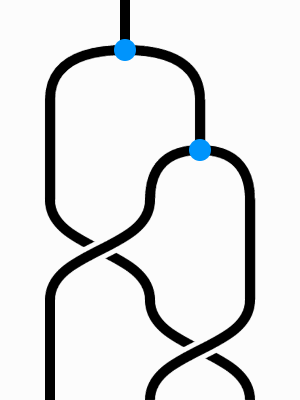}
\raisebox{0.6cm}{$\overset{PO^{-1}_{m,C}}{\Rightarrow}$}\includegraphics[width=.75cm,height=1.25cm]{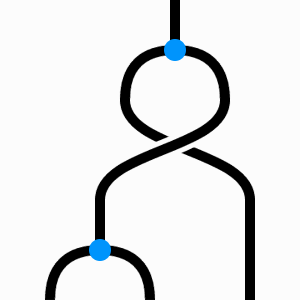}
\raisebox{0.6cm}{$\overset{c}{\Rightarrow}$}\includegraphics[width=.75cm,height=1.25cm]{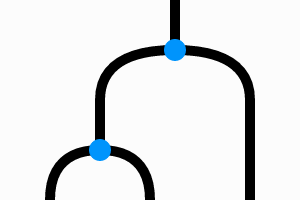}
\raisebox{0.6cm}{$\overset{\alpha}{\Rightarrow}$}\includegraphics[width=.75cm,height=1.25cm]{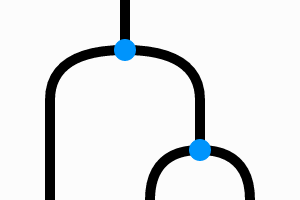}
\raisebox{0.45cm}{\Huge ]}
\raisebox{0.6cm}{$=$}
\raisebox{0.45cm}{\Huge [} \includegraphics[width=.75cm,height=1.25cm]{hexagontwoend}
\raisebox{0.6cm}{$\overset{c}{\Rightarrow}$}\includegraphics[width=.75cm,height=1.25cm]{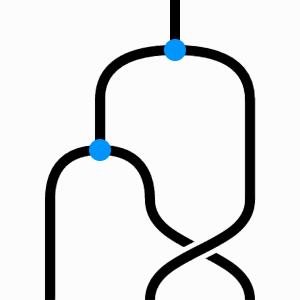}
\raisebox{0.6cm}{$\overset{\alpha}{\Rightarrow}$}\includegraphics[width=.75cm,height=1.25cm]{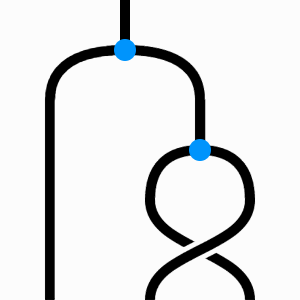}
\raisebox{0.6cm}{$\overset{c}{\Rightarrow}$}\includegraphics[width=.75cm,height=1.25cm]{hexagontwostart}
\raisebox{0.45cm}{\Huge ]}
\end{center}
\end{itemize}
\end{itemize}
\end{definition}

\begin{definition}\label{symmpseudomonoidpres}
The \emph{symmetric pseudomonoid}  computad $\mathcal{P}^{\text{sym}}$ is the symmetric Gray monoid computad with all the generating cells of $\mathcal{P}^{\text{br}}$, and the following additional data.
\begin{itemize}[leftmargin=*,label={}]
\item \underline{Equalities}:
\begin{itemize}[leftmargin=*,label={-}]
\item Symmetry:
\begin{center}
\raisebox{0.3cm}{\Huge [} \includegraphics[width=.75cm,height=1.25cm]{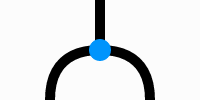} \raisebox{0.3cm}{\Huge ]}
\raisebox{0.6cm}{=}
\raisebox{0.3cm}{\Huge [} \includegraphics[width=.75cm,height=1.25cm]{symmetricpseudomonoidstart}
\raisebox{0.6cm}{$\overset{\sigma^{-1}}{\Rightarrow}$}\includegraphics[width=.75cm,height=1.25cm]{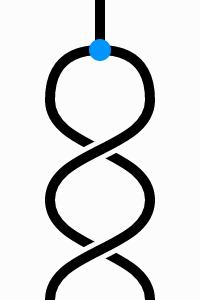}
\raisebox{0.6cm}{$\overset{c}{\Rightarrow}$}\includegraphics[width=.75cm,height=1.25cm]{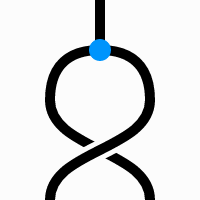}
\raisebox{0.6cm}{$\overset{c}{\Rightarrow}$}\includegraphics[width=.75cm,height=1.25cm]{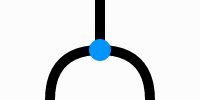}\raisebox{0.3cm}{\Huge ]}
\end{center}
\end{itemize}
\end{itemize}
\end{definition}

\begin{comment}
These computads are the images of the fully weak pseudomonoid computads under the functor $\tilde{S}$ of Definition~\ref{def:semistrictness}.
\end{comment}

\subsection{Theories of monoids}\label{sec:theoryofmonoids}

Before concluding the background section, we review the theory of PROs, PROBs and PROPs for monoids and commutative monoids which was summarised in Table~\ref{tbl:monoidsresultsinintro} of the introduction. 

\begin{definition}
The \emph{monoid computad} ${\bf M}$ is generating data for a PRO, derived from the pseudomonoid computad by considering the  generating 2-cells as equalities of 1-cells and forgetting equalities of 2-cells.
\end{definition}

\begin{definition}
The \emph{commutative monoid computad} ${\bf CM}$ is generating data for a PROB, derived from the pseudomonoid computad by considering the generating 2-cells as equalities of 1-cells and forgetting equalities of 2-cells.
\end{definition}
\noindent
We define a combinatorial category isomorphic to the PRO for monoids.

\begin{definition}
The objects of ${\bf \Delta}$ are natural numbers $\underline{n} \in \mathbb{N}$, and its 1-cells $\underline{m} \to \underline{n}$ are monotone functions $\{1,\dots,m\} \to \{1,\dots,n\}$ (i.e. functions satisfying $f(i)<f(j) \;\,\forall\, i<j$). Composition is composition of functions, and monoidal product is the coproduct in ${\bf Set}$.
\end{definition}
\noindent
In order to identify 1-cells in the PRO for monoids with monotone functions, one considers connectedness between inputs and outputs of a 1-cell. In the absence of a braiding, this must correspond to a monotone function. An output which is not connected to any input must come from a unit. An example is shown in Figure \ref{deltatreesexample}. The following proposition makes this precise.
\begin{proposition}\label{proformonoidsprop}
The PRO on the monoid computad, {\bf FM}, is isomorphic to ${\bf \Delta}$. 
\end{proposition}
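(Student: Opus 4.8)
The plan is to exhibit a strict monoidal functor $F \colon \mathbf{FM} \to \mathbf{\Delta}$ that is the identity on objects and a bijection on each hom-set. Since both categories are PROs---objects are the natural numbers with monoidal product given by addition---any such $F$ is automatically an isomorphism of PROs. I would define $F$ directly by \emph{connectivity}, as suggested by the discussion preceding the proposition: a $1$-cell of $\mathbf{FM}$ is an equivalence class of string diagrams built from $m$, $u$ and identities, and each input string is joined, through the multiplication trees, to exactly one output string. Sending input $i$ to the index of the output it reaches defines a function $F(D)\colon \underline{m} \to \underline{n}$. Because $\mathbf{M}$ has no braiding, strings cannot cross, so $F(D)$ is monotone and lands in $\mathbf{\Delta}$.

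To see that $F$ is well defined on $\mathbf{FM}$ rather than on raw diagrams, I would check that the generating equalities preserve connectivity: the associator relation $m\circ(m\otimes\id)=m\circ(\id\otimes m)$ and the two unit relations rearrange or delete nodes without changing which input reaches which output, so both sides have the same image under $F$. Functoriality and strict monoidality are then immediate, since the connectivity of a vertical composite is the composite of connectivities and the connectivity of a monoidal product is the disjoint sum. The images of the generators are the unique monotone maps $\underline{2}\to\underline{1}$ and $\emptyset\to\underline{1}$, which also pins $F$ down via the universal property of the free PRO.

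For surjectivity on hom-sets (fullness) I would realise each monotone $f\colon\underline{m}\to\underline{n}$ by a canonical \emph{forest}. Monotonicity forces each fibre $f^{-1}(j)$ to be a contiguous block of inputs, and these blocks occur left to right in the same order as the outputs; I build the diagram by multiplying the inputs in each nonempty fibre together with a fixed (say left-bracketed) tree of $m$'s, and by placing a single unit $u$ at each output whose fibre is empty. No crossings are needed precisely because the fibres are intervals, and by construction $F$ of this forest is $f$.

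The substantive step, and the main obstacle, is injectivity on hom-sets (faithfulness): any two $1$-cells of $\mathbf{FM}$ with the same connectivity function must already be equal modulo the monoid relations. I would prove this by a normal-form argument, taking the canonical forests of the previous paragraph as normal forms and showing that every $1$-cell rewrites to the normal form of its connectivity function using only associativity and the unit laws. Orienting associativity (rebracketing to the chosen convention) and the unit laws (deleting units absorbed by a multiplication) gives a terminating rewriting system; one then checks that the only irreducible diagrams are forests in which each output carries either a bracketed tree over its fibre or a single unit, and that such a forest is determined uniquely by its connectivity function. The delicate point throughout is the bookkeeping of units---ensuring the unit laws collapse every redundant unit while leaving exactly one unit on each empty fibre---so that distinct normal forms inject into distinct monotone functions. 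Combining fullness with this uniqueness of normal forms shows $F$ is bijective on hom-sets, hence an isomorphism of PROs.
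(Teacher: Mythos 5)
Your proposal is correct and in substance identical to the paper's proof: your canonical forests are exactly the paper's $\tilde{f} = \mu^{p_1} \otimes \cdots \otimes \mu^{p_n}$ (with $\mu^0 = u$ on empty fibres), and your connectivity functor is precisely the inverse of the paper's map ${\bf \Delta} \to {\bf FM}$. The only difference is one of presentation: the paper defines the functor in the opposite direction and dispatches bijectivity with ``it is easy to check,'' citing Davydov, whereas you supply the normal-form and faithfulness argument explicitly.
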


\begin{proof}
We define a functor ${\bf \Delta} \to {\bf FM}$.
\begin{itemize}
\item \emph{On 0-cells}: The objects of both categories are natural numbers; let the map on 0-cells be the identity function.
\item \emph{On 1-cells}: Given a function $f:\underline{m} \to \underline{n}$, we define a morphism $\tilde{f}$ in {\bf FM} as follows. Let $\mu: \underline{2} \to \underline{1}$ be the multiplication 1-cell in ${\bf FM}$, and $u: \underline{0} \to \underline{1}$ be the unit. Let $p_i = |f^{-1}(i)|$ be the cardinality of the preimage of $i \in \{1,\dots,n\}$. Let $\mu^{n}: \underline{n} \to \underline{1}$ be the composition of $n-1$ multiplications, left bracketed; for example, $\mu^{4}=\mu \circ (\mu \otimes \text{Id}) \circ (\mu \otimes \text{Id} \otimes \text{Id})$. We set $\mu^1 = \text{Id}$ and $\mu^0 = u$.  Then we define
\begin{equation}\label{eq:monotonefunctionimage}
\tilde{f} = (\mu^{p_1} \otimes \cdots \otimes \mu^{p_n}).
\end{equation}
\end{itemize}
It is easy to check that this is an isomorphism of categories~\cite[Section 2.1]{Davydov2010}.
\begin{figure}
\centering
\begin{tikzpicture}
	\begin{pgfonlayer}{nodelayer}
		\node [style=none] (0) at (0, -0) {};
		\node [style=none] (1) at (1, -0) {};
		\node [style=rn] (2) at (1, 2.25) {};
		\node [style=rn] (3) at (0.5, 1.5) {};
		\node [style=none] (4) at (2, -0) {};
		\node [style=gn] (5) at (2.5, 2.75) {};
		\node [style=rn] (6) at (4, 3.5) {};
		\node [style=none] (7) at (3.5, -0) {};
		\node [style=none] (8) at (4.5, -0) {};
		\node [style=none] (9) at (1, 5) {};
		\node [style=none] (10) at (2.5, 5) {};
		\node [style=none] (11) at (4, 5) {};
	\end{pgfonlayer}
	\begin{pgfonlayer}{edgelayer}
		\draw [style=simple, bend left, looseness=0.75] (0.center) to (3);
		\draw [style=simple, bend right=15, looseness=1.00] (1.center) to (3);
		\draw [style=simple, bend right=15, looseness=1.00] (4.center) to (2);
		\draw [style=simple, bend left=15, looseness=0.75] (7.center) to (6);
		\draw [style=simple, bend right=15, looseness=0.75] (8.center) to (6);
		\draw [style=simple] (6) to (11.center);
		\draw [style=simple] (5) to (10.center);
		\draw [style=simple, bend left=15, looseness=1.25] (3) to (2);
		\draw [style=simple] (2) to (9.center);
	\end{pgfonlayer}
\end{tikzpicture}
\caption{The image of the function $f: \underline{5} \to \underline{3}$, $f(1)=f(2)=f(3)=1$, $f(4)=f(5)=3$ under the isomorphism ${\bf \Delta} \to {\bf FM}$.}
\label{deltatreesexample}
\end{figure}
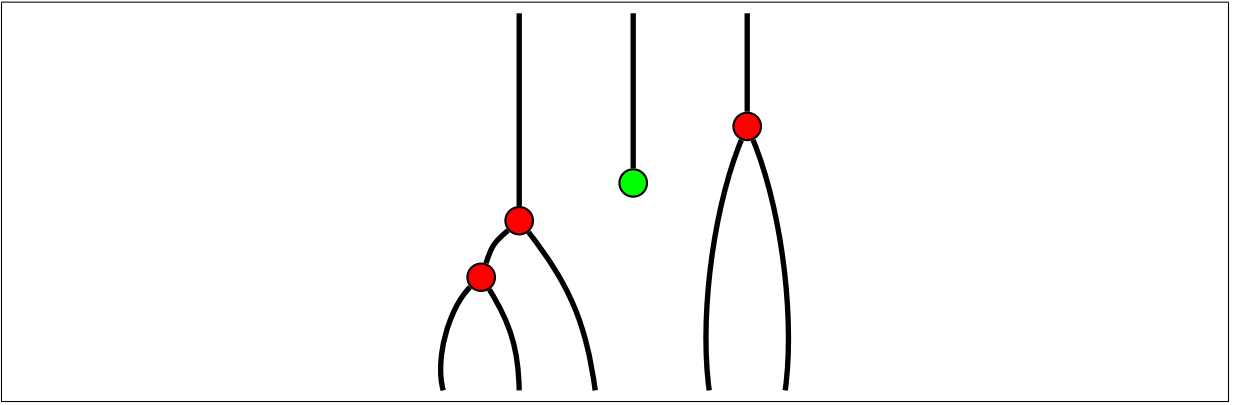
\end{proof}

\begin{definition}
We call the compositions $\mu^n$ \emph{multiplication trees}, or just~\emph{trees}.
\end{definition}

We now consider the free braided monoidal category (PROB) ${\bf F^{br}M}$ on the monoid computad. We can always pull the monoid structure through any braidings, which allows us to split any 1-cell into a braid followed by a monoid map. The 1-cells of the isomorphic combinatorial category will therefore have a braid part and a monotone function part. `Pulling through' is formally a distributive law~\cite{Day2003} between the braid structure and the monoid structure, which may be used to define composition. The following lemma and proposition make this precise. 

We write the image of the embedding ${\bf \Delta} \hookrightarrow {\bf F^{br}M}$, which picks out the morphisms without braiding, as ${\bf \Delta} \subset {\bf F^{br}M}$. Let ${\bf B}$ be the free braided monoidal category on a single object; we write the image of the  embedding ${\bf B} \hookrightarrow {\bf F^{br}M}$, which picks out the morphisms made entirely of braids, as ${\bf B} \subset {\bf F^{br}M}$.

\begin{lemma}[{\cite[Section 4]{Day2003}}]\label{distributivelawlemma}
For any $b \in {\bf B}$, $f \in {\bf \Delta}$, there exist unique morphisms $b' \in {\bf B}$, $f' \in {\bf \Delta}$ such that $b \circ f = f' \circ b'$ in ${\bf F^{br}M}$; there is a corresponding distributive law
\begin{equation}\label{eq:distlawbraid} B_n \times {\bf \Delta}(m,n) \overset{\delta_{m,n}}{\to} {\bf \Delta}(m,n) \times B_m
\end{equation}
where $B_n$ is the braid group on $n$ points and ${\bf \Delta}(m,n)$ are the morphisms $\underline{m}\to\underline{n}$ in ${\bf \Delta}$.
\end{lemma}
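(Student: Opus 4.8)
The plan is to prove the factorization by the standard \emph{pull-through} argument and then to establish uniqueness via a faithfulness (coherence) input. For existence I would induct on the number of generating 1-cells ($m$ and $u$) occurring in $f$, using naturality of the braiding in ${\bf F^{br}M}$. The base case $f=\id$ is trivial. For the inductive step, consider the topmost generator of the tree $\tilde f$ sitting immediately below $b$. If it is a multiplication $m:\underline 2\to\underline 1$, naturality of $R^{\pm}$ with respect to $m$ lets me slide $b$ below it, at the cost of replacing the single braided strand by a pair of parallel braided strands; if it is a unit $u:\underline 0\to\underline 1$, then $R^{\pm}_{C,Z}\circ(u\otimes\id)=(\id\otimes u)\circ R^{\pm}_{I,Z}=\id\otimes u$, so the braiding on that strand is simply deleted. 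Iterating, $b$ is pushed entirely below $\tilde f$, leaving a diagram $f'\circ b'$ with $f'\in{\bf \Delta}$ and $b'\in{\bf B}$. Concretely, writing $p_i=|f^{-1}(i)|$ for the block sizes of the monotone function $f$, the braid $b'\in B_m$ is the \emph{cabling} of $b$ obtained by replacing the $i$-th strand of $b$ by $p_i$ parallel strands, and $f'$ is the unique monotone function whose $j$-th block has size $p_{\bar b^{-1}(j)}$, where $\bar b\in S_n$ is the underlying permutation of $b$.

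For uniqueness I would argue in two stages. First, $f'$ is forced: the \emph{underlying function} $g:\underline m\to\underline n$ of $b\circ f$ (the connectivity of input to output strands, which is invariant under all the defining relations) satisfies $g=f'\circ\bar{b'}$ with $\bar{b'}$ a permutation, so $|g^{-1}(j)|=|f'^{-1}(j)|$ for all $j$; since a monotone function is determined by its multiset of ordered fibre sizes, $f'$ is uniquely determined by $g$, hence by $b\circ f$. Second, with $f'$ fixed I must show that post-composition $b'\mapsto\tilde{f'}\circ b'$ is injective on $B_m$. This is where coherence enters: because ${\bf F^{br}M}$ is the \emph{free} braided monoidal category on the monoid computad (so in particular no commutativity relation $m\circ R=m$ is imposed), the braiding is represented faithfully, and the total braid of the $m$ input strands---all of whose crossings lie below every merge in the factored form $\tilde{f'}\circ b'$---is a complete invariant recovering $b'$. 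I would make this precise by invoking coherence for braided monoidal categories (the one-object case is the computadic statement of Theorem~\ref{gurskicoherencethmbraids}, with the general case as in~\cite{Day2003}).

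Finally, the assignment $(b,f)\mapsto(f',b')$ defines the map $\delta_{m,n}$ of~\eqref{eq:distlawbraid}; the identity $b\circ f=f'\circ b'$ is exactly the pull-through computed above, and the fact that $\delta$ assembles into a genuine distributive law (compatibility with the braid-group multiplication in the $B$-coordinate and with composition of monotone functions in the ${\bf \Delta}$-coordinate) follows by pasting the corresponding pull-through diagrams, which is routine once existence and uniqueness are in hand.

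I expect the main obstacle to be the second stage of uniqueness: showing that the tree $\tilde{f'}$ does not collapse distinct braids under post-composition. The subtlety is that the underlying \emph{permutation} alone does \emph{not} determine $b'$, since braids acting within a single fibre of $f'$ are invisible to $g$; one therefore genuinely needs faithfulness of the braiding, i.e.\ a coherence theorem, together with the observation that in the factored form every braid crossing sits strictly below every multiplication node.
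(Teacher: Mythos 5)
The paper offers no in-text proof of this lemma --- it is stated with a bare citation to Day--Street --- so your reconstruction is the substantive content here, and most of it is right. Your existence argument (induct on generators of $f$, slide $b$ below each $m$ by naturality of $R^{\pm}$ plus the hexagon, delete crossings at each $u$ via $R^{\pm}_{I,Z}=\id$) is exactly the pull-through mechanism the paper alludes to, and your explicit formulas are correct: $b'$ is the cable of $b$ with strand $i$ replaced by $p_i=|f^{-1}(i)|$ parallel strands, and $f'$ has $j$-th fibre of size $p_{\bar b^{-1}(j)}$. Your first uniqueness stage is also sound: the underlying function $\underline m\to\underline n$ is invariant under all the defining relations (associativity, unit laws, naturality, hexagons all preserve input--output connectivity), and a monotone function is determined by its ordered tuple of fibre sizes, so $f'$ is forced.

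The soft spot is the second uniqueness stage, and your proposed justification does not yet close it. Theorem~\ref{gurskicoherencethmbraids} (and likewise Joyal--Street coherence) concerns categories freely generated by computads with \emph{no} tree-type generators --- it says braids embed faithfully among the structural morphisms, but it says nothing about whether postcomposition with $\tilde{f'}$ remains injective once the monoid relations are imposed. That this is a real issue is shown by the failure of naive left-cancellation: $\mu\circ(u\otimes\id)=\mu\circ(\id\otimes u)$ while $u\otimes\id\neq\id\otimes u$, so $\tilde{f'}$ is not a monomorphism, and the restriction to braids is doing genuine work (you correctly sense this in your closing paragraph). Your candidate invariant --- ``the total braid of the $m$ input strands'' --- is only defined on factored diagrams, whereas an equality $\tilde{f'}\circ b'_1=\tilde{f'}\circ b'_2$ in ${\bf F^{br}M}$ is witnessed by a zig-zag of elementary relation applications passing through \emph{non}-factored diagrams; you must show the invariant extends to all diagrams and is preserved at every elementary step, and that is precisely the hard content, not a corollary of coherence. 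The standard non-circular repair (and in effect what Day--Street do) reverses the logic: define the cabling operation $\delta_{m,n}$ purely combinatorially, verify directly that it satisfies Beck's distributive-law axioms (cabling is compatible with multiplication in $B_n$ and with composition of monotone maps --- a routine operadic identity), so that ${\bf B\Delta}$ of Definition~\ref{bdeltadefinition} is a well-defined braided monoidal category containing $\underline 1$ as a monoid; the universal property of ${\bf F^{br}M}$ then yields a strict braided monoidal functor ${\bf F^{br}M}\to{\bf B\Delta}$ sending $\tilde{f'}\circ b'$ to $(b',f')$, and injectivity of $b'\mapsto\tilde{f'}\circ b'$ falls out by applying this functor. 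Note also that your final remark --- that the Beck axioms follow ``once existence and uniqueness are in hand'' --- has the dependency backwards on this route: the axioms must be checked combinatorially first, since uniqueness is derived from the comparison functor they make possible.
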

\noindent
Under the distributive law, let $\delta_{m,n}^{B}:= \pi_{B_m} \circ \delta(m,n)$ be the braid part of the image, and let $\delta_{m,n}^{\Delta}:= \pi_{\Delta} \circ \delta(m,n)$ be the monoid part. 
\begin{definition}\label{bdeltadefinition}
The braided monoidal category ${\bf B \Delta}$ has objects natural numbers. Morphisms $\underline{m} \to \underline{n}$ are pairs $(\sigma,f)$, where $\sigma \in B_m$ and $f \in {\bf \Delta}(m,n)$. Composition is defined using the distributive law and composition in ${\bf \Delta}$ and the braid group in the following way. For $(\sigma_1,f_1) \in  \Hom(\underline{m},\underline{n})$ and $(\sigma_2,f_2) \in  \Hom(\underline{n},\underline{o})$, we define:
\begin{equation}
\label{eq:compositionbdelta}
(\sigma_2,f_2) \circ (\sigma_1,f_1):=  (\delta^{B}_{m,n}(\sigma_2,f_1) \circ \sigma_1, f_2 \circ \delta^{\Delta}_{m,n}(\sigma_2,f_1)).
\end{equation}
The monoidal product on objects is addition of natural numbers; on morphisms, for $(\sigma_1,f_1) \in \Hom(\underline{m},\underline{m'})$, $(\sigma_2,f_2) \in \Hom(\underline{n},\underline{n'})$ we define  
\begin{equation}
\label{eq:productbdelta}
(\sigma_1, f_1) \otimes (\sigma_2,f_2) := (\sigma_1 \times \sigma_2, f_1 \sqcup f_2) \in \Hom(\underline{m}+\underline{n}, \underline{m'}+\underline{n'}),
\end{equation}
where $\sqcup$ is the coproduct in ${\bf Set}$ and $\times$ is the Cartesian product of groups. The braiding $\sigma_{m,n}: \underline{m} \otimes \underline{n} \to \underline{n} \otimes \underline{m}$ is simply the corresponding braid $(\sigma, \id_{\underline{m+n}}) \in \Hom(\underline{m+n},\underline{m+n})$. 
\end{definition}

\begin{proposition}\label{probformonoidsprop}
The PROB on the monoid computad, ${\bf F^{br}M}$, is isomorphic to ${\bf B \Delta}$.
\end{proposition}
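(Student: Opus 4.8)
The plan is to define a braided strict monoidal functor $\Phi\colon {\bf B\Delta} \to {\bf F^{br}M}$ and to show it is a bijection on objects and on hom-sets. On objects $\Phi$ is the identity on $\mathbb{N}$. To define $\Phi$ on morphisms, write $\tilde f \in {\bf F^{br}M}$ for the image of a monotone function $f \in {\bf \Delta}(m,n)$ under the composite ${\bf \Delta} \xrightarrow{\sim} {\bf FM} \hookrightarrow {\bf F^{br}M}$ (the isomorphism of Proposition~\ref{proformonoidsprop} followed by the inclusion of the PRO into the PROB), and write $\bar\sigma \in {\bf F^{br}M}$ for the image of a braid $\sigma \in B_m$ under the inclusion ${\bf B} \hookrightarrow {\bf F^{br}M}$. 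Then I set $\Phi(\sigma, f) := \tilde f \circ \bar\sigma$, so that the braid acts on the $m$ inputs and the tree sits above it. Bijectivity on objects is immediate, so everything reduces to functoriality and bijectivity on hom-sets.

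Functoriality is exactly where the composition law of ${\bf B\Delta}$ pays off. Identities are preserved since $\tilde{\id}=\id$ and $\bar{\id}=\id$. For composition I expand $\Phi(\sigma_2,f_2)\circ\Phi(\sigma_1,f_1)=\tilde f_2 \circ \bar\sigma_2 \circ \tilde f_1 \circ \bar\sigma_1$ and apply the distributive law of Lemma~\ref{distributivelawlemma} to the central factor $\bar\sigma_2 \circ \tilde f_1$, rewriting it as $\widetilde{\delta^{\Delta}_{m,n}(\sigma_2,f_1)} \circ \overline{\delta^{B}_{m,n}(\sigma_2,f_1)}$. Faithfulness of the two inclusions (composition in ${\bf \Delta}$ and in the braid group) then collapses the four factors into $\widetilde{f_2 \circ \delta^{\Delta}_{m,n}(\sigma_2,f_1)} \circ \overline{\delta^{B}_{m,n}(\sigma_2,f_1)\circ\sigma_1}$, which is precisely $\Phi$ applied to the composite defined in~\eqref{eq:compositionbdelta}. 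The same bookkeeping, together with the facts that a product of braids is a braid and a disjoint union of monotone functions is monotone, shows $\Phi$ is strict monoidal, and it carries the generating braiding of ${\bf B\Delta}$ to that of ${\bf F^{br}M}$ by construction, so $\Phi$ is braided.

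Surjectivity on hom-sets is the existence half of a unique factorization: every morphism of ${\bf F^{br}M}$ equals $\tilde f \circ \bar\sigma$ for some monotone $f$ and braid $\sigma$. I would prove this by induction on a generator-word for a morphism. The generators (a multiplication, a unit, a braiding, an identity) are each trivially of this form; the inductive step for $\circ$ is the central rewrite above, pushing the lower braid past the upper tree via $\delta$ to recover a single (tree)$\circ$(braid); and the step for $\otimes$ is immediate. This is the formal content of ``pulling the monoid structure through the braids.''

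The main obstacle is faithfulness, i.e. the uniqueness half: if $\tilde f_1 \circ \bar\sigma_1 = \tilde f_2 \circ \bar\sigma_2$ then $f_1=f_2$ and $\sigma_1=\sigma_2$. Forgetting the braiding to the underlying function gives $f_1 \circ \pi(\sigma_1) = f_2 \circ \pi(\sigma_2)$; since a monotone function is determined by its fibre cardinalities and $\pi(\sigma_i)$ is a bijection, this already recovers $f_1=f_2$, but it only pins the braids $\sigma_1,\sigma_2$ down to a common underlying permutation, so the genuine difficulty is recovering the full braid rather than just the permutation. I would resolve this by viewing $\tilde f \circ \bar\sigma$ as the factorization produced by a factorization system on ${\bf F^{br}M}$ whose uniqueness is exactly the uniqueness clause of Lemma~\ref{distributivelawlemma}: the subcategories ${\bf \Delta},{\bf B}\hookrightarrow{\bf F^{br}M}$ are faithful, they jointly generate ${\bf F^{br}M}$ by the surjectivity step, and their only interaction is the distributive law, so by the standard theory of distributive laws~\cite{Day2003} the assignment sending each morphism to its factored pair $(\sigma,f)$ is well defined and functorial. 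Verifying that this assignment inverts $\Phi$ on both sides then shows $\Phi$ is an isomorphism of braided monoidal categories, establishing ${\bf F^{br}M}\cong{\bf B\Delta}$.
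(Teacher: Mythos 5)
Your proposal is correct and takes essentially the same route as the paper: the paper's proof consists of defining the very same assignment $(\sigma,f)\mapsto\tilde f\circ\bar\sigma$ (identity on objects) and citing Day--Street~\cite[Section 4]{Day2003} for the rest, which is precisely where your distributive-law rewrite for functoriality, the word-induction for surjectivity, and the unique-factorization fact you invoke for faithfulness all live. As a small simplification, you do not need the ``factorization system'' framing (nor the preliminary recovery of $f_1=f_2$ from underlying functions): the uniqueness clause of Lemma~\ref{distributivelawlemma}, applied to $\tilde f_1 = \tilde f_2\circ\overline{\sigma_2\sigma_1^{-1}}$ viewed as two factorizations of the same morphism, yields $f_1=f_2$ and $\sigma_1=\sigma_2$ directly.
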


\begin{proof}
See {\cite[Section 4]{Day2003}}. The isomorphism is the identity function on 0-cells; on 1-cells, it is simply $(\sigma, f) \mapsto \tilde{f} \circ \sigma$, where $\tilde{f}$ is defined as in~\eqref{eq:monotonefunctionimage}. The diagrammatic representation is shown in Figure~\ref{monoidisomorphismfunctor}.
\begin{figure}
\centering
\raisebox{1.5cm}{{\huge $(\sigma,f) \;\mapsto\;\;$}}
\begin{tikzpicture}
	\begin{pgfonlayer}{nodelayer}
		\node [style=none] (0) at (0, -0) {};
		\node [style=none] (1) at (0, 0.5) {};
		\node [style=none] (2) at (2, -0) {};
		\node [style=none] (3) at (2, 0.5) {};
		\node [style=none] (4) at (0, 1.5) {};
		\node [style=none] (5) at (2, 1.5) {};
		\node [style=none] (6) at (1, 1) {$\sigma$};
		\node [style=none] (7) at (0, 1.5) {};
		\node [style=none] (8) at (0, 1.75) {};
		\node [style=none] (9) at (2, 1.75) {};
		\node [style=none] (10) at (0, 2) {};
		\node [style=none] (11) at (2, 2) {};
		\node [style=none] (12) at (0, 3) {};
		\node [style=none] (13) at (2, 3) {};
		\node [style=none] (14) at (2, 3.5) {};
		\node [style=none] (15) at (0, 3.5) {};
		\node [style=none] (16) at (1, 3.5) {$\dots$};
		\node [style=none] (17) at (1, 1.75) {$\dots$};
		\node [style=none] (18) at (1, 2.5) {$\tilde{f}$};
		\node [style=none] (19) at (1, -0) {$\dots$};
		\node [style=none] (20) at (0.25, 3.5) {};
		\node [style=none] (21) at (1.75, 3.5) {};
		\node [style=none] (22) at (0.25, -0) {};
		\node [style=none] (23) at (1.75, -0) {};
		\node [style=none] (24) at (0.25, 0.5) {};
		\node [style=none] (25) at (1.75, 0.5) {};
		\node [style=none] (26) at (0.25, 3) {};
		\node [style=none] (27) at (1.75, 3) {};
		\node [style=none] (28) at (0.25, 2) {};
		\node [style=none] (29) at (0.25, 1.5) {};
		\node [style=none] (30) at (1.75, 2) {};
		\node [style=none] (31) at (1.75, 1.5) {};
	\end{pgfonlayer}
	\begin{pgfonlayer}{edgelayer}
		\draw [style=simple] (15.center) to (12.center);
		\draw [style=simple] (12.center) to (13.center);
		\draw [style=simple] (14.center) to (13.center);
		\draw [style=simple] (12.center) to (10.center);
		\draw [style=simple] (10.center) to (11.center);
		\draw [style=simple] (11.center) to (13.center);
		\draw [style=simple] (10.center) to (8.center);
		\draw [style=simple] (11.center) to (9.center);
		\draw [style=simple] (8.center) to (4.center);
		\draw [style=simple] (9.center) to (5.center);
		\draw [style=simple] (4.center) to (1.center);
		\draw [style=simple] (1.center) to (3.center);
		\draw [style=simple] (3.center) to (5.center);
		\draw [style=simple] (5.center) to (4.center);
		\draw [style=simple] (1.center) to (0.center);
		\draw [style=simple] (3.center) to (2.center);
		\draw [style=simple] (20.center) to (26.center);
		\draw [style=simple] (21.center) to (27.center);
		\draw [style=simple] (22.center) to (24.center);
		\draw [style=simple] (23.center) to (25.center);
		\draw [style=simple] (29.center) to (28.center);
		\draw [style=simple] (31.center) to (30.center);
	\end{pgfonlayer}
\end{tikzpicture}
\caption{The image of the pair of a braiding and a monotone function under the isomorphisms defined in this section.}
\label{monoidisomorphismfunctor}
\end{figure}
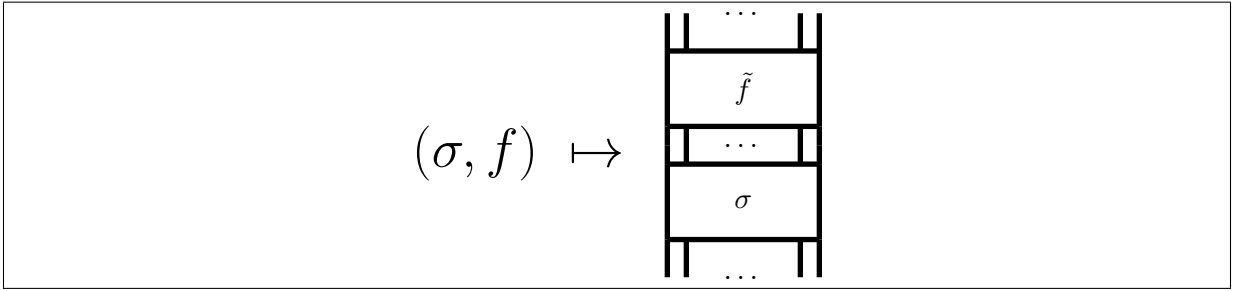
\end{proof}
\noindent
The PROP for monoids, ${\bf F^{sym}M}$, can be treated similarly. Let $S_n$ be the symmetric group on $n$ points. There is a surjective homomorphism $q: B_n \to S_n$, which takes a braid to its underlying permutation, and is suitably compatible with the distributive law. We therefore obtain another distributive law 
\begin{equation*}
S_n \times {\bf \Delta}(m,n) \overset{\delta^s_{m,n}}{\to} {\bf \Delta}(m,n) \times S_m
\end{equation*}
encoding the effect of pulling the monoid structure through the permutations.
\begin{definition}
The symmetric monoidal category ${\bf S \Delta}$ has natural numbers for objects. Morphisms $\underline{m} \to \underline{n}$ are pairs of a morphism in ${\bf \Delta}(m,n)$ and an element of $S_m$. Composition, monoidal product and braiding are defined as for ${\bf B \Delta}$~(\ref{eq:compositionbdelta}-\ref{eq:productbdelta}), using the distributive law $\delta^s$.
\end{definition}

\begin{proposition}
The PROP on the monoid computad, ${\bf F^{sym}M}$, is isomorphic to ${\bf S \Delta}$.
\end{proposition}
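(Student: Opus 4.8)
The plan is to mirror the proof of Proposition~\ref{probformonoidsprop} for the braided case, replacing the braid group $B_m$ by the symmetric group $S_m$ and the distributive law $\delta$ by its image $\delta^s$ under the quotient $q: B_n \to S_n$. Concretely, I would define a functor $F: {\bf S\Delta} \to {\bf F^{sym}M}$ that is the identity on objects and sends a morphism with permutation part $\pi \in S_m$ and monotone part $f \in {\bf \Delta}(m,n)$ to the composite $\tilde{f} \circ \pi$, where $\tilde{f}$ is the multiplication-tree 1-cell built from $f$ as in~\eqref{eq:monotonefunctionimage} and $\pi$ is realised as the corresponding permutation 1-cell in the PROP. One then checks that $F$ respects composition (via $\delta^s$, exactly as in~\eqref{eq:compositionbdelta}), monoidal product (as in~\eqref{eq:productbdelta}), and the symmetry, and that it is bijective on each hom-set.

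The cleanest route, however, is to deduce the result from the already-established braided case by a quotient argument. The PROP ${\bf F^{sym}M}$ is the quotient of the PROB ${\bf F^{br}M}$ by the single additional relation identifying the generating braiding with its inverse; this is precisely the relation presenting $S_n$ as a quotient of $B_n$ via $q$. On the combinatorial side, ${\bf S\Delta}$ is obtained from ${\bf B\Delta}$ by applying $q$ to the braid-group component $B_m$ of each hom-set while leaving the ${\bf \Delta}(m,n)$ component fixed. Since $q$ is compatible with the distributive law, so that $\delta^s$ is induced from $\delta$, this componentwise quotient is compatible with composition~\eqref{eq:compositionbdelta} and monoidal product~\eqref{eq:productbdelta}. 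I would therefore argue that the isomorphism ${\bf F^{br}M} \cong {\bf B\Delta}$ of Proposition~\ref{probformonoidsprop} descends along these two quotients to the desired isomorphism.

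The main obstacle is verifying that the two quotients have matching kernels: imposing symmetry on ${\bf F^{br}M}$ must identify exactly those pairs of morphisms whose images differ only in their braid part by an element of $\ker q$, and no others. The forward inclusion is immediate, since the symmetry relation in the PROP is the defining relation of the quotient $B_n \twoheadrightarrow S_n$. The reverse inclusion — that no unintended collapse occurs — is where care is needed: one must confirm that the monotone part $f$ is unaffected, which follows because the distributive law respects the product decomposition into a braid (now permutation) factor and a ${\bf \Delta}$ factor, with $q$ acting only on the former. Granting the compatibility of $q$ with $\delta$ asserted above, this amounts to checking that $\delta^s = (q \times \id) \circ \delta$ intertwines the two composition formulae, after which functoriality, fullness and faithfulness of $F$ follow from the braided case.
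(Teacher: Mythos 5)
Your proposal is correct, and its opening paragraph is in fact exactly the paper's proof: the paper simply cites Day--Street (\cite[Section 4]{Day2003}) and records that the isomorphism is the identity on objects and $(s,f) \mapsto \tilde{f} \circ s$ on morphisms, i.e.\ the direct analogue of Proposition~\ref{probformonoidsprop} with $B_m$ replaced by $S_m$ and $\delta$ by $\delta^s$. Your preferred ``cleanest route,'' however, is genuinely different: rather than re-running the distributive-law argument, you descend the braided isomorphism ${\bf F^{br}M} \cong {\bf B\Delta}$ along two parallel quotients --- imposing $R = R^{-1}$ on the PROB side, and applying $q: B_m \twoheadrightarrow S_m$ componentwise on the combinatorial side. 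This buys economy (the normal-form decomposition into a braid factor and a ${\bf \Delta}$ factor is proved once, in the braided case) and makes the role of $q$ structural rather than incidental; the cost is precisely the kernel-matching obligation you identify. Your resolution of it is the right one: the forward inclusion is the normal generation of $\ker q$ by the elements $\sigma_i^2$, and the reverse inclusion (no unintended collapse, in particular that the monotone part $f$ is untouched) follows because the componentwise map ${\bf B\Delta} \to {\bf S\Delta}$ is a strict monoidal functor killing the symmetry relation, hence the congruence on ${\bf F^{br}M}$ factors through it and can only identify morphisms with equal $f$ and equal $q$-image of the braid part. The paper's route buys immediacy and exact parallelism with the braided case, at the price of leaning entirely on the cited external result.

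One technical slip worth fixing: the formula $\delta^s = (q \times \id) \circ \delta$ does not typecheck, since $\delta_{m,n}: B_n \times {\bf \Delta}(m,n) \to {\bf \Delta}(m,n) \times B_m$ has the braid factor in the second slot of its codomain. The correct compatibility is the intertwining relation
\[
(\id \times q) \circ \delta_{m,n} \;=\; \delta^s_{m,n} \circ (q \times \id),
\]
which, by surjectivity of $q$, uniquely determines $\delta^s$ and is exactly what the descent of the composition formula~\eqref{eq:compositionbdelta} and the product formula~\eqref{eq:productbdelta} requires. With that correction your argument goes through.
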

\begin{proof}
See~{\cite[Section 4]{Day2003}}. Again, the isomorphism is the identity function on 0-cells, and on 1-cells it is $(s, f) \mapsto \tilde{f} \circ s$, as in Figure \ref{monoidisomorphismfunctor}.
\end{proof}

We now turn to commutative monoids. One needs a braiding in order to define the commutativity equality, so it is meaningless to consider the PRO in this case.

We begin with the PROB ${\bf F^{br} CM}$. Since all the equations in the monoid computad are satisfied, ${\bf F^{br} CM}$ will be a quotient category of ${\bf F^{br} M}$. The quotient is defined as follows. Given some morphism $(\sigma,f)$, the commutativity axiom allows us to alter $\sigma$ by absorbing or emitting braidings from the trees of $f$. For example, Figure~\ref{braidemittanceexample} shows emission of the braiding $\sigma_2^{-1} \in B_3$ from a single tree with 3 inputs.

\begin{figure}
\centering
\begin{tikzpicture}
	\begin{pgfonlayer}{nodelayer}
		\node [style=rn] (0) at (0.5, 2) {};
		\node [style=rn] (1) at (1, 2.5) {};
		\node [style=none] (2) at (1, 3.25) {};
		\node [style=none] (3) at (0, -0) {};
		\node [style=none] (4) at (1, -0) {};
		\node [style=none] (5) at (2, -0) {};
		\node [style=rn] (6) at (5, 2.5) {};
		\node [style=none] (7) at (4, 1.5) {};
		\node [style=none] (8) at (5, -0) {};
		\node [style=none] (9) at (6, -0) {};
		\node [style=none] (10) at (4, -0) {};
		\node [style=none] (11) at (5, 3.25) {};
		\node [style=none] (12) at (4, 1.5) {};
		\node [style=none] (13) at (6, 1.5) {};
		\node [style=none] (14) at (3, 1.5) {$=$};
		\node [style=rn] (15) at (5.5, 2) {};
		\node [style=none] (16) at (2, 1.5) {};
		\node [style=none] (17) at (1, 1.5) {};
		\node [style=none] (18) at (0, 1.5) {};
		\node [style=none] (19) at (0, 1.5) {};
		\node [style=none] (20) at (5, 1.5) {};
		\node [style=none] (21) at (5.5, 1.25) {};
		\node [style=none] (22) at (5.75, 1) {};
		\node [style=none] (23) at (5.5, 0.25) {};
		\node [style=none] (24) at (5.75, 0.5) {};
		\node [style=none] (25) at (7.5, 1.5) {};
		\node [style=none] (26) at (8.5, 3.25) {};
		\node [style=none] (27) at (9, 0.75) {};
		\node [style=none] (28) at (8.75, -0) {};
		\node [style=rn] (29) at (9, 2) {};
		\node [style=rn] (30) at (8.5, 2.5) {};
		\node [style=none] (31) at (9.25, 1) {};
		\node [style=none] (32) at (7.5, -0) {};
		\node [style=none] (33) at (9.5, -0) {};
		\node [style=none] (34) at (7.5, 1.5) {};
		\node [style=none] (35) at (9, 2) {};
		\node [style=none] (36) at (9, 2) {};
		\node [style=none] (37) at (6.75, 1.5) {$=$};
	\end{pgfonlayer}
	\begin{pgfonlayer}{edgelayer}
		\draw [style=simple] (0) to (1);
		\draw [style=simple] (1) to (2.center);
		\draw [style=simple] (6) to (11.center);
		\draw [style=simple] (10.center) to (12.center);
		\draw [style=simple] (7.center) to (6);
		\draw [style=simple] (13.center) to (15);
		\draw [style=simple] (15) to (6);
		\draw [style=simple] (16.center) to (1);
		\draw [style=simple] (5.center) to (16.center);
		\draw [style=simple] (4.center) to (17.center);
		\draw [style=simple] (17.center) to (0);
		\draw [style=simple] (18.center) to (0);
		\draw [style=simple] (3.center) to (18.center);
		\draw [style=simple] (20.center) to (15);
		\draw [style=simple, bend left=60, looseness=2.00] (9.center) to (13.center);
		\draw [style=simple] (8.center) to (23.center);
		\draw [style=simple, bend right=45, looseness=1.00] (24.center) to (22.center);
		\draw [style=simple] (21.center) to (20.center);
		\draw [style=simple] (30) to (26.center);
		\draw [style=simple] (32.center) to (34.center);
		\draw [style=simple] (25.center) to (30);
		\draw [style=simple] (29) to (30);
		\draw [style=simple] (28.center) to (27.center);
		\draw [style=simple, bend left=15, looseness=1.00] (33.center) to (35.center);
		\draw [style=simple, bend right=45, looseness=1.00] (31.center) to (36.center);
	\end{pgfonlayer}
\end{tikzpicture}
\caption{Emission of the braiding $\sigma_2^{-1} \in B_3$ from a single tree with 3 inputs. The first equality uses associativity and the braided structure of the category, and the second uses commutativity.}
\label{braidemittanceexample}
\end{figure}
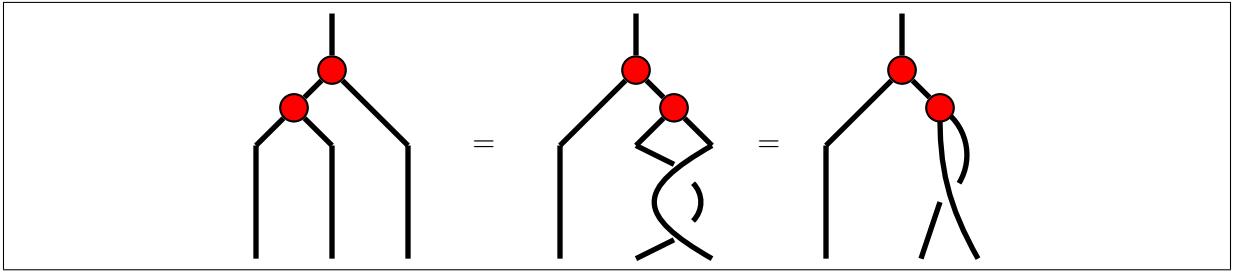

Each $f \in {\bf \Delta}(m,n)$ has fibres $f^{-1}(i)$, $i \in \{1,\dots,n\}$; we write $p_i = |f^{-1}(i)|$. Using the commutativity equality, we can create braidings or inverse braidings underneath the trees, move them downwards and absorb them into $\sigma$. Letting $p_i = |f^{-1}(i)|$, we obtain an action of $\prod_{i=1}^n B_{p_i}$ on $B_m$ by postcomposition. Figure \ref{braidemittanceandabsorption} depicts this for one morphism $\underline{7} \to \underline{3}$.

\begin{figure}
\centering
\begin{tikzpicture}
	\begin{pgfonlayer}{nodelayer}
		\node [style=none] (0) at (0, 4) {};
		\node [style=none] (1) at (1, 4) {};
		\node [style=none] (2) at (2, 4) {};
		\node [style=rn] (3) at (0.5, 4.5) {};
		\node [style=rn] (4) at (1, 5) {};
		\node [style=none] (5) at (1, 8.25) {};
		\node [style=none] (6) at (0, 4) {};
		\node [style=none] (7) at (0, 1) {};
		\node [style=none] (8) at (7, 4) {};
		\node [style=none] (9) at (7, 1) {};
		\node [style=none] (10) at (3.5, 1.5) {$\sigma \in B_7$};
		\node [style=gn] (11) at (3, 5.5) {};
		\node [style=none] (12) at (3, 8.25) {};
		\node [style=none] (13) at (4, 6) {};
		\node [style=none] (14) at (5, 6) {};
		\node [style=none] (15) at (6, 6) {};
		\node [style=none] (16) at (7, 6) {};
		\node [style=rn] (17) at (4.5, 6.5) {};
		\node [style=rn] (18) at (5, 7) {};
		\node [style=rn] (19) at (5.5, 7.5) {};
		\node [style=none] (20) at (5.5, 8.25) {};
		\node [style=none] (21) at (4, 4) {};
		\node [style=none] (22) at (5, 4) {};
		\node [style=none] (23) at (6, 4) {};
		\node [style=none] (24) at (0, 2) {};
		\node [style=none] (25) at (7, 2) {};
		\node [style=none] (26) at (1, 2) {};
		\node [style=none] (27) at (2, 2) {};
		\node [style=none] (28) at (4, 2) {};
		\node [style=none] (29) at (5, 2) {};
		\node [style=none] (30) at (6, 2) {};
		\node [style=none] (31) at (0, -0) {};
		\node [style=none] (32) at (1, -0) {};
		\node [style=none] (33) at (2, -0) {};
		\node [style=none] (34) at (4, -0) {};
		\node [style=none] (35) at (5, -0) {};
		\node [style=none] (36) at (6, -0) {};
		\node [style=none] (37) at (7, -0) {};
		\node [style=none] (38) at (1, 1) {};
		\node [style=none] (39) at (2, 1) {};
		\node [style=none] (40) at (4, 1) {};
		\node [style=none] (41) at (5, 1) {};
		\node [style=none] (42) at (6, 1) {};
		\node [style=none] (43) at (0, 3.5) {};
		\node [style=none] (44) at (2, 3.5) {};
		\node [style=none] (45) at (2, 2.5) {};
		\node [style=none] (46) at (0, 2.5) {};
		\node [style=none] (47) at (1, 3) {$B_3$};
		\node [style=none] (48) at (4, 3.5) {};
		\node [style=none] (49) at (4, 2.5) {};
		\node [style=none] (50) at (7, 2.5) {};
		\node [style=none] (51) at (5.5, 3) {$B_4$};
		\node [style=none] (52) at (7, 3.5) {};
		\node [style=none] (53) at (1, 2.5) {};
		\node [style=none] (54) at (5, 2.5) {};
		\node [style=none] (55) at (6, 2.5) {};
		\node [style=none] (56) at (1, 3.5) {};
		\node [style=none] (57) at (5, 3.5) {};
		\node [style=none] (58) at (6, 3.5) {};
		\node [style=none] (59) at (6.5, 3) {$\Updownarrow$};
		\node [style=none] (61) at (1.75, 3) {$\Updownarrow$};
	\end{pgfonlayer}
	\begin{pgfonlayer}{edgelayer}
		\draw [style=simple] (0.center) to (3);
		\draw [style=simple] (1.center) to (3);
		\draw [style=simple] (3) to (4);
		\draw [style=simple] (2.center) to (4);
		\draw [style=simple] (4) to (5.center);
		\draw [style=simple] (11) to (12.center);
		\draw [style=simple] (13.center) to (17);
		\draw [style=simple] (14.center) to (17);
		\draw [style=simple] (17) to (18);
		\draw [style=simple] (15.center) to (18);
		\draw [style=simple] (16.center) to (19);
		\draw [style=simple] (18) to (19);
		\draw [style=simple] (19) to (20.center);
		\draw [style=simple] (21.center) to (13.center);
		\draw [style=simple] (14.center) to (22.center);
		\draw [style=simple] (15.center) to (23.center);
		\draw [style=simple] (8.center) to (16.center);
		\draw [style=simple] (24.center) to (25.center);
		\draw [style=simple] (9.center) to (7.center);
		\draw [style=simple] (24.center) to (7.center);
		\draw [style=simple] (25.center) to (9.center);
		\draw [style=simple] (7.center) to (31.center);
		\draw [style=simple] (38.center) to (32.center);
		\draw [style=simple] (39.center) to (33.center);
		\draw [style=simple] (40.center) to (34.center);
		\draw [style=simple] (41.center) to (35.center);
		\draw [style=simple] (42.center) to (36.center);
		\draw [style=simple] (9.center) to (37.center);
		\draw [style=simple] (43.center) to (44.center);
		\draw [style=simple] (45.center) to (44.center);
		\draw [style=simple] (43.center) to (46.center);
		\draw [style=simple] (46.center) to (45.center);
		\draw [style=simple] (48.center) to (52.center);
		\draw [style=simple] (50.center) to (52.center);
		\draw [style=simple] (48.center) to (49.center);
		\draw [style=simple] (49.center) to (50.center);
		\draw [style=simple] (46.center) to (24.center);
		\draw [style=simple] (53.center) to (26.center);
		\draw [style=simple] (45.center) to (27.center);
		\draw [style=simple] (49.center) to (28.center);
		\draw [style=simple] (54.center) to (29.center);
		\draw [style=simple] (30.center) to (55.center);
		\draw [style=simple] (25.center) to (50.center);
		\draw [style=simple] (0.center) to (43.center);
		\draw [style=simple] (56.center) to (1.center);
		\draw [style=simple] (44.center) to (2.center);
		\draw [style=simple] (48.center) to (21.center);
		\draw [style=simple] (57.center) to (22.center);
		\draw [style=simple] (58.center) to (23.center);
		\draw [style=simple] (8.center) to (52.center);
	\end{pgfonlayer}
\end{tikzpicture}
\caption{}
\label{braidemittanceandabsorption}
\end{figure}
Let $\sim_f$ be the equivalence relation which identifies elements of $B_n$ if they are in the same orbit under this action. The distributive law is suitably compatible with the action, which allows us to define the following category.

\begin{definition}
The category ${\bf B\Delta/\sim}$ is defined in the same way as ${\bf B\Delta}$, but where the morphisms $f:\underline{m} \to \underline{n}$ are now pairs $(\bar{\sigma},f)$, where $f \in \Delta(m,n)$ and $\bar{\sigma} \in B_m/\sim_f$. 
\end{definition}

\begin{proposition}\label{probforcommmonprop}
The PROB on the commutative monoid computad is isomorphic to ${\bf B\Delta/\sim}$.
\end{proposition}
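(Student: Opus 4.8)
The plan is to realise ${\bf F^{br}CM}$ as a quotient of ${\bf F^{br}M}$ and transport the isomorphism of Proposition~\ref{probformonoidsprop} to the quotient. Since the commutative monoid computad is the monoid computad together with the single commutativity equality $\mu\circ\sigma_{1,1}=\mu$, the PROB ${\bf F^{br}CM}$ is exactly the quotient of ${\bf F^{br}M}$ by the braided-monoidal congruence $\equiv$ generated by this equality. Via the isomorphism ${\bf F^{br}M}\cong{\bf B\Delta}$ this congruence becomes a congruence on ${\bf B\Delta}$, and it suffices to prove two things: that $\equiv$ coincides with the relation ``same $f$, and $\sigma$-parts in the same $\prod_iB_{p_i}$-orbit'', and that this relation is a genuine congruence, so that the quotient category agrees with the category ${\bf B\Delta/\sim}$ of the definition. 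The isomorphism of the proposition then descends, and on $1$-cells is $(\bar\sigma,f)\mapsto\tilde f\circ\sigma$ with $\tilde f$ as in~\eqref{eq:monotonefunctionimage}.

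For the inclusion of $\sim$ into $\equiv$ (the absorption direction illustrated in Figure~\ref{braidemittanceexample}) I would prove, by induction on $p$, that a multiplication tree absorbs its whole braid group: $\mu^p\circ b=\mu^p$ for every $b\in B_p$. The base case is commutativity, which forces both $\mu\circ\sigma_{1,1}=\mu$ and $\mu\circ\sigma_{1,1}^{-1}=\mu$. For the inductive step I treat the Artin generators of $B_p$ separately: generators internal to the left subtree are absorbed by the inductive hypothesis after an application of associativity, while the topmost generator is absorbed by pulling it through the tree using a hexagon relation followed by commutativity. Applied blockwise to $\tilde f=\mu^{p_1}\otimes\cdots\otimes\mu^{p_n}$, this gives $\tilde f\circ(\beta\sigma)=\tilde f\circ\sigma$ for all $\beta$ in the image of $\prod_iB_{p_i}\hookrightarrow B_m$, i.e.\ $(\sigma,f)\equiv(\beta\sigma,f)$.

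For the reverse inclusion I would build the inverse map directly, setting $\Phi\colon{\bf F^{br}M}\to{\bf B\Delta/\sim}$, $(\sigma,f)\mapsto(\bar\sigma,f)$, and checking that it descends through $\equiv$. Because the target only records the $\sim_f$-orbit of $\sigma$, it is enough to verify that a single insertion of the commutativity relation into a larger $1$-cell changes $(\sigma,f)$ only by (a) fixing the underlying monotone function $f$---recoverable as the fibre/connectivity data, which commutativity merely permutes within a fibre---and (b) multiplying $\sigma$ by an element of $\prod_iB_{p_i}$. Together with the absorption map of the previous paragraph, $\Phi$ then exhibits ${\bf B\Delta/\sim}$ and ${\bf F^{br}CM}$ as mutually inverse, proving that $\equiv$ and $\sim$ coincide.

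The well-definedness of ${\bf B\Delta/\sim}$ as a category---that $\sim$ is compatible with the composition~\eqref{eq:compositionbdelta} and monoidal product~\eqref{eq:productbdelta}---and the functoriality of $\Phi$ both reduce to a single compatibility statement: the distributive law $\delta_{m,n}$ of Lemma~\ref{distributivelawlemma} intertwines the relevant orbit actions, carrying a block-braid for the fibres of $f_1$ to a block-braid for the fibres of $f_2$. I expect this compatibility, together with the reverse inclusion's claim that commutativity imposes no identifications beyond the orbit relation, to be the main obstacle; the cleanest route is to check well-definedness of $\Phi$ frame by frame on movies and only then invoke mutual inverseness.
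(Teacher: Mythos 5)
Your proposal is correct in outline but takes a genuinely different route from the paper: the paper's entire proof of this proposition is a citation of \cite[Theorem 2]{Lavers1997} together with the observation that the isomorphism takes $(\bar\sigma,f)\mapsto\tilde f\circ\bar\sigma$, whereas you reprove the result from scratch by presenting ${\bf F^{br}CM}$ as the quotient of ${\bf F^{br}M}\cong{\bf B\Delta}$ (Proposition~\ref{probformonoidsprop}) by the congruence $\equiv$ generated by commutativity, and showing $\equiv$ coincides with the orbit relation $\sim$. Your absorption lemma $\mu^p\circ b=\mu^p$ for all $b\in B_p$ is sound and gives $\sim\,\subseteq\,\equiv$ blockwise (using that fibres of a monotone $f$ are intervals, so $\prod_i B_{p_i}$ embeds as a block subgroup); note though that the top Artin generator needs only associativity, naturality of the braiding, and commutativity --- no hexagon. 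What your route buys is self-containedness; what it costs is that the two steps you explicitly defer --- that a single insertion of commutativity deep inside a diagram changes the braid part only by a block braid for the fibres of the resulting function, and that the distributive law of Lemma~\ref{distributivelawlemma} intertwines the orbit actions (conjugation of a block braid by the cabled braid $\delta^B_{m,n}(\sigma_2,f_1)$ lands in the block subgroup for the composite's fibres) --- are precisely the content of the cited theorem, and of the paper's own unproved assertion that ``the distributive law is suitably compatible with the action''. Both claims are true and follow from the cabling argument you gesture at, so the plan would go through, but as written it identifies rather than closes the essential step. One small terminological slip: ``frame by frame on movies'' imports the paper's 2-categorical language into this decategorified setting; what you correctly mean is checking invariance of $\Phi$ along each elementary application of the generating relation in a zig-zag witnessing $\equiv$.
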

\begin{proof}
See~\cite[Theorem 2]{Lavers1997}. Again, the isomorphism takes $(\bar{\sigma},f) \mapsto \tilde{f} \circ \bar{\sigma}$, as in Figure \ref{monoidisomorphismfunctor}.
\end{proof}
\noindent
Finally, we consider the PROP ${\bf F^{sym} CM}$. Again, this will be a quotient of ${\bf F^{sym} M}$. Rather than braidings, we now emit permutations from the trees, giving rise to an action of $\prod_{i=1}^n S_{p_i}$ on $S_m$ by postcomposition, which induces a quotient $S_m/\sim_f$. As before, we define the following category.

\begin{definition}
The category ${\bf S\Delta/\sim}$ is defined in the same way as ${\bf S\Delta}$, but where the morphisms $f:\underline{m} \to \underline{n}$ are now pairs $(\bar{\sigma},f)$ of $f \in \Delta(m,n)$ and $\bar{\sigma} \in  S_m/\sim_f$. 
\end{definition}
\noindent
It turns out that ${\bf S\Delta/\sim}$ is isomorphic to a familiar category.

\begin{definition}
The category ${\bf FS}$ has objects natural numbers, and morphisms $\underline{m} \to \underline{n}$ functions $\{1,\dots,m\} \to \{1,\dots,n\}$, where $\underline{0}$ is the empty set.
\end{definition}

\begin{proposition}\label{FSiscommonoidspropprop}
The PROP ${\bf F^{sym}CM}$ on the commutative monoid computad, the category ${\bf S\Delta/\sim}$, and ${\bf FS}$ are all isomorphic. 
\end{proposition}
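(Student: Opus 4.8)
The plan is to prove the statement as two separate isomorphisms, $\mathbf{F^{sym}CM}\cong\mathbf{S\Delta/\sim}$ and $\mathbf{S\Delta/\sim}\cong\mathbf{FS}$, and then invoke transitivity. For the first isomorphism I would start from the isomorphism $\mathbf{F^{sym}M}\cong\mathbf{S\Delta}$ established above, under which a $1$-cell $(s,f)$ corresponds to the string diagram $\tilde f\circ s$. Since $\mathbf{F^{sym}CM}$ is by definition the quotient of $\mathbf{F^{sym}M}$ by the commutativity equation, it suffices to check that, transported across this isomorphism, the commutativity relation is exactly the relation $\sim$ defining $\mathbf{S\Delta/\sim}$. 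As explained in the text preceding the proposition (the permutation analogues of Figures~\ref{braidemittanceexample} and \ref{braidemittanceandabsorption}), commutativity allows a permutation to be emitted from or absorbed into the tree over the fibre $f^{-1}(i)$, and the group of permutations so absorbable is exactly the block subgroup $\prod_{i=1}^{n} S_{p_i}\subseteq S_m$ with $p_i=|f^{-1}(i)|$; hence the relation identifies $(s,f)$ and $(s',f)$ precisely when $s,s'$ lie in the same orbit, i.e. when $\bar s=\bar s'$ in $S_m/\sim_f$, and it never alters $f$. Because the distributive law $\delta^s$ is compatible with this action, the quotient respects composition \eqref{eq:compositionbdelta}, monoidal product \eqref{eq:productbdelta} and braiding, so $\mathbf{F^{sym}CM}\cong\mathbf{S\Delta/\sim}$ as symmetric monoidal categories.

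For the second isomorphism I would define a functor $\Phi\colon\mathbf{S\Delta/\sim}\to\mathbf{FS}$ which is the identity on objects and sends a morphism $(\bar s,f)\colon\underline m\to\underline n$ to the function $f\circ s$, where $s\in S_m$ is any representative of $\bar s$. This is well defined independently of the representative: if $s'=t\,s$ with $t\in\prod_i S_{p_i}$ a block permutation preserving the fibres of $f$, then $f\circ t=f$ and hence $f\circ s'=f\circ s$; conversely these are the only coincidences, which is precisely the content of $\sim_f$. It is bijective on hom-sets: given an arbitrary function $g\colon\underline m\to\underline n$, set $p_i=|g^{-1}(i)|$ and let $f$ be the unique monotone function with those fibre sizes; any $s$ listing the elements of $g^{-1}(1),\dots,g^{-1}(n)$ in order satisfies $f\circ s=g$, giving surjectivity, while the fibre sizes of $g$ recover $f$ and the orbit $\bar s$ is determined by $g$, giving injectivity. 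Finally $\Phi$ preserves addition on objects, carries the disjoint union of morphisms \eqref{eq:productbdelta} to the disjoint union of functions, and sends the generating braiding $\sigma_{m,n}$ to the corresponding block transposition $\underline{m+n}\to\underline{m+n}$, so it is symmetric monoidal.

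The main obstacle is functoriality of $\Phi$ with respect to the composition \eqref{eq:compositionbdelta}, which is defined through the distributive law $\delta^s$ rather than by plain function composition. I would handle this by reinterpreting $\Phi$ diagrammatically: it is the restriction to $\mathbf{S\Delta/\sim}$ of the evident strict symmetric monoidal functor $\mathbf{F^{sym}CM}\to\mathbf{FS}$ that assigns to each string diagram the function sending every input to the output to which it is connected. This assignment is well defined on $\mathbf{F^{sym}CM}$ because such connectivity functions satisfy associativity, unitality and commutativity, so it descends through all the computad relations; and composition in $\mathbf{F^{sym}CM}$ is vertical pasting of diagrams, which it sends to composition of the underlying functions. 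Functoriality of $\Phi$ is therefore automatic, the delicate bookkeeping with $\delta^s$ being absorbed into the single observation that pulling a permutation through a tree leaves the underlying connectivity function unchanged. Combining the two isomorphisms by transitivity then gives that $\mathbf{F^{sym}CM}$, $\mathbf{S\Delta/\sim}$ and $\mathbf{FS}$ are all isomorphic.
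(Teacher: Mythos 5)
Your proof is correct, but it takes a more self-contained route than the paper, which disposes of this proposition largely by citation: the paper invokes Davydov's result for ${\bf F^{sym}CM}\cong{\bf S\Delta/\sim}$ and then merely exhibits the isomorphism in the direction ${\bf FS}\to{\bf F^{sym}CM}$, via the explicit normal form $f\mapsto\tilde f\circ\bar\sigma_f$ with $\sigma_f=\langle[f^{-1}(1)]\dots[f^{-1}(n)]\rangle$. You instead construct the functor in the opposite direction, $\Phi(\bar s,f)=f\circ s$, verify bijectivity on hom-sets by identifying the stabiliser of $f$ under postcomposition with the block subgroup $\prod_i S_{p_i}$, and — this is the genuinely nice move — obtain functoriality for free by factoring $\Phi$ through the connectivity-function assignment ${\bf F^{sym}CM}\to{\bf FS}$, which absorbs all the distributive-law bookkeeping that makes composition in ${\bf S\Delta/\sim}$ awkward to handle directly. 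What your approach buys is a proof with no external citation and a conceptual explanation of \emph{why} the distributive law is compatible with everything (pulling permutations through trees preserves connectivity); what the paper's approach buys is brevity and an explicit section ${\bf FS}\to{\bf F^{sym}CM}$ in normal form, which is the form actually used later in the biequivalence constructions of Section~\ref{sec:biequivalences}. One point of logical order worth making explicit: your assertion in the first half that the commutativity congruence on ${\bf F^{sym}M}$ identifies \emph{exactly} the orbit relation (and no more) is not self-evident from the emission/absorption picture alone — identifications generated under arbitrary contexts could a priori collapse further — but your own connectivity functor closes this gap, since it is well defined on ${\bf F^{sym}CM}$ and the connectivity function $f\circ s$ determines the pair $(\bar s,f)$ (its fibre sizes recover the monotone $f$, and then the orbit $\bar s$); you should state that this separation argument is what finishes part one, rather than leaving the two halves implicitly intertwined.
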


\begin{proof}\cite[Proposition 2.3.3]{Davydov2010}. The isomorphism between ${\bf F^{sym}CM}$ and ${\bf S\Delta/\sim}$ is as in Proposition \ref{probforcommmonprop}. For clarity, we define explicitly the isomorphism ${\bf FS} \to {\bf F^{sym}CM}$. Again, the morphisms in the image are of the form $\tilde{f} \circ \bar{\sigma}$ shown in Figure \ref{monoidisomorphismfunctor}; we need only define $\bar{\sigma}_f \in S_n/\sim_f$ and $\tilde{f} \in {\bf \Delta}(m,n)$ for a given function $f: \underline{m} \to \underline{n}$.

We define $\tilde{f}$ as before. Again, Let $\mu: \underline{2} \to \underline{1}$ be the multiplication in ${\bf F^{br}CM}$, and $u: \underline{0} \to \underline{1}$ be the unit. Again, let $p_i = |f^{-1}(i)|$ be the cardinality of the preimage of $i \in \{1,\dots,n\}$. Let $\mu^{n}: \underline{n} \to \underline{1}$ be the composition of $n-1$ multiplications, left bracketed; for example, $\mu^{4}=\mu \circ (\mu \otimes \text{Id}) \circ (\mu \otimes \text{Id} \otimes \text{Id})$. Set $\mu^1 = \text{Id}$ and $\mu^0 = u$. Then
$$\tilde{f} := (\mu^{p_1} \otimes \cdots \otimes \mu^{p_n}).$$
Now we define $\bar{\sigma}_f$. We write a permutation of $m$ elements as rearrangement of $1,\dots,m$; for example, the cycle $(1)(23)$ may be written as $\langle 132 \rangle$. For $S \subset \mathbb{N}$, let $[S]$ be the set $S$ with elements written in ascending order. In this notation, we define 
$$\sigma_f = \langle[f^{-1}(1)][f^{-1}(2)]\dots[f^{-1}(n)]\rangle.$$
Then $\bar{\sigma}_f$ is the equivalence class of this permutation under the quotient.
\end{proof}
\noindent
The results of this section are summarised in Table~ \ref{tbl:monoidsresultsinintro}.

\section{Coherence for braided and symmetric pseudomonoids}
\label{sec:coherence}
In this section we state and prove our main results, which were summarised in Table~\ref{tbl:pseudomonoidsintro}.

\subsection{Combinatorial bicategories}\label{sec:combibicats}
First we define the combinatorial bicategories that appear in the table.
\begin{definition}
A \emph{locally discrete} bicategory is one with only identity 2-cells. 
\end{definition}
\noindent
Any category may be considered as a strict locally discrete bicategory by adding identity 2-cells; this preserves monoidality, braiding and symmetry. We therefore obtain the  Gray monoid ${\bf \Delta}$, the braided Gray monoids ${\bf B\Delta}$ and ${\bf B\Delta/\sim}$, and the symmetric Gray monoids ${\bf S\Delta}$ and ${\bf FS}$. 

We now define another combinatorial categorification of ${\bf FS}$, which will correspond to the theory of braided pseudomonoids in a symmetric monoidal bicategory. In this case, not all diagrams commute, so we need to add more 2-cells to ${\bf FS}$ than just identities.

\begin{definition}
A \emph{locally totally disconnected} 2-category is one whose 2-cells are all endomorphisms.
\end{definition}

\begin{definition}\label{def:fsbr}
${\bf FS}^{\text{br}}$ is a locally totally disconnected strict 2-category obtained from ${\bf FS}$ by adding 2-cells as follows. 

For any 1-cell $f: \underline{m} \to \underline{n}$, we add a set of 2-cells $\Hom(f,f) = \prod_{i=1}^n PB_{p_i}$, where $p_i = |f(i)^{-1}|$ and $PB_n$ is the pure braid group on $n$ points. We define the following compositional structure on these 2-cells. 

\begin{itemize}[leftmargin=*]
\item \emph{Horizontal composition.} For any $f: \underline{m} \to \underline{n}$, and $\sigma, \tau \in \Hom(f,f)$, we define $\tau \circ_{H} \sigma \in \Hom(f,f)$ to be the composition $\tau \circ \sigma \in \prod_{i=1}^n PB_{p_i}$. 
\item \emph{Vertical composition.} For any $f: \underline{m} \to \underline{n}$, $g: \underline{n} \to \underline{o}$, and $\sigma_m \in \Hom(f,f)$, $\sigma_n \in \Hom(g,g)$, we define $\sigma_n \circ_V \sigma_m\in \Hom(g \circ f, g \circ f)$ as follows: $$\sigma_n \circ_V \sigma_m :=  \sigma_m  \delta_{m,n}^{B}(f_{\Delta},\sigma_n).$$
Here $\delta^{B}_{m,n}$ is the braided part of the image of the distributive law~\eqref{eq:distlawbraid} and $f_{\Delta}$ is the ${\bf \Delta}$-factor of $f$ in the decomposition ${\bf FS} = {\bf S \Delta}$ of Proposition \ref{FSiscommonoidspropprop}.
\item \emph{Monoidal product.} For any $f: \underline{m} \to \underline{n}, g: \underline{o} \to \underline{p}$, and $\sigma_m \in \Hom(f,f)$, $\sigma_n \in \Hom(g,g)$, we define $\sigma_m \otimes \sigma_n: f \otimes g \to f \otimes g$ to be the Cartesian product of braids $(\sigma_m, \sigma_n) \in B_m \times B_n \subset B_{m+n}$. 
\end{itemize}
\noindent
It is straightforward to check that this defines a strict symmetric Gray monoid.
\end{definition}

\subsection{Biequivalences}\label{sec:biequivalences}
\begin{definition}
A \emph{(braided/symmetric) biequivalence} of Gray monoids is a homomorphism of (braided/symmetric) Gray monoids \cite{Day1997} which is essentially surjective on objects and 1-morphisms, and fully faithful on 2-morphisms. 
\end{definition}

\begin{comment}
\begin{theorem}[Coherence for braided and symmetric pseudomonoids]\label{fullcoherencetheorem}
The table of biequivalences of Gray monoids depicted below holds; the biequivalences are braided/symmetric if between braided/symmetric Gray monoids.
\begin{center}
\begin{tabular}{|l||c|c|c|}
\hline
{\bf Ambient bicategory}  &  \multicolumn{3}{c|}{{\bf Pseudomonoid computad}}  \\
	&	\multicolumn{1}{c}{\emph{Unbraided} ($\mathcal{P}$)} &  \multicolumn{1}{c}{\emph{Braided} ($\mathcal{P}^{\text{br}}$)} & \multicolumn{1}{c|}{\emph{Symmetric} ($\mathcal{P}^{\text{sym}}$)} \\
	\cline{2-4} 
\emph{Naked ($G(-)$)} & ${\bf \Delta}$ & \emph{N/A} & \emph{N/A} \\
\cline{2-4}
\emph{Braided ($G^{\text{br}}(-)$)} & ${\bf B \Delta}$  & ${\bf B\Delta/{\raise.17ex\hbox{$\scriptstyle{\sim}$}}}$ & \emph{N/A} \\
\cline{2-4}
\emph{Sylleptic} ($G^{\text{syl}}(-))$ & {\bf ?} & {\bf ?} & {\bf ?} \\
\cline{2-4}
\emph{Symmetric} ($G^{\text{sym}}(-)$) & ${\bf S \Delta}$ & ${\bf FS}^{\text{br}}$& ${\bf FS}$ \\
\hline
\end{tabular}
\end{center}
\end{theorem}
\begin{remark}
Lack's result \cite{Lack2000} that all parallel 2-cells in $G(\mathcal{P})$ are equal is clearly implied by $G(\mathcal{P})\simeq {\bf \Delta}$, since ${\bf \Delta}$ is locally discrete.
\end{remark}

\begin{remark}
We have included a row for sylleptic Gray monoids to indicate that these results await a coherence theorem for sylleptic monoidal bicategories. 
\end{remark}
\end{comment}
\noindent
We define the biequivalences of Table~\ref{tbl:pseudomonoidsintro} by categorifying the isomorphisms of Section \ref{sec:theoryofmonoids}. Those isomorphisms map $(\sigma,f)$ to $\tilde{f} \circ \tilde{\sigma}$, where:
\begin{itemize}
\item $\sigma$ is an element of the braid group, the symmetric group or a quotient of those groups.
\item $\tilde{\sigma}$ is the same element considered as a morphism in the free braided or symmetric monoidal category on a single object.
\item $f$ is a monotone function.
\item $\tilde{f}$ is the corresponding braid-free morphism in the PRO, PROB or PROP.
\end{itemize}
\noindent
This definition needs to be adapted slightly for the higher setting. Firstly, in order to specify $\tilde{f}$, one must give the height of each generating 1-cell, since planar isotopy is now an isomorphism rather than an equality. Secondly, in order to specify $\tilde{\sigma}$, one must now specify a word in the generators of the Artin presentation of the braid group, rather than simply the isotopy class of braids, the permutation, or the equivalence class under the quotient, since the braid relations are now also isomorphisms. To resolve these issues, we make the following definitions. We first consider the braid-free part.
\begin{definition}
Let ${\bf \Delta}$ be the subcategory of braid-free morphisms of any of the PROs, PROBs or PROPs for monoids or commutative monoids. We say that a diagram in ${\bf \Delta}$ is in \emph{standard form} if the heights of the 1-cells in the trees rise from left to right, as in Figures \ref{deltatreesexample} and \ref{braidemittanceandabsorption}.
\end{definition}
\noindent
We now consider the braid part.
\begin{definition}
We define the \emph{representative word} of a braid or permutation as follows. 
\begin{itemize}[leftmargin=*]
\item For $\sigma \in B_n$: Its representative word is its Artin normal form.
\item For $\overline{\sigma} \in B_n/\sim_f:$  We use the Axiom of Choice to pick a coset representative for each $\overline{\sigma} \in B_n/\sim$, and say that the Artin normal form of this chosen representative is the representative word of $\overline{\sigma}$. 
\item For $s \in S_n$: We write the permutation as a braid diagram in the following way. We draw a straight line from each input to the output which is its image under the permutation. At crossings, we use the convention that the string connected to the leftmost input crosses on top. If there are any triple crossings then we pull the top string downwards in order to remove them. If two crossings occur at the same height, we deform the diagram so that the leftmost crossing occurs first. We may then read off a word in the braid generators from the diagram; we say that this is the representative word of $s$.
\item For $\overline{s} \in S_n/\sim_f:$ We use the Axiom of Choice to pick a coset representative for each $\overline{s} \in S_n/\sim_f$, and obtain a representative word for this representative as for $s \in S_n$.
\end{itemize}
\end{definition}
\noindent
Using these definitions, it is straightforward to specify the biequivalences on 0- and 1-cells. We define them as maps from the combinatorial category to the higher PRO.
\begin{definition}[Biequivalences on 0- and 1-cells]
Each biequivalence in Table~\ref{tbl:pseudomonoidsintro} is defined on 0-cells  and 1-cells as follows. 

\begin{itemize}
\item On 0-cells the map takes $\underline{n} \in \mathbb{N}$ to $A^{\otimes n}$, where $A$ is the unique generating 0-cell of the higher PRO.

\item On 1-cells, $(\sigma, f)$ is mapped to $\tilde{f} \circ \tilde{\sigma}$, where $\tilde{f}$ is in standard form and $\tilde{\sigma}$ is the braid defined by the representative word of $\sigma$.
\end{itemize}
\end{definition}

\begin{definition}[Biequivalences on 2-cells for locally discrete bicategories]
For the locally discrete categorifications, the biequivalences in Table~\ref{tbl:pseudomonoidsintro} are defined on 2-cells by taking identity 2-cells to identity 2-cells.
\end{definition}
\noindent
Finally, we define the biequivalence on 2-cells for the only non-locally discrete categorification.

\begin{definition}\label{braidedbiequivalencedefinition}
The biequivalence from ${\bf FS^{\text{br}}}$ in Table~\ref{tbl:pseudomonoidsintro} is defined on 2-cells as follows. Let $\sigma \in \Hom_{{\bf FS}^{\text{br}}}(f,f) = \prod_{i=1}^n PB_{p_i}$, and let $\sigma_i$ be the factors of $\sigma$ in the product. The 2-cell in the image of $\sigma$ is a movie defined as follows.
\begin{enumerate}
\item We use the symmetric structure of the category to create the braid $\sigma_1$ directly beneath the tree $m^{p_1}$, working from left to right. By Theorem \ref{gurskiosornocoherencethm} there is no ambiguity regarding the 2-morphism we use to do this.
\item We remove the braid using commutators and inverse commutators as follows. Recall that the tree is initially left bracketed.
	\begin{enumerate} 
	\item Use associators to bring the lowest 1-cell in the tree $m^{p_i}$ directly above the highest braid, using the following iterative method:
Associate the bottom 1-cell to the right. If this is impossible associate the 1-cell above it to the right and return. If this is impossible associate the 1-cell above that to the right and return. Etc. Repeat until the lowest multiplication is directly above the braiding to be absorbed. 
	\item Use a commutator or an inverse commutator to remove the braiding. A commutator removes a positive braiding; an inverse commutator removes a negative braiding by producing a positive braiding, then cancelling the two. 
	\item Use the inverse of the original sequence of associators to left bracket the tree again. 
	\item Repeat until $\sigma_1$ has been entirely removed by commutators.
	\end{enumerate}
An example is shown in Figure \ref{braidabsexamplefig}.
\item Now create $\sigma_2$ beneath the tree $m^{p_2}$ and absorb. Repeat for all trees, working from left to right. This completes the loop.
\end{enumerate}
A schematic is shown in Figure \ref{imageoffbrschematic}.
\end{definition}
\begin{figure}[p]
\resizebox{!}{3cm}{
\begin{tabular}{c c c c c c c c c c c c}
& \includegraphics[width=3cm,height=4.5cm]{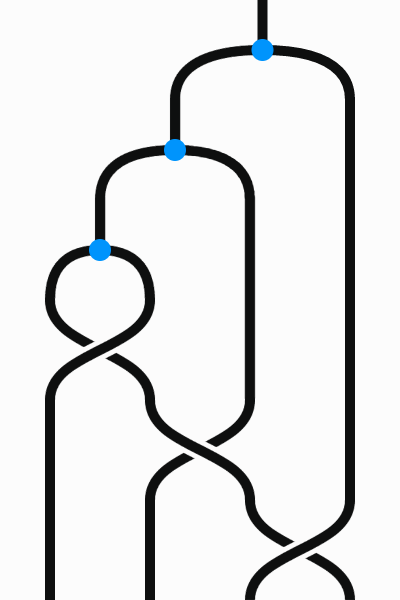} & \raisebox{1.5cm}{\makecell{$\Rightarrow$}} & \includegraphics[width=3cm,height=4.5cm]{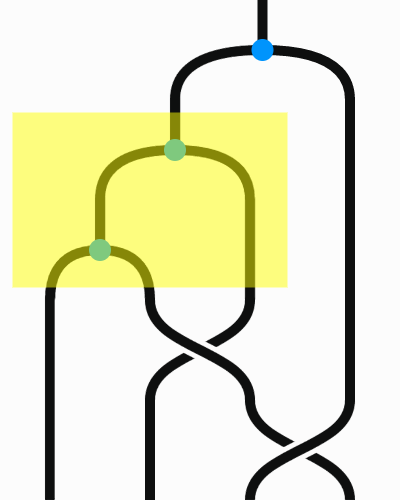} & \raisebox{1.5cm}{\makecell{$\Rightarrow$}} & \includegraphics[width=3cm,height=4.5cm]{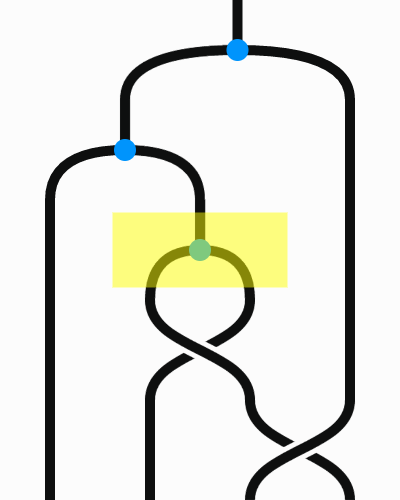} &
\raisebox{1.5cm}{\makecell{$\Rightarrow$}} &\includegraphics[width=3cm,height=4.5cm]{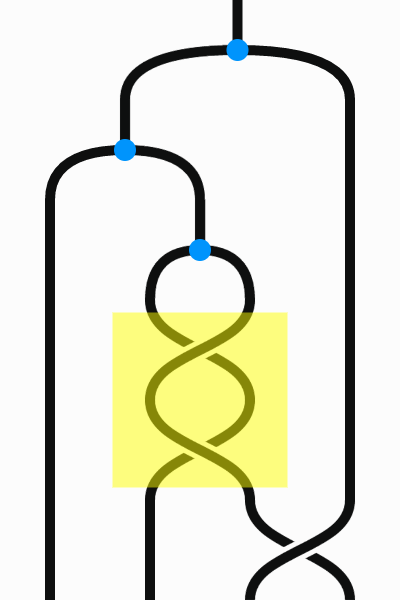} & \raisebox{1.5cm}{\makecell{$\Rightarrow$}} & \includegraphics[width=3cm,height=4.5cm]{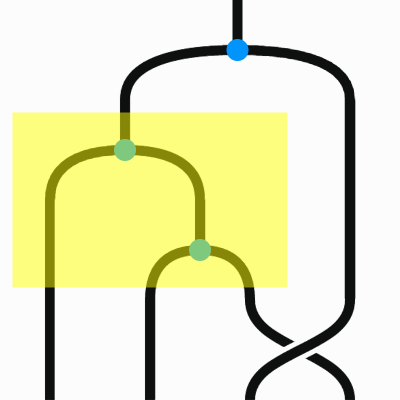} & \raisebox{1.5cm}{\makecell{$\Rightarrow$}} & \includegraphics[width=3cm,height=4.5cm]{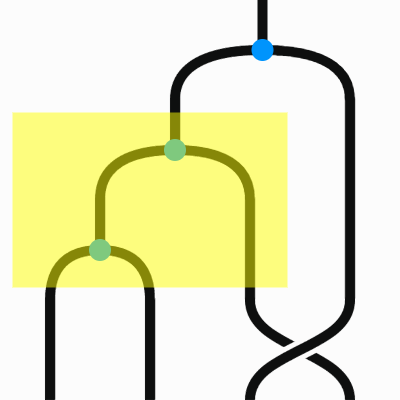} \\
\raisebox{1.5cm}{\makecell{$\Rightarrow$}} &\includegraphics[width=3cm,height=4.5cm]{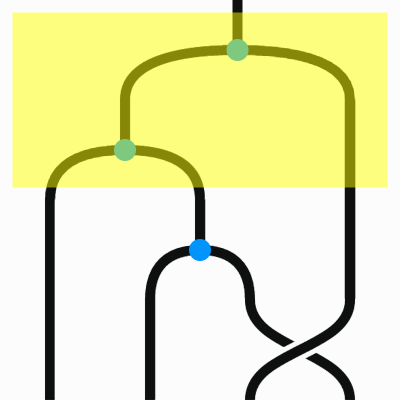} & \raisebox{1.5cm}{\makecell{$\Rightarrow$}} & \includegraphics[width=3cm,height=4.5cm]{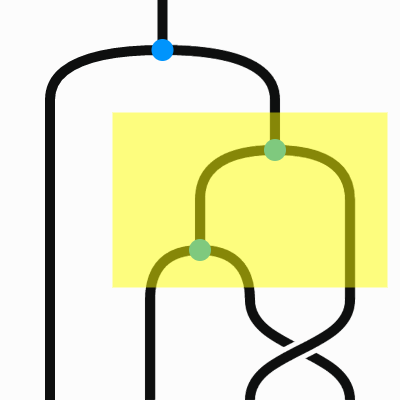} & \raisebox{1.5cm}{\makecell{$\Rightarrow$}} & \includegraphics[width=3cm,height=4.5cm]{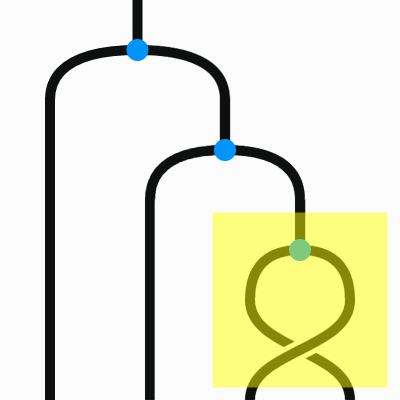} &
\raisebox{1.5cm}{\makecell{$\Rightarrow$}} &\includegraphics[width=3cm,height=4.5cm]{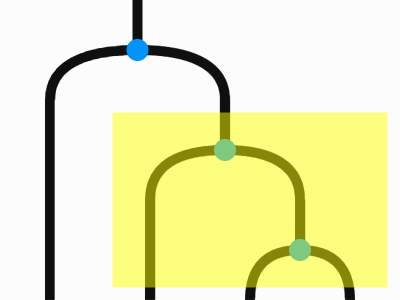} & \raisebox{1.5cm}{\makecell{$\Rightarrow$}} & \includegraphics[width=3cm,height=4.5cm]{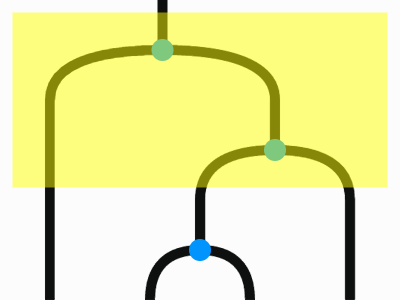} & \raisebox{1.5cm}{\makecell{$\Rightarrow$}} & \includegraphics[width=3cm,height=4.5cm]{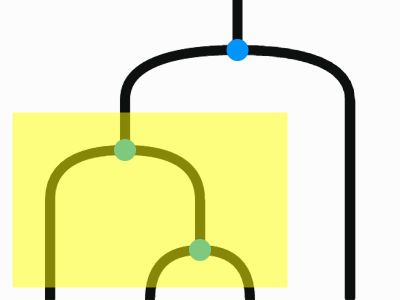}
\end{tabular}
}
\caption{Absorption of the braid $\sigma_1 \sigma_2^{-1} \sigma_3$.}
\label{braidabsexamplefig}
\end{figure}
\begin{figure}
\centering
\begin{tikzpicture}[scale=0.7]
	\begin{pgfonlayer}{nodelayer}
		\node [style=none] (0) at (0, -6.25) {};
		\node [style=none] (1) at (13.5, -6.25) {};
		\node [style=none] (2) at (13.5, -5.25) {};
		\node [style=none] (3) at (0, -5.25) {};
		\node [style=none] (4) at (0, -6.5) {};
		\node [style=none] (5) at (6.75, -6.5) {{\Huge $\dots$}};
		\node [style=none] (6) at (12.5, -6.5) {};
		\node [style=none] (7) at (13.5, -6.5) {};
		\node [style=none] (8) at (12.5, -6.25) {};
		\node [style=none] (9) at (6.75, -5.75) {Braid of 1-morphism};
		\node [style=none] (10) at (0, -4.75) {};
		\node [style=none] (11) at (0, -3.25) {};
		\node [style=none] (12) at (3.5, -3.25) {};
		\node [style=none] (13) at (3.5, -4.75) {};
		\node [style=none] (14) at (3.5, -5.25) {};
		\node [style=none] (15) at (1.5, -5) {{\Large $\dots$}};
		\node [style=none] (16) at (1.5, -4) {\makecell{$\sigma_1$}};
		\node [style=none] (17) at (13.5, -5.25) {};
		\node [style=none] (18) at (10.5, -5.25) {};
		\node [style=none] (19) at (0.5, -3.25) {};
		\node [style=none] (20) at (2.5, -3.25) {};
		\node [style=none] (21) at (1.25, -2.25) {};
		\node [style=none] (22) at (1.75, -2.25) {};
		\node [style=none] (23) at (1.75, -1.75) {};
		\node [style=none] (24) at (1.25, -1.75) {};
		\node [style=none] (25) at (1, -1.75) {};
		\node [style=none] (26) at (2, -1.75) {};
		\node [style=none] (27) at (1.5, -1.25) {};
		\node [style=none] (28) at (6, 2) {{\Huge $\iddots$}};
		\node [style=none] (29) at (1.5, -3) {{\Large $\dots$}};
		\node [style=none] (30) at (0, -1) {};
		\node [style=gn] (31) at (0.25, -0.75) {};
		\node [style=gn] (32) at (1.25, 0.25) {};
		\node [style=gn] (33) at (1.75, 0.75) {};
		\node [style=none] (34) at (1.75, 9) {};
		\node [style=none] (35) at (0.75, -0.25) {$\iddots$};
		\node [style=none] (36) at (3.5, -1) {};
		\node [style=none] (37) at (2.5, -1) {};
		\node [style=none] (38) at (-1.5, -2) {Step 2.};
		\node [style=none] (39) at (-1.5, -4) {Step 1.};
		\node [style=none] (40) at (8.5, 5.75) {Step $2n$.};
		\node [style=none] (41) at (8.5, 3.75) {\makecell{Step \\$2n-1$.}};
		\node [style=none] (42) at (11.5, -1.25) {{\Large $\dots$}};
		\node [style=none] (43) at (1, 0) {};
		\node [style=none] (44) at (12.5, -5.25) {};
		\node [style=none] (45) at (10, -5.25) {};
		\node [style=none] (46) at (0.5, -5.25) {};
		\node [style=none] (47) at (0.5, -4.75) {};
		\node [style=none] (48) at (2.5, -5.25) {};
		\node [style=none] (49) at (2.5, -4.75) {};
		\node [style=none] (50) at (0.5, -6.25) {};
		\node [style=none] (51) at (0.5, -6.5) {};
		\node [style=none] (52) at (10, -6.25) {};
		\node [style=none] (53) at (10, -6.5) {};
		\node [style=none] (54) at (6.75, -5) {{\Huge $\dots$}};
		\node [style=none] (55) at (0.5, -1) {};
		\node [style=none] (56) at (2.5, -6.25) {};
		\node [style=none] (57) at (2.5, -6.5) {};
		\node [style=none] (58) at (3.5, -6.25) {};
		\node [style=none] (59) at (3.5, -6.5) {};
		\node [style=none] (60) at (0.5, -0.5) {};
		\node [style=none] (61) at (11.5, 4.75) {{\Large $\dots$}};
		\node [style=none] (62) at (11.75, 6) {};
		\node [style=none] (63) at (13.5, 3) {};
		\node [style=gn] (64) at (11.75, 8.5) {};
		\node [style=gn] (65) at (10.25, 7) {};
		\node [style=none] (66) at (12.5, 3) {};
		\node [style=none] (67) at (11.5, 6.5) {};
		\node [style=none] (68) at (12.5, 4.5) {};
		\node [style=none] (69) at (11.75, 9.25) {};
		\node [style=none] (70) at (10.5, 3) {};
		\node [style=none] (71) at (12, 6) {};
		\node [style=none] (72) at (12.5, 6.75) {};
		\node [style=none] (73) at (10.5, 4.5) {};
		\node [style=none] (74) at (11, 7.75) {};
		\node [style=none] (75) at (11.5, 3.75) {\makecell{$\sigma_n$}};
		\node [style=none] (76) at (11, 6) {};
		\node [style=none] (77) at (10.75, 7.5) {$\iddots$};
		\node [style=gn] (78) at (11.25, 8) {};
		\node [style=none] (79) at (10.5, 7.25) {};
		\node [style=none] (80) at (10, 3) {};
		\node [style=none] (81) at (10, 6.75) {};
		\node [style=none] (82) at (11.25, 5.5) {};
		\node [style=none] (83) at (10, 4.5) {};
		\node [style=none] (84) at (11.75, 5.5) {};
		\node [style=none] (85) at (13.5, 4.5) {};
		\node [style=none] (86) at (11.25, 6) {};
		\node [style=none] (87) at (13.5, 6.75) {};
		\node [style=none] (88) at (10.5, 6.75) {};
		\node [style=none] (89) at (10.5, -6.5) {};
		\node [style=none] (90) at (10.5, -6.25) {};
		\node [style=none] (91) at (9.75, 5.75) {};
		\node [style=none] (92) at (11, 5.75) {};
		\node [style=none] (93) at (9.75, 3.75) {};
		\node [style=none] (94) at (11, 3.75) {};
		\node [style=none] (95) at (0.75, -4) {};
		\node [style=none] (96) at (-0.5, -4) {};
		\node [style=none] (97) at (0.75, -2) {};
		\node [style=none] (98) at (-0.5, -2) {};
	\end{pgfonlayer}
	\begin{pgfonlayer}{edgelayer}
		\draw [style=simple] (3.center) to (2.center);
		\draw [style=simple] (2.center) to (1.center);
		\draw [style=simple] (1.center) to (0.center);
		\draw [style=simple] (0.center) to (3.center);
		\draw [style=simple] (0.center) to (4.center);
		\draw [style=simple] (6.center) to (8.center);
		\draw [style=simple] (1.center) to (7.center);
		\draw [style=simple] (14.center) to (13.center);
		\draw [style=simple] (3.center) to (10.center);
		\draw [style=simple] (10.center) to (13.center);
		\draw [style=simple] (13.center) to (12.center);
		\draw [style=simple] (12.center) to (11.center);
		\draw [style=simple] (11.center) to (10.center);
		\draw [style=simple] (21.center) to (24.center);
		\draw [style=simple] (24.center) to (25.center);
		\draw [style=simple] (26.center) to (23.center);
		\draw [style=simple] (23.center) to (22.center);
		\draw [style=simple] (22.center) to (21.center);
		\draw [style=simple] (25.center) to (27.center);
		\draw [style=simple] (26.center) to (27.center);
		\draw [style=simple] (30.center) to (31);
		\draw [style=simple] (11.center) to (30.center);
		\draw [style=simple] (32) to (33);
		\draw [style=simple] (33) to (36.center);
		\draw [style=simple] (32) to (37.center);
		\draw [style=simple] (20.center) to (37.center);
		\draw [style=simple] (12.center) to (36.center);
		\draw [style=simple] (33) to (34.center);
		\draw [style=simple] (32) to (43.center);
		\draw [style=simple] (47.center) to (46.center);
		\draw [style=simple] (49.center) to (48.center);
		\draw [style=simple] (50.center) to (51.center);
		\draw [style=simple] (52.center) to (53.center);
		\draw [style=simple] (19.center) to (55.center);
		\draw [style=simple] (55.center) to (31);
		\draw [style=simple] (57.center) to (56.center);
		\draw [style=simple] (59.center) to (58.center);
		\draw [style=simple] (31) to (60.center);
		\draw [style=simple] (80.center) to (63.center);
		\draw [style=simple] (63.center) to (85.center);
		\draw [style=simple] (85.center) to (83.center);
		\draw [style=simple] (83.center) to (80.center);
		\draw [style=simple] (82.center) to (86.center);
		\draw [style=simple] (86.center) to (76.center);
		\draw [style=simple] (71.center) to (62.center);
		\draw [style=simple] (62.center) to (84.center);
		\draw [style=simple] (84.center) to (82.center);
		\draw [style=simple] (76.center) to (67.center);
		\draw [style=simple] (71.center) to (67.center);
		\draw [style=simple] (81.center) to (65);
		\draw [style=simple] (83.center) to (81.center);
		\draw [style=simple] (78) to (64);
		\draw [style=simple] (64) to (87.center);
		\draw [style=simple] (78) to (72.center);
		\draw [style=simple] (68.center) to (72.center);
		\draw [style=simple] (85.center) to (87.center);
		\draw [style=simple] (64) to (69.center);
		\draw [style=simple] (78) to (74.center);
		\draw [style=simple] (73.center) to (88.center);
		\draw [style=simple] (88.center) to (65);
		\draw [style=simple] (65) to (79.center);
		\draw [style=simple] (45.center) to (80.center);
		\draw [style=simple] (70.center) to (18.center);
		\draw [style=simple] (44.center) to (66.center);
		\draw [style=simple] (63.center) to (2.center);
		\draw [style=simple] (90.center) to (89.center);
		\draw [style=simple] (91.center) to (92.center);
		\draw [style=simple] (93.center) to (94.center);
		\draw [style=simple] (98.center) to (97.center);
		\draw [style=simple] (96.center) to (95.center);
	\end{pgfonlayer}
\end{tikzpicture}
\caption{The image of the 2-cell $\prod_{i=1}^n \sigma_i$ in  the map of Definition~\ref{braidedbiequivalencedefinition}. Steps 1 and 2 are creation and absorption of the braid $\sigma_1$ underneath the first tree; steps $2n-1$ and $2n$ are creation and absorption of the braid $\sigma_n$ under the last tree.}
\label{imageoffbrschematic}
\end{figure}
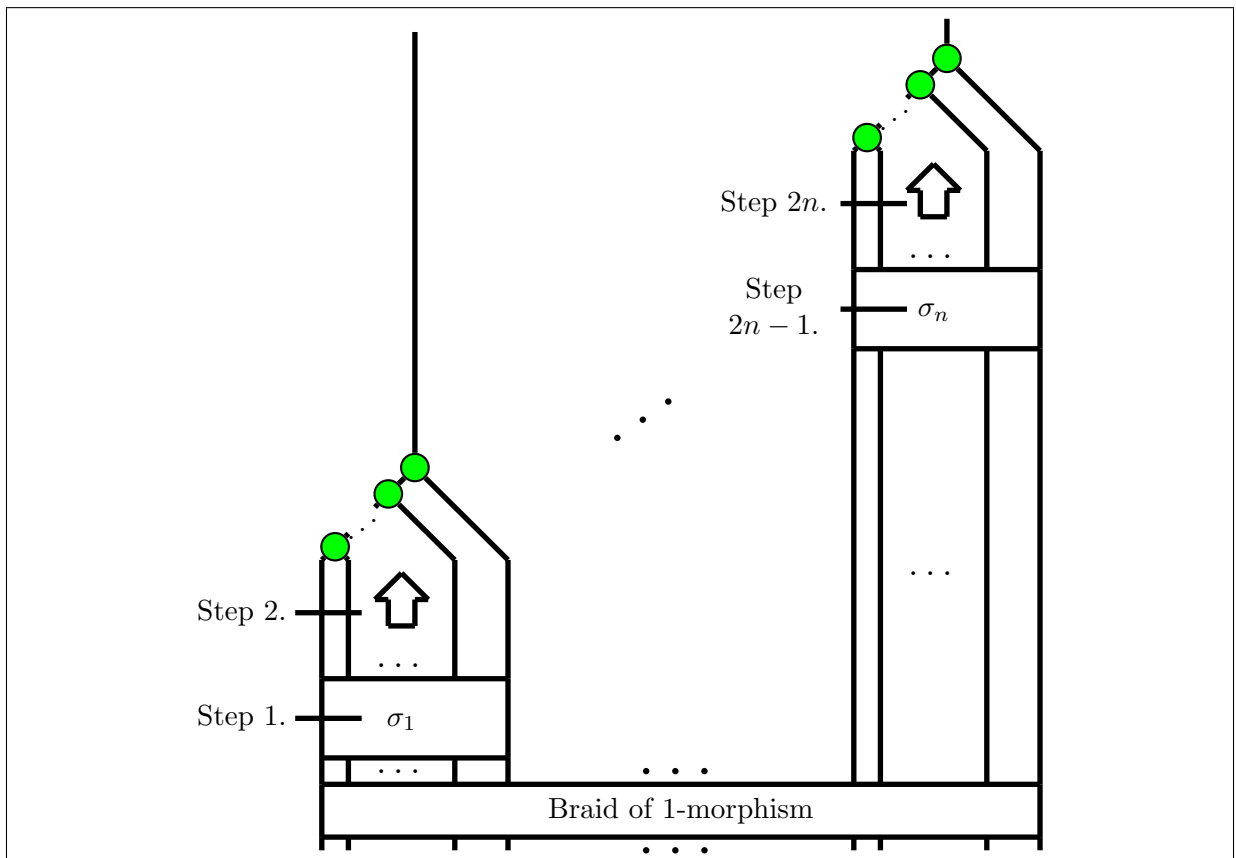
 
\subsection{Theorem statement}\label{sec:theoremstatement}

\begin{theorem}\label{thm:maintheorem}
The maps defined in Section~\ref{sec:biequivalences} are (braided/symmetric) monoidal biequivalences.
\end{theorem}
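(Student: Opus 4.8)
The plan is to unpack the definition of a (braided/symmetric) biequivalence of Gray monoids and verify its three clauses in turn: that each map is \textbf{(i)} a homomorphism of (braided/symmetric) Gray monoids, \textbf{(ii)} essentially surjective on $0$- and $1$-cells, and \textbf{(iii)} fully faithful on $2$-cells. On $0$-cells essential surjectivity is immediate, since $\underline{n} \mapsto A^{\otimes n}$ is a bijection onto the lists of the unique generating $0$-cell. For essential surjectivity on $1$-cells I would show that every $1$-cell of the higher PRO is connected by an invertible $2$-cell to a standard-form $1$-cell $\tilde f \circ \tilde\sigma$: starting from an arbitrary ordered string diagram built from $m$, $u$ and the braidings $R^{\pm}$, one uses associators and unitors to gather the multiplication/unit part into a tree, and the pullthrough isomorphisms $PO$, $PU$ together with the distributive law of Lemma~\ref{distributivelawlemma} to slide all braidings to the bottom. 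The monoid-theoretic isomorphisms of Section~\ref{sec:theoryofmonoids} then identify the resulting data with a combinatorial morphism, so the image of the map meets every $1$-cell up to isomorphism.

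For faithfulness on $2$-cells the five locally discrete cases are essentially free: the source $\Hom$-sets contain at most one $2$-cell, so the induced map on each $\Hom$-set is trivially injective. The genuine content is the ${\bf FS}^{\text{br}}$ case, where $\Hom(f,f) = \prod_{i=1}^n PB_{p_i}$. Here I would extract an invariant that recovers the absorbed pure braid from the image loop — using the symmetric coherence theorem~\ref{gurskiosornocoherencethm} to strip away the tree structure and read off the underlying braid-group element — and check that distinct elements of $\prod_{i} PB_{p_i}$ yield loops with distinct invariants, giving injectivity.

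The main obstacle I expect is fullness on $2$-cells. For the locally discrete targets this is the assertion that all diagrams commute: every $2$-cell between parallel standard-form $1$-cells equals the identity. For ${\bf FS}^{\text{br}}$ it is the assertion that every loop on a standard-form $1$-cell is a braid-absorption loop of the form in Definition~\ref{braidedbiequivalencedefinition}. My strategy would be to take an arbitrary movie, put each non-structural generating $1$-cell into top string normal form using Theorem~\ref{tsnfprocedure}, and then apply the extended coherence result (Theorem~\ref{gurskiosornocoherencethmextended}) to the purely structural remainder; after pushing all non-structural $2$-cells past one another this should force the movie into the prescribed normal form, with only residual pure-braid data surviving in the symmetric-braided case. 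Controlling the interaction of interchangers, pullthroughs and the pseudomonoid $2$-cells throughout this reduction is where the real work lies, and is what I would carry out separately in Section~\ref{sec:fullnessproof}.

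Finally, for the homomorphism clause I would verify functoriality: that the maps respect composition, monoidal product and braiding. The composition formula~\eqref{eq:compositionbdelta} and monoidal product~\eqref{eq:productbdelta} of the combinatorial categories were defined precisely via the distributive law so as to model the operation of pulling the monoid structure through the braids in the higher PRO, so verifying functoriality amounts to matching these formulae against the actual compositional structure of the Gray monoid, once more using the pullthrough coherence. Assembling clauses \textbf{(i)}--\textbf{(iii)} then yields the theorem, with the braided/symmetric qualifier inherited from whether the Gray monoids involved are braided or symmetric.
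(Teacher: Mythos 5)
Your overall architecture coincides with the paper's proof: essential surjectivity on objects is immediate; essential surjectivity on 1-cells is obtained, just as you sketch, by lifting the decategorified isomorphisms of Section~\ref{sec:theoryofmonoids} (each equality used to reduce a 1-cell diagram to standard form becomes an invertible 2-cell); faithfulness is trivial in the locally discrete cases and follows in the ${\bf FS}^{\text{br}}$ case from your invariant --- the isotopy class of the absorbed braid, which no generating 2-cell of the computad can change; and fullness is indeed where the work lies, carried out exactly along the lines you anticipate, by rewriting an arbitrary loop into the normal form of Definition~\ref{movienormalformfullnessdefn} using TSNF (Theorem~\ref{tsnfprocedure}) and extended coherence (Theorem~\ref{gurskiosornocoherencethmextended}), then splitting into cases according to the target.

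The one place your plan diverges from, and is weaker than, the paper's is the homomorphism clause. The maps do not preserve composition strictly: $F(g)\circ F(f)$ is a diagram with units still attached, two stacked layers of trees separated by a braid, and an unnormalised braid word, whereas $F(g\circ f)$ is a single standard-form diagram. So ``matching the composition formulae against the compositional structure'' cannot produce equalities; you must construct the compositor 2-cell $m_{f,g}$ (remove units, pull the lower trees up through the braid, left-bracket the merged trees, put the braid in Artin normal form) and then verify its naturality square and the associativity coherence diagram. The paper does \emph{not} verify these by direct computation with the pullthrough axioms, as you propose; it observes instead that each such diagram, after flipping two of its arrows, becomes a loop on a 1-cell in the image of $F$ whose absorbed braid is isotopically trivial, hence the identity by the fullness and faithfulness results already established. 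This makes the order of proof essential --- fullness must be proved \emph{before} functoriality --- and spares a long bare-hands coherence computation; your proposed ordering, with clause (i) verified directly and independently, would force exactly that computation, which is considerably more laborious than your sketch suggests. With the compositor made explicit and the functoriality diagrams discharged via fullness, your argument matches the paper's.
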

\noindent
Before commencing the proof of Theorem \ref{thm:maintheorem}, we consider how the biequivalence results for braided and symmetric pseudomonoids in symmetric monoidal bicategories imply MacLane's coherence theorems for braided and symmetric monoidal categories, which are braided and symmetric pseudomonoids in the symmetric monoidal bicategory {\bf Cat}.

\begin{corollary}[MacLane's coherence theorems]\label{corr:MacLane}
In a braided monoidal category, all diagrams of natural isomorphisms with the same underlying braid commute. In a symmetric monoidal category, all diagrams of natural isomorphisms with the same underlying permutation commute.
\end{corollary}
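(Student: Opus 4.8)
The plan is to read off the corollary from Theorem~\ref{thm:maintheorem} by regarding a braided (resp.\ symmetric) monoidal category as a pseudomonoid. Recall from the introduction that a braided monoidal category $\mathcal{V}$ is exactly a braided pseudomonoid in the symmetric monoidal bicategory ${\bf Cat}$, i.e.\ a strict homomorphism into ${\bf Cat}$ out of the free symmetric monoidal bicategory on $\mathcal{P}^{\text{br}}$ (and similarly with $\mathcal{P}^{\text{sym}}$ for the symmetric case). Since ${\bf Cat}$ is only a weak monoidal bicategory, the first move is to invoke the semistrictness machinery of Section~\ref{sec:computadssemistrictbackground} to replace this free weak bicategory by the symmetric Gray monoid it presents; write $\mathcal{G}^{\mathrm{br}}$ for the symmetric Gray monoid generated from $\mathcal{P}^{\text{br}}$ and $\mathcal{G}^{\mathrm{sym}}$ for that generated from $\mathcal{P}^{\text{sym}}$. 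Under the resulting (strict) homomorphism $F_{\mathcal{V}}$ the generating $0$-cell goes to $\mathcal{V}$, the $1$-cell $m$ to the tensor functor, $u$ to the unit, and the structural $2$-cells $\alpha,\lambda,\rho,c$ to the associator, unitors and braiding of $\mathcal{V}$. Consequently every tensor word is the $F_{\mathcal{V}}$-image of a $1$-cell, and every composite of structural natural isomorphisms is the $F_{\mathcal{V}}$-image of a $2$-cell.

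With this dictionary, a diagram of structural natural isomorphisms in $\mathcal{V}$ whose two legs run between tensor words $W,W'$ and have a common underlying braid (resp.\ permutation) is the $F_{\mathcal{V}}$-image, evaluated at whatever objects of $\mathcal{V}$ appear, of a parallel pair of $2$-cells $\phi,\psi:\tilde W\Rightarrow \tilde W'$ in $\mathcal{G}^{\mathrm{br}}$ (resp.\ $\mathcal{G}^{\mathrm{sym}}$) having the same underlying braid (resp.\ permutation). Here naturality of the structural isomorphisms is what lets me lift the component-level diagram at specific objects to a statement about $2$-cells between fixed functors $\tilde W,\tilde W'$; fixing the source and target functors already pins down the permutation, so that "same underlying permutation'' in the symmetric case is precisely the condition that $\phi$ and $\psi$ be \emph{parallel}. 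Thus it suffices to prove, inside the Gray monoid, that two parallel $2$-cells with the same underlying braid (resp.\ permutation) are equal, and then push the equality $\phi=\psi$ forward through $F_{\mathcal{V}}$ and evaluate at objects to conclude that the diagram commutes.

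The Gray-monoid statement I would deduce from the biequivalences of Table~\ref{tbl:pseudomonoidsintro}. Because the biequivalence is fully faithful on $2$-morphisms, it identifies the relevant $2$-cell hom-sets with those of the combinatorial model. In the symmetric case the model ${\bf FS}$ is locally discrete, so between parallel $1$-cells there is at most one $2$-cell; hence parallel $\phi,\psi$ are automatically equal, which is exactly the symmetric coherence assertion. In the braided case the hom-set of endo-$2$-cells of a $1$-cell $f:\underline m\to\underline n$ of ${\bf FS}^{\text{br}}$ is the product of pure braid groups $\prod_{i=1}^n PB_{p_i}$, so a $2$-cell is determined by its tuple of pure braids. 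Since the existence of a $2$-cell $\tilde W\Rightarrow \tilde W'$ forces $\tilde W$ and $\tilde W'$ to have the same underlying function (all $2$-cells of ${\bf FS}^{\text{br}}$ are endomorphisms), and the permutation part is then fixed by $\tilde W,\tilde W'$, agreement of the full underlying braids of $\phi$ and $\psi$ is equivalent to agreement of their images in $\prod_i PB_{p_i}$, and full faithfulness yields $\phi=\psi$.

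The hardest part, I expect, will be the bookkeeping that makes the previous paragraph honest: verifying that the combinatorial invariant attached to a $2$-cell by Theorem~\ref{thm:maintheorem} genuinely computes its classical underlying braid. This amounts to checking that the biequivalence of Definition~\ref{braidedbiequivalencedefinition}, which builds a loop out of a tuple of pure braids by creating and then absorbing each braid under the multiplication trees, sends a tuple to a $2$-cell whose underlying braid is that very tuple; only then does "same element of $\prod_i PB_{p_i}$'' translate back to "same underlying braid.'' A secondary point needing care is the claim that a structural $2$-cell between fixed parallel $1$-cells carries a well-defined permutation part determined by its source and target, so that "same source, target, and braid'' is the correct hypothesis; this I would justify by applying Theorems~\ref{gurskicoherencethmbraids} and~\ref{gurskiosornocoherencethm} to the structural, braid-only sub-Gray-monoid, where parallel $1$-cells are isomorphic iff they carry the same braid (resp.\ permutation).
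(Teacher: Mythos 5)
Your proposal is correct and takes essentially the same route as the paper's own (sketched) proof: both lift the internal diagram of structural isomorphisms along the strict homomorphism into the symmetric Gray monoid on $\mathcal{P}^{\text{br}}$ (resp.\ $\mathcal{P}^{\text{sym}}$) and then apply Theorem~\ref{thm:maintheorem}, your ``parallel pair plus full faithfulness'' formulation being the same mechanism as the paper's comparison loop $D(L_2)^{-1}D(L_1)$, whose absorbed braid is trivial iff the legs share an underlying braid. The two bookkeeping points you flag (that the absorbed pure-braid tuple of Definition~\ref{braidedbiequivalencedefinition} computes the classical underlying braid, and that source and target pin down the permutation part) are exactly the details the paper's sketch also leaves implicit, via its ``internal data'' $(\sigma,u,B)$ and the choice-independence of the executing movie $D(L)$.
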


\begin{proof}[Proof (sketch)]
First, note that these coherence theorems only refer to 2-cell diagrams \emph{internal} to the category. From a 1-cell diagram in {\bf Cat}, the only 1-cell data which are preserved internally are: the bracketing of the tree, the permutation of the tree's input strings by the braid beneath the tree, and any units attached to the tree. From a 2-cell in {\bf Cat}, the only generating 2-cells which are preserved internally are the associator, unitors, and commutator.

The case of a single tree is sufficiently general. The \emph{internal data} underlying the diagram for the tree $f \in \Hom(\underline{m},{1})$ are $(\sigma,u,B)$, where $\sigma \in S_m$ is the permutation of the inputs by the braid underneath the tree, $u \in \mathbb{N}^{m+2}$ is a vector detailing the number of units between each input, and $B$ is a choice of bracketing of the resulting multiplication tree. Two diagrams are isomorphic without using the associator, unitors, or commutator if and only if they have the same internal data. Likewise, given internal data $(\sigma, u, B)$ we can define a diagram $D(\sigma, u, B)$ in {\bf Cat} in a certain normal form which we will not detail precisely.

An internal 2-cell in a braided or symmetric monoidal category is a pasting diagram of associators, unitors and commutators which maps source internal data $(\sigma_1, u_1, B_1)$ into target internal data $(\sigma_2, u_2, B_2)$. Given such an internal 2-cell $L: (\sigma_1, u_1, B_1) \to  (\sigma_2, u_2, B_2)$, we may define a movie $D(L): D(\sigma_1, u_1, B_1) \to D(\sigma_2, u_2, B_2)$ in {\bf Cat} which executes it (the precise choice is irrelevant from the internal perspective). 

Given two internal 2-cells $L_1, L_2: (\sigma_1, u_1, B_1) \to  (\sigma_2, u_2, B_2)$, consider the loop $D(L_2)^{-1} D(L_1)$. By Theorem~\ref{thm:maintheorem}:
\begin{itemize}
\item For a braided pseudomonoid, this loop is the identity if and only if the absorbed braid is trivial, indicating that the internal 2-cells are equal if and only if they have the same underlying braid. 
\item For a symmetric pseudomonoid, this loop is always the identity, indicating that two 2-morphisms inducing the same  permutation $\sigma_2 \sigma_1^{-1}$ are always equal.
\end{itemize}
\end{proof}
\noindent
We now commence the proof of Theorem \ref{thm:maintheorem}. We begin with some easy steps.

\begin{lemma}
The maps described above are essentially surjective on objects.
\end{lemma}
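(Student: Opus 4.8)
The statement to prove is that the maps of Section~\ref{sec:biequivalences} are essentially surjective on objects. The plan is to unwind the definition of the map on 0-cells and observe that surjectivity (even up to isomorphism) is immediate.

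First I would recall that on 0-cells each biequivalence sends $\underline{n} \in \mathbb{N}$ to $A^{\otimes n}$, where $A$ is the unique generating 0-cell of the relevant higher PRO (the Gray monoid generated from $\mathcal{P}$, $\mathcal{P}^{\text{br}}$, or $\mathcal{P}^{\text{sym}}$). By Definition~\ref{def:0cellsgraymoncomp}, the 0-cells of such a Gray monoid are exactly the ordered lists of elements of the set of generating 0-cells $C_0$. Since the pseudomonoid computad $\mathcal{P}$ (Definition~\ref{pseudomonoidpresentation}) has $C_0 = \{C\}$ a single generating 0-cell, every 0-cell of the higher PRO is an ordered list of copies of $C$, that is, $C^{\otimes n}$ for some $n \in \mathbb{N}$ (with $n=0$ giving the empty list, the monoidal unit).

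Therefore I would simply note that an arbitrary 0-cell of the target is of the form $C^{\otimes n}$, and this is precisely the image of $\underline{n}$ under the map on 0-cells. Hence the map is surjective on 0-cells on the nose, which in particular makes it essentially surjective. This argument is uniform across all the cases in Table~\ref{tbl:pseudomonoidsintro}, since the braided and symmetric pseudomonoid computads $\mathcal{P}^{\text{br}}$ and $\mathcal{P}^{\text{sym}}$ have the same single generating 0-cell $C$ as $\mathcal{P}$, and the braided/symmetric structural cells do not introduce new generating 0-cells.

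There is no real obstacle here: the only thing to verify is that the generating 0-cell set is a singleton and that 0-cells are freely generated as lists, both of which are read directly off the definitions. The essential surjectivity on 0-cells is thus the easiest of the conditions defining a biequivalence, and I would dispatch it in a sentence or two, reserving the substantive work for essential surjectivity on 1-cells, faithfulness, fullness, and functoriality.
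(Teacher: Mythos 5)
Your proposal is correct and matches the paper's argument, which simply notes that the maps are in fact surjective on objects; your unwinding of the definitions (single generating 0-cell $C$, all 0-cells of the form $C^{\otimes n}$, each hit by $\underline{n}$) is exactly the justification the paper leaves implicit.
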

\begin{proof}
Clear; they are actually surjective.
\end{proof}

\begin{lemma}
The maps described above are essentially surjective on 1-cells. 
\end{lemma}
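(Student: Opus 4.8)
The plan is to exhibit, for an arbitrary $1$-cell $D$ of the higher PRO, a combinatorial $1$-cell whose image under the map is $2$-isomorphic to $D$. Since the only generating $0$-cell is $C$, every $1$-cell of the Gray monoid has source and target of the form $C^{\otimes k}$, both of which are already hit by the object map; it therefore suffices to produce the required $1$-cell over a fixed source $C^{\otimes m}$ and target $C^{\otimes n}$.

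The key observation is that each higher PRO $\mathcal{G}$ is generated by a computad (Definitions~\ref{pseudomonoidpresentation}, \ref{braidedpseudomonoidpres}, \ref{symmpseudomonoidpres}) whose $0$- and $1$-cell data, together with its equalities read as relations, is precisely the (commutative) monoid computad underlying the corresponding PRO $P$ of Section~\ref{sec:theoryofmonoids}. Crucially, every generating relation of $P$ is the source/target pair of a generating $2$-isomorphism of $\mathcal{G}$: associativity and the unit laws correspond to $\alpha,\lambda,\rho$; non-ordered planar isotopy corresponds to the interchangers; the braid relation and naturality of the braiding correspond to the structural cells $S^{\pm}$ and the pullthroughs $PO,PU$ of Definition~\ref{def:braidedgraymonoid}; and, in the commutative cases, commutativity corresponds to the commutator $c$. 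Consequently, if two $1$-cells of $\mathcal{G}$ become equal in $P$, they are $2$-isomorphic in $\mathcal{G}$: a derivation of their equality in $P$ is a finite sequence of applications of generating relations, each of which lifts to a whiskering of the corresponding generating $2$-isomorphism, so composing these invertible $2$-cells yields an isomorphism between the two $1$-cells. For the braided and symmetric cases the lifting of the purely braided relations is exactly the content of the coherence Theorems~\ref{gurskicoherencethmbraids} and \ref{gurskiosornocoherencethm}, which guarantee that structural $1$-cells with the same underlying braid, respectively permutation, are isomorphic.

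Granting this lifting principle, essential surjectivity on $1$-cells is immediate. Let $[D]$ denote the image of $D$ under the decategorification $\mathcal{G}\to P$ that remembers only the underlying $1$-cell modulo the generating $2$-isomorphisms. By the isomorphisms of Section~\ref{sec:theoryofmonoids} (Propositions~\ref{proformonoidsprop}, \ref{probformonoidsprop}, \ref{probforcommmonprop}, \ref{FSiscommonoidspropprop}, and the symmetric analogues), $[D]$ corresponds to a combinatorial $1$-cell $(\sigma,f)$. By construction the image $\tilde{f}\circ\tilde{\sigma}$ of $(\sigma,f)$ descends to the same morphism $[D]$ of $P$: the braid-free factor $\tilde{f}$ realises $f$ as a standard-form multiplication-tree diagram, and $\tilde{\sigma}$ realises the representative word of $\sigma$, which has the same underlying braid, permutation, or coset as the braid part of $[D]$. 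Hence $D$ and $\tilde{f}\circ\tilde{\sigma}$ are two $1$-cells of $\mathcal{G}$ equal in $P$, so by the previous paragraph they are $2$-isomorphic, placing $D$ in the essential image.

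The main obstacle is establishing the lifting principle with full rigour, and in particular handling the interaction of braidings with units. One must check that when braidings are pulled through the multiplication and unit $1$-cells---the categorified distributive law underlying Lemma~\ref{distributivelawlemma}---any crossing attached to a unit can be removed by sliding the unit past it, since a unit has empty source and a braiding pulled down to it disappears, leaving a genuine braid on the $m$ input strings only. This is precisely the ``pull through and then eliminate attached units'' step, and it is the one place where the argument is more than a formal consequence of the coherence results already in hand.
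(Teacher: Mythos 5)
Your proposal is correct and is essentially the paper's own argument: the paper proves this lemma in two sentences by observing that the decategorified functors are isomorphisms, so any $1$-cell is connected by a chain of equalities to one in the image, and each such equality becomes an invertible $2$-cell in the categorified setting. Your elaboration of which generating $2$-isomorphisms ($\alpha,\lambda,\rho$, interchangers, pullthroughs, $c$) lift which relations, and your appeal to Theorems~\ref{gurskicoherencethmbraids} and~\ref{gurskiosornocoherencethm} for the purely structural part, simply make explicit what the paper leaves implicit.
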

\begin{proof}
The decategorified functors are isomorphisms, so there must be a chain of equalities reducing any 1-cell diagram to one in the image of the isomorphism. In the categorified setting these equalities become isomorphisms, which implies essential surjectivity. 
\end{proof}

\begin{lemma}
The maps described above are faithful on 2-cells. 
\end{lemma}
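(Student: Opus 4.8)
The plan is to reduce immediately to the single nontrivial case and then to exhibit an explicit left inverse to the map on 2-cell hom-sets. For every source category other than ${\bf FS}^{\text{br}}$ the bicategory is locally discrete, so between any parallel pair of 1-cells there is at most one 2-cell; hence the induced map on hom-sets of 2-cells has domain of cardinality at most one and is automatically injective. The same holds for ${\bf FS}^{\text{br}}$ on all non-endomorphism hom-sets, since that bicategory is locally totally disconnected: the only parallel pairs admitting a 2-cell are of the form $(f,f)$, with $\Hom(f,f) = \prod_{i=1}^n PB_{p_i}$. Thus the entire content of the lemma is the injectivity of the map $\Phi\colon \prod_{i=1}^n PB_{p_i} \to \Hom_{G^{\text{sym}}(\mathcal{P}^{\text{br}})}(\tilde f, \tilde f)$ of Definition~\ref{braidedbiequivalencedefinition}.

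To prove this injectivity I would construct a retraction by evaluating in a concrete model. A braided monoidal category is exactly a braided pseudomonoid in the symmetric monoidal bicategory ${\bf Cat}$, so each braided monoidal category $V$ determines a strict symmetric-monoidal homomorphism $H_V\colon G^{\text{sym}}(\mathcal{P}^{\text{br}}) \to {\bf Cat}$ sending $m$ to the tensor functor of $V$, $u$ to its unit, $\alpha,\lambda,\rho$ to the monoidal structure isomorphisms, and the commutator $c$ to the genuine braiding $\beta_V$. Applying $H_V$ to the movie $\Phi(\sigma)$ produces a natural automorphism of the functor $V^{\times m}\to V^{\times n}$ underlying the tree $\tilde f$. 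Its $i$-th component, evaluated at a fixed object $X$ in every argument, is an automorphism of $X^{\otimes p_i}$. Taking $V$ to be the free braided monoidal category on one generator $X$, the endomorphism monoid $\End(X^{\otimes p_i})$ is precisely the braid group $B_{p_i}$, realized by $\beta_V$; reading off these components yields an element $\theta(\Phi(\sigma))\in\prod_{i=1}^n PB_{p_i}$. Since $H_V$ is a homomorphism, $\theta$ depends only on the 2-cell $\Phi(\sigma)$ and not on the chosen movie, so $\theta$ is a well-defined function on the target hom-set; if I can show $\theta\circ\Phi=\id$, then injectivity of $\Phi$ follows at once.

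The main obstacle is the identity $\theta\circ\Phi=\id$, i.e. verifying that the create-and-absorb procedure of Definition~\ref{braidedbiequivalencedefinition}, once pushed into $V$, records the pure braid $\sigma_i$ faithfully on the $i$-th block. This requires care for two reasons: the braid $\sigma_i$ is created using the \emph{ambient} symmetric structure, which in ${\bf Cat}$ is the strict cartesian swap and is only permutation-level, whereas it is \emph{absorbed} using the pseudomonoid commutator $c$, which models as the genuine braiding $\beta_V$. The nontriviality of a pure braid must therefore be shown to survive through the absorbing commutators and associators rather than through the creation step. I would track the strands through each absorption clip using the hexagon equalities of $\mathcal{P}^{\text{br}}$ and the pull-through axioms, confirming that every commutator contributes exactly one $\beta_V$-crossing in the pattern prescribed by $\sigma_i$, so that the net automorphism on the $i$-th block is the $\beta_V$-realization of $\sigma_i$. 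Faithfulness of the braid-group action on tensor powers in the free braided monoidal category then recovers each $\sigma_i$, and disjointness of the blocks recovers the full tuple $\sigma$. I would emphasise that this argument uses only the definition of $\Phi$ and the model $H_V$, and is in particular independent of Corollary~\ref{corr:MacLane}, which is derived \emph{from} Theorem~\ref{thm:maintheorem}; hence there is no circularity.
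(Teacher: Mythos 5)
Your argument is correct in substance, but it takes a genuinely different route from the paper's. The paper's proof is a short syntactic invariance argument: the isotopy class of the braid absorbed by each tree is unchanged by every generating equality of the computad and by the structural rewrites of the symmetric Gray monoid, so it descends to a well-defined invariant of 2-cells, and distinct elements of $\prod_{i=1}^n PB_{p_i}$ yield images with distinct invariants. You instead build a semantic retraction: evaluate via the braided pseudomonoid in ${\bf Cat}$ given by the free braided monoidal category on one object, and read the absorbed braids back off from the components of the resulting natural automorphism, using the classical Joyal--Street identification of $\End(X^{\otimes p})$ with $B_p$. Your route is heavier but more self-certifying: the paper's phrase ``the isotopy class of the absorbed braid'' implicitly requires checking well-definedness against all generating equalities, which the paper leaves tacit, whereas your $\theta$ is automatically well defined on 2-cells because homomorphisms preserve equality of movies. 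What the paper's approach buys is that it never leaves the syntactic Gray monoid and imports no classical coherence input; your approach imports Joyal--Street coherence as an external ingredient (classical and proved independently of this paper, so non-circular, as you rightly observe), and your key identity $\theta \circ \Phi = \id$ is indeed routine for exactly the reason you identify: all ambient cells used in the creation step ($i^{\pm}$, syllepses, pullthroughs, interchangers) evaluate to identities in the cartesian symmetric structure of ${\bf Cat}$, while each commutator or inverse commutator contributes exactly one Artin generator $\beta^{\pm 1}$ in the position prescribed by the absorption procedure of Definition~\ref{braidedbiequivalencedefinition}, with associators evaluating to coherence isomorphisms of trivial underlying braid.

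One point needs repair before the argument is airtight: there is no \emph{strict} homomorphism $G^{\text{sym}}(\mathcal{P}^{\text{br}}) \to {\bf Cat}$ in the Gray-monoid sense, because the cartesian product on ${\bf Cat}$ is not strictly associative --- the paper itself flags this in Section~\ref{sec:computadssemistrictbackground}. A braided pseudomonoid in ${\bf Cat}$ gives a strict homomorphism out of the \emph{weak} free bicategory, and you must transport it along the semistrictness biequivalence, which yields only a weak homomorphism $H_V$ out of the semistrict higher PRO. This is harmless for your purposes --- a weak homomorphism still sends equal 2-cells to equal 2-cells, and it perturbs your read-off automorphism only by conjugation by coherence isomorphisms whose underlying braid is trivial, so $\theta$ and its well-definedness survive --- but it should be stated. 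A smaller caveat: depending on orientation conventions, the braid you recover on the $i$-th block may be a fixed mirror or inverse of $\sigma_i$ rather than $\sigma_i$ itself; since this discrepancy is a group automorphism of $B_{p_i}$ independent of $\sigma$, injectivity of $\Phi$ is unaffected, but the literal claim $\theta \circ \Phi = \id$ should be weakened to $\theta \circ \Phi$ being a fixed injection.
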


\begin{proof}
For the locally discrete bicategories, this is trivial. For ${\bf FS^{\text{br}}}$, note that the isotopy class of the braid absorbed by each tree is different for every 2-cell in the domain; since none of the generating 2-cells of the computad for a braided pseudomonoid in a symmetric Gray monoid change the isotopy class of the absorbed braid, the map must therefore be faithful on 2-cells.
\end{proof}
\noindent
All that remains to show is that the maps are full on 2-cells and are functorial.

\section{Proof of fullness}\label{sec:fullnessproof}

\subsection{Putting the loop into normal form}\label{sec:normalformfullness}

We now demonstrate that the maps defined in the last section are full on 2-cells. To do this, we will provide an explicit series of rewrites that puts any loop on a 1-cell in the image of one of the maps into a normal form $N$ which we now define. Recall that ordered string diagrams in the image all consist of a braid (possibly trivial) followed by trees.

\begin{definition}\label{movienormalformfullnessdefn}
The normal form $N$ is as follows: A braid (possibly trivial) is created directly beneath the leftmost tree, then absorbed according to Definition \ref{braidedbiequivalencedefinition}. This process is repeated for each tree, moving from left to right.
\end{definition}
\noindent
In Table~\ref{tbl:pseudomonoidsintro} there are two variables: the braided structure of the ambient monoidal bicategory, and the braided structure of the pseudomonoid. Because of our choice of axioms, we need only consider the case of a braided pseudomonoid in a symmetric monoidal bicategory when defining our series of rewrites. This is because loops in other categories may be considered as loops in this Gray monoid which use only a restricted set of 1- and 2-cells. 
Only once we have rewritten the loop in the normal form $N$ will we need to distinguish the various cases.

Firstly we remove all unitors from the loop.

\subsubsection{Removing unitors}\label{sec:removingunits}

Since there are no attached unit nodes in any 1-cell diagram in the image, any unit creation operator in the loop is paired with a unit destruction operator which destroys the created unit. The intuitive idea of the series of rewrites we are about to define is to move a unit creation operator towards the end of the loop, where at some point it will meet its paired destruction operator and the two can be cancelled. The process may then be iterated to remove all unit creation and destruction operators.

We first demonstrate how this can be done in a simple case.

\begin{lemma}\label{unbraidedbubbleelimination}
Any unit creation operator may be eliminated along with its paired destruction operator if it satisfies the following conditions:
\begin{itemize}[leftmargin=*]
\item The only 2-cells acting on the created multiplication 1-cell throughout the loop are its creation operator and its destruction operator.
\item No further unit creation operators occur on the output string of the created unit 1-cell.
\end{itemize}
\end{lemma}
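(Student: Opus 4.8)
The plan is to realize the intuitive idea quoted before the lemma: to slide the chosen unit creation operator forward through the movie until it sits immediately before its paired destruction operator, and then delete the two using that they are mutually inverse isomorphisms. First I would pin down the pairing. Write the loop as a movie $D_1 \Rightarrow \cdots \Rightarrow D_r$ and suppose the chosen creation operator is a $\lambda^{-1}$ (the $\rho^{-1}$ case is identical) occurring at transition $k$, producing a unit 1-cell $u$ and a multiplication 1-cell $m$. Since the endpoint 1-cell of the loop lies in the image of the biequivalence, and such 1-cells consist of a braid followed by trees with no attached units, the created cells $u$ and $m$ cannot survive to the final frame and must be destroyed later in the movie. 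By the first hypothesis the only non-structural 2-cells touching $m$ are its creation and destruction operators, and in the pseudomonoid computad $\mathcal{P}$ the only generating 2-cell capable of consuming a multiplication fed by a unit is a unitor; the associator merely rebrackets and the commutator merely transposes. Hence there is a unique unitor $D = \lambda$ at some transition $l > k$ destroying precisely this $u$ and $m$, and $D$ is the paired destruction operator. As no non-structural 2-cell modifies the bubble in the intervening frames, $u$ and $m$ can be tracked through all of $D_{k+1}, \dots, D_{l-1}$, where they are at most repositioned by interchangers.

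Next I would commute the creation operator forward past every transition strictly between $k$ and $l$. For a transition $\gamma$ lying between $C$ and $D$ there are two cases. If $\gamma$ acts on a rectangular subregion disjoint from the bubble $\{u,m\}$, then a Type~I rewrite (Definition~\ref{structuralequalitiesgraymondef}) interchanges the order of $C$ and $\gamma$, moving $C$ one transition later. If instead $\gamma$ is an interchanger transporting $u$ or $m$ past an unconnected neighbouring 1-cell, then a Type~III rewrite lets me create the bubble directly at the post-interchange position, equivalently sliding $C$ to the far side of $\gamma$ and again moving it one transition later. Iterating, $C$ arrives immediately before $D$.

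The second hypothesis is precisely what makes this case division exhaustive and keeps the slide from stalling. Were a further unit created on the output string of the bubble, its creation operator $C'$ would act on a region lying above $m$ on $m$'s own output string — a region that exists only after $C$ has fired — so $C$ and $C'$ are not on independent regions and no Type~I or Type~III rewrite commutes $C$ past $C'$. Ruling this out guarantees every intervening transition falls into one of the two tractable cases. Once $C=\lambda^{-1}$ is immediately followed by $D=\lambda$ acting on the same region, Definition~\ref{def:isomorphism} gives $\lambda \circ_H \lambda^{-1} = \id$, so both transitions may be excised from the movie, removing the creation/destruction pair as claimed; this is the simple base case to be iterated in the removal of all unitors.

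The main obstacle is the interchanger step of the forward commutation: verifying that the Type~III rewrites really do transport the creation operator across every interchanger acting on the bubble, and — more delicately — confirming that hypotheses (1) and (2) together force every one of the transitions $D_{k+1}, \dots, D_{l-1}$ into either the disjoint-region case or the bubble-interchanger case, so that the slide can always be carried to completion and the final cancellation is legitimate.
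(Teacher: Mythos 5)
You have correctly identified the pairing and the overall slide-and-cancel strategy, but your case division is not exhaustive, and this is a genuine gap rather than a technicality. The lemma is proved in the symmetric Gray monoid on the braided pseudomonoid computad (Section~\ref{sec:normalformfullness} reduces everything to this case), so the structural 2-cells in play go well beyond interchangers: the created unit, having no inputs, is also transported by pullthroughs $PU$, $PO$ through braidings, and braiding inverse-inserts $i^{\pm}$, inverse syllepses, and further unit creations can fire at a vertical level \emph{between} the created multiplication and the unit. None of these falls into your two cases: they are not disjoint from the bubble in the sense needed for a Type I rewrite, and a Type III rewrite (Definition~\ref{structuralequalitiesgraymondef}) by definition only transports a 2-cell across an \emph{interchanger}. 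Your claim that hypothesis (2) ``makes this case division exhaustive'' is therefore incorrect: hypothesis (2) excludes only further \emph{unit creation operators} on the output string; it says nothing about the structural cells $i^{\pm}$, $\sigma^{-1}$ or pullthroughs, which are always available in the ambient category. In particular your assertion that $u$ and $m$ ``are at most repositioned by interchangers'' between creation and destruction is false, and the slide can stall exactly where your closing caveat suspects.

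There is a second, related omission. Cancelling $\lambda^{-1}$ against $\lambda$ via Definition~\ref{def:isomorphism} requires the two to act on the \emph{same rectangular subregion}, i.e.\ the unit must be spatially adjacent to the multiplication with nothing at a level in between; purely temporal Type I commutation of the creation operator cannot produce that configuration once the unit has drifted downwards. The paper's proof resolves both problems by first putting the created unit into TSNF via Theorem~\ref{tsnfprocedure} --- legitimate precisely because your hypotheses (1) and (2) guarantee no non-structural 2-cell acts on its output string --- so that every subsequent obstruction acts on a region to one side of the output string. It then handles in-between insertions (inverse-inserts, inverse syllepses, unit creations) by inserting interchanger pairs so the unit hops over the created cells followed by a Type III rewrite, and eliminates the unit's own chains of downward interchangers \emph{and pullthroughs} by a delay-and-cancel argument, with an explicit decreasing measure (the number of interchangers/pullthroughs of the unit, plus the number of 2-cells occurring at levels between the two 1-cells) guaranteeing termination. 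Your ``iterate until adjacent'' step has no such measure, so even for the cases you do cover, termination of the procedure is unproven. To repair the proposal you would need to import the TSNF normalisation as step zero and redo the obstruction analysis case-by-case as the paper does.
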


\begin{proof}
The rewrite procedure is as follows. 
\begin{enumerate}[leftmargin=*]
\item Put the created unit in TSNF using Procedure \ref{tsnfprocedure}.
\item Try to move the unit creation operator towards the end of the loop using Type I interchanger rewrites. The possible obstructions are as follows.
	\begin{itemize}[leftmargin=*]
	\item \emph{We have reached the paired destruction operator}. Cancel the pair and we have finished. 
	\item \emph{A braiding inverse-insert, an inverse syllepsis, or a unit creation operator occurs at a vertical level between the created multiplication and the unit.} Since the unit is in TSNF, and by assumption there are no further units created on the output string, this 2-cell will affect a rectangular subregion on one side of the unit's output string. We may therefore:
		\begin{enumerate}[leftmargin=*]
		\item Insert interchangers and their inverses so that, after the 2-cell occurs, the unit interchanges upwards with the created 1-cells, and then returns. 
		\item Use a Type III rewrite so that the 2-cell occurs below the unit, then the unit interchanges downwards.
		\end{enumerate}
		This reduces by one the number of 2-cells occuring between the two 1-cells.
	\item \emph{A series of downwards interchangers and pullthroughs of the unit occurs}. In this case, go to the last 2-cell in the series and try to delay it using Type I rewrites. If this is impossible, there must be an obstruction. If the obstruction is a 2-cell acting on the unit, then since only interchangers and pullthroughs act on the unit, the 2-cell must be an upwards interchanger or pullthrough. This may be cancelled with the downwards interchanger or pullthrough, reducing the number of interchangers and pullthroughs of the unit by two. If the 2-cell acts on the level directly above the unit, there are two possibilities, depending on what the 2-cell is:
		\begin{itemize}[leftmargin=*]
		\item \emph{A braiding inverse-insert, an inverse syllepsis or a unit creation operator at a level between the two 1-cells.} Since the unit is in TSNF, and by assumption there are no further units created on the output string, this 2-cell will target a region on one side of the unit. We: 
			\begin{enumerate}[leftmargin=*]
				\item Insert interchangers and their inverses so that, after the 2-cell occurs, the unit interchanges upwards with the created 1-cells, and then returns. 
				\item Use a Type III rewrite so that the 2-cell occurs below the unit, then the unit interchanges downwards.
			\end{enumerate}
		This reduces by one the number of 2-cells occuring between the multiplication and the unit.	
		\item \emph{Any other 2-cell}. In this case, the interchanger will have interchanged downwards with all the involved 1-cells; we may therefore use a Type III rewrite, reducing by one the number of 2-cells occuring between the multiplication and the unit.	
		\end{itemize}
	\end{itemize}
\item Iterate the procedure. Since all paths above either cancel the unit creation and destruction operators, reduce the number of interchangers of the unit, or reduce the the number of 2-cells occuring at a level between the two 1-cells, it is clear that this result in cancellation of the creation and destruction operators.\end{enumerate}\end{proof}
\begin{example}\label{ex:unitelim}
See the 5-cell `Example~\ref{ex:unitelim} - Statement' and the 6-cell `Example~\ref{ex:unitelim} - Pf' in the \emph{Globular} workspace. The first 20 rewriting steps put the created unit into TSNF, and the remaining 26 steps eliminate it.
\end{example}

\begin{definition}
We call unit creation operators satisfying the conditions of Lemma~\ref{unbraidedbubbleelimination} \emph{unnested with fixed multiplication}.
\end{definition}

\noindent
We now show how to rewrite any loop so that the final unit creation operator is unnested with fixed multiplication.

\begin{lemma}\label{bubblenormalformwithfixedmultnoderewrite}
Any loop can be rewritten so that the last unit creation operator is unnested with fixed multiplication, without increasing the number of unit creation operators. 
\end{lemma}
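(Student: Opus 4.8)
I want to show that any loop can be rewritten, without increasing the number of unit creation operators, so that the last (topmost, i.e.\ latest-occurring) unit creation operator is \emph{unnested with fixed multiplication} in the sense of the definition just given. The goal is to arrange that the unit created by this operator satisfies the two hypotheses of Lemma~\ref{unbraidedbubbleelimination}: that the only non-structural 2-cells touching the \emph{created} multiplication 1-cell are its own creation and destruction operators, and that no further unit creation operators occur on its output string. Let me think about what the plan should be.

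Let me reconsider the structure. The last unit creation operator $\lambda^{-1}$ or $\rho^{-1}$ produces a multiplication $m$ together with an attached unit $u$. Because it is the \emph{last} one in the loop, no subsequent unit creation operator can appear anywhere above it; so any \emph{other} creation operators on the output string must occur \emph{before} it in the movie, and can be handled by re-ordering. The two conditions of the lemma therefore have a natural strategy.
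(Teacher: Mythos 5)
Your proposal stops exactly where the proof has to begin. The observation that the last unit creation operator is automatically unnested is correct and matches the paper, which disposes of that condition in one sentence. But the substantive condition is \emph{fixed multiplication}: the only 2-cells acting on the \emph{created multiplication 1-cell} throughout the loop must be the creation and destruction operators, and this cannot be ``handled by re-ordering''. The 2-cells that obstruct this condition -- an associator or inverse associator with the created multiplication as lower or upper partner, a commutator or inverse commutator on it, an interchanger or pullthrough of it, or its annihilation with another unit -- genuinely involve the created multiplication cell, so Type I rewrites do not commute the creation operator past them. Each case requires a non-trivial derived equality (the paper derives several from the triangle equality, e.g.\ the equalities (A1), (A2) and their flips, plus a separate equality for the commutator case) that lets the creation operator be pushed past, or absorbed into, the offending 2-cell.

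You are also missing the mechanism that keeps the creation-operator count from growing while iterating. The paper's trick is: before the first 2-cell acting on the created multiplication, insert IPI so the unit travels up to the multiplication and back, insert a unit destruction operator and its inverse there, and then cancel the \emph{original} creation operator against the freshly inserted destruction operator via Lemma~\ref{unbraidedbubbleelimination} (legitimate precisely because, up to that point, the unit is unnested and the multiplication untouched). This effectively re-creates the unit immediately before the offending 2-cell at constant creation-operator count, after which the case analysis removes that 2-cell, and the whole procedure iterates until the first 2-cell on the multiplication is the paired destruction operator. Without this insertion-and-cancellation step and the case-by-case derived equalities, the claim that ``the two conditions have a natural strategy'' is an assertion, not a proof, and as stated the re-ordering idea would fail on the very first associator that touches the created multiplication.
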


\begin{proof}
The last unit creation operator is clearly unnested. We now show how to fix the created multiplication node without introducing nesting. Consider the first 2-cell involving the created multiplication 1-cell. If this 2-cell is a unit destruction operator then we are finished. If not:
\begin{enumerate}[leftmargin=*]
\item Insert interchangers, pullthroughs and their inverses (IPI) immediately prior to the 2-cell so that the unit node goes straight up to the multiplication 1-cell, returns to where it started and then the 2-cell occurs. 
\item Insert a unit destruction operator and its inverse immediately before the pulldowns. 
\item Eliminate the first unit creation operator and the inserted destruction operator using Lemma~\ref{unbraidedbubbleelimination}. We can do this since by assumption this was the first 2-cell acting on the created unit, and the unit is unnested.
\item Use Type I rewrites so that the 2-cell occurs immediately after the unit creation (or after additional interchangers/pullthroughs if necessary).
\end{enumerate}
We now have a movie with the same number of unit creation operators where the last creation operator occurs, the unit interchanges or pulls through downwards to directly beneath the region acted on by the 2-cell involving the multiplication, and then the 2-cell occurs. We now show how to eliminate each possible 2-cell case-by-case. 

\begin{itemize}[leftmargin=*]
\item \emph{The multiplication interchanges downwards}. In this case the creation operator occurs and then both the unit and the multiplication interchange once downwards. Use a Type III rewrite so that the creation operator occurs immediately below the 1-cell involved in the interchanger:

\begin{center}
\begin{tabular}{c c c c c c}
\raisebox{0.5cm}{{\Huge $[$}} &
\includegraphics[width=1cm,height=1.5cm]{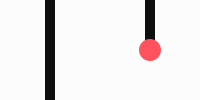} 
\raisebox{0.5cm}{$\Rightarrow$} 
\includegraphics[width=1cm,height=1.5cm]{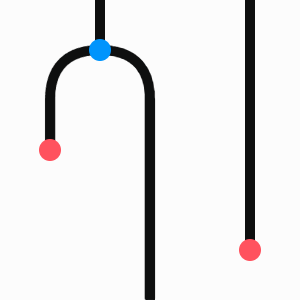} 
\raisebox{0.5cm}{$\Rightarrow$} 
\includegraphics[width=1cm,height=1.5cm]{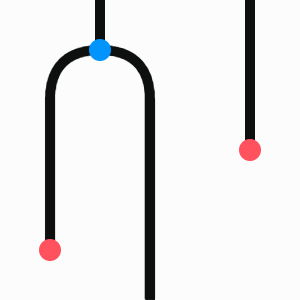} 
\raisebox{0.5cm}{$\Rightarrow$}
\includegraphics[width=1cm,height=1.5cm]{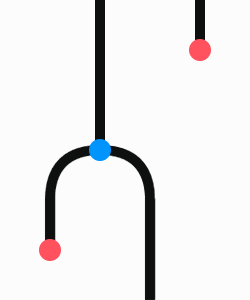} &
\raisebox{0.5cm}{{\Huge $]$}} &
\raisebox{0.5cm}{{\Large $=$}} &
\raisebox{0.5cm}{{\Huge $[$}} &
\includegraphics[width=1cm,height=1.5cm]{multintdownwardsstart} 
\raisebox{0.5cm}{$\Rightarrow$} 
\includegraphics[width=1cm,height=1.5cm]{multintdownwardsend} 
\raisebox{0.5cm}{{\Huge $]$}} 
\end{tabular}
\end{center}

\item \emph{The multiplication interchanges upwards}. In this case the creation operator occurs and then the multiplication interchanges upwards. Insert a upwards interchanger of the unit and its inverse immediately following the upwards interchanger of the multiplication. Use a Type II then a Type III rewrite so that the creation operator occurs immediately above the 1-cell involved in the interchanger:
\begin{center}
\begin{tabular}{c c c}
\raisebox{0.5cm}{{\Huge $[$}}
\includegraphics[width=1cm,height=1.5cm]{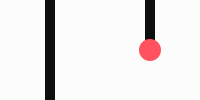} 
\raisebox{0.5cm}{$\Rightarrow$} 
\includegraphics[width=1cm,height=1.5cm]{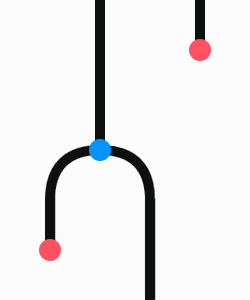} 
\raisebox{0.5cm}{$\Rightarrow$} 
\includegraphics[width=1cm,height=1.5cm]{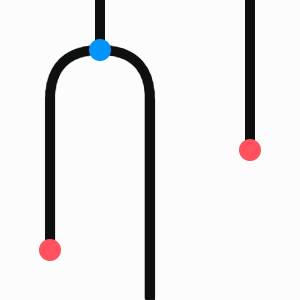} 
\raisebox{0.5cm}{{\Huge $]$}} &
\raisebox{0.5cm}{{\Large $=$}} &
\raisebox{0.5cm}{{\Huge $[$}}
\includegraphics[width=1cm,height=1.5cm]{multintupwardsstart} 
\raisebox{0.5cm}{$\Rightarrow$} 
\includegraphics[width=1cm,height=1.5cm]{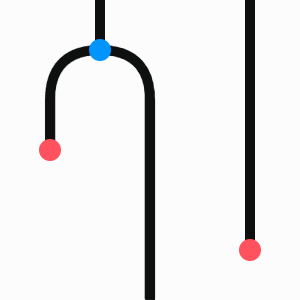} 
\raisebox{0.5cm}{$\Rightarrow$} 
\includegraphics[width=1cm,height=1.5cm]{multintupwardsend}\raisebox{0.5cm}{{\Huge $]$}}
\end{tabular}
\end{center}

\item \emph{The multiplication pulls through downwards}. In this case the creation operator occurs immediately above a braiding, the unit pulls through, and is followed by the multiplication node. Use $(\cdot \otimes \Rightarrow)$ or $(\Rightarrow \otimes \cdot)$ so that the creation operator occurs beneath the braiding:

\begin{center}
\begin{tabular}{c c c}
\raisebox{0.5cm}{{\Huge $[$}}
\includegraphics[width=1cm,height=1.5cm]{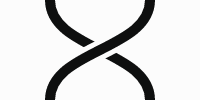} 
\raisebox{0.5cm}{$\Rightarrow$} 
\includegraphics[width=1cm,height=1.5cm]{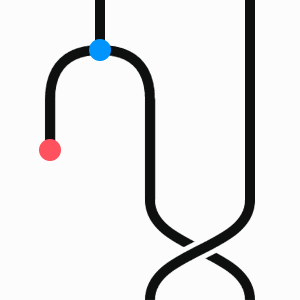} 
\raisebox{0.5cm}{$\Rightarrow$} 
\includegraphics[width=1cm,height=1.5cm]{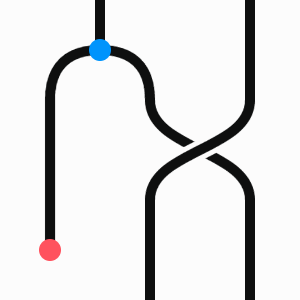}  
\raisebox{0.5cm}{$\Rightarrow$} 
\includegraphics[width=1cm,height=1.5cm]{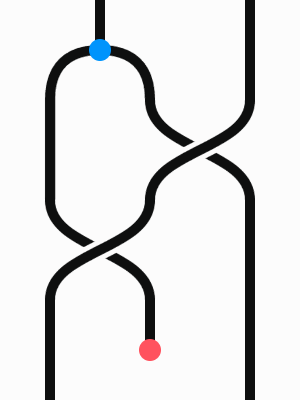} 
\raisebox{0.5cm}{$\Rightarrow$} 
\includegraphics[width=1cm,height=1.5cm]{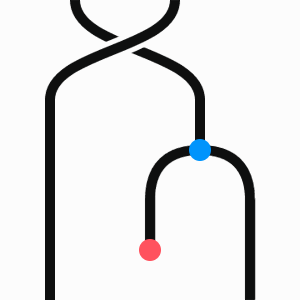}
\raisebox{0.5cm}{{\Huge $]$}} &
\raisebox{0.5cm}{{\Large $=$}} &
\raisebox{0.5cm}{{\Huge $[$}}
\includegraphics[width=1cm,height=1.5cm]{multptdownwardsstart}
\raisebox{0.5cm}{$\Rightarrow$} 
\includegraphics[width=1cm,height=1.5cm]{multptdownwardsend}\raisebox{0.5cm}{{\Huge $]$}}
\end{tabular}
\end{center}

\item \emph{The multiplication pulls through upwards}.
Here the creation operator occurs immediately below a braiding, and the multiplication then pulls through upwards. Insert an upwards pullthrough of the unit followed by a downwards pullthrough immediately after the pullthrough of the multiplication. Use $(\cdot \otimes \Rightarrow)$ or $(\Rightarrow \otimes \cdot)$ so that the creation operator occurs above the braiding and the unit then pulls through downwards:
\begin{center}
\begin{tabular}{c c c}
\raisebox{0.5cm}{{\Huge $[$}}
\includegraphics[width=1cm,height=1.5cm]{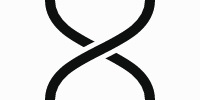} 
\raisebox{0.5cm}{$\Rightarrow$} 
\includegraphics[width=1cm,height=1.5cm]{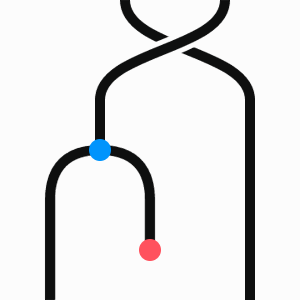} 
\raisebox{0.5cm}{$\Rightarrow$} 
\includegraphics[width=1cm,height=1.5cm]{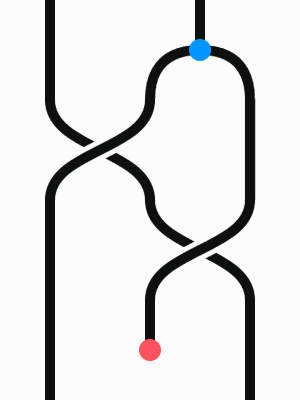}
\raisebox{0.5cm}{{\Huge $]$}} &
\raisebox{0.5cm}{{\Large $=$}} &
\raisebox{0.5cm}{{\Huge $[$}}
\includegraphics[width=1cm,height=1.5cm]{multptupwardsstart}
\raisebox{0.5cm}{$\Rightarrow$} 
\includegraphics[width=1cm,height=1.5cm]{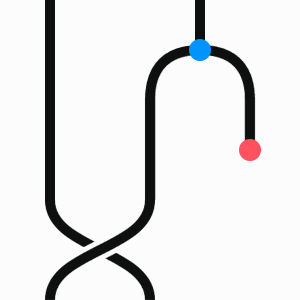}
\raisebox{0.5cm}{$\Rightarrow$} 
\includegraphics[width=1cm,height=1.5cm]{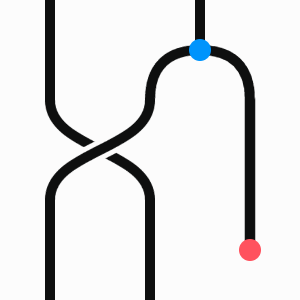}
\raisebox{0.5cm}{$\Rightarrow$} 
\includegraphics[width=1cm,height=1.5cm]{multptupwardsend}\raisebox{0.5cm}{{\Huge $]$}}
\end{tabular}
\end{center}

\item \emph{The multiplication is the lower partner in an associator or inverse associator.} In this case the creation operator is performed immediately below a multiplication 1-cell, with which the created multiplication 1-cell immediately associates. Here we require four equalities, all of which are implied by the triangle equality. Two are shown below; the others are the same, but with all diagrams flipped in a vertical axis (we call them (A1V) and (A2V). 

\begin{center}
\begin{tabular}{l l l r}
\raisebox{0.5cm}{{\Huge $[$}}
\includegraphics[width=1cm,height=1.5cm]{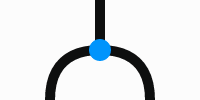} 
\raisebox{0.5cm}{$\Rightarrow$} 
\includegraphics[width=1cm,height=1.5cm]{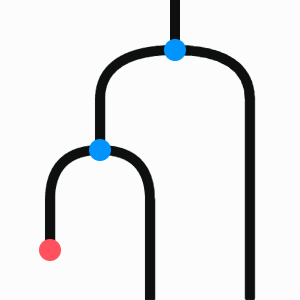} 
\raisebox{0.5cm}{$\Rightarrow$} 
\includegraphics[width=1cm,height=1.5cm]{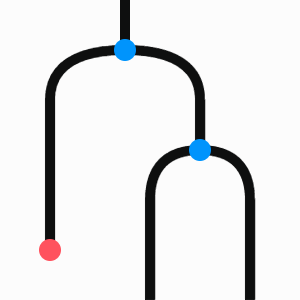}
\raisebox{0.5cm}{{\Huge $]$}} &
\raisebox{0.5cm}{{\Large $=$}} &
\raisebox{0.5cm}{{\Huge $[$}}
\includegraphics[width=1cm,height=1.5cm]{multlunitlassocstart}
\raisebox{0.5cm}{$\Rightarrow$} 
\includegraphics[width=1cm,height=1.5cm]{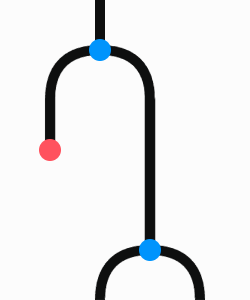}
\raisebox{0.5cm}{$\Rightarrow$} 
\includegraphics[width=1cm,height=1.5cm]{multlunitlassocend}
\raisebox{0.5cm}{{\Huge $]$}}
& \raisebox{0.5cm}{(A1)}
\\
\raisebox{0.5cm}{{\Huge $[$}}
\includegraphics[width=1cm,height=1.5cm]{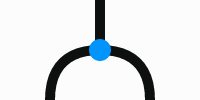} 
\raisebox{0.5cm}{$\Rightarrow$} 
\includegraphics[width=1cm,height=1.5cm]{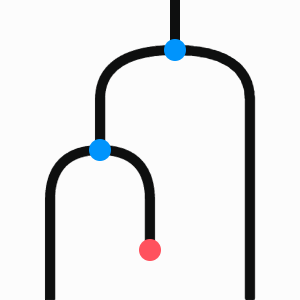} 
\raisebox{0.5cm}{$\Rightarrow$} 
\includegraphics[width=1cm,height=1.5cm]{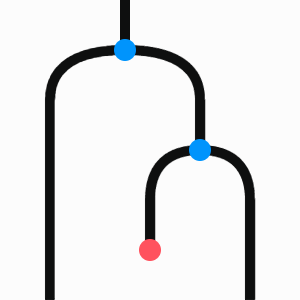}
\raisebox{0.5cm}{{\Huge $]$}} &
\raisebox{0.5cm}{{\Large $=$}} &
\raisebox{0.5cm}{{\Huge $[$}}
\includegraphics[width=1cm,height=1.5cm]{multrunitlassocstart}
\raisebox{0.5cm}{$\Rightarrow$} 
\includegraphics[width=1cm,height=1.5cm]{multrunitlassocend}
\raisebox{0.5cm}{{\Huge $]$}}
& \raisebox{0.5cm}{(A2)}
\end{tabular}
\end{center}
The full derivations of these rewrites can be found in the \emph{Globular} workspace as the 6-cells `Lemma \ref{bubblenormalformwithfixedmultnoderewrite} - Mult lower partner in associator Pf (A1)' and `Lemma \ref{bubblenormalformwithfixedmultnoderewrite} - Mult lower partner in associator Pf (A2)'. In the proof we use two intermediate lemmas, the 5-cells `Lemma~\ref{bubblenormalformwithfixedmultnoderewrite} - Associator Lemma Left' and `Lemma~\ref{bubblenormalformwithfixedmultnoderewrite} - Associator Lemma Right'. We give the proof for the left lemma  as the 6-cell `Lemma~\ref{bubblenormalformwithfixedmultnoderewrite} - Associator Lemma Left Pf'; the proof for the right lemma is similar.

\item \emph{The multiplication is the upper partner in an associator or inverse associator.} Here the unit creation operator occurs directly above a multiplication 1-cell, the unit interchanges downwards, and an associator or inverse associator is then performed. For a left unit, this will be an associator, and for a right unit it will be an inverse associator. We require two equalities, one of which is (A1) postcomposed on both sides with an inverse associator, and the other of which is (A1V) postcomposed on both sides of the equality with an associator.

\item \emph{The multiplication annihilates with another unit.} In this case, the creation operator occurs directly above a unit. The created unit then interchanges downwards and the multiplication annihilates with the other unit. To rewrite this movie we need one equality for a left unit creation, derivable from the triangle equality. The equality for a right unit creation is simply the flip of this one.
\begin{center}
\begin{tabular}{l l l}
\raisebox{0.5cm}{{\Huge $[$}}
\includegraphics[width=1cm,height=1.5cm]{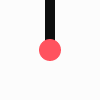} 
\raisebox{0.5cm}{$\Rightarrow$} 
\includegraphics[width=1cm,height=1.5cm]{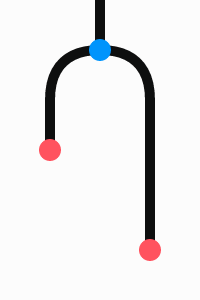} 
\raisebox{0.5cm}{$\Rightarrow$} 
\includegraphics[width=1cm,height=1.5cm]{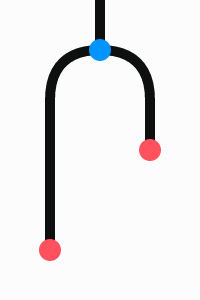} 
\raisebox{0.5cm}{$\Rightarrow$} 
\includegraphics[width=1cm,height=1.5cm]{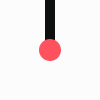}
\raisebox{0.5cm}{{\Huge $]$}} &
\raisebox{0.5cm}{{\Large $=$}} &
\raisebox{0.5cm}{{\Huge $[$}}
\includegraphics[width=1cm,height=1.5cm]{multunitorstart}
\raisebox{0.5cm}{{\Huge $]$}}
\end{tabular}
\end{center}
We provide the full derivation of this equality in the \emph{Globular} workspace as the 6-cell `Lemma~\ref{bubblenormalformwithfixedmultnoderewrite} - Left unit twist Pf'. For convenience, we include the 5-cell `Lemma~\ref{bubblenormalformwithfixedmultnoderewrite} - Right unit twist' separately.

\item \emph{The multiplication is acted on by a commutator or inverse commmutator}. For a commutator the unit is created, pulls through the other string, and a commutator occurs. For an inverse commutator, the unit is created and then an inverse commutator occurs. One equality for a commutator is shown below; the other equality is the same, but with all diagrams flipped in a vertical axis. The two equalities for inverse commutators follow from the equalities for commutators by flipping and then postcomposing on both sides with a unit creation and a pullthrough.
\begin{center}
\begin{tabular}{l l l }
\raisebox{0.5cm}{{\Huge $[$}}
\includegraphics[width=1cm,height=1.5cm]{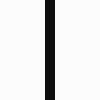} 
\raisebox{0.5cm}{$\Rightarrow$} 
\includegraphics[width=1cm,height=1.5cm]{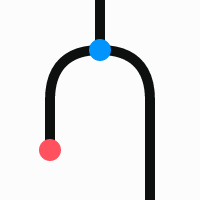} 
\raisebox{0.5cm}{$\Rightarrow$} 
\includegraphics[width=1cm,height=1.5cm]{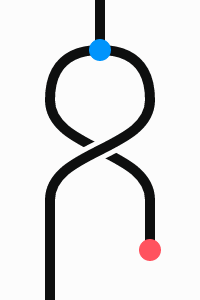}
\raisebox{0.5cm}{$\Rightarrow$} 
\includegraphics[width=1cm,height=1.5cm]{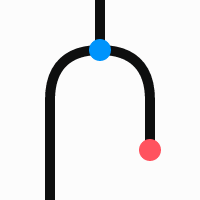}
\raisebox{0.5cm}{{\Huge $]$}} &
\raisebox{0.5cm}{{\Large $=$}} &
\raisebox{0.5cm}{{\Huge $[$}}
\includegraphics[width=1cm,height=1.5cm]{multinvcommrunitstart}
\raisebox{0.5cm}{$\Rightarrow$} 
\includegraphics[width=1cm,height=1.5cm]{multinvcommrunit1-1} 
\raisebox{0.5cm}{{\Huge $]$}} 
\end{tabular}
\end{center}
The full derivation of this rewrite is shown in the \emph{Globular} workspace as `Lemma~\ref{bubblenormalformwithfixedmultnoderewrite} - Commutator Lemma Pf'.
\end{itemize}
All these rewrites remove the first 2-cell on the multiplication 1-cell. The result therefore follows by iterating the procedure.
\end{proof}
\noindent
Using these results together, we may remove all unitors. 
\begin{proposition}
Any loop on a 1-morphism in the image of $F$ may be rewritten so that it contains no unit creation or destruction operators. 
\end{proposition}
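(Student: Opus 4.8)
The plan is to combine the two preceding results, Lemma~\ref{unbraidedbubbleelimination} and Lemma~\ref{bubblenormalformwithfixedmultnoderewrite}, into an induction on the number of unit creation operators occurring in the loop. The key observation is that Lemma~\ref{bubblenormalformwithfixedmultnoderewrite} lets us massage any loop so that its \emph{last} unit creation operator becomes unnested with fixed multiplication, and crucially does so \emph{without increasing} the total number of unit creation operators. Once a unit creation operator is in this special position, Lemma~\ref{unbraidedbubbleelimination} applies directly and eliminates it together with its paired destruction operator, thereby strictly decreasing the count. So the overall strategy is: repeatedly (i) bring the last creation operator into the unnested-with-fixed-multiplication form, then (ii) cancel it against its paired destruction operator.

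Concretely, I would argue as follows. Let $k$ be the number of unit creation operators in the loop; we induct on $k$. If $k=0$ there is nothing to prove, since (as noted in Section~\ref{sec:removingunits}) every created unit in a diagram in the image must eventually be destroyed, so the absence of creation operators forces the absence of destruction operators as well, and the loop already contains no unitors. For the inductive step, suppose $k \geq 1$. By Lemma~\ref{bubblenormalformwithfixedmultnoderewrite}, we may rewrite the loop so that the last unit creation operator is unnested with fixed multiplication, and this rewrite does not increase $k$. The conditions defining \emph{unnested with fixed multiplication} are precisely the hypotheses of Lemma~\ref{unbraidedbubbleelimination}: the only 2-cells acting on the created multiplication 1-cell are its creation and destruction operators, and no further unit creation operators occur on the output string of the created unit (the latter holding because we have chosen the \emph{last} creation operator). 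Hence Lemma~\ref{unbraidedbubbleelimination} applies and eliminates this creation operator along with its paired destruction operator, yielding a loop with $k-1$ creation operators. By the induction hypothesis this loop may in turn be rewritten to contain no unit creation or destruction operators, completing the argument.

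I would include a short remark to confirm that the bookkeeping is sound: each application of Lemma~\ref{bubblenormalformwithfixedmultnoderewrite} preserves the creation-operator count, each application of Lemma~\ref{unbraidedbubbleelimination} strictly decreases it by one, and both are finite sequences of structural and specified rewrites, so the whole procedure terminates after at most $k$ eliminations. Since each step is a genuine equality of 2-cells (the structural equalities of Definition~\ref{structuralequalitiesgraymondef} together with equalities implied by the pseudomonoid computad), the final unit-free movie is equal as a 2-cell to the original loop.

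The main obstacle here is not really in this proof, which is a clean two-line induction, but in being careful that the two lemmas genuinely compose: one must verify that the output of Lemma~\ref{bubblenormalformwithfixedmultnoderewrite} exactly meets the input hypotheses of Lemma~\ref{unbraidedbubbleelimination}. The subtle point is the pairing of creation and destruction operators --- one needs that in any diagram in the image there are no standalone unit nodes, so every creation operator is matched by a unique destruction operator, which is what was established at the start of Section~\ref{sec:removingunits}. Granting that matching, the induction goes through without difficulty; all the real combinatorial work has already been absorbed into the two preceding lemmas.
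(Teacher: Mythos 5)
Your proposal is correct and follows exactly the paper's own argument: rewrite the loop via Lemma~\ref{bubblenormalformwithfixedmultnoderewrite} so that the last unit creation operator is unnested with fixed multiplication, eliminate it with Lemma~\ref{unbraidedbubbleelimination}, and repeat until no unitors remain. Your additional bookkeeping (the induction on the number of creation operators, the non-increase under Lemma~\ref{bubblenormalformwithfixedmultnoderewrite}, and the pairing of creation with destruction operators) merely makes explicit the termination argument the paper leaves implicit.
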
 
\begin{proof}
Use Lemma~\ref{bubblenormalformwithfixedmultnoderewrite} to rewrite the movie such that the last unit creation operator is unnested with fixed multiplication; then eliminate it using Lemma \ref{unbraidedbubbleelimination}. Repeat until all unit creation operators have been removed.
\end{proof}

\subsubsection{Fixing the trees}\label{sec:treefixing}

We now have a loop consisting only of associators, interchangers, commutators, pullthroughs, syllepses, braiding inverse-inserts and braiding cancellations. Recall that the source 1-cell diagram of the loop is a braiding followed by a series of left-bracketed multiplication trees with heights rising from left to right, where we consider a unit 1-cell to be a multiplication tree $m^0$. We will now provide a series of rewrites that will `fix the trees'. The intuitive meaning of this is shown in Figure \ref{fixedtreesfigure}.
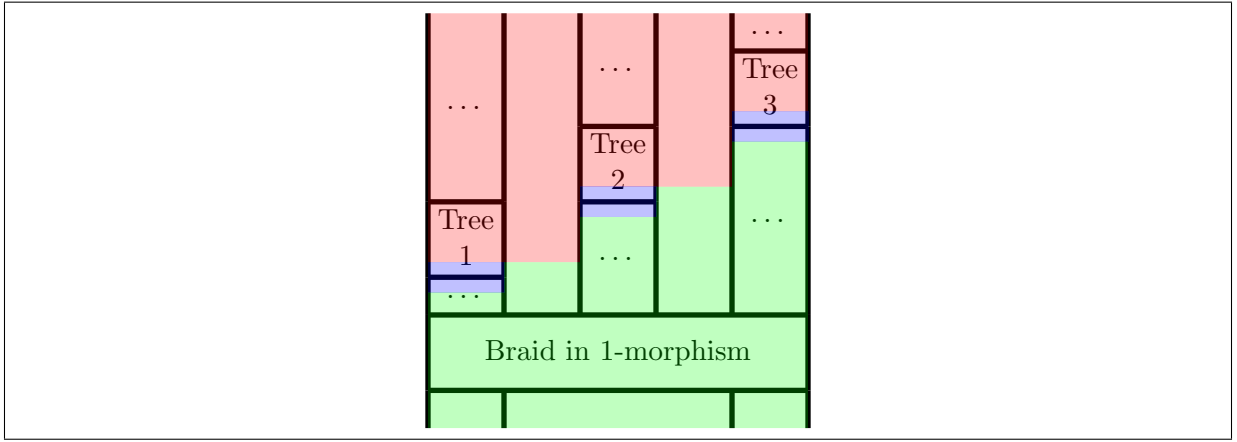
\begin{figure}
\centering
\begin{tikzpicture}
	\begin{pgfonlayer}{nodelayer}
		\node [style=none] (0) at (0, -0) {};
		\node [style=none] (1) at (1, -0) {};
		\node [style=none] (2) at (0, 0.5) {};
		\node [style=none] (3) at (0, 1.5) {};
		\node [style=none] (4) at (5, 1.5) {};
		\node [style=none] (5) at (5, 0.5) {};
		\node [style=none] (6) at (5, -0) {};
		\node [style=none] (7) at (4, -0) {};
		\node [style=none] (8) at (1, 0.5) {};
		\node [style=none] (9) at (4, 0.5) {};
		\node [style=none] (10) at (1, 1.5) {};
		\node [style=none] (11) at (4, 1.5) {};
		\node [style=none] (12) at (2, 1.5) {};
		\node [style=none] (13) at (3, 1.5) {};
		\node [style=none] (14) at (0, 2) {};
		\node [style=none] (15) at (0, 3) {};
		\node [style=none] (16) at (1, 3) {};
		\node [style=none] (17) at (1, 2) {};
		\node [style=none] (18) at (0.5, 2.5) {\makecell{Tree\\1}};
		\node [style=none] (19) at (2, 3) {};
		\node [style=none] (20) at (2, 4) {};
		\node [style=none] (21) at (3, 4) {};
		\node [style=none] (22) at (3, 3) {};
		\node [style=none] (23) at (2.5, 3.5) {\makecell{Tree\\2}};
		\node [style=none] (24) at (4, 4) {};
		\node [style=none] (25) at (4, 5) {};
		\node [style=none] (26) at (4.5, 4.5) {\makecell{Tree\\3}};
		\node [style=none] (27) at (5, 4) {};
		\node [style=none] (28) at (5, 5) {};
		\node [style=none] (29) at (2.5, 2.25) {$\dots$};
		\node [style=none] (30) at (4.5, 2.75) {$\dots$};
		\node [style=none] (31) at (0.5, 1.75) {$\dots$};
		\node [style=none] (32) at (5, 5.5) {};
		\node [style=none] (33) at (4, 5.5) {};
		\node [style=none] (34) at (3, 5.5) {};
		\node [style=none] (35) at (2, 5.5) {};
		\node [style=none] (36) at (0, 5.5) {};
		\node [style=none] (37) at (1, 5.5) {};
		\node [style=none] (38) at (0.5, 4.25) {$\dots$};
		\node [style=none] (39) at (2.5, 4.75) {$\dots$};
		\node [style=none] (40) at (4.5, 5.25) {$\dots$};
		\node [style=none] (41) at (2.5, 1) {\makecell{Braid in 1-morphism}};
	\end{pgfonlayer}
	\begin{pgfonlayer}{edgelayer}
		\draw [style=simple] (0.center) to (2.center);
		\draw [style=simple] (1.center) to (8.center);
		\draw [style=simple] (7.center) to (9.center);
		\draw [style=simple] (6.center) to (5.center);
		\draw [style=simple] (3.center) to (2.center);
		\draw [style=simple] (3.center) to (4.center);
		\draw [style=simple] (4.center) to (5.center);
		\draw [style=simple] (5.center) to (2.center);
		\draw [style=simple] (14.center) to (3.center);
		\draw [style=simple] (17.center) to (10.center);
		\draw [style=simple] (19.center) to (12.center);
		\draw [style=simple] (22.center) to (13.center);
		\draw [style=simple] (24.center) to (11.center);
		\draw [style=simple] (27.center) to (4.center);
		\draw [style=simple] (15.center) to (14.center);
		\draw [style=simple] (16.center) to (15.center);
		\draw [style=simple] (16.center) to (17.center);
		\draw [style=simple] (17.center) to (14.center);
		\draw [style=simple] (19.center) to (20.center);
		\draw [style=simple] (20.center) to (21.center);
		\draw [style=simple] (21.center) to (22.center);
		\draw [style=simple] (22.center) to (19.center);
		\draw [style=simple] (25.center) to (28.center);
		\draw [style=simple] (28.center) to (27.center);
		\draw [style=simple] (25.center) to (24.center);
		\draw [style=simple] (24.center) to (27.center);
		\draw [style=simple] (36.center) to (15.center);
		\draw [style=simple] (37.center) to (16.center);
		\draw [style=simple] (20.center) to (35.center);
		\draw [style=simple] (34.center) to (21.center);
		\draw [style=simple] (33.center) to (25.center);
		\draw [style=simple] (32.center) to (28.center);
	\end{pgfonlayer}
\fill[red,nearly transparent] (0,2.2) -- (2,2.2) -- (2,3.2) -- (4,3.2) -- (4,4.2) -- (5,4.2) -- (5,5.5) -- (0,5.5) -- cycle;
\fill[blue,nearly transparent] (0,1.8) -- (1,1.8) -- (1,2.2) -- (0,2.2) -- cycle;
\fill[blue,nearly transparent] (2,2.8) -- (3,2.8) -- (3,3.2) -- (2,3.2) -- cycle;
\fill[blue,nearly transparent] (4,3.8) -- (5,3.8) -- (5,4.2) -- (4,4.2) -- cycle;
\fill[green,nearly transparent] (0,0) -- (5,0) -- (5,3.8) -- (4,3.8) -- (4,3.2) -- (3,3.2) -- (3,2.8) -- (2,2.8) -- (2,2.2) -- (1,2.2) --(1,1.8) -- (0,1.8) -- cycle;
\end{tikzpicture}
\caption{This figure shows fixed trees. In the green region, only structural 2-cells may occur. Commutators occur only at the bottom of the trees, in the blue regions. In the red region, only associators may occur.}
\label{fixedtreesfigure}
\end{figure}
We may write the condition as follows.
\begin{lemma}
The loop can be rewritten so that non-structural 1-cells are in TSNF; there are no interchangers between non-structural 1-cells; and commutators only occur on multiplication 1-cells at the bottom of a tree.
\end{lemma}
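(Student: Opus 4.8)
The plan is to reduce the problem to controlling the finitely many \emph{non-structural} 2-cells appearing in the loop --- the associators and commutators together with their inverses --- and then to manoeuvre these into the positions dictated by Figure~\ref{fixedtreesfigure}. The leverage here is that every maximal clip of the loop lying strictly between two non-structural 2-cells is purely structural, so Theorem~\ref{gurskiosornocoherencethmextended} guarantees that any two parallel structural clips are equal. Consequently I am free to replace each such clip by whatever canonical structural representative is convenient, and the loop is determined, up to structural coherence, by the ordered list of associators and commutators it contains. Fixing the trees then becomes a finite manipulation of this list, carried out by a terminating procedure, exactly as in the unitor-removal arguments of Section~\ref{sec:removingunits}.

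First I would establish condition three by localising the commutators to the bottoms of the trees. A commutator or inverse commutator fires on a multiplication node whose two inputs are crossed by a braiding lying directly beneath it; I would process these one at a time, always selecting a commutator acting on a multiplication that is as high as possible in its tree. Using the two hexagon equalities of Definition~\ref{braidedpseudomonoidpres} together with the associators of Definition~\ref{pseudomonoidpresentation}, a commutator acting on an internal node can be traded for a commutator acting on the node immediately beneath it, at the cost of associators above; iterating drives the commutator down to the bottom multiplication of its tree (the blue region of Figure~\ref{fixedtreesfigure}). The braiding that the commutator consumes is slid past any intervening structural cells without obstruction by means of the pull-through axioms $(\cdot\,\otimes\Downarrow)$, $(\Downarrow\otimes\,\cdot)$ and PT-B of Definition~\ref{def:braidedgraymonoid}. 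Each such trade strictly lowers the height of a commutator in its tree, so the procedure terminates with every commutator in a blue region.

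Next I would secure conditions one and two. Once the commutators sit at the tree bottoms, the only non-structural 2-cells that can meet the output (trunk) string of a multiplication are associators $m\circ(m\otimes\id)\Rightarrow m\circ(\id\otimes m)$, and these only ever rebracket a vertically adjacent pair; I would evacuate them upward out of the tree interior so that no non-structural 2-cell acts on a rectangular subregion containing a trunk string. With that hypothesis in force, Theorem~\ref{tsnfprocedure} applies to each non-structural 1-cell and puts it in TSNF, giving condition one. For condition two, extended coherence (Theorem~\ref{gurskiosornocoherencethmextended}) lets me choose the structural representative of each clip so that the trees keep a fixed relative height order; any residual interchanger swapping two multiplications can then be cancelled against a partner or absorbed by Type~I and Type~III rewrites, leaving no interchanger between non-structural 1-cells.

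The hard part will be the commutator-localisation step. Trading a commutator on an internal node for one on the node below it is a single hexagon move, but iterating this while tracking the consumed braiding --- and ensuring that the associators thereby generated can genuinely be evacuated from the trunk strings rather than accumulating there --- relies on the pull-through coherence ($(\rightarrow\rightarrow\otimes\,\cdot)$ and PT-B) to commute the braiding past the whole tree cleanly, and on the compatibility of the absorption pattern of Definition~\ref{braidedbiequivalencedefinition} with these moves. Checking that the rearrangements assemble into an honest rewriting of the loop and that the induced measure (commutator heights, then number of obstructing associators) strictly decreases is the delicate bookkeeping; I would expect to discharge the individual algebraic identities as explicit \emph{Globular} derivations, as was done for the lemmas of Section~\ref{sec:removingunits}.
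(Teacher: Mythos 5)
Your overall strategy --- localise the commutators with the hexagons first, then appeal to Theorem~\ref{tsnfprocedure} and Theorem~\ref{gurskiosornocoherencethmextended} to fix the trees --- shares most of its ingredients with the paper's proof, but the second phase as you state it cannot be carried out. Your intermediate goal is to ``evacuate'' the associators so that no non-structural 2-cell acts on a rectangular subregion containing a trunk string, at which point Theorem~\ref{tsnfprocedure} would apply to every multiplication node. That intermediate state is unattainable: an associator, by its very definition, rebrackets a vertically adjacent pair of multiplication 1-cells, and its source region necessarily contains the output string of the lower node of that pair. So for every internal node that is ever touched by an associator, the hypothesis of Theorem~\ref{tsnfprocedure} fails, and no amount of evacuation can restore it. The paper avoids this by never separating the two phases: it pins the nodes of each tree one at a time from the top down ($N_1$, then $N_2$, and so on), handling the non-structural 2-cells by an explicit case analysis at each stage, so that TSNF is only ever invoked for the single node currently being fixed, at a moment when its output string is under control.

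Second, your toolkit omits the pentagon equality of Definition~\ref{pseudomonoidpresentation}, and this is not a cosmetic omission. During the pinning one unavoidably meets an associator acting on an already-fixed node $N_i$ while $N_{i+1}$ must remain directly beneath it; Type~I and Type~III rewrites cannot commute that associator past the intervening pinned node, because that node sits inside the region on which the associator acts. The paper resolves exactly this case with the pentagon, and the analogous commutator case with the hexagons --- used a second time \emph{inside} the induction, not only in a preliminary pass as in your phase one. Relatedly, your appeal to Theorem~\ref{gurskiosornocoherencethmextended} to choose structural representatives ``keeping the trees in a fixed height order'' only works once the endpoints of each structural clip already have the trees pinned, which is the conclusion of the induction rather than something coherence alone can deliver; the same goes for the claim that residual interchangers between multiplications can simply be ``cancelled against a partner''. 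Your phase one (driving commutators down the trees by hexagon trades, with termination by a height measure, possibly refined to a multiset ordering since one trade can produce several lower commutators) is sound in spirit and close to what the paper actually does, but without the pentagon-based mechanism for associators and the interleaved top-down induction, the proof does not go through as proposed.
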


\begin{proof}
Begin with the leftmost tree. Consider the source diagram. Call the top 1-cell in the tree $N_1$, the next 1-cell down $N_2$, etc. Put $N_1$  into TSNF. At the start and end of the loop use Type II rewrites so that $N_1$ moves to the top of the diagram and then back down again. Now we will rewrite the loop so that $N_1$ is at the top of the diagram when any non-braiding 2-cell occurs. Consider the first non-braiding 2-cell in the loop:
\begin{itemize}[leftmargin=*]
\item \emph{The first non-braiding 2-cell involves $N_1$}.
\begin{enumerate}[leftmargin=*]
\item Insert IPI immediately before the 2-cell so that $N_1$ and the other 1-cells acted on by the 2-cell rise together to the top of the diagram, then return, then the 2-cell occurs. There can be no obstructing 1-cells above $N_1$ since it was the highest 1-cell in its tree, so this is always possible.
\item Use Type III rewrites so that $N_1$ and the other 1-cells involved in the 2-cell rise together to the top of the diagram, the 2-cell occurs, then they return.
\end{enumerate}
\item \emph{The first non-braiding 2-cell does not involve $N_1$}. 
\begin{enumerate}[leftmargin=*]
\item Insert IPI immediately before the 2-cell so that $N_1$ rises to the top of the diagram, then returns down again, then the 2-cell occurs. There can be no obstructing 1-cells for the reason already stated. 
\item Use Type I rewrites so that $N_1$ rises to the top of the diagram, returns down to just below the region on which the 2-cell acts, then the 2-cell occurs, then $N_1$ returns to its original position. 
\item Use a Type III rewrite followed by Type I rewrites so that $N_1$ rises to the top of the diagram, the 2-cell occurs, then $N_1$ returns to its original position. 
\end{enumerate}
\end{itemize}
Repeat this process for all non-braiding 2-cells occuring in the loop. In between the two points at the start and end of the loop where $N_1$ is moved to top of the diagram, we now have a loop where $N_1$ is at the top of the diagram during all non-braiding 2-cells. Consider the clips between the applications of the non-braiding 2-cells. These contain only structural 2-cells, and begin and end with $N_1$ at the top of the diagram. Use Theorem \ref{gurskiosornocoherencethmextended} to rewrite these so that $N_1$ remains at the top of the diagram throughout the whole loop. We now have a loop where $N_1$ rises to the top of the diagram in the beginning, remains there throughout, then returns to its original position.

Now we show that, having fixed $N_i$, we can fix $N_{i+1}$; that is, we can rewrite so that $N_{i+1}$ remains in TSNF directly beneath $N_i$ throughout the loop. First put $N_{i+1}$ in TSNF; this will mean that we can always pull it up to directly beneath $N_i$. At the beginning and end of the loop, after $N_i$ rises to the top, use Type II rewrites so that $N_{i+1}$ rises directly beneath $N_i$ and then returns to its starting position. Now we rewrite so that $N_{i+1}$ is directly beneath $N_i$ whenever a non-braiding 2-cell is performed. Consider the first 2-cell in the movie; there are three possibilities.

\begin{itemize}[leftmargin=*]
\item \emph{The 2-cell acts on $N_i$ and on $N_{i+1}$.} Here the 2-cell must be an associator. In this case, since $N_{i+1}$ must be directly beneath $N_i$ for the performance of the 2-cell, no rewrite is necessary.
\item \emph{The 2-cell acts on $N_i$ but not on $N_{i+1}$.} In this case, the 2-cell will be either an associator or a commutator on $N_i$.
	\begin{itemize}[leftmargin=*]
	\item \emph{The 2-cell is an associator or inverse associator}. Insert IPI so that $N_{i+1}$ pulls up directly beneath $N_i$, returns to its starting position, then the associator is performed. Use Type I rewrites so that $N_{i+1}$ pulls up directly beneath $N_i$, then interchanges downwards once, then the associator is performed, then $N_{i+1}$ returns to its starting position. Then use the pentagon equality so that $N_{i+1}$ moves directly beneath $N_i$, a series of associators are performed, then the other multiplication 1-cell moves back to the starting position of $N_i$:
	\begin{center}
\begin{tabular}{l l}
\raisebox{0.5cm}{{\Huge $[$}}
&
\includegraphics[width=1cm,height=1.5cm]{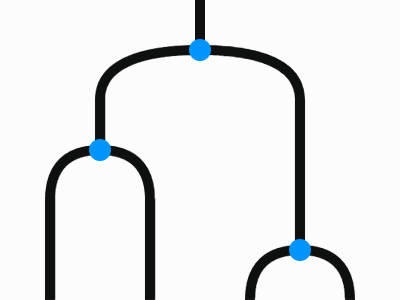} 
\raisebox{0.5cm}{$\Rightarrow$} 
\includegraphics[width=1cm,height=1.5cm]{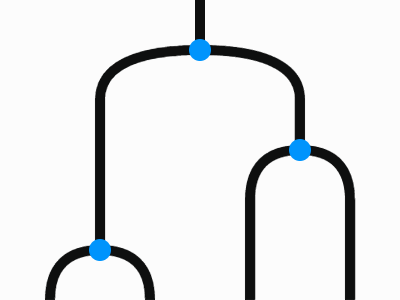} 
\raisebox{0.5cm}{$\Rightarrow$} 
\includegraphics[width=1cm,height=1.5cm]{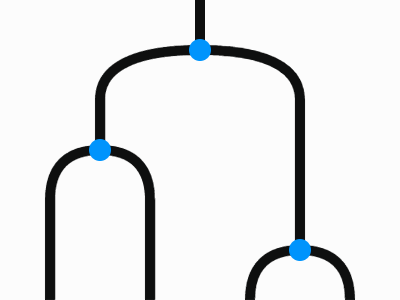}
\raisebox{0.5cm}{$\Rightarrow$} 
\includegraphics[width=1cm,height=1.5cm]{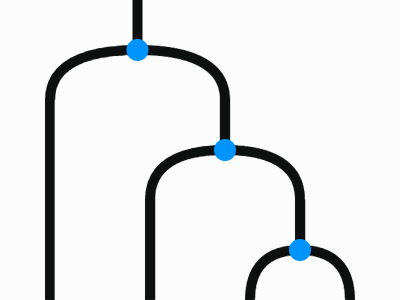}
\raisebox{0.5cm}{{\Huge $]$}} \\
\raisebox{0.5cm}{{\Large $=$}} &
\raisebox{0.5cm}{{\Huge $[$}}
\includegraphics[width=1cm,height=1.5cm]{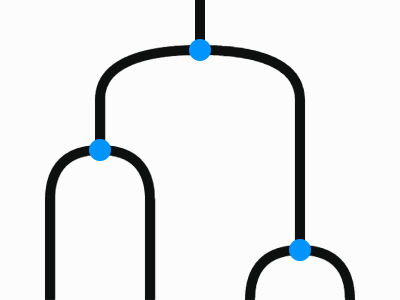}
\raisebox{0.5cm}{$\Rightarrow$} 
\includegraphics[width=1cm,height=1.5cm]{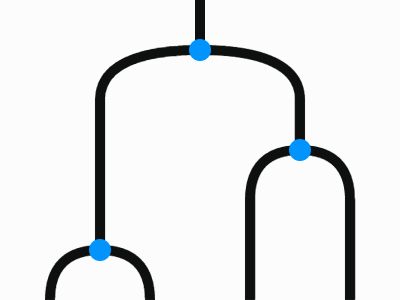}
\raisebox{0.5cm}{$\Rightarrow$} 
\includegraphics[width=1cm,height=1.5cm]{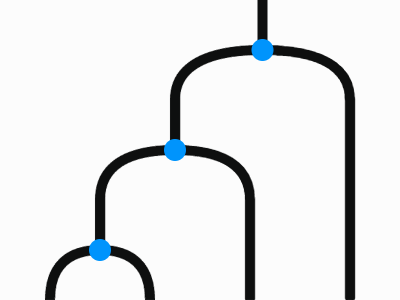}
\raisebox{0.5cm}{$\Rightarrow$} 
\includegraphics[width=1cm,height=1.5cm]{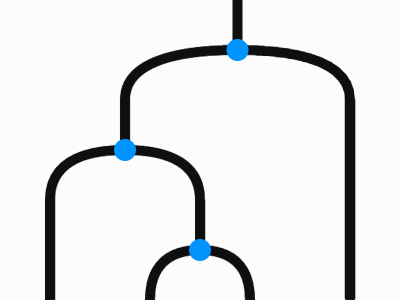}
\raisebox{0.5cm}{$\Rightarrow$} 
\includegraphics[width=1cm,height=1.5cm]{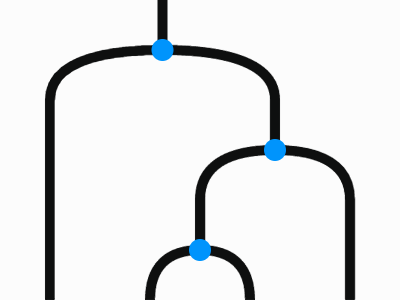}
\raisebox{0.5cm}{$\Rightarrow$} 
\includegraphics[width=1cm,height=1.5cm]{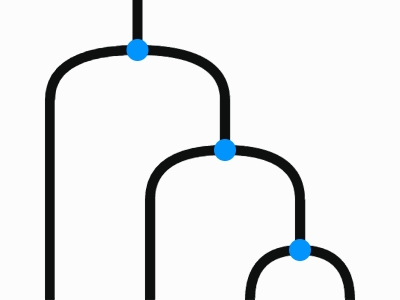}
\raisebox{0.5cm}{{\Huge $]$}}
\end{tabular}
\end{center}
	
	\item \emph{The 2-cell is a commutator or inverse commutator}. Insert IPI so that $N_{i+1}$ pulls up directly beneath $N_i$, then returns to its starting position, then the commutator is performed. Use Type I rewrites so that $N_{i+1}$ is pulled up beneath $N_i$, pulls through downwards, and then the commutator is performed. Then use one of the two hexagons to rewrite the movie so that $N_{i+1}$ pulls up directly beneath $N_i$, then a series of associators and commutators are performed, then $N_{i+1}$ returns to its starting position:
\begin{center}
\begin{tabular}{l l }
\raisebox{0.5cm}{{\Huge $[$}} &
\includegraphics[width=1cm,height=1.5cm]{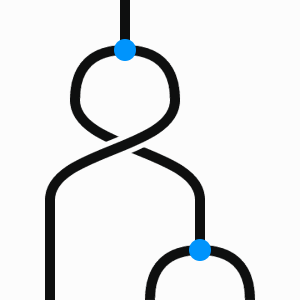} 
\raisebox{0.5cm}{$\Rightarrow$} 
\includegraphics[width=1cm,height=1.5cm]{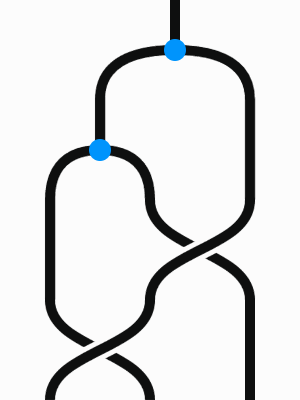} 
\raisebox{0.5cm}{$\Rightarrow$} 
\includegraphics[width=1cm,height=1.5cm]{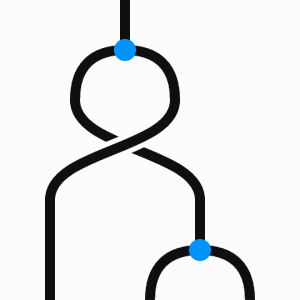}
\raisebox{0.5cm}{$\Rightarrow$} 
\includegraphics[width=1cm,height=1.5cm]{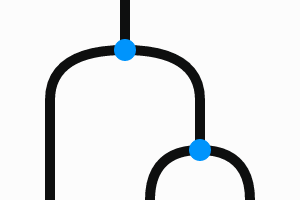}
\raisebox{0.5cm}{{\Huge $]$}} \\
\raisebox{0.5cm}{{\Large $=$}} &
\raisebox{0.5cm}{{\Huge $[$}}
\includegraphics[width=1cm,height=1.5cm]{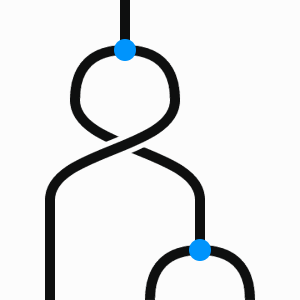}
\raisebox{0.5cm}{$\Rightarrow$} 
\includegraphics[width=1cm,height=1.5cm]{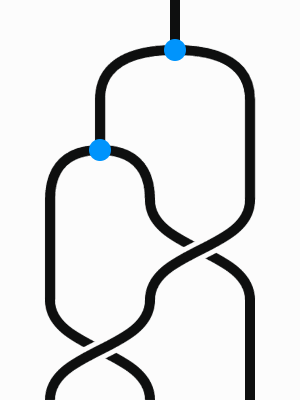}
\raisebox{0.5cm}{$\Rightarrow$} 
\includegraphics[width=1cm,height=1.5cm]{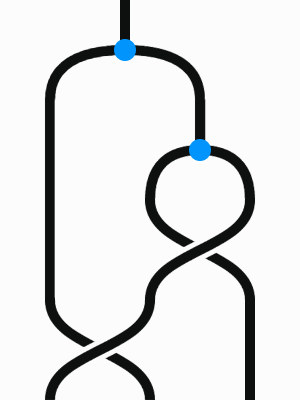}
\raisebox{0.5cm}{$\Rightarrow$} 
\includegraphics[width=1cm,height=1.5cm]{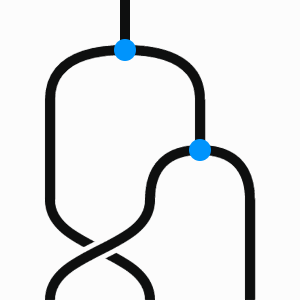}
\raisebox{0.5cm}{$\Rightarrow$} 
\includegraphics[width=1cm,height=1.5cm]{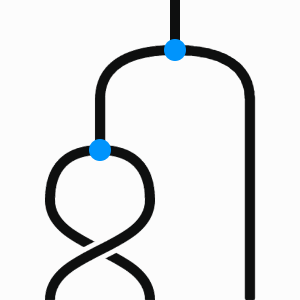}
\raisebox{0.5cm}{$\Rightarrow$} 
\includegraphics[width=1cm,height=1.5cm]{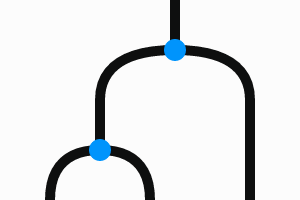}
\raisebox{0.5cm}{$\Rightarrow$} 
\includegraphics[width=1cm,height=1.5cm]{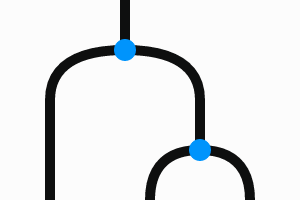}
\raisebox{0.5cm}{{\Huge $]$}}
\end{tabular}
\end{center}
\end{itemize}
\item \emph{The 2-cell acts on $N_{i+1}$ but not on $N_i$.} In this case, the 2-cell must be an associator or commutator involving $N_{i+1}$ and a multiplication 1-cell or braiding beneath it. Use IPI so that both affected 1-cells rise to beneath $N_i$, then use Type III rewrites so that the 1-cells move up, the 2-cell occurs, and then they move back down.
\item \emph{The 2-cell acts neither on $N_i$ nor on $N_{i+1}$.} In this case, insert IPI and use Type I rewrites and possibly a Type III rewrite so that $N_{i+1}$ rises up, the 2-cell occurs, and then $N_{i}$ moves back down again. 
\end{itemize}
By induction, we obtain a loop where each node in the leftmost tree rises to the top of the diagram and remains there throughout, with commutators occuring only on the bottom multiplication 1-cell, before descending again. Repeat for all trees, from left to right; then remove the loop of interchangers at the beginning and end of the movie using Theorem \ref{gurskiosornocoherencethmextended}. The resulting loop will be of the desired form.
\end{proof}

\subsubsection{Associators, commutators and braidings}\label{sec:assoccommsandbraidingssection}

We now finish rewriting the loop into normal form $N$. First we deal with the associators; we ensure that the trees remain left bracketed until a commutator or inverse commutator is about to occur, at which point the affected tree will associate in the manner of Definition \ref{braidedbiequivalencedefinition}, then return to the left bracketing using the inverse sequence of associators when the commutator or inverse commutator is complete.  For this, we use the following lemma.

\begin{lemma}\label{assoccoherence}
Any two unbroken sequences of associators (i.e. without interchangers) between two bracketings of a tree are equal.
\end{lemma}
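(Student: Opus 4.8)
The plan is to prove the statement by reducing it to the triviality of associator loops and then invoking Mac Lane's classical coherence theorem for the associator \cite{MacLane1978}, translated into the movie calculus. First I would observe that it suffices to show that every \emph{loop} of associators with no interchangers --- a sequence whose source and target bracketing coincide --- is equal to the trivial movie. Indeed, given two interchanger-free associator sequences $S_1, S_2$ from a bracketing $B$ to a bracketing $B'$, the composite $S_2^{-1} \circ_V S_1$ is such a loop (the associators are isomorphisms, so $S_2^{-1}$ makes sense), and its triviality gives $S_1 = S_2$.

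Next I would set up the combinatorial bookkeeping. A bracketing of an $n$-leaf tree is a binary tree, i.e. a vertex of the associahedron $K_n$, and a single associator move is an edge. Forgetting the vertical heights of the multiplication $1$-cells, an interchanger-free associator sequence projects to an edge-path in the $1$-skeleton of $K_n$, and a loop projects to a closed edge-path. Since $K_n$ is a polytope, its $1$-skeleton is simply connected relative to its $2$-dimensional faces; these faces are of exactly two kinds: the pentagons (the boundary of a $K_4$ sub-face, five associator edges) and the \emph{commuting squares} arising when two associators act on disjoint sub-trees. Hence any closed edge-path is, up to the relations of the fundamental groupoid, a composite of conjugates of the boundaries of these two kinds of face.

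I would then translate each face relation into an equality of $2$-cells. A commuting square of independent associators is precisely a Type I rewrite (Definition \ref{structuralequalitiesgraymondef}), since the two associators act on rectangular subregions with disjoint sets of generating $1$-cells; this is exactly the example given there. A pentagon face is handled by the pentagon equality of the computad $\mathcal{P}$ (Definition \ref{pseudomonoidpresentation}). Filling in every face of the projected loop therefore contracts it to the trivial movie using only the specified and structural equalities, which would prove the loop trivial and hence establish the lemma.

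The main obstacle --- and the reason this is not immediate from the decategorified Mac Lane theorem --- is that the pentagon equality of $\mathcal{P}$ is not a pure associator relation: one of its two sides contains an interchanger $\iota$, reflecting that two of the three multiplications must exchange heights. Thus each pentagon contraction of an interchanger-free loop temporarily introduces an interchanger. I would control this by working with a fixed canonical (fully left-bracketed) target together with a canonical associator sequence $c_B$ to it for each bracketing $B$, and then showing by induction on length that any interchanger-free sequence equals the appropriate $c_{B'}^{-1} \circ_V c_B$; at each inductive step the interchanger produced by a pentagon application is cancelled against a partner using Type II and Type III rewrites (Definition \ref{structuralequalitiesgraymondef}). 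The invariant that makes this cancellation possible is that the underlying braid of the whole loop is trivial --- the loop contains no interchangers to begin with --- so the interchangers introduced during the contraction necessarily occur in cancelling pairs.
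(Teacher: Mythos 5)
Your reduction to loops and the associahedron bookkeeping are fine at the decategorified level, but the argument has a genuine gap exactly where you locate the difficulty, and your proposed fix does not close it. Contracting the projected loop across the $2$-faces of $K_n$ only yields an equality of movies if each face relation lifts to the movie calculus, and in a Gray monoid the pentagon lifts only in its interchanger-augmented form $[\alpha,\iota,\alpha]=[\alpha,\alpha,\alpha]$ (Definition~\ref{pseudomonoidpresentation}): the heights of the multiplication $1$-cells are part of the data of the source and target diagrams, and the two-associator side of the pentagon simply does not exist as an interchanger-free movie. So every pentagon application injects an interchanger into your hitherto interchanger-free loop, and the burden of the proof becomes showing that these inserted interchangers can be paired off and cancelled compatibly with the pentagon applications. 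Your stated invariant --- ``the underlying braid of the whole loop is trivial'' --- cannot carry this: interchangers have no braid content (this lemma lives already in the naked Gray monoid), and even read as the statement that the net height permutation of a loop is trivial, it does not imply that the interchangers produced by successive pentagon contractions occur in cancelling pairs, nor that Type II/III rewrites can bring such a pair together past the intervening associators. That claim is the entire content of the lemma in this rigid setting and is asserted rather than proved; your induction ``any interchanger-free sequence equals $c_{B'}^{-1}\circ_V c_B$'' is also not obviously well-posed, since rebracketing to the canonical left-bracketed form may itself force interchangers to make the relevant $1$-cells vertically adjacent.

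It is worth knowing that the paper's proof shows the pentagon is not needed at all, which is why your route is harder than necessary. The paper inducts on the size of the tree $m^n$: follow the lowest $1$-cell through the chain of associators, locate the first \emph{turning point} where an associator on it is immediately undone by an inverse associator, note that the intermediate segment is an associator sequence on the smaller tree $m^{n}$ above returning its lowest cell to its starting position, rewrite that segment by the induction hypothesis so the cell does not move, and then use Type I rewrites (Definition~\ref{structuralequalitiesgraymondef}) to slide the associator into its inverse and cancel. Iterating removes all movement of the bottom cell, and the induction closes. In other words, in the height-decorated setting, interchanger-free associator loops contract \emph{freely}, using only naturality and cancellation of inverse pairs --- the pentagon equality never enters. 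If you want to salvage your approach you would have to prove the interchanger-cancellation mechanism rigorously, at which point you would essentially be redoing the paper's turning-point induction with extra machinery on top.
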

\begin{proof}
We prove this by induction on the size of the tree $m^{n}$. It is clearly true for $m^1$. Now consider $m^{n+1}$. Follow the progress of the lowest 1-cell throughout the chain of associators. Take the first turning point, where an associator on the lowest 1-cell is followed by an inverse associator on it. In between the associator and inverse associator we have a series of associators of the tree $m^{n}$ above it such the lowest 1-cell in that tree starts and ends in the same position. By the induction hypothesis this may be rewritten using associator cancellations so that the lowest 1-cell in that tree does not move at all. We may then use Type I rewrites to pull the associator up to the inverse associator and cancel the two. Repeating this, we eliminate all movement of the bottom 1-cell in the loop. The result follows. 
\end{proof}\noindent
We may then rewrite the associators.
\begin{lemma}
The loop may be rewritten so that all trees are left bracketed until a commutator or inverse commutator, at which point the affected tree associates in the manner prescribed by Definition \ref{braidedbiequivalencedefinition}, the commutator occurs,  and then the tree associates back to the left bracketing in the manner prescribed by Definition \ref{braidedbiequivalencedefinition}.
\end{lemma}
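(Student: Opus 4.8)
The plan is to argue tree by tree, exploiting that after the fixing-trees step the internal bracketing of each tree is a well-defined invariant of each frame: it is left-bracketed (call it $L$) at the first and last frame, since the source and target $1$-cells lie in the image and hence are in standard form; it is changed only by associators acting on that tree; and it is left unchanged by the commutators, which act only on the bottom multiplication. I would first normalise the bracketing at which each commutator occurs. When a commutator $c_j$ fires, its bottom multiplication sits directly above the braid crossing it absorbs, but the bracketing $B_j$ of the rest of the tree is arbitrary. Writing $B^{\mathrm{can}}_j$ for the canonical bracketing produced by the iterative associator sequence of Definition~\ref{braidedbiequivalencedefinition}, there is a sequence of associators $A \colon B_j \to B^{\mathrm{can}}_j$ touching only the upper multiplications, hence acting on a rectangular subregion disjoint from $c_j$. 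The movie $A,\, c_j,\, A^{-1}$ reduces, by commuting $A$ past $c_j$ with a Type~I rewrite and then cancelling $A$ against $A^{-1}$ (the round trip $B_j \to B^{\mathrm{can}}_j \to B_j$ being trivial by Lemma~\ref{assoccoherence}), to $c_j$ alone; so we may replace each $c_j$ by it and assume every commutator fires at its canonical bracketing.

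Next I would gather the associators. The commutators on a fixed tree cut the loop into clips inside which no commutator on that tree occurs, so within each clip the internal bracketing is altered only by associators. Every other generating $2$-cell appearing there (interchangers involving a braid, pullthroughs, braiding inverse-inserts, braiding cancellations, syllepses) acts on a subregion disjoint from the tree's upper multiplications, since those multiplications are in TSNF and there are no interchangers between non-structural $1$-cells. Using Type~I rewrites, together with Type~III rewrites (or insertion of IPI) wherever an associator meets a $2$-cell sharing the bottom multiplication, I would commute all the associators of a clip into a single unbroken block abutting the commutator at each end: a block immediately following $c_j$ realising $B^{\mathrm{can}}_j \to L$, and a block immediately preceding $c_{j+1}$ realising $L \to B^{\mathrm{can}}_{j+1}$, with the tree resting in left-bracketed form while the structural $2$-cells of the clip take place. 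By Lemma~\ref{assoccoherence} the gathered block between $c_j$ and $c_{j+1}$, being an unbroken associator sequence from $B^{\mathrm{can}}_j$ to $B^{\mathrm{can}}_{j+1}$, equals the concatenation of the canonical inverse association $B^{\mathrm{can}}_j \to L$ for $c_j$ and the canonical association $L \to B^{\mathrm{can}}_{j+1}$ for $c_{j+1}$; the initial and final clips are handled identically, giving $L \to B^{\mathrm{can}}_1$ and $B^{\mathrm{can}}_k \to L$. Iterating over all trees, whose associators act on mutually disjoint regions, produces a loop of exactly the prescribed form.

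The main obstacle is the gathering step: I must be sure that every internal associator can be slid to the relevant clip boundary past the intervening structural $2$-cells without obstruction. The delicate case is an associator on the bottom two multiplications of a tree competing with a commutator, an interchanger, or a pullthrough that also involves the bottom multiplication, where a plain Type~I interchange is unavailable and one must route the associator through by a Type~III rewrite. As in the fixing-trees lemma, I would check case by case that the TSNF hypothesis guarantees no non-structural $1$-cell ever obstructs the motion, so the associator can always be repositioned. Once this is secured, Lemma~\ref{assoccoherence} does the remaining work, since it renders the precise associator path between any two bracketings irrelevant.
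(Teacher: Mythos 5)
Your proposal is correct and follows essentially the same route as the paper: make each commutator fire at the bracketing prescribed by Definition~\ref{braidedbiequivalencedefinition} by inserting a cancelling associator sequence around it (your conjugation by $A$ is the paper's ``insert associators and their inverses, then Type~I rewrites''), and then invoke Lemma~\ref{assoccoherence} together with Type~I rewrites to identify the associator activity between consecutive commutators with the canonical left-bracketing/re-association sequences. Your treatment is simply a more explicit elaboration --- in particular your observation that $B_j$ and $B^{\mathrm{can}}_j$ share the same bottom node (so $A$ avoids the commutator's subregion) and your care over the gathering step make precise what the paper compresses into its final sentence.
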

\begin{proof}
Immediately before every commutator or inverse commutator, insert associators and their inverses so that the tree is rewritten into the bracketing prescribed by Definition \ref{braidedbiequivalencedefinition}, then returns to the original bracketing. Use Type I rewrites so that the commutators and inverse commutators occur while the bracketing is as prescribed by Definition \ref{braidedbiequivalencedefinition}. Finally, insert associators and their inverses to left bracket the tree immediately before and after each commutator. By Lemma \ref{assoccoherence} and Type I rewrites we may now rewrite so that the tree remains left bracketed in between commutators.
\end{proof}
\begin{lemma}
The loop may be rewritten so that all commutators and inverse commutators occur at the very end of the loop, and the commutators occur on each tree in turn, from left to right. 
\end{lemma}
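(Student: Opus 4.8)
The plan is to start from the loop in the form established by the previous lemmas: all unit creation and destruction operators have been removed, every non-structural $1$-cell is in TSNF, commutators and inverse commutators act only on the bottom multiplication $1$-cell of a tree, and each commutator is wrapped in the associator packet prescribed by Definition~\ref{braidedbiequivalencedefinition} with all trees otherwise left-bracketed. Apart from these packets, the surviving $2$-cells are purely \emph{braid-region} cells: interchangers, pullthroughs, syllepses, braiding inverse-inserts and braiding cancellations. I will treat a commutator (or inverse commutator) together with its bound associator packet as a single \emph{commutator block}, and argue in two stages: first that every commutator block can be pushed to the end of the loop past all braid-region cells, and second that the blocks at the end can be regrouped tree-by-tree from left to right.

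For the first stage I would induct on the number of braid-region $2$-cells that still follow some commutator block, pushing the latest such block one step later at each stage. Let the block act on the bottom node $N$ of tree $i$ and let $\gamma$ be the braid-region cell immediately after it. If the generating $1$-cells of $\gamma$ are disjoint from those of the block, a Type~I rewrite interchanges them. The only genuine obstructions are cells $\gamma$ acting on $N$ or on the two strings feeding $N$. A pullthrough of $N$ past a braiding on a third strand is handled by the pull-through coherences $(\cdot\;\otimes\Downarrow)$ and $(\Downarrow\otimes\;\cdot)$ of Definition~\ref{def:braidedgraymonoid}, with the commutator in the role of the $2$-morphism $\alpha$; a pullthrough or associator interacting with the crossing just absorbed into $N$ is handled by the hexagon equalities of Definition~\ref{braidedpseudomonoidpres}; and a braiding inverse-insert or cancellation on the inputs of $N$ is handled by the PT-B equality of Definition~\ref{def:braidedgraymonoid}. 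In each case I would first insert IPI and apply Type~III rewrites to localize $\gamma$ directly beneath $N$, then invoke the relevant equality (or its flip, for inverse commutators) to realize the swap. Because all these cells are invertible, the reorganization stays within braid-region and commutator cells, and the chosen measure strictly decreases, so after finitely many steps the block is followed only by commutator blocks.

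Once every commutator block has reached the tail of the loop, I would regroup them. Blocks acting on different trees involve the bottom nodes of distinct, horizontally separated, TSNF trees whose input strands are disjoint, since the trees partition the inputs; hence any two such blocks have disjoint support, and Type~I rewrites both collect the blocks belonging to a common tree and order the resulting groups from left to right. The clip preceding the commutator blocks now consists solely of parallel structural $2$-cells, which may be left untouched or normalized using Theorem~\ref{gurskiosornocoherencethmextended}. This is consistent because, the trees being disjoint, the braids absorbed by the separate trees occupy disjoint strand-groups and so coexist in a single braid beneath the trees at the start of the commutator phase.

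The main obstacle is the interacting case of the inductive step: verifying that a commutator block followed by a pullthrough or braiding inverse-insert on the inputs of $N$ can genuinely be commuted using only the hexagon, pull-through and PT-B equalities, without introducing new commutators or disturbing the TSNF and tree-fixed structure, and confirming that the induction measure decreases along every branch. This is exactly where PT-B, whose necessity was argued in Section~\ref{sec:newsymmgrayaxioms}, is indispensable, since it is the equality governing the commutation of a braiding inverse-insert and a pullthrough around the crossing being absorbed; the hexagons and pull-through coherences dispose of the associator- and pullthrough-interactions. Assembling these local equalities into the global rewrite, and tracking the measure, is the substance of the argument.
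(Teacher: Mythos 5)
There is a genuine gap, and it sits exactly where you defer the work: the case of an \emph{inverse} commutator. After the previous lemmas have been applied, the trees are fixed in TSNF at the top of the diagram, so the obstruction cases you enumerate mostly cannot occur: the bottom node $N$ never pulls through a braiding (the trees no longer move), and no later $2$-cell can interact with ``the crossing just absorbed'' by a commutator, because that crossing no longer exists after the block. The one real overlap is the braiding \emph{emitted} by an inverse commutator: that crossing survives the block, and the subsequent structural cells of the loop genuinely act on it (it must eventually be cancelled against a negative crossing brought adjacent by interchangers), so Type~I rewrites cannot push the inverse commutator past them, and none of the equalities you cite resolves this. PT-B and the pull-through coherences govern a $1$-cell passing a braiding pair, not a braid-region cell passing an emitted crossing; and invoking the hexagon equalities of Definition~\ref{braidedpseudomonoidpres} would reintroduce pullthroughs of the multiplication $1$-cell and re-bracketings, destroying the fixed-tree, left-bracketed invariants your normal form depends on. Your closing admission that verifying these commutations and the decrease of the measure ``is the substance of the argument'' concedes precisely the step that fails as stated.

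The paper's proof avoids all of this with one preliminary move you are missing: it first rewrites every inverse commutator as an emission of a positive braiding immediately followed by its cancellation against a pre-existing negative braiding, so that each inverse commutator becomes an \emph{absorption} of a negative crossing already present in the braid. After this conversion (and tidying the associator packets with Type~II rewrites), no commutator block leaves any new braiding behind, its support after execution is disjoint from every subsequent structural cell, and plain Type~I rewrites move all blocks to the end of the loop ``with no obstruction'' --- no hexagons, PT-B, pull-through coherences, induction measure, or appeal to Theorem~\ref{gurskiosornocoherencethmextended} is needed in this lemma. Your second stage, regrouping the blocks tree-by-tree via disjointness of supports, does match the paper's final step and is fine; it is the first stage that needs to be replaced by the emission-plus-cancellation conversion rather than patched with coherence equalities that do not apply here.
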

\begin{proof}
We first rewrite so that all inverse commutators are absorptions of negative braidings, in the sense that the emission of a positive braiding is immediately followed by its cancellation with a negative braiding:
\begin{center}
\raisebox{0.5cm}{{\Huge $[$}}
\includegraphics[width=1cm,height=1.5cm]{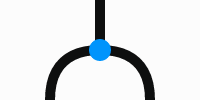}
\raisebox{0.5cm}{$\Rightarrow$} 
\includegraphics[width=1cm,height=1.5cm]{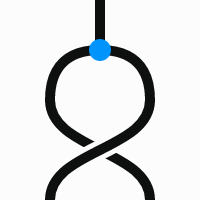} 
\raisebox{0.5cm}{{\Huge $]$}} 
\raisebox{0.5cm}{{\Large $=$}}
\raisebox{0.5cm}{{\Huge $[$}}
\includegraphics[width=1cm,height=1.5cm]{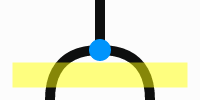} 
\raisebox{0.5cm}{$\Rightarrow$} 
\includegraphics[width=1cm,height=1.5cm]{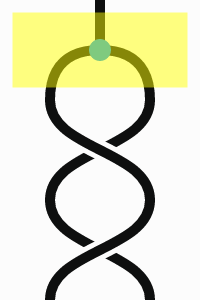} 
\raisebox{0.5cm}{$\Rightarrow$} 
\includegraphics[width=1cm,height=1.5cm]{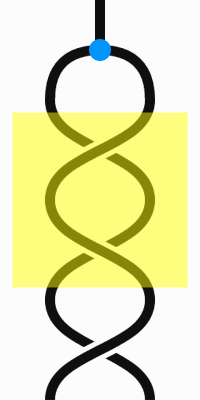}
\raisebox{0.5cm}{$\Rightarrow$} 
\includegraphics[width=1cm,height=1.5cm]{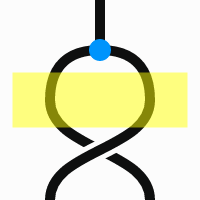}
\raisebox{0.5cm}{$\Rightarrow$} 
\includegraphics[width=1cm,height=1.5cm]{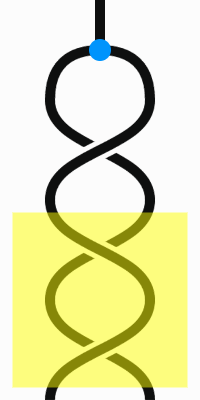} 
 \raisebox{0.5cm}{$\Rightarrow$} 
\includegraphics[width=1cm,height=1.5cm]{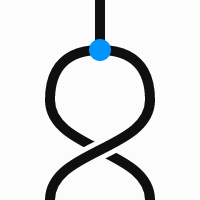} 
\raisebox{0.5cm}{{\Huge $]$}}
\end{center}
From now on we consider the inverse commutator as being the pair of emission and cancellation, which together form an absorption. We then use Type II rewrites to move the structural 2-cells out of the sequence of associators preceding and following the inverse commutator, so that the tree once again remains left bracketed until immediately before the inverse commutator, and then returns to the left bracketing immediately afterwards.

Now we may simply move all commutators and all inverse commutators to the end of the loop using Type I rewrites, beginning with the last, since there can be no obstruction. 
Finally, since the sets of 1-cells involved in commutators in different trees are always disjoint, we may now use Type I rewrites to ensure that the commutators occur on each tree in turn, from left to right. 
\end{proof}

The loop is now of the following form: some braids are created directly beneath the trees, and are then entirely absorbed at the end of the loop. We use Theorem \ref{gurskiosornocoherencethmextended} to rewrite the loop so that the braids are created underneath each tree in turn from left to right. Finally, we use Type I rewrites so that the braid beneath the leftmost tree is created, then absorbed, then the same happens for each tree in turn, from left to right. The loop is now in normal form. 

\subsection{Showing fullness from normal form}
Now that the loop is in normal form, we analyse the various cases in Table~\ref{tbl:pseudomonoidsintro} separately.

\begin{proposition}
The map from ${\bf \Delta}$ to the naked Gray monoid on the pseudomonoid computad defined in Section \ref{sec:biequivalences} is full on 2-morphisms.
\end{proposition}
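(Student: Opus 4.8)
The plan is to reduce every 2-cell between 1-cells in the image to the trivial loop using the normal-form machinery of Section~\ref{sec:normalformfullness}, and then to observe that in the \emph{naked} setting this normal form is necessarily empty. Since ${\bf \Delta}$ is locally discrete, fullness amounts to showing that the only 2-cells in $G(\mathcal{P})$ between images of 1-cells are identities, which are themselves images of identity 2-cells.

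First I would argue that any 2-cell in the image's hom-set is automatically a loop. Suppose $\mu\colon F(f)\Rightarrow F(g)$ is a 2-cell in $G(\mathcal{P})$ with $f,g\in{\bf\Delta}$. A movie representing $\mu$ is a sequence of associators, unitors and interchangers, and under decategorification each such generating 2-cell becomes one of the defining equalities of the monoid PRO ${\bf FM}$. By Proposition~\ref{proformonoidsprop}, two 1-cells of ${\bf FM}$ are equal exactly when they determine the same monotone function, so the existence of $\mu$ forces $f$ and $g$ to have the same underlying function. Since the map sends a function to its standard-form diagram, which is uniquely determined by that function, we obtain $F(f)=F(g)$ and hence $\mu$ is a loop. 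Fullness is therefore equivalent to the claim that every loop on $F(f)$ equals $\mathrm{id}_{F(f)}=F(\mathrm{id}_f)$.

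Next I would invoke the rewriting procedure of Section~\ref{sec:normalformfullness}, which takes an arbitrary loop on a 1-cell in the image and rewrites it into the normal form $N$ of Definition~\ref{movienormalformfullnessdefn}: a braid created directly beneath each tree and then absorbed, proceeding from left to right. The crucial specialisation is that in a \emph{naked} Gray monoid there are no braiding 1-cells at all, so the only structural 2-cells available are interchangers; there is no braid to create beneath any tree, and every commutator, syllepsis and braiding-inverse-insert step used in the general procedure is simply absent. Consequently the braid absorbed by each tree in $N$ is trivial, and $N$ reduces to the empty movie, i.e.\ the identity 2-cell on $F(f)$. Thus any loop on $F(f)$ is equal to $\mathrm{id}_{F(f)}$, and the map is full on 2-cells.

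The only real obstacle is confirming that the normal-form reduction of Section~\ref{sec:normalformfullness} applies verbatim to the naked case; this is already guaranteed by the remark there that loops in the other entries of Table~\ref{tbl:pseudomonoidsintro} can be viewed as loops in the braided-in-symmetric Gray monoid using only a restricted set of cells, so no new argument is needed beyond deleting the (now vacuous) braid-creation and commutator steps.
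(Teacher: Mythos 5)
Your proposal is correct and follows essentially the same route as the paper's proof: reduce to loops, apply the normal-form rewriting of Section~\ref{sec:normalformfullness}, and observe that in the naked setting the absorbed braid is forced to be trivial, so the normal form is the identity. Your preliminary argument that any 2-cell between image 1-cells is automatically a loop (via decategorification to ${\bf FM}\cong{\bf\Delta}$) is a point the paper leaves implicit, but it is a refinement of, not a departure from, the paper's argument.
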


\begin{proof} 
There is no braided structure in the naked Gray monoid on the pseudomonoid computad, so the braid created in the normal form is therefore trivial. All loops are therefore equal to the identity.
\end{proof}

\begin{proposition}\label{braidednoncommfullnessprop}
The map from ${\bf B\Delta}$ to the braided Gray monoid on the pseudomonoid computad defined in Section \ref{sec:biequivalences} is full on 2-morphisms.
\end{proposition}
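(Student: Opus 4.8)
The plan is to exploit the defining feature of this case, namely that the underlying pseudomonoid is \emph{naked}: the computad $\mathcal{P}$ of Definition~\ref{pseudomonoidpresentation} contains no commutator $c$, this generating 2-cell being introduced only in $\mathcal{P}^{\text{br}}$ (Definition~\ref{braidedpseudomonoidpres}). By the rewriting carried out in Sections~\ref{sec:removingunits}--\ref{sec:assoccommsandbraidingssection} I may assume that an arbitrary loop on a 1-cell in the image has already been put into the normal form $N$ of Definition~\ref{movienormalformfullnessdefn}, in which a (possibly trivial) braid is created beneath each tree in turn and then absorbed in the manner of Definition~\ref{braidedbiequivalencedefinition}. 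Since the domain ${\bf B\Delta}$ is locally discrete, fullness on 2-cells amounts to showing that every such loop equals the identity, i.e.\ the image of the unique identity 2-cell.

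First I would observe that, because $\mathcal{P}$ has no commutator, the normalised loop contains no commutators or inverse commutators. Recall from the tree-fixing construction that associators appear in the normalised loop only inside the sequences that bracket and unbracket a tree immediately around a commutator or inverse commutator; with no commutators present these sequences are empty, so every tree remains left bracketed and no associator occurs. The unitors were already eliminated in Section~\ref{sec:removingunits}. Moreover, the absorption step of Definition~\ref{braidedbiequivalencedefinition} proceeds entirely by commutators, so no non-trivial braid can be created beneath a tree and then returned to the source 1-cell; hence the braids created in $N$ are all trivial. Consequently the normalised loop contains no non-structural 2-cells whatsoever: it is built solely from interchangers, pullthroughs, braiding inverse-inserts and braiding cancellations.

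It then remains to show that a purely structural loop is the identity, and here I would appeal to extended coherence. Such a loop lives in the sub-braided-Gray-monoid generated by the computad whose only generating cells are the 1-cells $m$ and $u$, with no non-structural generating 2-cells. Both $m:C\otimes C\to C$ and $u:I\to C$ have a single generating 0-cell as output, so the hypotheses of Theorem~\ref{gurskiosornocoherencethmextended} are met and all parallel 2-cells in this sub-Gray-monoid are equal. In particular our loop is equal to the identity 2-cell on its source, which is precisely the image of the identity 2-cell of ${\bf B\Delta}$, establishing fullness. (Note that Theorem~\ref{gurskicoherencethmbraids} cannot be used in its place, since $m$ does not have a single 0-cell as source.)

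The step I expect to demand the most care is the second one: verifying that the shared normal-form machinery, developed for a braided pseudomonoid in a symmetric monoidal bicategory, genuinely specialises to this setting so that the absence of the commutator forces the created braids to be trivial and leaves no surviving associators. Once this structural reduction is secured, the final appeal to Theorem~\ref{gurskiosornocoherencethmextended} is immediate.
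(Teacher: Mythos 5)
Your proof is correct and takes essentially the same route as the paper's: reduce to a loop in normal form $N$ and observe that, since the computad $\mathcal{P}$ contains no commutator, the braids created in the normal form cannot be absorbed and must therefore be trivial. Your extra appeal to Theorem~\ref{gurskiosornocoherencethmextended} to contract the residual purely structural loop (noting that $m$ and $u$ each have a single generating 0-cell as output, so Theorem~\ref{gurskicoherencethmbraids} would not apply) is a harmless and slightly more careful elaboration of what the paper leaves implicit when it asserts that all such loops equal the identity.
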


\begin{proof}
There is no commutator in the signature; the braid created in the normal form must therefore be trivial, as there is no way for it to be absorbed. All loops are therefore equal to the identity.
\end{proof}

\begin{proposition}
The map from ${\bf S\Delta }$ to the symmetric Gray monoid on the pseudomonoid computad defined in Section \ref{sec:biequivalences} is full on 2-morphisms.
\end{proposition}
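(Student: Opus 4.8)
The plan is to reuse the normal-form machinery of Section~\ref{sec:normalformfullness} and then argue, exactly as in the two preceding propositions, that the absence of a commutator in the \emph{naked} pseudomonoid computad $\mathcal{P}$ forces every loop to collapse. Since ${\bf S\Delta}$ is locally discrete, its only $2$-cells are identities, so, just as for the ${\bf\Delta}$ and ${\bf B\Delta}$ cases, it is enough to contract every loop on a $1$-cell in the image of the map to the trivial (identity) movie.

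First I would take an arbitrary loop on a $1$-cell of the form $\tilde f \circ \tilde\sigma$, where $\sigma \in S_m$ and $\tilde\sigma$ is the braid given by the representative word of $\sigma$, and run the full rewriting procedure of Sections~\ref{sec:removingunits}--\ref{sec:assoccommsandbraidingssection}. This removes every unit creation and destruction operator, fixes the trees, handles the associators, commutators and braidings, and finally places the loop into the normal form $N$ of Definition~\ref{movienormalformfullnessdefn}. The point worth stressing is that this reduction was carried out once and for all in the ambient symmetric monoidal bicategory and uses no non-structural $2$-cell beyond the associator and commutator; it therefore applies verbatim to the naked pseudomonoid placed in a symmetric Gray monoid.

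Next I would observe that, in normal form, the loop consists solely of a braid created directly beneath each tree in turn, followed by its absorption in the manner of Definition~\ref{braidedbiequivalencedefinition}. That absorption is effected \emph{entirely} by commutators and inverse commutators. But $\mathcal{P}$ contains no commutator $2$-cell, so exactly as in Proposition~\ref{braidednoncommfullnessprop} there is no way to absorb a nontrivial braid; hence the braid created beneath each tree must be trivial and the normal form reduces to the identity movie. This establishes fullness.

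The step I expect to carry the argument — and the one to check carefully, rather than a genuine obstacle — is that the richer symmetric structure does not secretly supply an alternative absorption route. One must confirm that the extra symmetric-structural $2$-cells (the syllepsis and its inverse, the braiding inverse-inserts, and the braiding cancellations) are all consumed by the reduction and never survive into $N$ itself, so that once the loop is in normal form the created braid can interact with a tree only through a commutator. Since the reduction of Section~\ref{sec:normalformfullness} was designed precisely to push these structural cells into the creation-and-absorption pattern of $N$, this is exactly the content that transports the braided-case argument to the symmetric setting unchanged.
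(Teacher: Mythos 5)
Your proposal is correct and is essentially the paper's own argument: the paper proves this proposition by citing Proposition~\ref{braidednoncommfullnessprop} verbatim, i.e.\ run the once-and-for-all normal-form reduction of Section~\ref{sec:normalformfullness} (valid since a loop here is a loop in the symmetric Gray monoid on $\mathcal{P}^{\text{br}}$ using a restricted set of cells) and then note that, with no commutator in the signature of $\mathcal{P}$, the braid created in normal form $N$ has no absorption mechanism and so must be trivial. Your closing check that the syllepses, braiding inverse-inserts and cancellations cannot supply an alternative absorption route is exactly what the reduction to $N$ guarantees, so nothing further is needed.
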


\begin{proof}
As for Proposition \ref{braidednoncommfullnessprop}.
\end{proof}
\noindent
For braided and symmetric pseudomonoids, the braid created in the normal form can be nontrivial. A priori, any braid can be created and absorbed in the normal form. However, we now show that two normal form loops where isotopic braids are created and absorbed are equal.

\begin{proposition}\label{braidsistopyinmoviesprop}
If the list of isotopy classes of braids absorbed by each tree in two normal form loops on a given 1-cell are the same, then the loops are equal.
\end{proposition}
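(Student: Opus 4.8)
The plan is to exploit the standard presentation of the braid group: two braid words represent the same element of $B_{p_i}$ — equivalently, define isotopic braids — if and only if they are connected by a finite sequence of the elementary moves $\sigma_j\sigma_k = \sigma_k\sigma_j$ for $|j-k|\geq 2$, $\sigma_j\sigma_{j+1}\sigma_j = \sigma_{j+1}\sigma_j\sigma_{j+1}$, and free (de)insertion of a cancelling pair $\sigma_j\sigma_j^{-1}$. Since a normal form loop is completely determined, tree by tree, by the word used to create and then absorb the braid beneath that tree (Definitions \ref{braidedbiequivalencedefinition} and \ref{movienormalformfullnessdefn}), it suffices to show that the loop is unchanged as a 2-cell when the word beneath a single tree is altered by one such move. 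The braids created beneath distinct trees act on disjoint sets of strings and are created and absorbed in disjoint, sequentially ordered clips, so by Type I rewrites together with Theorem \ref{gurskiosornocoherencethmextended} the trees may be treated independently; transitivity of equality then upgrades per-move invariance to the full statement.

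First I would dispose of the two easy moves. For far commutativity, the crossings $\sigma_j$ and $\sigma_k$ with $|j-k|\geq 2$ are created on disjoint pairs of strings and absorbed by commutators acting on disjoint inputs of the tree, so both their creation and their absorption 2-cells have disjoint support and a Type I rewrite interchanges them, yielding equal loops. For a cancelling pair $\sigma_j\sigma_j^{-1}$, the two crossings are created by a braiding inverse-insert and then absorbed by a commutator immediately followed by an inverse commutator at the same location; using that the commutator is an isomorphism (Definition \ref{def:isomorphism}), that $i^{\pm}$ are units of adjoint equivalences (the axiom ADJ), and the braiding cancellations, this create-and-absorb clip rewrites to the trivial clip in the same spirit as the unit-elimination arguments of Section \ref{sec:removingunits}.

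The main obstacle is the braid relation $\sigma_j\sigma_{j+1}\sigma_j = \sigma_{j+1}\sigma_j\sigma_{j+1}$, which couples the ambient braided structure with the pseudomonoid's commutator. Here the two words produce the same underlying permutation and hence create \emph{equal} 1-cells, so by the structural coherence Theorem \ref{gurskiosornocoherencethmextended} the two creation 2-cells are already equal; the real content is to show that the two prescribed absorptions of this three-crossing braid into a three-leaf tree agree. This is precisely what the two hexagon equalities of the braided pseudomonoid computad (Definition \ref{braidedpseudomonoidpres}) encode: they express the compatibility of the commutator $c$ with the associator $\alpha$ in the Yang--Baxter pattern, so that the absorption procedure of Definition \ref{braidedbiequivalencedefinition} — repeatedly associating a leaf into position, applying a commutator, and associating back — yields the same 2-cell irrespective of the order in which the three crossings are absorbed. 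I would verify this by an explicit movie rewrite combining the two hexagons with the interchanger and pullthrough rewrites, exactly as in the associator and commutator lemmas inside Lemma \ref{bubblenormalformwithfixedmultnoderewrite}, and I expect to record the full derivation as a \emph{Globular} 6-cell in the accompanying workspace.

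Finally, since any two words with the same underlying braid are connected by finitely many such moves, chaining the three invariances shows that the two normal form loops agree beneath every tree, and hence that loops carrying identical lists of absorbed isotopy classes are equal, completing the proof.
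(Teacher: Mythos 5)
Your overall skeleton --- reducing to the Artin presentation moves (free cancellation of $\sigma_j\sigma_j^{-1}$, Yang--Baxter, far commutativity), treating the trees independently, cancelling a commutator against an inverse commutator for the cancelling pair, and invoking the two hexagons for the Yang--Baxter move --- is exactly the paper's proof. But your far-commutativity step contains a genuine gap. You claim that the absorptions of $\sigma_j$ and $\sigma_k$ with $|j-k|\geq 2$ ``have disjoint support,'' so that a single Type I rewrite interchanges them. That is false under the absorption procedure of Definition~\ref{braidedbiequivalencedefinition}: absorbing one crossing is not merely a commutator acting on two leaves, it is a cascade of associators re-bracketing the entire left-bracketed tree to bring the lowest multiplication 1-cell directly above the crossing, then the commutator, then the inverse cascade to restore the left bracketing. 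Consequently the two absorption clips overlap on the tree's multiplication 1-cells --- they share the global re-bracketing --- and Type I rewrites do not apply. The paper has to work here: it first uses Lemma~\ref{assoccoherence} (any two unbroken sequences of associators between two bracketings of a tree are equal) to rewrite the movie between the two commutators into a shape in which both crossings sit beneath disjoint bottom-level multiplication nodes simultaneously (Figure~\ref{sigmaij=sigmajifig}), then applies a nontrivial, explicitly verified rewrite sequence (the ``Swap relation'' 6-cell in the \emph{Globular} workspace) exchanging the two absorptions at the cost of an interchanger, which is finally moved back before the commutators by Type I rewrites to restore the normal form $N$. Without this, or an equivalent argument, your far-commutativity case fails, and with it the reduction to the braid presentation.

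A smaller slip: in the Yang--Baxter case you assert that $\sigma_j\sigma_{j+1}\sigma_j$ and $\sigma_{j+1}\sigma_j\sigma_{j+1}$ ``create equal 1-cells.'' They do not --- as ordered string diagrams they are distinct composites of braiding 1-cells, merely isomorphic, by Theorem~\ref{gurskiosornocoherencethm}. What coherence actually buys you (and what the paper uses) is that the purely structural creation clip can be rewritten freely via Theorem~\ref{gurskiosornocoherencethmextended}, and that the residual structural 2-cell produced by the hexagon manipulation (a pullthrough, in the paper's derivation) can be moved before the commutators by Type I rewrites. Your plan to verify the two absorptions agree by an explicit movie rewrite using both hexagons, recorded as a \emph{Globular} 6-cell, is precisely the paper's ``Yang--Baxter Pf''; that part of your proposal is sound.
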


\begin{proof}
We need to show that the group properties and braid equations are satisfied. That is, on each tree: 

\begin{enumerate}
\item Associativity: A loop where $(\sigma_i \sigma_j) \sigma_k$ is absorbed may be rewritten to a loop where $\sigma_i (\sigma_j \sigma_k)$ is absorbed.
\item Inverses: A loop where $\sigma_i \sigma_i^{-1}$ is absorbed may be rewritten to a loop where nothing is absorbed.
\item A loop where $\sigma_i \sigma_{i+1} \sigma_i$ is absorbed may be rewritten to a loop where $\sigma_{i+1} \sigma_{i} \sigma_{i+1}$ is absorbed.
\item A loop where $\sigma_i \sigma_j$, for $|i-j|>1$, is absorbed may be rewritten to a loop where $\sigma_j \sigma_i$ is absorbed.
\end{enumerate}

We now show each of these in turn. 

\begin{enumerate}
\item Associativity: This is trivially satisfied.
\item Inverses: One may perform a rewrite which exchanges $\sigma_i \sigma_i^{-1}$ for a cancellation of the two braids by simply cancelling a commutator and its inverse, as follows. (Recall that $\sigma_i^{-1}$ is a pair of an emission and a cancellation.)
\begin{center}
\raisebox{0.5cm}{{\Huge $[$}}
\includegraphics[width=1cm,height=1.5cm]{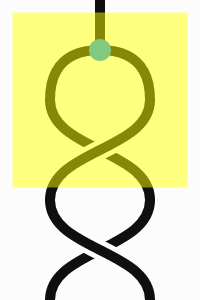}
\raisebox{0.5cm}{$\Rightarrow$} 
\includegraphics[width=1cm,height=1.5cm]{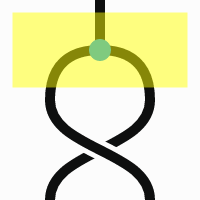} 
\raisebox{0.5cm}{$\Rightarrow$} 
\includegraphics[width=1cm,height=1.5cm]{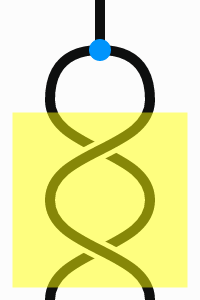}
\raisebox{0.5cm}{$\Rightarrow$} 
\includegraphics[width=1cm,height=1.5cm]{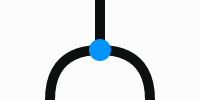} 
\raisebox{0.5cm}{{\Huge $]$}} 
\raisebox{0.5cm}{{\Large $=$}}
\raisebox{0.5cm}{{\Huge $[$}}
\includegraphics[width=1cm,height=1.5cm]{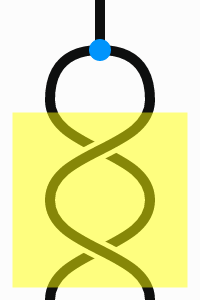} 
\raisebox{0.5cm}{$\Rightarrow$} 
\includegraphics[width=1cm,height=1.5cm]{inversesobeyedend} 
\raisebox{0.5cm}{{\Huge $]$}}
\end{center}
We then use Type I rewrites to move the cancellation before the commutators, restoring the loop to normal form $N$.
\item $\sigma_i \sigma_{i+1} \sigma_i = \sigma_{i+1} \sigma_{i} \sigma_{i+1}$: We perform a sequence of rewrites with the following effect:
\begin{center}
\raisebox{0.5cm}{{\Huge $[$}}
\includegraphics[width=1cm,height=1.5cm]{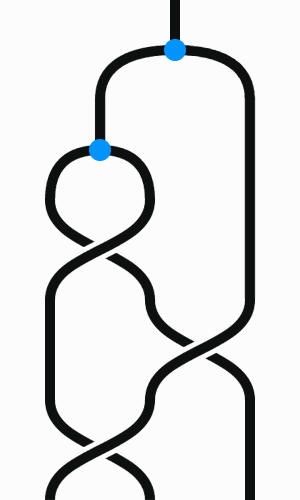}
\raisebox{0.5cm}{$\Rightarrow$} 
\includegraphics[width=1cm,height=1.5cm]{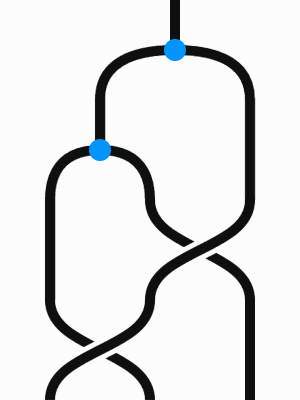} 
\raisebox{0.5cm}{$\Rightarrow$} 
\includegraphics[width=1cm,height=1.5cm]{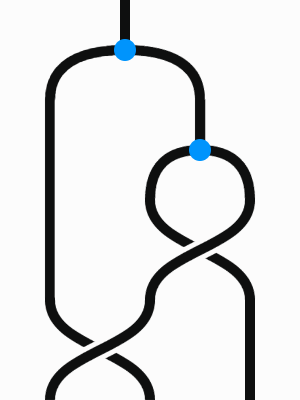}
\raisebox{0.5cm}{$\Rightarrow$} 
\includegraphics[width=1cm,height=1.5cm]{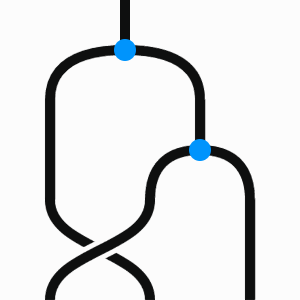}
\raisebox{0.5cm}{$\Rightarrow$} 
\includegraphics[width=1cm,height=1.5cm]{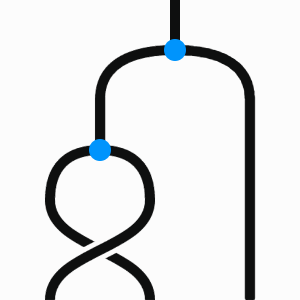}
\raisebox{0.5cm}{$\Rightarrow$} 
\includegraphics[width=1cm,height=1.5cm]{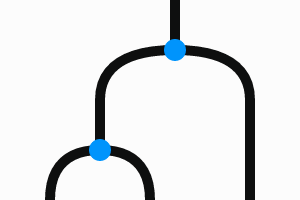} 
\raisebox{0.5cm}{{\Huge $]$}}
\raisebox{0.5cm}{{\Large $=$}}
\raisebox{0.5cm}{{\Huge $[$}}
\includegraphics[width=1cm,height=1.5cm]{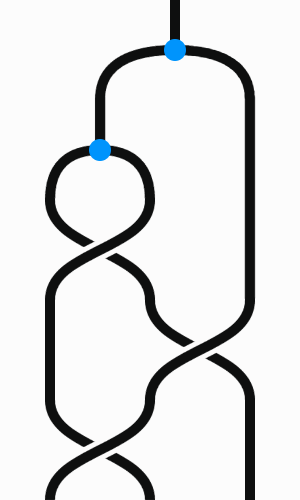}
\raisebox{0.5cm}{$\Rightarrow$} 
\includegraphics[width=1cm,height=1.5cm]{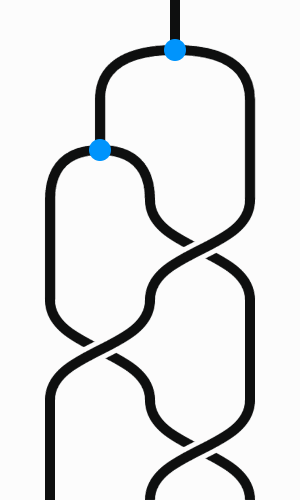} 
\raisebox{0.5cm}{$\Rightarrow$} 
\includegraphics[width=1cm,height=1.5cm]{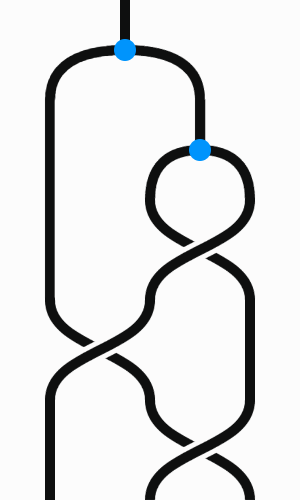}
\raisebox{0.5cm}{$\Rightarrow$} 
\includegraphics[width=1cm,height=1.5cm]{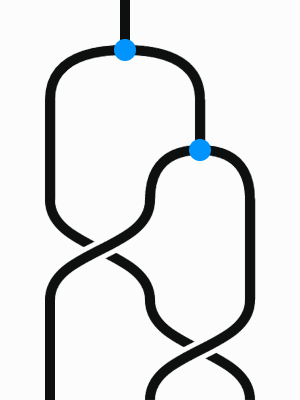}
\raisebox{0.5cm}{$\Rightarrow$} 
\includegraphics[width=1cm,height=1.5cm]{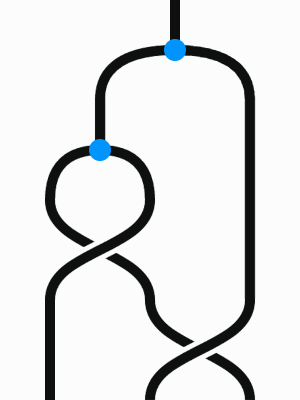}
\raisebox{0.5cm}{$\Rightarrow$} 
\includegraphics[width=1cm,height=1.5cm]{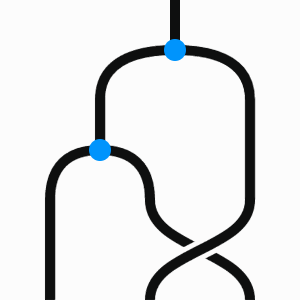} 
\raisebox{0.5cm}{$\Rightarrow$} 
\includegraphics[width=1cm,height=1.5cm]{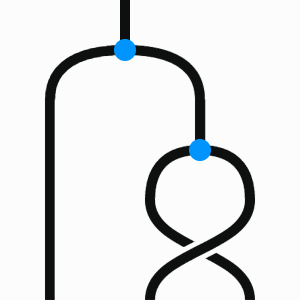}
\raisebox{0.5cm}{$\Rightarrow$} 
\includegraphics[width=1cm,height=1.5cm]{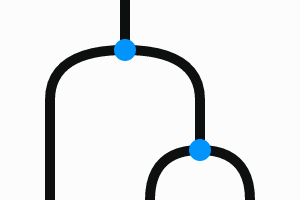}
\raisebox{0.5cm}{$\Rightarrow$} 
\includegraphics[width=1cm,height=1.5cm]{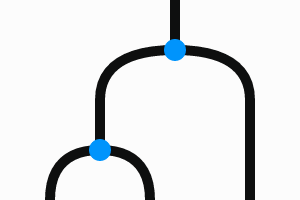} 
\raisebox{0.5cm}{{\Huge $]$}} 
\end{center}

The full rewrite sequence contains two uses of the hexagon equalities and is in the \emph{Globular} workspace as the 6-cell `Proposition~\ref{braidsistopyinmoviesprop} - Yang-Baxter Pf'. We then use Type I rewrites to move the pullthrough before all the commutators of that tree, restoring the loop to normal form $N$.

\item $\sigma_i \sigma_j = \sigma_j \sigma_i$ for $|i-j|>1$: Firstly, we use the unique series of associators with no interchangers between multiplication 1-cells (Lemma \ref{assoccoherence}) to rewrite the movie between the commutators so that it is of the form shown in Figure \ref{sigmaij=sigmajifig}. We then perform a sequence of rewrites with the following effect:
\begin{figure}
\centering
\begin{tikzpicture}[scale=0.4]
	\begin{pgfonlayer}{nodelayer}
		\node [style=none] (0) at (0, 3.5) {{ $\dots$}};
		\node [style=none] (1) at (1.75, 1.75) {};
		\node [style=none] (2) at (3, 3.5) {{ $\dots$}};
		\node [style=none] (3) at (4, 2) {};
		\node [style=none] (4) at (5, 2) {};
		\node [style=gn] (5) at (1.5, 3) {};
		\node [style=gn] (6) at (4.5, 4) {};
		\node [style=none] (7) at (1.5, 5) {};
		\node [style=none] (8) at (4.5, 5) {};
		\node [style=none] (9) at (0, 5) {};
		\node [style=none] (10) at (6, 5) {};
		\node [style=none] (11) at (6, 6) {};
		\node [style=none] (12) at (0, 6) {};
		\node [style=none] (13) at (6, 3.5) {{$\dots$}};
		\node [style=none] (14) at (3, 5.5) {Rest of tree};
		\node [style=none] (15) at (1, 1) {};
		\node [style=none] (16) at (1.5, 1.25) {};
		\node [style=none] (17) at (1.25, 1.5) {};
		\node [style=none] (18) at (2, -0) {};
		\node [style=none] (19) at (1, -0) {};
		\node [style=none] (20) at (4.75, 1) {};
		\node [style=none] (21) at (4, -0) {};
		\node [style=none] (22) at (4.25, 0.75) {};
		\node [style=none] (23) at (4.5, 0.5) {};
		\node [style=none] (24) at (5, -0) {};
	\end{pgfonlayer}
	\begin{pgfonlayer}{edgelayer}
		\draw [style=simple] (9.center) to (10.center);
		\draw [style=simple] (10.center) to (11.center);
		\draw [style=simple] (11.center) to (12.center);
		\draw [style=simple] (12.center) to (9.center);
		\draw [style=simple, bend right=45, looseness=1.00] (1.center) to (5);
		\draw [style=simple] (5) to (7.center);
		\draw [style=simple, bend left=15, looseness=1.00] (3.center) to (6);
		\draw [style=simple, in=-60, out=90, looseness=1.00] (4.center) to (6);
		\draw [style=simple] (6) to (8.center);
		\draw [style=simple, bend right, looseness=0.50] (1.center) to (15.center);
		\draw [style=simple, bend right=15, looseness=1.00] (15.center) to (19.center);
		\draw [style=simple, bend left=15, looseness=1.00] (4.center) to (20.center);
		\draw [style=simple] (20.center) to (21.center);
		\draw [style=simple] (23.center) to (24.center);
		\draw [style=simple, in=120, out=-135, looseness=1.00] (5) to (17.center);
		\draw [style=simple, in=90, out=-60, looseness=1.00] (16.center) to (18.center);
		\draw [style=simple, bend left=15, looseness=1.00] (22.center) to (3.center);
	\end{pgfonlayer}
\end{tikzpicture}
\raisebox{1cm}{$\Rightarrow$}
\begin{tikzpicture}[scale=0.4]
	\begin{pgfonlayer}{nodelayer}
		\node [style=none] (0) at (0, 3.5) {{ $\dots$}};
		\node [style=none] (1) at (3, 3.5) {{ $\dots$}};
		\node [style=none] (2) at (4, 2) {};
		\node [style=none] (3) at (5, 2) {};
		\node [style=gn] (4) at (1.5, 3) {};
		\node [style=gn] (5) at (4.5, 4) {};
		\node [style=none] (6) at (1.5, 5) {};
		\node [style=none] (7) at (4.5, 5) {};
		\node [style=none] (8) at (0, 5) {};
		\node [style=none] (9) at (6, 5) {};
		\node [style=none] (10) at (6, 6) {};
		\node [style=none] (11) at (0, 6) {};
		\node [style=none] (12) at (6, 3.5) {{ $\dots$}};
		\node [style=none] (13) at (3, 5.5) {Rest of tree};
		\node [style=none] (14) at (2, -0) {};
		\node [style=none] (15) at (1, -0) {};
		\node [style=none] (16) at (4.75, 1) {};
		\node [style=none] (17) at (4, -0) {};
		\node [style=none] (18) at (4.25, 0.75) {};
		\node [style=none] (19) at (4.5, 0.5) {};
		\node [style=none] (20) at (5, -0) {};
	\end{pgfonlayer}
	\begin{pgfonlayer}{edgelayer}
		\draw [style=simple] (8.center) to (9.center);
		\draw [style=simple] (9.center) to (10.center);
		\draw [style=simple] (10.center) to (11.center);
		\draw [style=simple] (11.center) to (8.center);
		\draw [style=simple] (4) to (6.center);
		\draw [style=simple, bend left=15, looseness=1.00] (2.center) to (5);
		\draw [style=simple, in=-60, out=90, looseness=1.00] (3.center) to (5);
		\draw [style=simple] (5) to (7.center);
		\draw [style=simple, bend left=15, looseness=1.00] (3.center) to (16.center);
		\draw [style=simple] (16.center) to (17.center);
		\draw [style=simple] (19.center) to (20.center);
		\draw [style=simple, bend left=15, looseness=1.00] (18.center) to (2.center);
		\draw [style=simple, bend left=15, looseness=1.00] (15.center) to (4);
		\draw [style=simple, bend right=15, looseness=0.75] (14.center) to (4);
	\end{pgfonlayer}
\end{tikzpicture}
\raisebox{1cm}{$\overset{\textrm{Associators}}{\Rightarrow}$}
\begin{tikzpicture}[scale=0.4]
	\begin{pgfonlayer}{nodelayer}
		\node [style=none] (0) at (0, 3.5) {{ $\dots$}};
		\node [style=none] (1) at (3, 3.5) {{ $\dots$}};
		\node [style=gn] (2) at (1.5, 4) {};
		\node [style=gn] (3) at (4.5, 3) {};
		\node [style=none] (4) at (1.5, 5) {};
		\node [style=none] (5) at (4.5, 5) {};
		\node [style=none] (6) at (0, 5) {};
		\node [style=none] (7) at (6, 5) {};
		\node [style=none] (8) at (6, 6) {};
		\node [style=none] (9) at (0, 6) {};
		\node [style=none] (10) at (6, 3.5) {{ $\dots$}};
		\node [style=none] (11) at (3, 5.5) {Rest of tree};
		\node [style=none] (12) at (2, -0) {};
		\node [style=none] (13) at (1, -0) {};
		\node [style=none] (14) at (4.75, 1) {};
		\node [style=none] (15) at (4, -0) {};
		\node [style=none] (16) at (4.25, 0.75) {};
		\node [style=none] (17) at (4.5, 0.5) {};
		\node [style=none] (18) at (5, -0) {};
	\end{pgfonlayer}
	\begin{pgfonlayer}{edgelayer}
		\draw [style=simple] (6.center) to (7.center);
		\draw [style=simple] (7.center) to (8.center);
		\draw [style=simple] (8.center) to (9.center);
		\draw [style=simple] (9.center) to (6.center);
		\draw [style=simple] (2) to (4.center);
		\draw [style=simple] (3) to (5.center);
		\draw [style=simple] (14.center) to (15.center);
		\draw [style=simple] (17.center) to (18.center);
		\draw [style=simple, bend left=15, looseness=1.00] (13.center) to (2);
		\draw [style=simple, bend right=15, looseness=0.75] (12.center) to (2);
		\draw [style=simple, bend right, looseness=0.75] (3) to (16.center);
		\draw [style=simple, bend left=60, looseness=0.75] (3) to (14.center);
	\end{pgfonlayer}
\end{tikzpicture}
\raisebox{1cm}{$\Rightarrow$}
\begin{tikzpicture}[scale=0.4]
	\begin{pgfonlayer}{nodelayer}
		\node [style=none] (0) at (0, 3.5) {{ $\dots$}};
		\node [style=none] (1) at (3, 3.5) {{ $\dots$}};
		\node [style=gn] (2) at (1.5, 4) {};
		\node [style=gn] (3) at (4.5, 3) {};
		\node [style=none] (4) at (1.5, 5) {};
		\node [style=none] (5) at (4.5, 5) {};
		\node [style=none] (6) at (0, 5) {};
		\node [style=none] (7) at (6, 5) {};
		\node [style=none] (8) at (6,6) {};
		\node [style=none] (9) at (0, 6) {};
		\node [style=none] (10) at (6, 3.5) {{ $\dots$}};
		\node [style=none] (11) at (3, 5.5) {Rest of tree};
		\node [style=none] (12) at (2, -0) {};
		\node [style=none] (13) at (1, -0) {};
		\node [style=none] (14) at (4, -0) {};
		\node [style=none] (15) at (5, -0) {};
	\end{pgfonlayer}
	\begin{pgfonlayer}{edgelayer}
		\draw [style=simple] (6.center) to (7.center);
		\draw [style=simple] (7.center) to (8.center);
		\draw [style=simple] (8.center) to (9.center);
		\draw [style=simple] (9.center) to (6.center);
		\draw [style=simple] (2) to (4.center);
		\draw [style=simple] (3) to (5.center);
		\draw [style=simple, bend left=15, looseness=1.00] (13.center) to (2);
		\draw [style=simple, bend right=15, looseness=0.75] (12.center) to (2);
		\draw [style=simple, bend right=15, looseness=0.75] (3) to (14.center);
		\draw [style=simple, bend right=15, looseness=0.75] (15.center) to (3);
	\end{pgfonlayer}
\end{tikzpicture}
\caption{}
\label{sigmaij=sigmajifig}
\end{figure}
\begin{center}
\raisebox{0.5cm}{{\Huge $[$}}
\includegraphics[width=.9cm,height=1.5cm]{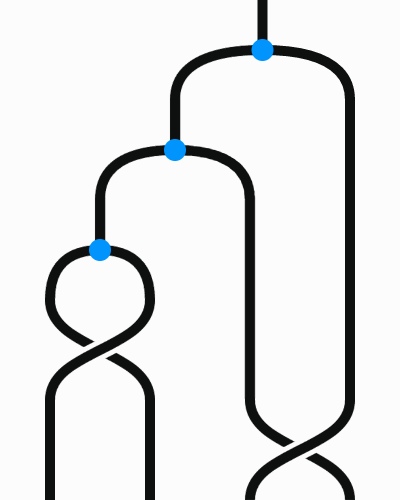}
\raisebox{0.5cm}{$\Rightarrow$} 
\includegraphics[width=.9cm,height=1.5cm]{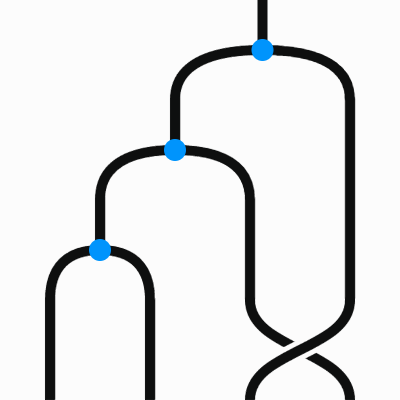} 
\raisebox{0.5cm}{$\Rightarrow$} 
\includegraphics[width=.9cm,height=1.5cm]{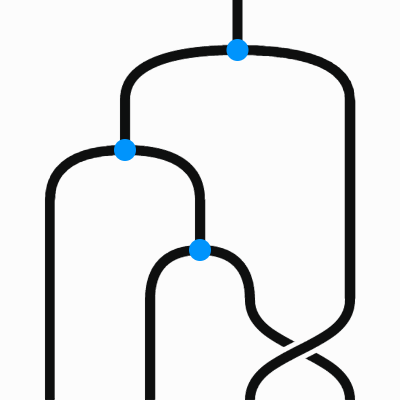}
\raisebox{0.5cm}{$\Rightarrow$} 
\includegraphics[width=.9cm,height=1.5cm]{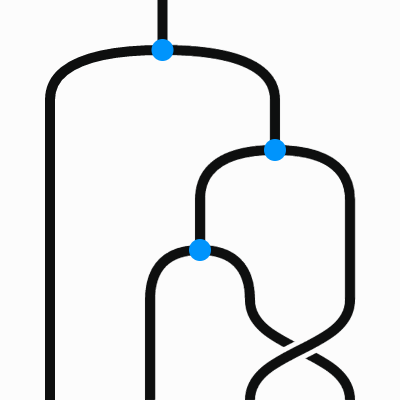}
\raisebox{0.5cm}{$\Rightarrow$} 
\includegraphics[width=.9cm,height=1.5cm]{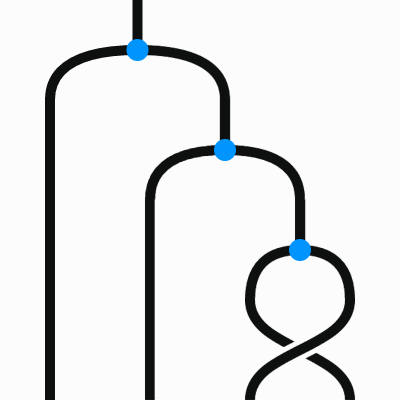}
\raisebox{0.5cm}{$\Rightarrow$} 
\includegraphics[width=.9cm,height=1.5cm]{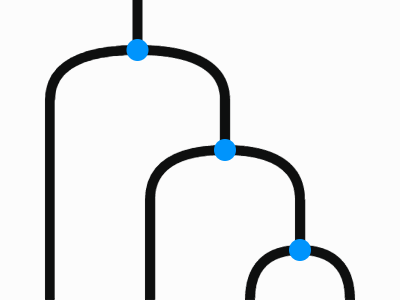}
\raisebox{0.5cm}{$\Rightarrow$} 
\includegraphics[width=.9cm,height=1.5cm]{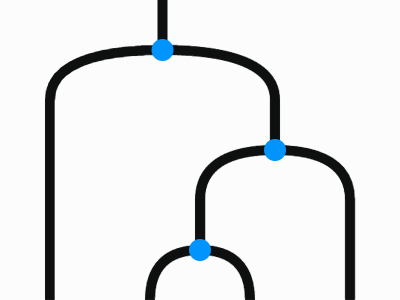}
\raisebox{0.5cm}{$\Rightarrow$} 
\includegraphics[width=.9cm,height=1.5cm]{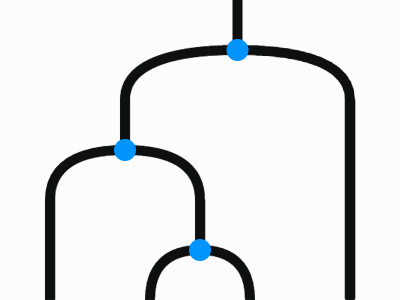}
\raisebox{0.5cm}{$\Rightarrow$} 
\includegraphics[width=.9cm,height=1.5cm]{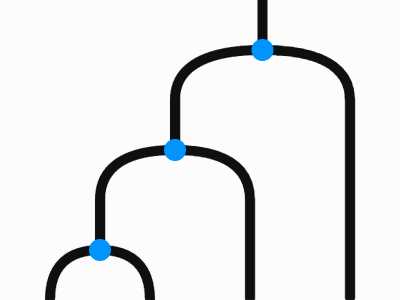} 
\raisebox{0.5cm}{{\Huge $]$}}\\
\raisebox{0.5cm}{{\Large $=$}}
\raisebox{0.5cm}{{\Huge $[$}}
\includegraphics[width=.9cm,height=1.5cm]{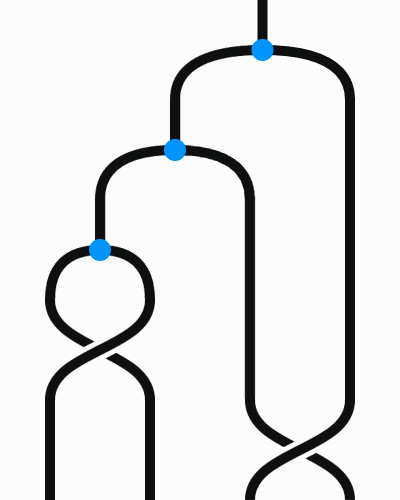}
\raisebox{0.5cm}{$\Rightarrow$} 
\includegraphics[width=.9cm,height=1.5cm]{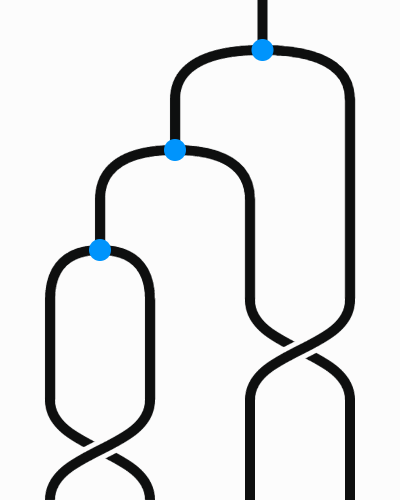} 
\raisebox{0.5cm}{$\Rightarrow$} 
\includegraphics[width=.9cm,height=1.5cm]{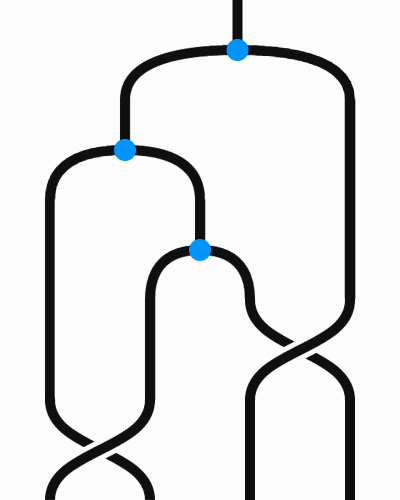}
\raisebox{0.5cm}{$\Rightarrow$} 
\includegraphics[width=.9cm,height=1.5cm]{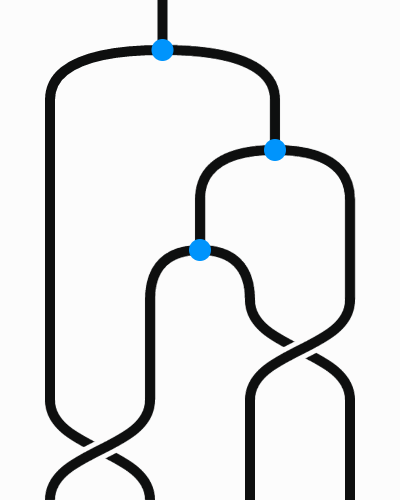}
\raisebox{0.5cm}{$\Rightarrow$} 
\includegraphics[width=.9cm,height=1.5cm]{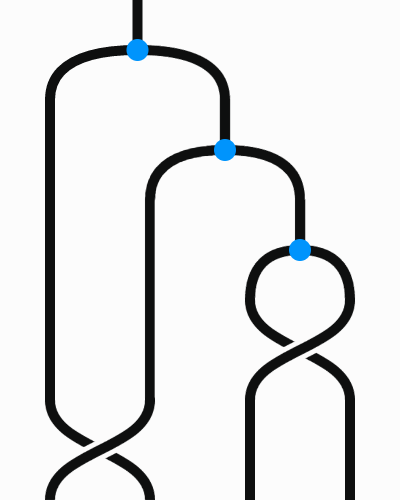}
\raisebox{0.5cm}{$\Rightarrow$} 
\includegraphics[width=.9cm,height=1.5cm]{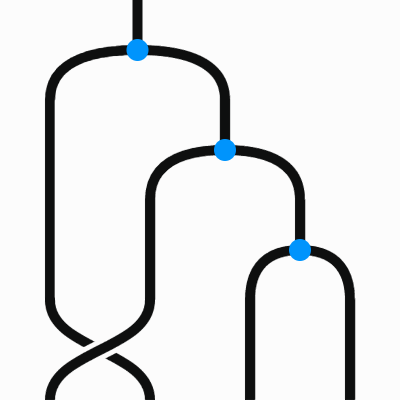}
\raisebox{0.5cm}{$\Rightarrow$} 
\includegraphics[width=.9cm,height=1.5cm]{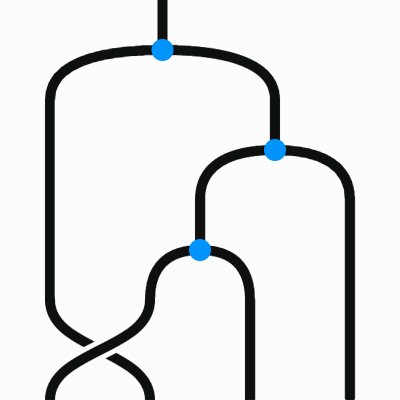}
\raisebox{0.5cm}{$\Rightarrow$} 
\includegraphics[width=.9cm,height=1.5cm]{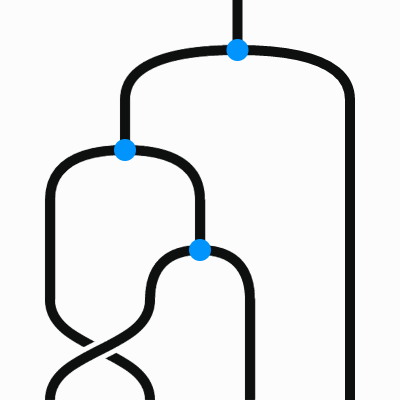}
\raisebox{0.5cm}{$\Rightarrow$} 
\includegraphics[width=.9cm,height=1.5cm]{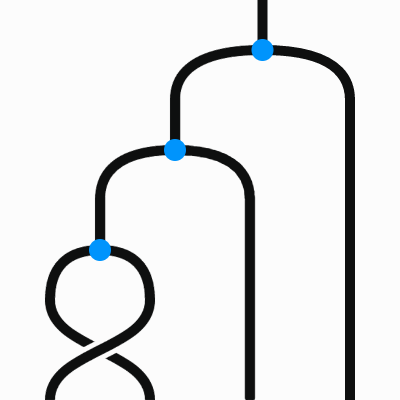} 
\raisebox{0.5cm}{$\Rightarrow$} 
\includegraphics[width=.9cm,height=1.5cm]{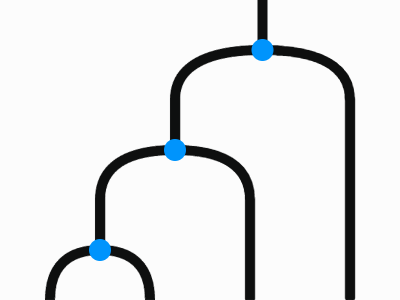}
\raisebox{0.5cm}{{\Huge $]$}}
\end{center}
The full sequence of rewrites is shown in the \emph{Globular} workspace as the 6-cell 'Proposition~\ref{braidsistopyinmoviesprop} - Swap relation Pf'. We then use Type I rewrites to move the interchanger before the commutators, restoring the loop to normal form $N$.
\end{enumerate}
\end{proof}

\begin{proposition}
The map from ${\bf B\Delta/{\raise.17ex\hbox{$\scriptstyle{\sim}$}}}$ to the braided Gray monoid on a braided pseudomonoid defined in Section \ref{sec:biequivalences} is full on 2-morphisms.
\end{proposition}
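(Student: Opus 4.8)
The plan is to mirror the three previous fullness arguments while accounting for the one feature that is genuinely new here: both a commutator and a braided ambient structure are present, so \emph{a priori} a nontrivial braid can be created and absorbed in the normal form. Since ${\bf B\Delta/\sim}$ is locally discrete, fullness on 2-cells reduces to showing that every loop on a 1-cell in the image is equal to the identity. First I would invoke the normal form machinery of Sections~\ref{sec:normalformfullness}--\ref{sec:assoccommsandbraidingssection} to rewrite an arbitrary such loop into the normal form $N$ of Definition~\ref{movienormalformfullnessdefn}, in which a braid is created directly beneath each tree and then absorbed into it in turn from left to right. By Proposition~\ref{braidsistopyinmoviesprop} the resulting loop is determined up to equality by the list of isotopy classes of the braids absorbed beneath each tree, so it suffices to show that each of these absorbed braids is trivial.

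The decisive point is that the ambient Gray monoid is braided but not symmetric, and in particular carries no syllepsis. The braid absorbed beneath a given tree is created out of nothing from the parallel strands lying between the tree and the fixed braid at the foot of the 1-cell, using only the structural 2-cells available in a braided Gray monoid: braiding inverse-inserts $i^{\pm}$, interchangers, pullthroughs, and the braid move $S^+=S^-$. Each of these preserves the underlying braid-group element of the strands it acts on: an inverse-insert contributes a cancelling pair $\sigma_j\sigma_j^{-1}$, interchangers and pullthroughs merely rearrange heights and move non-braiding 1-cells past crossings, and $S^+=S^-$ is a relation already holding in the braid group. Equivalently, the created braid is connected to the identity 1-cell by a 2-cell built entirely from structural cells, so by Theorem~\ref{gurskicoherencethmbraids} it has the same underlying braid-group element as the identity, namely the trivial one. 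This is exactly the step that fails in the symmetric ambient underlying ${\bf FS}^{\text{br}}$, where the syllepsis permits an over-crossing to be exchanged for an under-crossing, converting a cancelling pair into a full twist and thereby producing the nontrivial pure braids of $\prod_{i=1}^n PB_{p_i}$.

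With each absorbed braid shown to be trivial, Proposition~\ref{braidsistopyinmoviesprop} identifies the loop with the normal-form loop that absorbs nothing beneath every tree, which is the identity; hence every loop is the identity and the map is full. I expect the main obstacle to be making the central claim of the second paragraph fully rigorous --- that, absent a syllepsis, no sequence of structural rewrites can endow a braid created from the identity with a nontrivial braid-group element. The cleanest route is the appeal to Theorem~\ref{gurskicoherencethmbraids} applied to the purely structural sub-2-cell that performs the creation, since that theorem precisely pins down isomorphism of structural 1-cells by their underlying braid; care is needed to isolate this structural clip from the non-structural commutators that perform the absorption, so that the coherence theorem may be applied to it.
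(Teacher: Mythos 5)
Your proof is correct and takes essentially the same route as the paper, which likewise observes that only trivial braids can be created in a braided non-symmetric Gray monoid and then concludes via Proposition~\ref{braidsistopyinmoviesprop} that every normal-form loop contracts to the identity. Your extra step of grounding the triviality claim in Theorem~\ref{gurskicoherencethmbraids}, applied to the purely structural creation clip in isolation from the commutators, simply makes explicit the justification that the paper's one-line proof leaves implicit, and it is sound.
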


\begin{proof}
Only trivial braids can be created in a braided non-symmetric Gray monoid; Proposition~\ref{braidsistopyinmoviesprop} therefore implies that all the loops are contractible to the identity.
\end{proof}

\begin{proposition}
The map from ${\bf FS}^{br}$ to the symmetric Gray monoid on a braided pseudomonoid defined in Section \ref{sec:biequivalences} is full on 2-morphisms. 
\end{proposition}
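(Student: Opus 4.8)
The plan is to read off fullness from the normal-form analysis already carried out in Sections~\ref{sec:normalformfullness}--\ref{sec:assoccommsandbraidingssection} together with Proposition~\ref{braidsistopyinmoviesprop}. First I would dispose of the genuinely non-parallel cases. Since ${\bf FS}^{\text{br}}$ is locally totally disconnected, $\Hom_{{\bf FS}^{\text{br}}}(X,Y)$ is empty whenever $X\neq Y$, so for those pairs fullness is vacuous \emph{provided} no 2-cell exists between the images. This last point follows because distinct 1-cells of ${\bf FS}^{\text{br}}$ are distinct functions, and by the isomorphism ${\bf FS}\cong{\bf F^{sym}CM}$ (Proposition~\ref{FSiscommonoidspropprop}) together with the symmetric coherence theorem (Theorem~\ref{gurskiosornocoherencethm}), distinct functions give 1-cells $F X$ and $F Y$ in distinct isomorphism classes; as every 2-cell in the target is invertible, $\Hom(FX,FY)$ is then empty. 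Hence fullness has content only for loops, i.e.\ on the endomorphism 2-cells $\Hom_{{\bf FS}^{\text{br}}}(f,f)=\prod_{i=1}^{n}PB_{p_i}$.

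Next I would take an arbitrary loop $\ell$ on a 1-cell $FX=\tilde f\circ\tilde\sigma$ and apply the rewriting of Sections~\ref{sec:normalformfullness}--\ref{sec:assoccommsandbraidingssection} to bring it into the normal form $N$ of Definition~\ref{movienormalformfullnessdefn}: a braid $\beta_i$ is created directly beneath the $i$-th tree and then entirely absorbed, left to right. The crucial step is to observe that each $\beta_i$ must be a \emph{pure} braid, i.e.\ $\beta_i\in PB_{p_i}$. This can be argued two mutually reinforcing ways: since $\ell$ is an endomorphism of $FX$, the net permutation induced on the leaves of the $i$-th tree by creating and absorbing $\beta_i$ must be trivial, forcing the underlying permutation of $\beta_i$ to be the identity; equivalently, $\beta_i$ is created from the identity braid using only the structural 2-cells of the symmetric Gray monoid, and by Theorem~\ref{gurskiosornocoherencethm} such a creation 2-cell exists precisely when the created 1-cell shares the (trivial) underlying permutation of the identity, i.e.\ precisely when $\beta_i$ is pure. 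By Proposition~\ref{braidsistopyinmoviesprop}, $N$ is then determined up to equality by the list of isotopy classes $(\beta_1,\dots,\beta_n)\in\prod_{i=1}^{n}PB_{p_i}=\Hom_{{\bf FS}^{\text{br}}}(f,f)$, and this element maps, under Definition~\ref{braidedbiequivalencedefinition}, to exactly this normal-form loop. Thus $\ell$ equals the image of an element of $\Hom_{{\bf FS}^{\text{br}}}(f,f)$, which is fullness.

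I expect the main obstacle to be the pure-braid step, and specifically its converse realizability: one must check not only that every loop forces pure data, but that every element of $\prod_i PB_{p_i}$ is genuinely realised as a well-defined normal-form loop, so that the correspondence between loops and pure braids is onto as well as injective. This rests on confirming that creating a pure braid from the identity beneath a tree and absorbing it really does return the ambient 1-cell $FX$ unchanged, with no interference from the braid factor $\tilde\sigma$ of $FX$. The bulk of the work, however, is already complete once an arbitrary loop is in normal form; the present proposition is essentially the bookkeeping that extracts the pure-braid data from that form and matches it against the domain $\prod_{i=1}^{n}PB_{p_i}$.
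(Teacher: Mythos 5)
Your proof is correct and takes essentially the same route as the paper's: after the normal-form rewriting of Sections~\ref{sec:normalformfullness}--\ref{sec:assoccommsandbraidingssection}, purity of each created braid is forced by Theorem~\ref{gurskiosornocoherencethm} (your coherence-based version of the purity step is the paper's actual argument; the ``net permutation on the leaves'' heuristic alone would not suffice, since absorption by commutators places no constraint), and Proposition~\ref{braidsistopyinmoviesprop} then identifies the normal-form loop with the image under Definition~\ref{braidedbiequivalencedefinition} of the corresponding element of $\prod_{i=1}^{n}PB_{p_i}$, with the absorbed braids put into Artin normal form. Your explicit disposal of the non-parallel case via local total disconnectedness and invertibility of all target 2-cells is sound bookkeeping that the paper leaves implicit.
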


\begin{proof}
The only braids which it is possible to create in a symmetric Gray monoid are pure, by Theorem~\ref{gurskiosornocoherencethm}. We can then use Proposition~\ref{braidsistopyinmoviesprop} to put the absorbed pure braids into Artin normal form, since all the braid equations are satisfied.
\end{proof}

\begin{proposition}
The map from ${\bf FS}$ to the symmetric  Gray monoid on a symmetric pseudomonoid defined in Section~\ref{sec:biequivalences} is full on 2-morphisms. 
\end{proposition}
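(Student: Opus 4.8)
The plan is to reuse the entire normal-form machinery developed for the previous cases and then exploit the one extra relation that distinguishes a symmetric pseudomonoid from a braided one, namely the Symmetry equation of Definition~\ref{symmpseudomonoidpres}. First I would run the rewriting procedure of Sections~\ref{sec:normalformfullness}, \ref{sec:treefixing} and \ref{sec:assoccommsandbraidingssection} verbatim to put an arbitrary loop on a 1-cell in the image into the normal form $N$: all unitors are removed, the trees are fixed, and the loop reduces to creating a braid directly beneath each tree and absorbing it, working from left to right. Exactly as in the ${\bf FS}^{\text{br}}$ case, since the ambient Gray monoid is symmetric, Theorem~\ref{gurskiosornocoherencethm} forces every braid that can be created beneath a tree to be \emph{pure}; and by Proposition~\ref{braidsistopyinmoviesprop} the resulting loop depends only on the list of isotopy classes of these absorbed pure braids.

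The new ingredient is that the Symmetry equation provides a further relation among these loops that is unavailable for a merely braided pseudomonoid. Reading that equation as a composite of 2-cells, it states that the two commutators absorbing a double crossing $\sigma_i^2$ on the two inputs of a single multiplication 1-cell are equal to a syllepsis $\sigma$, which is a \emph{structural} 2-cell. Hence I would show that absorbing the braid $\sigma_i^2$ beneath a tree is equal, after using the bracketing prescribed in Definition~\ref{braidedbiequivalencedefinition} to bring the relevant multiplication node directly above the crossing and applying the Symmetry equation locally, to a loop in which $\sigma_i^2$ is replaced by a purely structural clip. Since by Theorem~\ref{gurskiosornocoherencethmextended} every loop of structural 2-cells on a 1-cell whose generating 1-cells all have a single 0-cell as output is contractible, this structural clip may be removed; in other words, absorbing $\sigma_i^2$ is equal to absorbing the trivial braid.

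To finish, I would combine this with Proposition~\ref{braidsistopyinmoviesprop}. That proposition already shows that the braid-group relations (invertibility, the Yang--Baxter move and the far-commutativity move) hold among normal-form loops, so the absorbed braid may be manipulated freely up to the relations of $B_{p_i}$. Adjoining the relation $\sigma_i^2 \sim 1$ just established collapses $B_{p_i}$ onto $S_{p_i}$, and since the created braids are pure they lie in the kernel $PB_{p_i}$ of $B_{p_i} \to S_{p_i}$. As $PB_{p_i}$ is precisely the normal closure of $\sigma_1^2$ in $B_{p_i}$, every absorbed pure braid is reducible to the trivial braid using the braid relations together with $\sigma_i^2 \sim 1$. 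A normal-form loop absorbing only trivial braids is the identity, exactly as in the naked case, so every loop contracts to the identity and the map is full onto the identity 2-cells of ${\bf FS}$.

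The main obstacle I expect is the local computation in the second paragraph: verifying that the two commutators of the Symmetry equation genuinely match the double-crossing absorption prescribed by Definition~\ref{braidedbiequivalencedefinition}, including correctly accounting for the syllepsis and for the auxiliary associators, pullthroughs and interchangers needed to localise the crossing beneath a single multiplication node, and confirming via PT-SYL and the coherence of Theorem~\ref{gurskiosornocoherencethmextended} that the leftover syllepsis is indeed contractible rather than contributing a nontrivial 2-cell. The group-theoretic reduction and the normal-form bookkeeping are then routine.
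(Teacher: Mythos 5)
Your proposal is correct and follows essentially the same route as the paper: reduce to the normal form $N$, note via Theorem~\ref{gurskiosornocoherencethm} that only pure braids can be created, invoke Proposition~\ref{braidsistopyinmoviesprop} to manipulate absorbed braids up to the braid relations, and use the Symmetry equation to adjoin $\sigma_i^2 \sim 1$, which trivialises all pure braids. Your explicit group-theoretic justification --- that $PB_{p_i}$ is the normal closure of $\sigma_1^2$ in $B_{p_i}$, so the quotient collapses every pure braid --- is precisely what the paper's terse phrase ``all pure braids are trivialised'' leaves implicit, so you have simply spelled out the same argument in more detail.
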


\begin{proof}
We now have the additional relation $\sigma_i^2=\id$ in the computad for a symmetric pseudomonoid. With this additional relation, all pure braids are trivialised, and the loop can therefore be contracted.
\end{proof}

\section{Proof of functoriality}\label{sec:functorialityproof}

We use the definitions of homomorphisms of unbraided, braided and symmetric Gray monoids given in~\cite[Definitions 2, 14 and 16]{Day1997}. 

We first show that the maps we defined in Section~\ref{sec:biequivalences} are homomorphisms of Gray monoids. Throughout this section we use the letter $F$ for such a map.

\begin{proposition}
All maps defined in Section~\ref{sec:biequivalences} are homomorphisms of Gray monoids. 
\end{proposition}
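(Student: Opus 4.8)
The plan is to exhibit, for each map $F$, the comparison data required by the Day--Street definition of a (braided/symmetric) homomorphism~\cite[Definitions 2, 14, 16]{Day1997}, and then to check the coherence axioms. I would first record the strict part of the structure: on $0$-cells $F$ sends $\underline m \otimes \underline n = \underline{m+n}$ to $A^{\otimes m}\otimes A^{\otimes n}$ and the monoidal unit $\underline 0$ to the empty word, so $F$ is \emph{strictly} monoidal on objects and all object-level comparison cells may be taken to be identities. What remains to supply is the composition comparison $\phi_{g,f}\colon F(g)\circ F(f)\Rightarrow F(g\circ f)$, the unit comparison $F(\id)\Rightarrow\id$, the tensor comparison $\chi_{f,g}\colon F(f)\otimes F(g)\Rightarrow F(f\otimes g)$, and, in the braided/symmetric cases, the $2$-cell comparing $F$ of a combinatorial braiding with the structural braiding of the Gray monoid.

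Second, I would construct these comparison cells. Because the decategorified maps of Section~\ref{sec:theoryofmonoids} are isomorphisms of PROs/PROBs/PROPs that strictly preserve composition and monoidal product, the $1$-cells $F(g)\circ F(f)$ and $F(g\circ f)$ (and likewise $F(f)\otimes F(g)$ and $F(f\otimes g)$) have equal decategorifications. By essential surjectivity on $1$-cells, which as noted in Section~\ref{sec:theoremstatement} is realised through isomorphisms built from structural moves (associators, unitors, interchangers, pullthroughs), there is an invertible structural $2$-cell between them, which I take as $\phi_{g,f}$, respectively $\chi_{f,g}$. In the five locally discrete cases this cell is unique by the full faithfulness established in Section~\ref{sec:fullnessproof}; in the ${\bf FS}^{\text{br}}$ case I would check well-definedness exactly as in the faithfulness argument, observing that structural moves do not alter the isotopy class of the braid absorbed by any tree, so the comparison cell carries the identity element of $\prod_i PB_{p_i}$.

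Third is the verification of the coherence axioms, each of which is an equation between parallel $2$-cells. The key reduction is that, after whiskering the two sides of any such axiom by the appropriate comparison isomorphisms, it becomes an equation between two $2$-cells with common source and target $F(h)$ (the source and target agree because composition and monoidal product in the combinatorial category are strict). For the locally discrete categorifications ${\bf\Delta}$, ${\bf B\Delta}$, ${\bf S\Delta}$, ${\bf B\Delta}/{\sim}$ and ${\bf FS}$, full faithfulness yields $\Hom(F(h),F(h))\cong\Hom(h,h)=\{\id\}$, so any two parallel $2$-cells between $F$-images coincide and every axiom holds automatically; the braided/symmetric compatibility axioms are dispatched the same way, their two sides again being parallel $2$-cells between $F$-images.

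The main obstacle is the ${\bf FS}^{\text{br}}$ case, where $\Hom(h,h)=\prod_i PB_{p_i}$ is nontrivial and the singleton argument fails. Here I would argue that, since all comparison cells carry the trivial braid, each coherence axiom reduces to a statement about how the absorbed pure braids compose under horizontal composition, vertical composition and monoidal product; these statements are precisely the compatibilities of the distributive law~\eqref{eq:distlawbraid} with the operations built into Definition~\ref{def:fsbr}, together with the braid-group identities already verified in Proposition~\ref{braidsistopyinmoviesprop}. Thus functoriality of the $2$-cell assignment for ${\bf FS}^{\text{br}}$ follows from explicit braid bookkeeping rather than from local discreteness, and combining all cases establishes that every $F$ is a homomorphism of (braided/symmetric) Gray monoids.
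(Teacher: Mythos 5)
Your proposal is correct and takes essentially the same route as the paper: both reduce every homomorphism axiom to an equation between parallel $2$-cells (equivalently, a loop) on a $1$-cell in the image of $F$, which is then killed by the fullness and faithfulness results of Section~\ref{sec:fullnessproof} --- trivially in the locally discrete cases, and via the absorbed-braid invariant (the distributive-law bookkeeping of Definition~\ref{def:fsbr} together with Proposition~\ref{braidsistopyinmoviesprop}) in the ${\bf FS}^{\text{br}}$ case. The only real difference is presentational: the paper constructs the composition comparison $m_{f,g}$ by an explicit rewriting algorithm (remove units, pull trees through the braid, left-bracket, Artin normal form), whereas you obtain it abstractly from essential surjectivity plus faithfulness, which amounts to the same cell.
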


\begin{proof}
Let $\Hom(A,B)_{S}$ and $\Hom(A,B)_{T}$ be the Hom-categories between objects $A$ and $B$ in the source and target Gray monoids respectively; $c^{A,B,C}_{S}$ and $c^{A,B,C}_{T}$ the composition bifunctors $\Hom(A,B) \times \Hom(B,C) \to \Hom(A,C)$ in each Gray monoid; and $I^A_{S}$ and $I^A_{T}$ the identity functors ${\bf 1} \to \Hom(A,A)$ in each Gray monoid, where ${\bf 1}$ is the trivial one-object category. 

First note that $F \circ I^A_{S} = I^A_{T}$, so the 2-cell relating the identity functors is trivial. We therefore only require a natural transformation $m$ of the following type: 

\begin{diagram}
\Hom(A,B)_{S} \times \Hom(B,C)_{S} & \rTo^{c^{A,B,C}_{S}} & \Hom(A,C)_{S} \\
\dTo^{F \times F} & \ruImplies^{m} & \dTo_{F} \\
\Hom(A,B)_{T} \times \Hom(B,C)_{T} & \rTo^{c^{A,B,C}_{T}} & \Hom(A,C)_{T} \\
\end{diagram}

The 2-cell $m_{f,g}$ relates the composition of diagrams in the combinatorial category to composition in the higher PRO, and can be defined as followed. First pull all attached unit nodes up to their connected multiplication node and remove them with a unit destruction operator. Now pull all trees in the bottom diagram, starting with the rightmost tree, up through any braids so they are directly beneath the trees in the top diagram. Finally, use associators to left bracket the combined trees, and put the braid in Artin normal form using the standard algorithm.

We now prove naturality of $m$. Let $f: \underline{m} \to \underline{n}$ and $g: \underline{n} \to \underline{o}$ be some 1-morphisms in the source. For every $\alpha:f \Rightarrow f, \beta: g \Rightarrow g$, the following diagram must commute:
\begin{diagram}
F(g \circ f) & \rTo^{F(\beta \circ_H \alpha)} & F(g \circ f) \\
\uTo^{m_{f,g}} & & \uTo_{m_{f,g}} \\
F(g) \circ F(f) & \rTo_{F(\beta) \circ_H F(\alpha)} &
F(g) \circ F(f)
\end{diagram}
Commutativity of this diagram follows from flipping the right and bottom arrows; we get a loop on a 1-morphism in the image of $F$ such that the isotopy class of the braid absorbed is trivial, which must be the identity by the results of Section \ref{sec:fullnessproof}.

We need one further diagram to commute for coherence of $m$. For all $f: \underline{m} \to \underline{n}$, $g: \underline{n} \to \underline{o}$ and $h:\underline{o} \to \underline{p}$:
\begin{diagram}
F(h) \circ F(g) \circ F(f) & \rTo^{\Id \circ_{H} m_{f,g}} & F(h) \circ F(g \circ f) \\
\dTo^{m_{g,h} \circ_{H} \Id} & & \dTo_{m_{g \circ f,h}} \\
F(h \circ g) \circ F(f) & \rTo_{m_{f,h \circ g}} & F(h \circ g \circ f)
\end{diagram}
Again, flipping the bottom and left arrows we get a loop on a 1-morphism in the image of $F$ involving no commutators, which is the identity by fullness and faithfulness of $F$; commutativity follows. 
\end{proof}
\noindent
We now show that the map is monoidal.

\begin{proposition}
All maps defined in Section \ref{sec:biequivalences} are strictly monoidal. 
\end{proposition}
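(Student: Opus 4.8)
The plan is to verify the three levels of data required for a strict monoidal homomorphism of Gray monoids \cite{Day1997}, reusing the normal-form machinery of Section~\ref{sec:fullnessproof}. On $0$-cells the monoidal product of the source is addition and that of the target is concatenation of lists, so $F(\underline m \otimes \underline n) = A^{\otimes(m+n)} = A^{\otimes m}\otimes A^{\otimes n} = F(\underline m)\otimes F(\underline n)$ and $F(\underline 0)=I$; hence the unit comparison and the object-level constraint hold strictly and there is nothing to check there. All the content lies in showing that $F$ preserves the monoidal product of $1$- and $2$-cells up to an identity.

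For $1$-cells, write $f=(\sigma,\phi)$ and $g=(\tau,\psi)$ with $\sigma\in B_m$, $\tau\in B_n$ and $\phi,\psi$ monotone. By~\eqref{eq:productbdelta} the source product is $(\sigma\times\tau,\,\phi\sqcup\psi)$, so $F(f\otimes g)=\widetilde{\phi\sqcup\psi}\circ\widetilde{\sigma\times\tau}$, with the trees in standard form and the braiding given by the representative word of $\sigma\times\tau$. On the target side $F(f)\otimes F(g)=(\tilde\phi\circ\tilde\sigma)\otimes(\tilde\psi\circ\tilde\tau)$ stacks the right factor entirely above the left, so its underlying ordered string diagram reads, from bottom to top, $\tilde\sigma,\tilde\phi,\tilde\tau,\tilde\psi$. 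First I would observe that juxtaposing two standard-form trees with the right one raised is again the standard form of $\phi\sqcup\psi$, so the tree parts already agree. Next, the braiding $\tilde\tau$ acts on the last $n$ strands and is thus unconnected to the left tree $\tilde\phi$, which acts on the first $m$ strands; interchangers therefore slide $\tilde\tau$ down beneath $\tilde\phi$ until all braiding lies below all trees. The two disjoint braids then compose to a word for $\sigma\times\tau$, which the braid relations ($S^{\pm}$) rewrite into its representative word. This exhibits $F(f)\otimes F(g)$ and $F(f\otimes g)$ as equal, the intervening comparison being a structural isomorphism built only from interchangers, braid moves and associators --- crucially containing no commutators or unitors, so that no braiding is ever absorbed into a tree.

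For $2$-cells the locally discrete cases are immediate. For ${\bf FS}^{\text{br}}$, $F(\alpha\otimes\beta)$ creates and absorbs the Cartesian product braid $\alpha\times\beta$ beneath the combined trees, while $F(\alpha)\otimes F(\beta)$ absorbs $\alpha$ beneath the left trees and $\beta$ beneath the right trees; since the isotopy class of the braid absorbed by each tree agrees on the two sides, these are equal by the faithfulness argument of Section~\ref{sec:theoremstatement} together with Proposition~\ref{braidsistopyinmoviesprop}. Finally I would discharge the coherence axioms for a monoidal homomorphism \cite{Day1997}, namely compatibility of the tensor comparison with associativity of the products and with the composition comparison $m$ of the previous proposition. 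As in the functoriality proof, flipping the invertible $2$-cells in each axiom turns it into a loop on a $1$-cell in the image of $F$; because the comparison isomorphisms above are commutator-free, no braiding is absorbed in these loops, so by the fullness and faithfulness results of Section~\ref{sec:fullnessproof} every such loop is the identity, and the structural comparison itself may likewise be contracted to the identity. The main obstacle is precisely this reconciliation of the height-staggered Gray tensor product with the flat ``braiding-below-trees, representative-word'' shape of the image $1$-cells; once one checks that the required rearrangement uses no commutators, the triviality of braiding-free loops established in Section~\ref{sec:fullnessproof} does all the remaining work.
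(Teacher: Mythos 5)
There is a genuine gap, and it sits at the heart of your argument. The proposition claims \emph{strict} monoidality, i.e.\ the on-the-nose equalities $F(f\otimes g)=F(f)\otimes F(g)$ and $F(\alpha\otimes\beta)=F(\alpha)\otimes F(\beta)$; the paper's own proof is a one-line observation that this is clear from Table~\ref{compositioningeneratedgraymonoid} and the definitions of Section~\ref{sec:biequivalences}, with no mediating cells anywhere. You, by contrast, concede that the two sides differ --- $F(f)\otimes F(g)$ has the height order $\tilde\sigma,\tilde\phi,\tilde\tau,\tilde\psi$ while $F(f\otimes g)$ has all braiding below all trees --- and then bridge the difference with a ``structural isomorphism built only from interchangers, braid moves and associators,'' which you declare ``exhibits'' the two 1-cells ``as equal.'' In a Gray monoid this is exactly what you may not do: interchangers are nonidentity generating 2-cells, and two ordered string diagrams differing in the height order of their generating 1-cells (braiding 1-cells included) are \emph{distinct} 1-cells --- Definition~\ref{def:1cellsgraymoncomp} and Figure~\ref{fig:orderedstringdiags} make precisely this point, and the same applies to your appeal to the braid relations, whose instantiations are nontrivial 2-cells (the axiom $S^+=S^-$ equates the two instantiations of a move; it does not make the move an identity). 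So if your sliding argument were needed, its honest conclusion would be that $F$ is \emph{weakly} monoidal with nontrivial comparison cells, which contradicts the proposition rather than proving it. To prove the stated proposition one must check that the diagrams coincide literally, which is what the paper's appeal to the definitions asserts: the standard form (trees rising left to right) and the Gray product convention (left factor wholly below the right) are matched to each other by construction.

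Your attempted repair in the final paragraph does not close this gap. The fullness and faithfulness machinery of Section~\ref{sec:fullnessproof} and Proposition~\ref{braidsistopyinmoviesprop} shows that \emph{loops} --- 2-cells from a 1-cell to itself --- with trivial absorbed braid are identities. Your comparison cell is a 2-cell between two a priori \emph{distinct} parallel 1-cells; it is not a loop, and no rewriting can ``contract it to the identity,'' because an identity 2-cell exists only on a single 1-cell. Flipping one leg of a coherence square to obtain a loop (as in the functoriality proofs of Section~\ref{sec:functorialityproof}) is legitimate only when the comparison cells in that square are already given 2-cells of the Gray monoid and the square's corners agree; it cannot retroactively convert an isomorphism of 1-cells into an equality of 1-cells. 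For the same reason your 2-cell step for ${\bf FS}^{\text{br}}$ is circular: $F(\alpha\otimes\beta)$ and $F(\alpha)\otimes F(\beta)$ are parallel, hence comparable at all, only once 1-cell strictness is already established. A further untreated point on the braid side: strictness also requires the representative word of $\sigma\times\tau$ to be the concatenation of the representative words of $\sigma$ and (shifted) $\tau$, and in the quotient cases the coset representatives are chosen by the Axiom of Choice with no multiplicativity guarantee; invoking $S^{\pm}$ moves to fix this reintroduces exactly the isomorphism-versus-equality confusion above.
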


\begin{proof}
Clear from the the definitions of Table~\ref{compositioningeneratedgraymonoid} and Section~\ref{sec:biequivalences}.
\end{proof}

\begin{proposition}
All maps from braided Gray monoids defined in Section~\ref{sec:biequivalences} are strictly braided. 
\end{proposition}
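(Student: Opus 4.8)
The plan is to verify the single axiom that distinguishes a braided homomorphism from an ordinary one: compatibility of $F$ with the braiding 2-cells, expressed via a further coherence condition on the structural natural transformation $m$ constructed in the previous proposition. Concretely, following the definition of a braided homomorphism in \cite[Definition 14]{Day1997}, I must show that the 2-cell comparing $F$ applied to the source braiding with the target braiding composed with the appropriate instances of $m$ is the identity. Since both source and target Gray monoids have their braiding on objects given by the evident braid $(\sigma_{m,n}, \id)$, and $F$ sends this braiding 1-cell to the corresponding braid in standard form with trivial tree part, the underlying 1-cells on both sides of the required equation already agree strictly.

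First I would write down the relevant square: for objects $\underline{m},\underline{n}$ the braided homomorphism axiom demands that the two ways of relating $F(R^{+}_{\underline m,\underline n})$ to the composite of $F$-images across the braiding — one going through the source braiding then $m$, the other through $m$ then the target braiding — be equal as 2-cells. Because the maps are strictly monoidal (the preceding proposition) and $F$ carries the combinatorial braiding $(\sigma_{m,n},\id)$ to the literal braid in Artin normal form with no trees attached, each composite reduces to a loop on a 1-cell in the image of $F$. I would then invoke the mechanism already established in Section~\ref{sec:fullnessproof}: any such loop, once put in the normal form $N$ of Definition~\ref{movienormalformfullnessdefn}, absorbs a braid of a definite isotopy class under each tree, and by Proposition~\ref{braidsistopyinmoviesprop} two normal-form loops absorbing isotopic braids are equal.

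The key observation making the square commute is that the braid absorbed by every tree in the resulting loop is trivial. This is because the braiding 1-cell contributes only a permutation of strings and no multiplication nodes, so after fixing the trees there is nothing for a commutator to act on and no braid gets created beneath any tree. Hence the loop is contractible to the identity by exactly the fullness-and-faithfulness argument used to prove, e.g., Proposition~\ref{braidednoncommfullnessprop} and the naturality squares in the functoriality proof. I would state this as: flipping the appropriate arrows yields a loop on a 1-morphism in the image of $F$ whose absorbed braid is trivial, which is therefore the identity by the results of Section~\ref{sec:fullnessproof}.

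The main obstacle, and the only place requiring genuine care, is bookkeeping: confirming that the specific coherence 2-cells demanded by \cite[Definition 14]{Day1997} really do assemble into a loop on an image 1-cell with trivial absorbed braid, rather than one secretly carrying a nontrivial pure braid. For the symmetric cases ${\bf S\Delta}$, ${\bf FS}^{\text{br}}$ and ${\bf FS}$ one must also check that the syllepsis does not introduce any obstruction; but since the target braiding on objects and its $F$-image are both governed purely by the braid-group/permutation data, and the trees play no role, the isotopy class of every absorbed braid is forced to be trivial. I expect this verification to be short: once the square is rewritten as a loop in normal form, the conclusion is immediate from Proposition~\ref{braidsistopyinmoviesprop}.
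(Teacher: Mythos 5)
Your proposal is correct, but it takes a substantially heavier route than the paper, which disposes of this proposition in one line: by the very definitions of Section~\ref{sec:biequivalences}, the combinatorial braiding $(\sigma_{m,n},\id)$ is sent to the corresponding braid 1-cell in the higher PRO, so $R_{F(X),F(Y)} = F(R_{X,Y})$ holds strictly and the braiding constraint cell can be taken to be the identity --- no loop argument is invoked at all. What you do differently is to go on and check that this identity constraint actually satisfies the coherence axioms of \cite[Definition 14]{Day1997}, by flipping the relevant square into a loop on an image 1-cell, observing that the loop contains no commutators (the braiding contributes permutation data only, and $m$ is built from unitors, associators and pullthroughs), so the absorbed braid is trivial and the loop contracts by the results of Section~\ref{sec:fullnessproof} and Proposition~\ref{braidsistopyinmoviesprop}. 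This is exactly the mechanism the paper uses one proposition earlier for the naturality and coherence of $m$, and your transplanting of it to the braiding axiom is sound, including your check that nothing changes in the sylleptic cases. What your version buys is explicitness: the paper's one-liner implicitly delegates the braided-homomorphism axioms to the already-proven coherence of $m$, whereas you verify them directly; what it costs is invoking the full normal-form machinery where, for the statement as phrased (strictness of the braiding), the strict equality of 1-cells is all that is needed.
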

\begin{proof}
By the definitions of Section \ref{sec:biequivalences}, we have that $R_{F(X),F(Y)} = F(R_{X,Y})$.
\end{proof}
\begin{proposition}
All maps from symmetric Gray monoids defined in Section~\ref{sec:coherence} are strictly symmetric. 
\end{proposition}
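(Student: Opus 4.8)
The plan is to verify the single additional axiom that distinguishes a symmetric homomorphism from a braided one (Day--Street Definition 16), namely compatibility of $F$ with the syllepses of source and target. Since the previous proposition already establishes that each of the three maps in question (those out of ${\bf S\Delta}$, ${\bf FS}^{\text{br}}$ and ${\bf FS}$) is strictly braided, so that the braiding constraint $R_{F(X),F(Y)} = F(R_{X,Y})$ is the identity, this axiom collapses to a single equation of $2$-cells in the target Gray monoid, which I will argue holds automatically by coherence, exactly paralleling the one-line braided argument.

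First I would record the shape of that equation. The source symmetric Gray monoids are genuinely symmetric: their underlying braiding satisfies $R^{+}=R^{-}$, the double braiding is the strict identity $1$-cell, and so their syllepsis is an identity $2$-cell. (For ${\bf FS}^{\text{br}}$ this still holds, since a double braiding on generating $0$-cells is the $1$-cell $(\sigma,\id)$ whose fibres all have size one, whence $\Hom = \prod_i PB_{1}$ is trivial.) Consequently $F$ sends each source syllepsis to an identity $2$-cell. Feeding this fact, together with the trivial braiding constraint, into the symmetric-homomorphism axiom turns it into the assertion that a certain pasting of the target syllepsis $\sigma_{F(X),F(Y)}$ with braiding inverse-inserts and pullthroughs equals the identity; equivalently, that a specific loop is contractible.

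The key observation is that every $1$-cell and $2$-cell occurring in this loop is structural and involves only braiding $1$-cells between copies of the single generating $0$-cell $C$: no multiplication or unit $1$-cells appear, and no associators, unitors or commutators occur, because the syllepsis axiom is formulated purely in terms of braidings. The loop therefore lives inside the sub-Gray-monoid of the target generated by $C$ with \emph{no} non-structural generating $1$- or $2$-cells, which satisfies the hypotheses of Theorem~\ref{gurskiosornocoherencethm}. Hence all its parallel $2$-cells are equal, the loop equals the identity, the axiom holds, and $F$ is strictly symmetric. (Alternatively one may close this last step via Section~\ref{sec:fullnessproof}: the loop contains no commutators, so its absorbed braid is trivial, and Proposition~\ref{braidsistopyinmoviesprop} forces it to be the identity.)

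The main obstacle is bookkeeping rather than substance: one must check that reducing the axiom does not inadvertently drag in a non-structural $1$-cell (which would take us outside the scope of Theorem~\ref{gurskiosornocoherencethm}), and that the orientation conventions for $R^{+}$, $R^{-}$ and the syllepsis are matched between source and target so that the two sides of the equation are genuinely parallel. Once that is confirmed, the coherence theorem for braidings closes the argument in the same way that $R_{F(X),F(Y)}=F(R_{X,Y})$ closed the braided case.
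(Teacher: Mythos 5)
Your argument is correct, but it takes a genuinely different route from the paper. The paper's entire proof is a one-line appeal to the definitions of Section~\ref{sec:biequivalences}: the maps are constructed so that $\sigma_{F(X),F(Y)} = F(\sigma_{X,Y})$, exactly mirroring the one-line proof of the strictly-braided proposition, with no invocation of coherence at all. You instead unfold the Day--Street symmetric-homomorphism axiom, observe that the source syllepses are identity $2$-cells (your check for ${\bf FS}^{\text{br}}$ via the triviality of $\Hom(\id,\id)=\prod_i PB_1$ is the right one and is not spelled out in the paper), and discharge the resulting equation by noting it is a loop of purely structural $2$-cells on braiding $1$-cells between tensor powers of $C$, hence lies in the image of the free symmetric Gray monoid on a single $0$-cell and contracts by Theorem~\ref{gurskiosornocoherencethm}. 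What each approach buys: the paper's proof is immediate but terse to the point of glossing a real subtlety --- in the source the syllepsis is an identity $2$-cell on an identity $1$-cell, whereas in the target $\sigma_{F(X),F(Y)}$ has the nontrivial double-braiding $1$-cell as its source, so the displayed equality only makes sense once mediated by the compositor $m_{f,g}$ of Section~\ref{sec:functorialityproof}, which on braid-only composites normalises the braid word rather than being the identity. Your coherence argument absorbs exactly this mediation, since those rewrites are themselves structural, and so supplies the justification the paper leaves implicit; your fallback via Proposition~\ref{braidsistopyinmoviesprop} (no commutators, trivial absorbed braid) is likewise sound. The one refinement worth making explicit is that your phrase ``the braiding constraint is the identity'' should be read together with ``and $m$ is structural on braid $1$-cells'', so that the loop you contract really does satisfy the hypotheses of Theorem~\ref{gurskiosornocoherencethm}; with that noted, your proof is complete and, if anything, more careful than the paper's.
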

\begin{proof}
By the definitions of Section~\ref{sec:biequivalences}, we have that $\sigma_{F(X),F(Y)} = F(\sigma_{X,Y})$.
\end{proof} 

\bibliographystyle{plainurl}
\bibliography{pseudomonoids}
\appendix
\appendixpage

\section{Semistrictness for Bar-Vicary braided monoidal bicategories}\label{app:semistrictnesshexagonators}

Here we imitate Schommer-Pries' semistrictness proof for quasistrict symmetric monoidal bicategories~\cite{Schommer-Pries2009}; the proofs are almost identical, so we do not repeat them here. First, since the braided monoidal bicategory monad is finitary and monotone, every braided monoidal bicategory is strictly biequivalent to a computadic one~\cite[Lemma 2.66, Corollary 2.67]{Schommer-Pries2009}. The coherence result of Gurski~\cite[Theorem 2.26]{Gurski2011} can be used to show that every computadic weak braided monoidal bicategory is biequivalent to the Crans braided monoidal bicategory on the corresponding Crans computad~\cite[Proposition 2.91]{Schommer-Pries2009}. We therefore need only prove that a computadic Crans braided monoidal bicategory is equivalent to the same category with trivial hexagonators. We use the following theorem.

\begin{theorem}[{Whitehead's theorem for braided monoidal bicategories~\cite[Theorem 2.25]{Schommer-Pries2009}}]\label{thm:whiteheadbraidedmon}
A braided monoidal homomorphism between braided monoidal bicategories is a braided monoidal biequivalence if and only if it is an equivalence of underlying bicategories.
\end{theorem}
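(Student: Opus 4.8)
The plan is to prove the biconditional by separating the two directions, with essentially all of the content lying in the reverse (``if'') direction. The forward direction is immediate: if $F\colon \mathcal{C}\to\mathcal{D}$ is a braided monoidal biequivalence, then it admits a braided monoidal pseudo-inverse together with braided monoidal pseudo-natural equivalences $FG\simeq\Id_{\mathcal D}$ and $GF\simeq\Id_{\mathcal C}$; forgetting the braided monoidal structure of all of this data leaves a homomorphism of bicategories with a pseudo-inverse, which is exactly an equivalence of underlying bicategories. (Equivalently, if one uses the characterisation from the definition in Section~\ref{sec:biequivalences}, the conditions of being essentially surjective on $0$- and $1$-cells and fully faithful on $2$-cells are stated without reference to the monoidal structure, so they persist after forgetting it.)

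For the reverse direction, suppose $F$ is a braided monoidal homomorphism whose underlying homomorphism of bicategories is a biequivalence. First I would promote this underlying biequivalence to an adjoint biequivalence: choose a pseudo-inverse homomorphism $G_0$ together with invertible pseudo-natural transformations $u\colon \Id_{\mathcal C}\Rightarrow G_0F$ and $c\colon FG_0\Rightarrow\Id_{\mathcal D}$ satisfying the triangle identities. This is always possible for a biequivalence of bicategories by standard coherence. The goal is then to equip $G_0$ with the structure of a braided monoidal homomorphism $G$ in such a way that $u$ and $c$ become braided monoidal transformations, whence $(G,u,c)$ exhibits $F$ as a braided monoidal biequivalence.

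The core step is transport of structure. I would define the tensor and unit comparison constraints of $G$ by conjugating those of $F$ through the equivalences $u$ and $c$: the constraint $\chi^G_{X,Y}\colon GX\otimes GY\to G(X\otimes Y)$ is obtained by transporting $F$'s constraint $\chi^F$ along the counit $c$ (using that $FGX\simeq X$), and similarly for the unit comparison $\iota^G\colon I_{\mathcal C}\to GI_{\mathcal D}$. Since $F$ is a \emph{homomorphism} of braided monoidal bicategories, its constraint cells $\chi^F,\iota^F$ are adjoint equivalences and its braiding-compatibility datum is an invertible modification; hence the transported data for $G$ are again adjoint equivalences and invertible modifications. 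I would then verify the monoidal homomorphism axioms for $G$ (the associativity and unit coherence equalities) and the two braided-homomorphism axioms, together with the statements that $u$ and $c$ are monoidal and braided transformations. Each of these is an equality of invertible $2$-cells, and each follows from the corresponding equality already known for $F$ by pasting it across the adjoint equivalences $u,c$ and cancelling with the triangle identities.

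The main obstacle is precisely this last verification: the coherence bookkeeping needed to check that the transported constraints satisfy the (braided) monoidal axioms. This is where the genuine work of a Whitehead-style argument resides, and where Schommer-Pries' proof does its labour. The enabling observation that makes it tractable is that, because $F$ is a homomorphism rather than a mere oplax or lax morphism, all of its structure cells are \emph{invertible}, so every transport is through an adjoint equivalence and each axiom is transported as an equality between pasting diagrams of invertible cells; no essential obstruction can arise, only the (lengthy but routine) manipulation of such diagrams via the triangle identities. Since $u$ and $c$ are componentwise equivalences by construction, the resulting braided monoidal homomorphism $F$ is essentially surjective on $0$- and $1$-cells and fully faithful on $2$-cells, completing the proof.
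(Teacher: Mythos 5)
Your proposal is correct in outline but takes a genuinely different route from the paper. The paper does not prove this theorem by transport of structure: it imports it from Schommer-Pries~\cite{Schommer-Pries2009} (Theorem 2.25 there, stated for symmetric monoidal bicategories), whose proof proceeds in two stages --- first every symmetric monoidal bicategory is shown to be biequivalent to a \emph{skeletal} one (Lemma 2.21 of that work), and then the Whitehead statement is proved for homomorphisms between skeletal symmetric monoidal bicategories (Lemma 2.24). The braided case follows by observing that neither lemma ever uses the syllepsis or its coherence, so the same argument runs verbatim with that data omitted. The point of the skeletalisation is precisely to tame the step you defer to ``lengthy but routine'' manipulation: in the skeletal setting much of the pasting bookkeeping trivialises, whereas your direct approach --- promote the underlying biequivalence to an adjoint biequivalence, conjugate $\chi^F$, $\iota^F$ and the braiding-compatibility datum through $u$ and $c$, and verify the axioms by whiskering with the triangle identities --- must confront that bookkeeping head-on. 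Your route is more self-contained and makes visible where invertibility of $F$'s constraint cells is used; the paper's route is shorter on the page because it delegates the labour, and it makes explicit that the braided case is a formal specialisation of the symmetric one.

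One imprecision worth fixing before your argument could be called complete: being a (braided) monoidal transformation is \emph{structure}, not a property. The equivalences $u$ and $c$ must be \emph{equipped} with invertible modifications comparing, say, $u_X \otimes u_Y$ with $u_{X \otimes Y}$ (and analogous unit data), and these modifications must themselves be constructed by transport before the transformation axioms can be ``verified''; your write-up treats this as a check rather than a construction. Relatedly, the adjoint biequivalence you invoke must be chosen so that its triangle identities interact coherently with the transported monoidal data --- this is the content of the notion of adjoint biequivalence in the monoidal-bicategorical setting, and the cancellations you rely on do not come for free without it. Neither point breaks your strategy, but a complete proof must supply this data explicitly, which is exactly the labour the paper's skeletalisation route is designed to avoid.
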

\noindent
The semistrictness result can now be straightforwardly proven.
\begin{theorem}[Semistrictness for Bar-Vicary braided monoidal bicategories]
For any computadic Crans braided monoidal bicategory, the quotient homomorphism $\phi$ which identifies all braiding 1-cells with the corresponding `expanded' composite of braidings of generating 1-cells~\eqref{eq:trivialhexagonatorbraidings}, and sends all hexagonators to the identity, is a braided monoidal biequivalence. 
\end{theorem}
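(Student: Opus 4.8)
The plan is to reduce the statement to Whitehead's theorem for braided monoidal bicategories (Theorem~\ref{thm:whiteheadbraidedmon}), exactly as in Schommer-Pries' treatment of the quasistrict symmetric case. Since that theorem guarantees that a braided monoidal homomorphism is a braided monoidal biequivalence as soon as it is an equivalence of underlying bicategories, the entire problem splits into two manageable tasks: first, verify that the quotient map $\phi$ is genuinely a braided monoidal homomorphism; and second, verify that $\phi$ is an equivalence of the underlying (non-monoidal) bicategories. The braided structure then comes for free.

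First I would set up $\phi$ carefully as a map of computadic Crans braided monoidal bicategories. On 0-cells and on generating 1-cells $\phi$ is the identity; the content is that a braiding 1-cell $R^{\pm}_{A,B}$ for a composite 0-cell $A = A_1 \otimes \cdots \otimes A_k$ is sent to the iterated composite of braidings of the generating 0-cells, as prescribed by the expanded form~\eqref{eq:trivialhexagonatorbraidings}, and that every hexagonator 2-cell is sent to an identity. I would then check that $\phi$ respects the braided monoidal structure: the key point is that, because the target has trivial hexagonators by fiat, the hexagon coherence 2-cells in the source are carried to identities consistently, and the remaining structural 2-cells (the braiding inverse-inserts $i^{\pm}$, the pullthroughs, and the syllepsis in the symmetric case) transport without obstruction. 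This amounts to confirming that the defining axioms of a braided monoidal homomorphism are satisfied once the hexagonators are collapsed — a diagram-chase that is routine given the expanded-braiding definition.

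The heart of the argument is showing $\phi$ is an equivalence of underlying bicategories, and this is where I expect the main obstacle to lie. I would argue that $\phi$ is essentially surjective on objects (it is the identity on 0-cells) and then show it is essentially surjective on 1-cells and fully faithful on 2-cells. Essential surjectivity on 1-cells follows because every braiding 1-cell is, in the source, already isomorphic via the hexagonators to its expanded composite, so nothing is lost up to isomorphism. Full faithfulness on 2-cells is the delicate step: one must check that collapsing the hexagonators to identities neither identifies 2-cells that were distinct nor kills any 2-cell information that is not already forced. This is precisely the local analysis Schommer-Pries performs, and the claim in the excerpt is that the proofs are ``almost identical'', so I would invoke the structure of his argument — using finitarity and monotonicity of the monad to reduce to the computadic case, and then the coherence of Gurski to control the 2-cells — rather than reproving it from scratch.

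Having established that $\phi$ is a braided monoidal homomorphism and an equivalence of underlying bicategories, the conclusion is immediate: Theorem~\ref{thm:whiteheadbraidedmon} upgrades the underlying equivalence to a braided monoidal biequivalence, which is exactly the statement. I would close by remarking that the same argument, with the syllepsis handled identically, gives the corresponding result in the symmetric setting, so that the semistrictness of our definition follows from Schommer-Pries' quasistrict semistrictness theorem~\cite[Theorem 2.96]{Schommer-Pries2009}.
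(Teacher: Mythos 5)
Your first step agrees with the paper: both reduce via Whitehead's theorem for braided monoidal bicategories (Theorem~\ref{thm:whiteheadbraidedmon}) to showing that $\phi$ is an equivalence of underlying bicategories. But at exactly that point your argument has a genuine gap. You declare full faithfulness on 2-cells to be ``the delicate step'' and then discharge it by invoking ``the structure of [Schommer-Pries'] argument --- using finitarity and monotonicity of the monad to reduce to the computadic case, and then the coherence of Gurski to control the 2-cells.'' Neither tool does the job here. Finitarity/monotonicity of the monad and Gurski's coherence theorem~\cite{Gurski2011} are what reduce an arbitrary \emph{weak} braided monoidal bicategory to a computadic \emph{Crans} one; that reduction happens \emph{before} the present theorem, whose hypothesis already is a computadic Crans braided monoidal bicategory. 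The remaining question --- whether the quotient from the Crans structure to the trivial-hexagonator structure is locally fully faithful --- is precisely what these citations do not address, and it is the kind of statement that can fail for a quotient that sends 2-cells to identities: you must rule out that killing the hexagonators identifies previously distinct 2-cells.

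The missing idea, which is the actual heart of the paper's proof, is a coherence lemma for hexagonators: any two directed composites of hexagonators between the same pair of 1-cells are \emph{equal}, which the paper extracts from the polyhedral axioms (2.4--2.7) of Crans~\cite{Crans1998}. With that lemma in hand the paper does not verify full faithfulness abstractly; it constructs an explicit section $H$ with $\phi \circ H = \mathrm{id}$ (identity on objects, ``expand all braidings'' on 1-cells), uses the uniqueness of hexagonator composites to obtain a \emph{canonical} 2-isomorphism $f \to H(\phi(f))$ assembling into an invertible pseudonatural transformation $\nu \colon \mathrm{Id} \Rightarrow H \circ \phi$, and defines $H$ on a 2-cell $\mu$ by choosing any preimage in $\phi^{-1}(\mu)$ and conjugating by the components of $\nu$ --- where well-definedness, strict compositionality, and the pseudonaturality of $\nu$ all again rest on the same uniqueness lemma. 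Your appeal to the symmetric quasistrict case~\cite[Theorem 2.96]{Schommer-Pries2009} cannot substitute for this: there the cells being trivialised (and the coherence facts controlling them) are different, so the hexagonator-uniqueness argument must be supplied in the braided setting rather than cited. To repair your proof, state and prove (or cite via Crans' polyhedra) the lemma that all parallel sequences of hexagonators are equal, and then either run the paper's section-plus-pseudonatural-equivalence construction or use the lemma directly to show $\phi$ is locally fully faithful.
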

\begin{proof}
%%%%%%%%%%%%%%%%%%%%%%%%IGNORE: OLD VERSION%%%%%%%%%%%
\begin{comment}
The proof is essentially identical to that in~\cite[Theorem 2.96]{Schommer-Pries2009}, but we go through it in full here. 
By Theorem~\ref{thm:computadicequivalence}, every braided monoidal bicategory is braided monoidally biequivalent to a computadic one. We can split the quotient homomorphism as $F_W(P) \to F_{Cr}(P) \to F_{BV}(P)$, and by Proposition~\ref{prop:weaktocransequiv} we need only prove that the quotient homomorphism $\phi: F_{Cr}(P) \to F_{BV}(P)$ is a biequivalence of braided monoidal bicategories. 
\end{comment}
%%%%%%%%%%%%%%%%%%%%%%%%%%%%%%%%%%%%%%%%%%%%%%%%%%%%%

By Theorem~\ref{thm:whiteheadbraidedmon} we need only show that it is an equivalence of underlying bicategories. To do this, we construct an inverse homomorphism $H$ which is a section in the sense that $\phi \circ H = id$, and such that $H \circ \phi$ is naturally isomorphic to the identity.

Since $\phi$ is the identity on objects, we define $H$ to be the identity on objects. On 1-cells $f$, we define $H(f)$ to be the identical 1-cell with all braidings expanded.

Before defining $H$ on 2-cells, we specify the components of the invertible natural transformation $\nu: \Id \to H \circ \phi $. Since $H \circ \phi$ is the identity, we choose the 1-cell components of $\nu$ to be the identity 1-cells. We need therefore only define the 2-cell components. We note that any 1-cell differs only from its image under $H \circ \phi$ by repeated directed application of hexagonators. However, any order of directed application of hexagonators gives an equal 2-morphism, as can be shown using the polyhedra (2.4-2.7) in~\cite{Crans1998}. It follows that there is a single canonical isomorphism $f \to H ( \phi(f))$; all these isomorphisms together define the invertible natural transformation $\nu$. It is clear that this is compatible with composition using the structural equalities of a Gray monoid.

On 2-cells $\mu: f \to g$, we define $H$ by picking any 2-cell in the fibre $\phi^{-1}(\mu)$, and conjugating it by the 2-cell components of $\nu$, so that the source and the target 1-cells are expanded. Since there is only one composition of hexagonators taking the target of $f$ to the source of $g$, strict compositionality follows; we also have strict unitality.

Finally, we need to show pseudonaturality of $\nu: \Id \to H \circ \phi$; this follows immediately from  the fact that all parallel sequences of hexagonators are equal. 
\end{proof}

\section{Proofs from Section 2.4}\label{sec:proofsforgraymoncohappendix}

\begin{theorem}[Putting a 1-cell in TSNF]
Let $M$ be a clip in a computadic braided Gray monoid. Let $N$  be a non-structural generating 1-cell whose output is a single generating 0-cell. If no non-structural 2-cells occur on a rectangular subregion containing the output string of $N$ during $M$, then there exists a series of rewrites to put $N$ in TSNF.
\end{theorem}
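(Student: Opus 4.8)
```latex
\textbf{Proof plan.}
The plan is to put $N$ in TSNF by processing, one at a time, every generating 2-cell in $M$ that touches the output string of $N$, and showing that each such 2-cell can be moved off the output string (or cancelled) without disturbing the 2-cells already cleared. Since, by hypothesis, no \emph{non-structural} 2-cell acts on a rectangular subregion containing the output string, the only 2-cells we must contend with on the output string are structural: interchangers, pullthroughs, braiding cancellations and braiding inverse-inserts (and, in the symmetric case, syllepses). The output string of $N$ runs from $N$ to either the next non-structural 1-cell it feeds into, or to the roof of the diagram; this endpoint is fixed throughout the clip because no non-structural 2-cell touches the string, so we always have a well-defined string to clear.

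First I would use the ``insert IPI'' technique (Definition of Insert IPI) together with Type II rewrites as the basic tool for relocating $N$ and its output string. The key observation is that any structural 2-cell $\gamma$ acting on the output string falls into one of two families: either it is an interchanger/pullthrough that \emph{moves} the output string relative to a neighbouring 1-cell, or it is a braiding cancellation / braiding inverse-insert that \emph{creates or destroys} a braiding crossing on the output string. For the first family, I would delay $\gamma$ using Type I rewrites and, when obstructed, absorb the obstruction using Type III rewrites, exactly as in the unitor-elimination arguments of Lemma~\ref{unbraidedbubbleelimination} and Lemma~\ref{bubblenormalformwithfixedmultnoderewrite}; each such manipulation strictly reduces a well-chosen complexity measure (the number of structural 2-cells touching the output string, counted with the interchangers/pullthroughs of $N$ itself). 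For the second family, the PT-B equality is the essential ingredient: it lets me convert a braiding inverse-insert followed by a pullthrough \emph{above} the string into one \emph{below}, so that the crossing is pushed off the output string entirely; the ADJ axiom then allows a braiding inverse-insert to be cancelled against its partner braiding cancellation when they meet.

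The main technical device is an induction on a complexity measure counting the structural 2-cells acting on a rectangular subregion that contains the output string of $N$. I would show that each reduction step — delaying via Type I, absorbing via Type III, cancelling an IPI pair, or pushing a crossing below the string via PT-B — either strictly decreases this measure or exposes a cancellable pair. Because the measure is a non-negative integer, the process terminates, and at termination no structural 2-cell acts on the output string, which is precisely the TSNF condition of Definition~\ref{tsnfdefinition}.

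I expect the hard part to be the bookkeeping around the braiding crossings: when a braiding inverse-insert introduces a crossing on the output string and a later pullthrough moves it, one must invoke PT-B to relocate the whole configuration below the string rather than naively cancelling, and one must check that this relocation does not reintroduce a crossing that was cleared earlier. Verifying that the complexity measure genuinely decreases under the PT-B step — rather than merely being permuted — is where the argument is most delicate, and is the step I would write out in full detail in the appendix.
```
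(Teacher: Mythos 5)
Your overall strategy coincides with the paper's own proof (given in Appendix~\ref{sec:proofsforgraymoncohappendix}): scan the clip for structural 2-cells acting on the output string of $N$, remove them one at a time using insert IPI together with Type I and Type III rewrites and the PT-B equality, and iterate until none remain. However, two specific points in your plan break down. First, your claim that a braiding inverse-insert is eventually ``cancelled against its partner braiding cancellation'' via ADJ is unjustified: a clip is not a loop, so a braiding cancellation on the output string may consume crossings already present in the source frame, and the crossings produced by an inverse-insert may survive into the target frame --- in general no pairing exists. The paper never pairs them; each inverse-insert is pushed below $N$ on its own using PT-B, and each cancellation is handled independently using the \emph{flip} of PT-B, so no global matching argument is needed.

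Second, your treatment of the pullthrough cases by ``delay via Type I, absorb via Type III'' is insufficient. When another 1-cell $g$ pulls through a braiding lying on the output string of $N$, moving $N$ past that site forces you to commute a pullthrough of $N$ with a pullthrough of $g$ across the same braiding; this is not achievable by the Type I--III structural rewrites (Type III only applies when an \emph{interchanger} is involved) but is precisely the content of the braided Gray monoid axiom $(\rightarrow \otimes \rightarrow)$ and its flip, which the paper invokes for the upwards and downwards pullthrough cases. Without this axiom your complexity measure has no reason to decrease in those cases. Relatedly, the delaying strategy you import from Lemma~\ref{unbraidedbubbleelimination} terminates there only because the created unit has a guaranteed paired destruction operator; here there is no such partner, and the paper instead removes each offending 2-cell \emph{in place} (insert IPI to raise $N$ above the site, apply the relevant structural equality so the 2-cell occurs below $N$, then return $N$), so that termination follows simply because the number of 2-cells acting on the output string drops by one at each step.
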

\begin{proof}
Consider the first 2-cell in $M$ which acts on the output string of $N$. We rewrite $M$ to remove this 2-cell from the output string. We will now detail the rewrite case-by-case:
\begin{itemize}[leftmargin=*]
\item \emph{A braiding inverse-insert}. 
\begin{enumerate}[leftmargin=*]
\item Insert IPI immediately following the braiding inverse-insert so that the braiding inverse insert occurs, $N$ moves up through both created braidings, and then back down again.
\item Use Type I rewrites so that $N$ moves up to immediately beneath the site of the braiding inverse-insert, the braiding inverse-insert occurs, $N$ moves up through the created braidings, and then back down again.
\item Use PT-B so that $N$ moves up, the braiding occurs underneath $N$, and $N$ then moves back down again.
\end{enumerate}
See the 6-cell `Theorem A1 - Braiding inverse-insert Pf' in the \emph{Globular} workspace.

\item \emph{A braiding cancellation}. 
\begin{enumerate}[leftmargin=*]
\item Insert IPI immediately before the cancellation so that $N$ moves up through the cancelled braidings, moves back down again, and then the braidings are cancelled.
\item Use Type I rewrites so that $N$ moves up to and through the braidings, down again through both braidings, the braidings are cancelled, and then $N$ returns. 
\item Use the flip of PT-B so that $N$ moves up to and through the braidings, the braidings are cancelled, and then $N$ returns. 
\end{enumerate}
See the 6-cell `Theorem A1 - Braiding cancellation Pf' in the \emph{Globular} workspace.

\item \emph{An upwards pullthrough}. 
\begin{enumerate}[leftmargin=*]
\item Insert IPI immediately following the pullthrough so that the 1-cell pulls through, then $N$ then travels up far enough to interchange with the 1-cell, then $N$ returns. 
\item Use $(\rightarrow \otimes \rightarrow)$ so that $N$ travels up and pulls over the output string of the other 1-cell, then the 1-cell moves up just below $N$, then $N$ returns. 
\end{enumerate}
See the 6-cell `Theorem A1 - Upwards pullthrough Pf' in the \emph{Globular} workspace.

\item \emph{A downwards pullthrough}. 
\begin{enumerate}[leftmargin=*]
\item Insert IPI immediately prior to the pullthrough so that $N$ moves up to and through the braidings, then returns, then the other 1-cell pulls downwards through the braidings.
\item Use Type I rewrites so that $N$ moves up to and through the braidings, then $N$ moves back down through the braidings, then the other 1-cell pulls through the braidings, then $N$ returns.
\item Use the flip of $(\rightarrow \otimes \rightarrow)$ and Type II rewrites so that $N$ moves up and through the braidings, then the other 1-cell moves down through the braidings, then $N$ returns.
\end{enumerate}
See the 6-cell `Theorem A1 - Downwards pullthrough Pf' in the \emph{Globular} workspace.

\item \emph{An upwards interchanger}. 
\begin{enumerate}[leftmargin=*]
\item Insert IPI immediately following the interchanger so that  then $N$ pulls up and through the interchanged braiding, then returns.
\item Use Type I rewrites so that $N$ pulls up beneath the braiding, the braiding interchanges upwards, then $N$ interchanges upwards and pulls through upwards, then returns.
\item Use a Type III interchanger so that $N$ pulls through the braiding, then interchanges upwards, then the braiding interchanges upwards, then $N$ returns.
\end{enumerate}
See the 6-cell `Theorem A1 - Upwards interchanger Pf' in the \emph{Globular} workspace.

\item \emph{A downwards interchanger}.
\begin{enumerate}[leftmargin=*]
\item Insert IPI immediately prior to the downwards interchanger so that $N$ pulls up to and through the braiding, then $N$ returns, then the braiding interchanges downwards. 
\item Use Type I rewrites so that $N$ pulls up to and through the braiding, then back down through the braiding, then interchanges downwards, then the braiding interchanges downwards, then $N$ returns. 
\item Use a Type III rewrite so that $N$ pulls up to and through the braiding, the braiding interchanges downwards, then $N$ returns.
\end{enumerate}
See the 6-cell `Theorem A1 - Downwards interchanger Pf' in the \emph{Globular} workspace.
\end{itemize}
For a symmetric Gray monoid, there are two more possibilities:

\begin{itemize}[leftmargin=*]
\item \emph{A syllepsis}. 
\begin{enumerate}[leftmargin=*]
\item Insert IPI immediately before the syllepsis so that $N$ moves up through the braidings, then back down again, then the syllepsis occurs.
\item Use Type I rewrites so that $N$ moves up through the braidings, then pulls back through the braidings, the syllepsis occurs, then $N$ returns.
\item Use PT-SYL and Type II rewrites so that $N$ moves up through the braidings, the syllepsis occurs and then $N$ returns.
\end{enumerate}
See the 6-cell `Theorem A1 - Syllepsis Pf' in the \emph{Globular} workspace.

\item \emph{An inverse syllepsis}.
\begin{enumerate}[leftmargin=*]
\item Insert IPI immediately following the inverse syllepsis so that the inverse syllepsis occurs, $N$ moves up through the created braidings, and then returns.
\item Use Type I rewrites so that $N$ moves up immediately beneath the site of the inverse syllepsis, the inverse syllepsis occurs, $N$ moves up through the created braidings, and then returns.
\item Use the flip of PT-SYL and Type II rewrites so that $N$ moves up, the inverse syllepsis occurs beneath $N$, and then $N$ returns.
\end{enumerate}
See the 6-cell `Theorem A1 - Inverse syllepsis Pf' in the \emph{Globular} workspace.
\end{itemize}
Repeat until all 2-cells acting on the output string of $N$ have been removed.
\end{proof}
We now extend Theorems \ref{gurskicoherencethmbraids} and \ref{gurskiosornocoherencethm} using the TSNF procedure we just introduced.

\begin{theorem}[Extended coherence for computadic braided and symmetric Gray monoids]
Let $C$ be a computad for a braided or symmetric Gray monoid with no nonstructural generating 2-cells, whose generating 1-cells all have a single generating 0-cell as output. Then  all parallel 2-cells in the braided or symmetric Gray monoid generated from $C$ are equal.
\end{theorem}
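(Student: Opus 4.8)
The plan is to induct on the number of non-structural generating $1$-cells in the common source of the two parallel $2$-cells, peeling these off the top of the diagram one at a time by means of the TSNF procedure (Theorem~\ref{tsnfprocedure}) and thereby reducing to the purely braided case, which is handled directly by Theorems~\ref{gurskicoherencethmbraids} and~\ref{gurskiosornocoherencethm}.

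First I would pass to loops. Since $C$ has no non-structural generating $2$-cells, every $2$-cell in the generated Gray monoid is a composite of interchangers, braiding inverse-inserts, braiding cancellations, pullthroughs and (in the symmetric case) syllepses; all of these are isomorphisms, so every $2$-cell is invertible. Hence two parallel $2$-cells $\mu_{1},\mu_{2}\colon f\Rightarrow g$ are equal if and only if the loop $L=\mu_{2}^{-1}\circ_{H}\mu_{1}\colon f\Rightarrow f$ is the identity, and it suffices to show that every loop contracts. Write $k(f)$ for the number of non-structural generating $1$-cells occurring in $f$; since no structural rewrite creates or destroys a non-structural $1$-cell, $k$ is constant along every frame of $L$. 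I argue by induction on $k$.

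For the base case $k=0$ the $1$-cell $f$ is a pure braid and, as $k$ is constant, every frame of $L$ is a pure braid; thus $L$ lies entirely in the free braided (resp. symmetric) Gray monoid on the generating $0$-cells, which is the Gray monoid of a computad with no non-structural generating $1$- or $2$-cells. Theorem~\ref{gurskicoherencethmbraids} (resp.~\ref{gurskiosornocoherencethm}) then shows that all parallel $2$-cells there are equal, so $L$ contracts. For the inductive step, choose a non-structural $1$-cell $N$ in $f$ that is maximal for the relation ``the output string of $N$, followed upward through braidings, is an input of''; since each such $1$-cell has a single generating $0$-cell as output, this relation has at most one successor per node, so a maximal $N$ exists and its output string reaches the roof of $f$. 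Because $C$ has no non-structural generating $2$-cells, the hypothesis of Theorem~\ref{tsnfprocedure} holds vacuously, and I put $N$ in TSNF throughout $L$.

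It then remains to rewrite $L$ so that $N$ is raised to the topmost position at the start of the loop, held there throughout, and returned at the end, so that $L$ factors as $r^{-1}\circ L'\circ r$, where $r$ raises $N$ and $L'$ is a loop on the $1$-cell $\bar f$ obtained by deleting $N$ and routing its input strings to the roof. Since $N$ is held at the top throughout $L'$ and, being in TSNF and maximal, is touched by no $2$-cell of $L'$, deleting this fixed top strand yields a genuine loop $\bar L$ on $\bar f$, with $k(\bar f)=k(f)-1$; by the inductive hypothesis $\bar L$, and hence $L'$, contracts, whence $L=r^{-1}\circ r=\id_{f}$. The main obstacle is precisely this fixing step: I would run the same case analysis as in the proof that the trees can be fixed (Section~\ref{sec:treefixing}) and in the TSNF proof itself, inserting IPI and applying Type~I and Type~III rewrites so that, for each structural $2$-cell of the loop --- whether it acts below $N$, interchanges $N$ with an unconnected neighbour, or pulls $N$ through a braiding --- $N$ can be moved to the top before the $2$-cell fires and its motion absorbed into the initial raise and final lowering. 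This analysis is strictly simpler here than in Section~\ref{sec:treefixing}, since the absence of non-structural $2$-cells means no associators, commutators, unitors, pentagon or hexagon equations intervene; the clips of purely structural $2$-cells between consecutive moves are then handled uniformly by Theorems~\ref{gurskicoherencethmbraids} and~\ref{gurskiosornocoherencethm} to keep $N$ at the top throughout.
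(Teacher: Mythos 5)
Your overall architecture---pass to loops, induct on the number of non-structural generating 1-cells, verify the TSNF hypothesis vacuously and apply Theorem~\ref{tsnfprocedure}, raise a roof-reaching 1-cell $N$ to the top, hold it there, and recurse on the diagram beneath---is essentially the paper's proof, which fixes the 1-cells $N_1, N_2, \dots$ from the top down and finishes by applying Theorem~\ref{gurskiosornocoherencethm} only to the pure-braiding loop left at the bottom, where its hypotheses genuinely hold. Your deletion formulation of the recursion and the whiskering observation (a contraction of $\bar{L}$ lifts to $L'$ because the inert top strand obstructs no rewrite) are fine, and your base case is correct.

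The gap is in the step you yourself flag as the main obstacle, and the one sentence you offer to close it does not work. You propose that "the clips of purely structural 2-cells between consecutive moves are then handled uniformly by Theorems~\ref{gurskicoherencethmbraids} and~\ref{gurskiosornocoherencethm}". But those clips are outside the scope of both theorems: their frames still contain the remaining non-structural generating 1-cells, and $N$ itself, all of which may have arbitrarily many inputs, whereas Theorems~\ref{gurskicoherencethmbraids} and~\ref{gurskiosornocoherencethm} require every non-structural generating 1-cell to have exactly one 0-cell as source \emph{and} one as target. Extending coherence past that restriction is exactly what the present theorem asserts, so appealing to the extended statement instead (as Section~\ref{sec:treefixing} legitimately does, since by then the theorem is proved) would be circular. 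The paper closes this hole by hand: after pushing every 2-cell not involving $N_1$ below it---including a case your analysis omits, namely braiding inverse-inserts and inverse syllepses firing at levels \emph{above} $N_1$, which need a dedicated Type III manoeuvre---it cancels the residual pullthroughs of $N_1$ by a counting argument (go to the first downwards pullthrough at which the number $b$ of braidings on the output string is maximal, delay it with Type I rewrites, and analyse the possible obstructions until a Type II cancellation applies), then eliminates the remaining interchangers by a similar chain-and-cancel argument. Your proposal needs this cancellation machinery, or a substitute for it, spelled out: inserting IPI together with Type I and Type III rewrites alone does not absorb the residual up-and-down motion of $N$ into a single initial raise and final lowering.
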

\begin{proof}
Since every structural generating 2-cell is an isomorphism we need only show that any loop is contractible. Consider the source diagram. Order the non-braiding 1-cells in the source by height; call the highest $N_1$, the next highest $N_2$, etc.; up to $N_d$, where $d$ is the number of non-braiding 1-cells in the source. The target of our rewrites will be the following movie: $N_1$ interchanges and pulls through directly upwards to the top of the diagram.  $N_2$ then does the same, to the level just below $N_1$. $N_3$ then does the same; this continues until all $N_i$ are at the top of the diagram in the same height order as they were originally. Now, a loop of 2-cells occurs beneath all the $N_i$. Finally, $N_d$ interchanges and pulls through directly downwards back to its initial position, then $N_{d-1}$ does the same, etc.; this continues until  all nodes have returned to their original position. We may then use Theorem \ref{gurskiosornocoherencethm} to remove the loop beneath the $N_i$, since it features no 1-cells; then we cancel the upwards movement of the $N_i$ with the downwards movement, since they are inverse. 

Our series of rewrites will be inductive in the following sense: we will first rewrite the movie so that $N_1$ moves to the top, remains there throughout and then returns to the bottom again, while the other 1-cells interact beneath it in between its ascent and and its descent. We will then do exactly the same thing for the movie of interactions beneath $N_1$, and then for the movie of interactions beneath $N_2$, etc. It is clear that this approach will produce a movie in the desired form. Therefore, all we need to show is that we can rewrite the movie to one where $N_1$ moves to the top, remains there throughout and then returns to the bottom again. The procedure is as follows.

\begin{enumerate}[leftmargin=*]
\item Put $N_1$ in TSNF using Procedure \ref{tsnfprocedure}. 
\item Insert IPI so that the movie begins and ends with $N_1$ travelling straight up to the top of the diagram, then returning. This is possible since $N_1$ was the highest non-braiding 1-cell.
\item We now ensure that every 2-cell not involving $N_1$ occurs below $N_1$. Take the first 2-cell in the movie not involving $N_1$ and occuring above it. We have the following cases.
	\begin{itemize}[leftmargin=*]
	\item \emph{The 2-cell is a braiding inverse-insert or inverse syllepsis}. 
		\begin{enumerate}[leftmargin=*]
		\item Insert IPI so that $N_1$ rises to immediately beneath the rectangular subregion on which 2-cell occurs then returns. This is possible because $N_1$ is the highest non-braiding 1-cell. 
		\item Use Type I rewrites so that $N_1$ rises immediately beneath the rectangular subregion on which the 2-cell occurs, the 2-cell occurs, then $N_1$ returns to its starting position.
		\item Insert two interchangers and their inverses immediately following the 2-cell so that $N_1$ moves up past the created braidings and then returns. This is possible as $N_1$ is in TSNF, so the 2-cell cannot involve its output string. 
		\item Use a Type III rewrite to rewrite the movie to one where $N_1$ is pulled upwards, the 2-cell occurs directly beneath it, and then $N_1$ returns to its starting position. This is possible because $N_1$ is in TSNF, so the 2-cell acts on one side of the output string.
		\end{enumerate}
	\item \emph{Any other 2-cell}. 
		\begin{enumerate}[leftmargin=*]
		\item Insert IPI immediately before the 2-cell so that $N_1$ moves directly above the region acted on by the 2-cell, then returns, then the 2-cell occurs. This is possible because $N_1$ is the highest non-braiding 1-cell. 
		\item Use Type I rewrites so that $N_1$ moves directly above the region acted on by the 2-cell, returns to just below the region acted on by the 2-cell, then the 2-cell occurs. 
		\item Use a Type III rewrite so that $N_1$ is pulled upwards, the 2-cell occurs directly beneath it, then $N_1$ returns to its starting position. This is possible because $N_1$ is in TSNF, so the 2-cell acts on one side of the output string or is an interchanger.
		\end{enumerate}
\end{itemize}
Repeat until all 2-cells not involving $N_1$ occur below $N_1$.
\item We now remove all pullthroughs on $N_1$. Since $N_1$ is in TSNF, the number $b$ of braidings on the output string can only be changed by downwards and upwards pullthroughs of $N_1$. At the beginning and end of the clip $b=0$, so there are an equal number of upwards and downwards pullthroughs. Go to the first downwards pullthrough at which $b$ is maximised. Try to move this later in the movie using Type I rewrites. At some point this will be impossible. Since $N_1$ is in TSNF, no 2-cell can affect the braiding created by the pullthrough; there are therefore two possibilities for the obstruction.
\begin{itemize}[leftmargin=*]
\item \emph{The downwards pullthrough is immediately followed by an upwards pullthrough.} 
\begin{enumerate}[leftmargin=*]
\item Cancel the two pullthroughs, reducing the total number of pullthroughs on $N_1$ during the clip by $2$. 
\end{enumerate}
\item \emph{There is a chain of downward interchangers of $N_1$ immediately following the downwards pullthrough}. 
\begin{enumerate}[leftmargin=*]
\item  Go to the last downwards interchanger in this chain. Try to push it later in the movie using a Type II rewrite. If this succeeds, return to 4. If it fails, the obstruction cannot be a downwards pullthrough, since $b$ was maximal; nor can it be a 2-cell above $N_1$ not involving $N_1$, since we removed all these. It can therefore only be an upwards interchanger. Therefore, cancel both interchangers using a Type II rewrite. 
\end{enumerate}
Repeat this procedure to remove all pairs of downwards and upwards pullthroughs from the movie.
\end{itemize} 
\item There are now only interchangers on $N_1$. Go to the first interchanger of $N_1$; it will be the first of a chain of downwards interchangers. Go to the last downward interchanger in this chain and try to move it backwards in the movie using a Type I interchanger. If this is impossible, the only possible obstruction is an upwards interchanger, since there are no more pullthroughs on $N_1$ and no 2-cells above it not involving it; we may therefore cancel the two interchangers using a Type II rewrite. Repeat this procedure to remove all interchangers.
\end{enumerate}
The loop is now in the desired form and may be contracted.
\end{proof}

\end{document}